\newcommand*\patchAmsMathEnvironmentForLineno[1]{
\expandafter\let\csname old#1\expandafter\endcsname\csname #1\endcsname
\expandafter\let\csname oldend#1\expandafter\endcsname\csname end#1\endcsname
\renewenvironment{#1}
{\linenomath\csname old#1\endcsname}{\csname oldend#1\endcsname\endlinenomath}}
\newcommand*\patchBothAmsMathEnvironmentsForLineno[1]{
\patchAmsMathEnvironmentForLineno{#1}
\patchAmsMathEnvironmentForLineno{#1*}}
\newtheorem{thm}{Theorem}[section]
\newtheorem{prop}[thm]{Proposition}
\newtheorem{lem}[thm]{Lemma}
\newtheorem{cor}[thm]{Corollary}
\theoremstyle{definition}
\newtheorem{definition}[thm]{Definition}
\newtheorem{prob}{Problem}
\theoremstyle{remark}
\newtheorem{remark}[thm]{Remark}
\theoremstyle{claim}
\newtheorem{claim}{Claim}
\numberwithin{equation}{section}
\newcommand{\R}{\mathbb{R}}  
\newcommand{\N}{\mathbb{N}}  
\newcommand\norm[1]{\lVert#1\rVert}
\renewcommand\H[1]{H(#1)}
\newcommand{\inner}[2]{\langle{#1},{#2}\rangle}
\newcommand{\scalarprod}[2]{\big({#1},{#2}\big)}
\newcounter{rtaskno}
\newcommand{\rtask}[1]{\refstepcounter{rtaskno}\label{#1}}
\newcounter{rsubtaskno}
\newcommand{\rsubtask}[1]{\refstepcounter{rsubtaskno}\label{#1}}
\newcounter{rsubsubtaskno}
\DeclareRobustCommand{\gobblefive}[5]{}
\newcommand*{\SkipTocEntry}{\addtocontents{toc}{\gobblefive}}
\begin{document}


\title[Well-posedness and applications]{Well-posedness of evolutionary differential variational-hemivariational inequalities and applications to frictional contact mechanics}


\author[N. Skoglund Taki and K. Kumar]{Nadia Skoglund Taki and Kundan Kumar}
\address{Center for Modeling of Coupled Subsurface Dynamics \\ Department of Mathematics\\ University of Bergen\\ Postbox 7800\\ 5020 Bergen\\ Norway}
\date{\today}
\email{Nadia.Taki@uib.no, Kundan.Kumar@uib.no}





\keywords{Variational-hemivariational inequality, Well-posedness, Contact problem, Modelling}
\subjclass[2020]{Primary: 47J20, 49K40; Secondary: 74M15}

\begin{abstract}
    In this paper, we study the well-posedness of a class of evolutionary variational-hemivariational inequalities coupled with a nonlinear ordinary differential equation in Banach spaces. 
    The proof is based on an iterative approximation scheme showing that the problem has a unique mild solution. In addition, we established the continuity of the flow map with respect to the initial data.
    Under the general framework, we consider two new applications for modelling of frictional contact for viscoelastic materials. In the first application, we consider Coulomb friction with normal compliance, and in the second, normal damped response. The structure of the friction coefficient $\mu$ is new with motivation from geophysical applications in earth sciences with dependence on an external state variable $\alpha$ and the slip rate $|\Dot{u}_\tau|$.
\end{abstract}


\maketitle
\section{Introduction}

\noindent
This work concerns the study of an evolutionary differential variational-hemivariational inequality modelling mathematical problems from contact mechanics. These systems are relevant for many physical phenomena ranging from engineering to biology (see, e.g., \cite{Dancer2003, Duvaut1976,shillor2004} and the references therein). We are interested in frictional contact phenomena for viscoelastic materials with the linearized strain tensor, which have been studied intensively, see, e.g., some of the relevant books \cite{Duvaut1976,Migorski2012, shillor2004}. 
\\
\indent
Let $V$ and $Y$ be two Banach spaces, and for $T>0$, we let $[0,T]$ be the time interval of interest. Then, the Cauchy problem under consideration reads
\begin{subequations}\label{eq:prob}
	\begin{align}\label{eq:alpha_firsteq}
		&\Dot{\alpha}(t) = 	\mathcal{G}(t,\alpha(t),Mw(t)),\\
		&\inner{\Dot{w}(t) + A(t, w(t)) - f(t)+ \mathcal{R}w(t) }{v- w(t) } 
		+ \varphi(t, \alpha(t),\mathcal{S}_\varphi  w(t), Mw(t), Kv) \label{eq:w1}\\
		&-  \varphi(t,  \alpha(t),\mathcal{S}_\varphi  w(t), Mw(t), Kw(t))+
		j^\circ(t,\alpha(t), \mathcal{S}_j w(t),Nw(t); Nv -Nw(t))\geq 0, \notag
	\end{align}
	for all $v\in V$, a.e. $t\in (0,T)$ with
	\begin{equation}\label{eq:w2}
		w(0) = w_0, \ \ \alpha(0) = \alpha_0.
	\end{equation}
\end{subequations}
Here, $A$ and $\mathcal{R}$ are nonlinear operators related to the viscoelastic constitutive laws.   Further, $j^\circ$ is a generalized directional derivative of a functional $j$. The functionals $\varphi$ and $j$ are determined by contact boundary conditions. We require $\varphi$ to be convex in its last argument, while $j$ may be nonconvex with appropriate structures given later (see Section \ref{sec:problem_and_mainresult}). The operators $\mathcal{S}_\varphi$ and $\mathcal{S}_j$ relate to the contact conditions, and $\mathcal{G}$ is assumed to be a nonlinear operator related to the change in the external state variable $\alpha$. The data $f$ is related to the given body forces and surface traction, and $w_0$ and $\alpha_0$ represent the initial data. Lastly, $M$, $N$, and $K$ are bounded linear operators related to the tangential and normal trace operators.  The Cauchy problem \eqref{eq:w1}-\eqref{eq:w2} is called a hemivariational inequality if $\varphi\equiv 0$ and variational inequality if $j^\circ\equiv 0$. Moreover, a solution to \eqref{eq:prob} is understood in the mild sense.
\begin{definition}\label{def:sols}
	A pair of functions $(w,\alpha)$, where $\alpha\in C([0,T];Y)$ and $w : [0,T] \rightarrow V$ measurable, is said to be a mild solution of \eqref{eq:prob} if $\alpha$ and $w$, respectively, satisfy
	\begin{equation*}
		\alpha(t) = \alpha_0 + \int_0^t	\mathcal{G}(s,\alpha(s),Mw(s)) ds
	\end{equation*} 
	and \eqref{eq:w1}-\eqref{eq:w2}.
\end{definition}

The main purpose of this paper is to extend the results from \cite{Migorski2022, Patrulescu2017} to prove well-posedness of \eqref{eq:prob} with applications to rate-and-state frictional contact problems. We prove that the pair $(w,\alpha)$ is a solution to \eqref{eq:prob} in the sense of Definition \ref{def:sols} and that the flow map depends continuously on the initial data. The problem setting is motivated by \cite{Pipping2015_phd,shillor2004,Patrulescu2017,sofonea2017}, and the techniques have taken inspiration from \cite{Migorski2019}.
\subsection{Former well-posedness results}
Special cases of \eqref{eq:prob} have been investigated in literature.  The recent work \cite{Migorski2022} is closest to our setting. They prove well-posedness for an ordinary differential equation coupled with a variational-hemivariational inequality with applications to viscoplastic material and viscoelasticity with adhesion. In fact, if we let $\varphi$ be independent of $Mw$ in its third argument and
relax the more generalized structure of $\varphi$ (see Remark \ref{remark:mubounded} for more details), then \eqref{eq:prob} reduces to the problem studied in \cite{Migorski2022}. However, keeping the dependence of $Mw$ in $\varphi$ and a generalized structure of $\varphi$ (see Remark \ref{remark:mubounded}) allows us to include applications with a new structure of the friction coefficient. On the other hand, neglecting $\alpha$ and $Mw$ in $\varphi$ and $\alpha$ in $j^\circ$, existence and uniqueness is provided in \cite[Section 10.3]{sofonea2017}. If we let $\varphi \equiv 0$ and $j^\circ$ be independent of $\alpha$ and $\mathcal{S}_jw$, existence and uniqueness was proved in \cite[Section 6]{han}. 

In the quasi-static case tackled in \cite{Patrulescu2017} with $j^\circ \equiv 0$ and a simplified structure of $\varphi$ (see Remark \ref{remark:mubounded}), they proved existence and uniqueness of the solution pair by an implicit method, where they rewrite \eqref{eq:alpha_firsteq} to only depend on $w$. However, the setting of \cite{Patrulescu2017} is not applicable in our case as the inertial term restricts the space-time regularity for $w$. We refer to \cite[p.2]{Migorski2022} for further discussion.

\subsection{Physical setting}\label{sec:intro_physics}
A mathematical model in contact mechanics needs several relations: a constitutive law, a balance equation, boundary conditions, interface laws, and initial conditions. The constitutive laws help us describe the material's mechanical reactions (stress-strain type). In most cases, constitutive laws originate from experiments, though they are verified to satisfy certain invariance principles. We refer to \cite[Chapter 6]{han2002} for a general description of several diagnostic experiments which provide the needed information to construct constitutive laws for specific materials. The interface laws are prescribed on the possible contact surface. We refer to the interface laws in tangential direction as friction laws and in normal direction as contact conditions. The mathematical treatment of these problems gives rise to the variational-hemivariational inequalities of the form \eqref{eq:w1}-\eqref{eq:w2} where we put appropriate constraints on the operators to fit the applications of interest.
\\
\indent
We are mainly interested in studying frictional problems with the following dependencies:
\begin{align}\label{eq:mu_alpha}
	\mu &= \mu(|\Dot{u}_\tau(t)|,\alpha(t)), &\Dot{\alpha}(t) = G(\alpha(t),|\Dot{u}_\tau(t)|).
\end{align}
One application with the dependencies seen in \eqref{eq:mu_alpha} is 
a memory-dependent friction coefficient (see, e.g., \cite[Section 5.3]{Oden1985}), which is also referred to as rate-and-state friction law. This is modelled via an ODE, where the \emph{state variable} $\alpha$ 
tracks information of the contact surface using the slip rate $|\Dot{u}_\tau(t)|$ found from solving \eqref{eq:w1}-\eqref{eq:w2} and then updates the friction coefficient. Under certain constraints we may consider $\alpha$ as the surface temperature or humidity on the contact surface. 

In \cite{Patrulescu2017}, they assume that $\mu(|\Dot{u}_\tau(t)|,\alpha(t))$ is bounded and Lipschitz with respect to both arguments. Additionally to the boundeness assumption on the friction coefficient, \cite{Migorski2022} considers applications in the frictionless setting and $\mu = \mu(u_\nu(t))$. 
Our framework is therefore an extension of the frameworks in \cite{Patrulescu2017,Migorski2022}, where we need to use different techniques to prove the well-posedness of \eqref{eq:prob} in the sense of Definition \ref{def:sols}. Lastly, $\mu=\mu(|\Dot{u}_\tau|)$ is covered in \cite[Section 6.3 and 8.1]{Migorski2012} (see also \cite[p.185-187]{han}). A discussion on many different friction models can be found in \cite{Zmitrowicz}.

\subsection{Contributions and outline} The novelties of this paper are:
\begin{itemize}
    \item Well-posedness of \eqref{eq:prob} in the sense of Definition \ref{def:sols}, where the proof is based on an iterative decoupling approach that directly gives rise to a numerical method. 
    \item A more complicated structure of $\varphi$ (see Remark \ref{remark:mubounded}) that allows for a larger set of conditions on the contact surface.
    \item Two new applications with Coulomb's friction. One contact problem is with normal compliance, and the latter is with normal damped response. The general framework allows a new structure for the friction coefficient that can be unbounded. 
\end{itemize}

The paper is organized as follows. In Section \ref{sec:func_nonsmooth}, we introduce the function spaces and some basics of nonsmooth analysis in order to better understand the problem setting. In Section \ref{sec:problem_and_mainresult}, we present our problem statement and the assumptions on the data. The section ends with our main result, Theorem \ref{thm:mainresult}, that summarizes the well-posedness of \eqref{eq:prob} in the sense of Definition \ref{def:sols}. The proof of the theorem is presented in Section \ref{sec:proof_main_result} utilizing a preliminary result stated in Section \ref{sec:preliminary}. Next, two applications fitting our framework will be introduced in Section \ref{sec:application1}-\ref{sec:application2}. In Section \ref{sec:rateandstate}, we introduce an
application motivated by earth sciences. Lastly, in Appendix \ref{appendix:comments_app}, we include remarks on the assumptions needed in the proof of Theorem \ref{thm:mainresult}, \ref{thm:finalthm}, and \ref{thm:wellposed_app2}. Appendix \ref{appendix:proof_part2}-\ref{appendix:assumption_phiandj} contains proofs of results that are similar to ones found elsewhere but needed throughout the paper.

\subsection{Notation}
We now present some notations that will be used in this paper.
\begin{itemize}
	\item Let $0<T<\infty$ be the maximal time.
	\item Let $d$ denote the dimension. In the applications, $d=2,3$.
	\item A point in $\R^d$ is denoted by $x = (x_i)$,  $1 \leq i \leq d$.
	\item $\mathbb{S}^d$ denotes the space of second order symmetric tensors on $\R^d$.
	\item We denote 	$|\cdot|$ as the Euclidean norm. 
	\item $\Omega \subset \R^d$ is a bounded open connected subset with a Lipschitz boundary $\Gamma = \partial \Omega$. We split $\Gamma$ into three disjoint parts; $\Gamma_D$, $\Gamma_N$, and $\Gamma_C$ with $\mathrm{meas}(\Gamma_D)>0$, $\mathrm{meas}(\Gamma_C)>0$, i.e., nonzero Lebesgue measure, but $\Gamma_N$ is allowed to be empty.
	\item In the applications, $\Omega$ is the reference configuration of a viscoelastic deformable body sliding on a foundation. Moreover, $\Gamma_D$ denotes the  Dirichlet boundary,  $\Gamma_N$ the Neumann boundary, and $\Gamma_C$ is the contact boundary.
	\item $\nu$ denotes the outward normal on $\Gamma$.
	\item We denote $\Bar{\Omega} = \Omega \cup \Gamma$.
	\item  $L^p(\Omega)$	denotes the space of Lebesgue $p$-integrable functions equipped with the norm $ \norm{v}_{L^p(\Omega)}  = \big( \int_\Omega |v|^p dx \big)^{1/p}$
	for $1\leq p <\infty$. 
	With the usual modifications for $L^\infty(\Omega)$.
	\item $C^\infty_c(\Omega)$ denotes the space of infinitely differentiable functions with compact support.
	\item We will denote $c$ as a positive constant, which might change from line to line.
	\item Let $h(t)$ be a function, then we denote the time derivative of $h(t)$ by $\Dot{h}(t)$ and the double time derivative as $\Ddot{h}(t)$. Assuming here that $h(t)$ has enough regularity such that it makes sense to take the time derivative of it twice.
	\item $(\Tilde{X},\norm{\cdot}_{\Tilde{X}})$ and $(\Tilde{Y},\norm{\cdot}_{\Tilde{Y}})$ denote arbitrary (separable reflexive) Banach spaces. For the convenience of the reader, this notation will be used when introducing general theory. If the theory is only needed for a specific (separable reflexive) Banach space, we write the specific space.
	\item The dual space of $\Tilde{X}$ will be denoted by $\Tilde{X}^\ast$.
	\item $2^{\Tilde{X}^\ast}$ denotes the set containing all subsets of $\Tilde{X}^\ast$. 
	\item The dual product between the spaces $\Tilde{X}$ and $\Tilde{X}^\ast$ will be denoted by $\inner{\cdot}{\cdot}_{\Tilde{X}^\ast \times \Tilde{X}}$.
	\item In the particular case, when $\Tilde{X}$ is an inner product space, we denote $\scalarprod{\cdot}{\cdot}_{\Tilde{X}}$ as its inner product.
	\item  $U$,	$V$, $X$, $Y$, and $Z$ denote real separable reflexive Banach spaces, and $H$ is a real Hilbert space.
	\item The embedding $V \subset H \subset V^\ast$ is referred to as an evolution triple.	Here, the embedding $V \subset H$ is continuous and $V$ dense in $H$. It follows that $H \subset V^\ast$ is continuously embedded (see, e.g., \cite[Section 7.2]{Roubicek2006}). Moreover, $V \subset H$ is compactly embedded, leading to $H \subset V^\ast$ being compactly embedded (see, e.g., \cite[Remark 3.4.4]{Denkowski_application}).
	\item For simplicity of notation, the dual product between $V^\ast$ and $V$ is denoted by $\inner{\cdot}{\cdot} = \inner{\cdot}{\cdot}_{V^\ast \times V} $. 
	\item $\mathcal{L}(\Tilde{X},\Tilde{Y})$ denotes the set of all bounded linear maps from $\Tilde{X}$ into $\Tilde{Y}$.
	\item We denote the operator norm of the operators $M: V \rightarrow U$, $N : V \rightarrow X$, and $K : V \rightarrow Z$  as  $\norm{M} = \norm{M}_{\mathcal{L}(V,U)}$, $\norm{N} = \norm{N}_{\mathcal{L}(V,X)}$, and $\norm{K} = \norm{K}_{\mathcal{L}(V,Z)}$, respectively.	
\end{itemize}

\tableofcontents

\section{Function spaces and basics of nonsmooth analysis} \label{sec:func_nonsmooth}
\noindent
In this section, we present the function spaces and fundamental results. For further information, we refer to standard textbooks, e.g., \cite{Cheney2001,evans,Pedersen1989,Roubicek2006}.

\subsection{Sobolev spaces}\label{sec:sobolev_spaces}
This section defines the solution spaces and the usual Sobolev spaces, which will become useful in Section \ref{sec:problem_and_mainresult} and in the applications, i.e., Section \ref{sec:applications}. 
We define
\begin{align}\label{eq:H_space}
	L^p(\Omega; \mathbb{R}^d ) &= \{ v = (v_i) : v_i \in L^p(\Omega), \ 1\leq i \leq d\},\\
	L^p(\Omega; \mathbb{S}^d ) &= \{ \tau = (\tau_{ik})  : \tau_{ik} = \tau_{ki} \in L^p(\Omega), \ 1\leq i,k \leq d\},
	\label{eq:Q_space}
\end{align}
for $1\leq p \leq \infty$. The associated norms will be denoted $\norm{\cdot}_{ L^p(\Omega; \mathbb{R}^d )}$ and $\norm{\cdot}_{L^p(\Omega; \mathbb{S}^d )}$, respectively. For $p=2$, \eqref{eq:H_space}-\eqref{eq:Q_space} are Hilbert spaces with the canonical inner products
\begin{align*}
	\scalarprod{u}{v}_{ L^2(\Omega; \mathbb{R}^d )} &= \int_\Omega u_iv_i dx = \int_\Omega u\cdot v dx, &\scalarprod{\sigma}{\tau}_{L^2(\Omega; \mathbb{S}^d )} &= \int_\Omega \sigma_{ik} \tau_{ik} dx =  \int_\Omega \sigma : \tau dx
\end{align*}
for all $u,v \in  L^2(\Omega; \mathbb{R}^d) $, $\sigma,\tau \in L^2(\Omega; \mathbb{S}^d)$. 
Moreover, we define the spaces
\begin{equation*}
	H^1(\Omega) = \{v \in L^2(\Omega) : \text{ the weak derivatives } \frac{\partial v }{\partial x_i}\text{ exist in } L^2(\Omega) , \ 1\leq i \leq d\}
\end{equation*}
and 
\begin{equation*}
	H^1(\Omega;\mathbb{R}^d) = \{ v = (v_i) : v_i \in H^1(\Omega), \ 1\leq i \leq d\}.
\end{equation*}
With abuse of notation, the trace of functions $v \in  H^1(\Omega;\mathbb{R}^d)$ on $\Gamma$ will still be denoted by $v$. For the displacement, we use the space 
\begin{equation}\label{eq:V_space}
	V = \{v \in H^1(\Omega;\mathbb{R}^d) :  v =  0 \ \text{ on }\Gamma_D \}.
\end{equation}
As a consequence of $\mathrm{meas}(\Gamma_D) > 0$, it follows by Korn's inequality, i.e., $\norm{\varepsilon(v)}_{L^2(\Omega; \mathbb{S}^d )}\geq c\norm{v}_{H^1(\Omega;\mathbb{R}^d)}$ (see, e.g., \cite[Lemma 6.2]{Kikuchi1988}), that $V$ is a Hilbert space with the canonical inner product
\begin{equation*}
	\scalarprod{u}{v}_V = \int_\Omega \varepsilon(u) : \varepsilon(v) dx.
\end{equation*}
Here, $\varepsilon : H^1(\Omega;\mathbb{R}^d) \rightarrow L^2(\Omega; \mathbb{S}^d )$ is the deformation operator defined by
\begin{equation*}
\varepsilon(u) = (\varepsilon_{ik}(u)), \ \ \  \varepsilon_{ik}(u) = \frac{1}{2} \Big(\frac{\partial u_k }{\partial x_i} + \frac{\partial u_i}{\partial x_k}\Big).
\end{equation*}
We denote the associated norm on $V$ by $\norm{\cdot}_V$. 	Moreover, if $\sigma$ is a regular function, say $\sigma \in C^1(\Bar{\Omega}; \mathbb{S}^d)$, the following Green's formula holds
\begin{align}\label{eq:greensformulasigma}
	\int_\Omega (\nabla \cdot \sigma) v dx = \int_\Gamma \sigma \nu \cdot v da - \int_\Omega \sigma : \varepsilon(v) dx \ \text{ for all } v\in H^1(\Omega;\mathbb{R}^d).
\end{align}
We also need the following trace theorem from \cite[Theorem 4.12]{Adams2003}.
\begin{thm}\label{thm:trace}
    Let $\Omega \subset \R^d$ be bounded with Lipschitz boundary $\Gamma$ and $\Gamma_C \subset \Gamma$ be such that $\mathrm{meas}(\Gamma_C)>0$.
    Then there exists a linear continuous operator $\gamma : V \rightarrow L^q(\Gamma_C;\mathbb{R}^d)$
    satisfying 
    \begin{align*}
        \norm{\gamma v}_{L^q(\Gamma_C;\R^d)} &\leq \norm{\gamma}_{\mathcal{L}(V,L^q(\Gamma_C;\R^d))} \norm{v}_V
    \end{align*}
    for all $v\in V$.  If $d=2$, then $2\leq q<\infty$. And  $d>2$,
    $2\leq q\leq p^\ast = \frac{4}{d-2}$. 
\end{thm}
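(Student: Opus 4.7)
The statement is the Sobolev trace embedding specialized to the vector-valued subspace $V$, and my plan is to reduce it to the scalar case on $H^1(\Omega)$ and then restrict. Since $V$ is defined componentwise as a closed subspace of $H^1(\Omega;\R^d)$ via the Dirichlet constraint on $\Gamma_D$, it suffices to produce a bounded linear trace $\gamma_0 : H^1(\Omega) \to L^q(\Gamma)$ for each scalar component and then take the product map; the restriction operator $L^q(\Gamma;\R^d) \to L^q(\Gamma_C;\R^d)$ is obviously bounded (with norm $\le 1$). The operator norm in the statement then follows from the continuity of the embedding $V \hookrightarrow H^1(\Omega;\R^d)$ composed with these maps.

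For the scalar trace, the standard approach (and the one carried out in the cited \cite[Theorem~4.12]{Adams2003}) is to cover $\Gamma$ by finitely many charts in which, after a bi-Lipschitz straightening, $\Omega$ becomes locally a half-space and $\Gamma$ becomes a piece of hyperplane. On smooth functions $u \in C^\infty(\bar\Omega)$ one defines the trace by pointwise restriction; the fundamental theorem of calculus together with H\"older's inequality gives the bound $\|u\|_{L^2(\Gamma)} \le c\|u\|_{H^1(\Omega)}$, after which density of $C^\infty(\bar\Omega)$ in $H^1(\Omega)$ extends $\gamma_0$ to a bounded linear operator into $H^{1/2}(\Gamma)$. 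Composing with the Sobolev embedding on the $(d-1)$-dimensional manifold $\Gamma$ upgrades the target space: when $d=2$, $\Gamma$ is one-dimensional and $H^{1/2}(\Gamma)\hookrightarrow L^q(\Gamma)$ for all finite $q$; when $d\ge 3$, the critical exponent from the Sobolev embedding on $\Gamma$ dictates the stated upper bound on $q$.

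The only genuinely delicate point is handling the Lipschitz (as opposed to smooth) regularity of $\Gamma$: one must verify that the bi-Lipschitz change of variables preserves $H^1$-regularity and that the partition of unity argument carries through with constants depending only on the Lipschitz atlas of $\Omega$. This is exactly the content of \cite[Theorem~4.12]{Adams2003}, so the proof amounts to quoting that result and patching in the vector-valued and $\Gamma_C$-restriction steps described above. I anticipate no further obstacles, as the Korn-based norm on $V$ introduced in \eqref{eq:V_space} is equivalent to the ambient $H^1(\Omega;\R^d)$-norm on $V$, so the final constant $\|\gamma\|_{\mathcal L(V, L^q(\Gamma_C;\R^d))}$ is finite.
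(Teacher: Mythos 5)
Your proposal is correct and follows essentially the same route as the paper, which gives no argument beyond citing \cite[Theorem 4.12]{Adams2003}: the componentwise reduction, the restriction from $\Gamma$ to $\Gamma_C$, and the equivalence of the Korn norm with the ambient $H^1(\Omega;\R^d)$-norm are exactly the routine patching steps implicit in that citation. Your intermediate factorization through $H^{1/2}(\Gamma)$ and the boundary Sobolev embedding in fact yields the (sharper) exponent $\tfrac{2(d-1)}{d-2}$ for $d>2$, which covers the stated range $q\le \tfrac{4}{d-2}$, so nothing is lost.
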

\indent We denote $\mathbb{X} = \prod_{i=1}^\ell \Tilde{X}_i$ as a Cartesian product space, for some $\ell\in \mathbb{Z}_+$, where $(\Tilde{X}_i,\norm{\cdot}_{\Tilde{X}_i})$ are normed spaces for $i=1,\dots, \ell$. Then, $\mathbb{X}$ is equipped with the norm
\begin{equation*}
	\norm{(v_1,\dots,v_\ell)}_\mathbb{X} = \sum_{i=1}^\ell \norm{v_i}_{\Tilde{X}_i} \ \ \text{ for all $v_i \in \Tilde{X}_i$, $i=1,\dots,\ell$}.
\end{equation*}
Equivalently, we may equip $\mathbb{X}$ with the norm
\begin{equation*}
	\norm{(v_1,\dots,v_\ell)}_\mathbb{X}^2 = \sum_{i=1}^\ell \norm{v_i}_{\Tilde{X}_i}^2 \ \ \text{ for all $v_i \in \Tilde{X}_i$, $i=1,\dots,\ell$}.
\end{equation*}

\subsection{Time-dependent spaces}\label{sec:bochner_spaces}
Let $(V,H,V^\ast)$ be an evolution triple.
\begin{definition}\label{def:Bochner}
	Let $\Tilde{X}$ be a Banach space, and $T > 0$. The space $L^p(0,T; \Tilde{X})$, $1\leq p \leq \infty$, consists of all measurable functions $v : [0, T] \rightarrow \Tilde{X}$ such that
	\begin{align*}
		\int_0^T \norm{v(t)}_{\Tilde{X}}^p dt < \infty.
	\end{align*}
	With the usual modifications for $L^\infty(0, T; \Tilde{X})$. For brevity, we use the standard short-hand notation
	\begin{equation*}
		L^p_{t}\Tilde{X} = L^p(0, t; \Tilde{X})
	\end{equation*}
	for all $t \in [0,T]$ and $1\leq p \leq \infty$.
\end{definition}
We also introduce the solution space
\begin{align}\label{eq:spaces_VastW} 
	\mathcal{W}^{1,2}_T= \{ w \in L^2_{T}V  : \Dot{w} \in L^2_{T}V^\ast\},
\end{align}
equipped with the norm $\norm{w}_{\mathcal{W}^{1,2}_T}^2 = \norm{w}_{L^2_{T}V}^2 + \norm{\dot{w}}_{L^2_{T}V^\ast}^2$. The duality pairing between $L^2_TV^\ast$ and $L^2_TV$ is denoted by
\begin{equation*}
	\inner{\Tilde{v}}{v}_{L^2_TV^\ast \times L^2_TV}= \int_0^T \inner{\Tilde{v}(s)}{v(s)}  ds \ \ \text{ for all } \Tilde{v}\in L^2_TV^\ast, v\in L^2_TV.
\end{equation*}
We denote the space of continuous functions defined on $[0,T]$ with values in $\Tilde{X}$ by 
\begin{equation*}
	C([0,T]; \Tilde{X}) = \{h : [0,T] \rightarrow \Tilde{X} : h \text{ is continuous and } \norm{h}_{L^\infty_T{\Tilde{X}}} < \infty\}.
\end{equation*}
The next proposition can be found in, e.g., \cite[Proposition 3.4.14]{Denkowski_application} or \cite[Lemma 7.3]{Roubicek2006} and will help us provide estimates.
\begin{prop}\label{prop:integrationbypartsformula}
Let $(V,H,V^\ast)$ be an evolution triple in space, and $0<T< \infty$. Then, for any $v_1,v_2 \in \mathcal{W}^{1,2}_T$ (defined in \eqref{eq:spaces_VastW}), and for all $0 \leq s \leq t \leq T$, the following integration by parts formula holds:
\begin{equation*}
	\scalarprod{v_1(t)}{v_2(t)}_H -	\scalarprod{v_1(s)}{v_2(s)}_H = \int_s^t \big[ \inner{\Dot{v}_1(\tau)}{v_2(\tau)} + \inner{\Dot{v}_2(\tau)}{v_1(\tau)} \big] d \tau.
\end{equation*}
In addition, the embedding $\mathcal{W}^{1,2}_T \subset C([0,T];H)$ is continuous.
\end{prop}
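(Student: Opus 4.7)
The plan is a standard density argument: establish both claims first for smooth functions, then extend using an energy identity that controls the $C([0,T];H)$-norm. The key point is that for smooth approximants, everything reduces to the classical product rule, while the difficulty in the general case is purely the reconciliation of $v \in L^2_TV$ with $\Dot{v} \in L^2_TV^\ast$.

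\textbf{Smooth case.} For $v_1, v_2 \in C^1([0,T]; V)$, the scalar map $\phi(\tau) := \scalarprod{v_1(\tau)}{v_2(\tau)}_H$ is continuously differentiable with derivative $\scalarprod{\Dot{v}_1(\tau)}{v_2(\tau)}_H + \scalarprod{v_1(\tau)}{\Dot{v}_2(\tau)}_H$. Because the evolution triple $V \subset H \subset V^\ast$ identifies $H$ with its own dual, the $H$-inner product agrees with the duality pairing $\inner{\cdot}{\cdot}$ whenever one argument lies in $V$ and the other in $V^\ast$, so each summand equals $\inner{\Dot{v}_i(\tau)}{v_j(\tau)}$. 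Integrating $\phi'$ from $s$ to $t$ yields the stated identity for smooth functions.

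\textbf{Density and energy estimate.} I would then show that $C^1([0,T]; V)$ is dense in $\mathcal{W}^{1,2}_T$ by extending $v \in \mathcal{W}^{1,2}_T$ from $[0,T]$ to all of $\R$ via reflection across the endpoints and convolving with a smooth time-mollifier $\rho_\varepsilon$; the restrictions $v_\varepsilon \in C^\infty([0,T]; V)$ satisfy $v_\varepsilon \to v$ in $L^2_TV$ and $\Dot{v}_\varepsilon \to \Dot{v}$ in $L^2_TV^\ast$. Applying the smooth identity with $v_1 = v_2 = v_\varepsilon$ produces the energy identity
\[ \norm{v_\varepsilon(t)}_H^2 = \norm{v_\varepsilon(s)}_H^2 + 2\int_s^t \inner{\Dot{v}_\varepsilon(\tau)}{v_\varepsilon(\tau)} d\tau. \]
Bounding the pairing via $2|\inner{\Dot{u}}{u}| \leq \norm{\Dot{u}}_{V^\ast}^2 + \norm{u}_V^2$, averaging over $s \in [0,T]$, and using the continuous embedding $V \subset H$ to control $\int_0^T \norm{v_\varepsilon(s)}_H^2 ds$ by $\norm{v_\varepsilon}_{L^2_TV}^2$, one obtains the uniform bound $\max_{t \in [0,T]} \norm{v_\varepsilon(t)}_H \leq C\norm{v_\varepsilon}_{\mathcal{W}^{1,2}_T}$. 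Applied to the differences $v_\varepsilon - v_\delta$, this shows $\{v_\varepsilon\}$ is Cauchy in $C([0,T];H)$, so $v$ admits a continuous representative and the embedding $\mathcal{W}^{1,2}_T \subset C([0,T];H)$ is continuous.

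\textbf{Passing to the limit and main obstacle.} For general $v_1, v_2 \in \mathcal{W}^{1,2}_T$ with smooth approximants $v_1^\varepsilon, v_2^\varepsilon$, both sides of the smooth formula pass to the limit as $\varepsilon \to 0$: the left side by the $C([0,T];H)$-convergence just established, and the right side because $\Dot{v}_i^\varepsilon \to \Dot{v}_i$ in $L^2_TV^\ast$ while $v_j^\varepsilon \to v_j$ in $L^2_TV$, so the pairing integral converges. The technical heart is the density step: convolution on $[0,T]$ alone degrades near the endpoints, so an extension to a larger interval is required, and this extension must simultaneously preserve the $L^2V$-regularity of $v$ and the $L^2V^\ast$-regularity of $\Dot{v}$. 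The fact that $v$ and $\Dot{v}$ live in different, incompatible spaces, unrelated by a classical chain rule on $H$, is precisely why one cannot avoid the mollification detour and why the integration by parts formula is nontrivial in this generality.
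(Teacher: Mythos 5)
The paper gives no proof of this proposition; it simply cites \cite[Lemma 7.3]{Roubicek2006} and \cite[Proposition 3.4.14]{Denkowski_application}, and your density-plus-mollification argument (smooth case via the product rule and the Gelfand-triple identification, extension and time-mollification, the averaged energy estimate giving the uniform $C([0,T];H)$ bound, then passage to the limit on both sides) is exactly the standard proof found in those references, so you are taking the same route the paper implicitly relies on. The argument is correct as sketched; the only step worth making explicit is that the reflection extension is legitimate because any $v\in\mathcal{W}^{1,2}_T$ is already (after modification on a null set) absolutely continuous as a $V^\ast$-valued function, so reflecting across the endpoints creates no singular part of the distributional time derivative.
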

For more on evolution spaces and other time-dependent spaces, which are also referred to as Bochner spaces, see, e.g., \cite[Chapter 7]{Roubicek2006}, \cite[Section 3.4]{Denkowski_application}, \cite[Chapter 23]{Zeidler1990}, \cite[Chapter V, Section 5]{Yosida1968}, or \cite[Section 5.9.2 and Appendix B.2]{evans}.\\
\indent 
We lastly introduce the following Bochner space needed in the applications, i.e., in Section \ref{sec:applications}:
\begin{definition}
	Let $V$ be a real Banach space, then the Bochner space $W^{1,2}(0,T;V)$ consists of all functions $u \in L^2_TV$ such that $\Dot{u}$ exists in the weak sense and belongs to $L^2_TV$.  The space $W^{1,2}(0,T;V)$ is equipped with the norm
	\begin{align*}
		\norm{u}_{W^{1,2}(0,T;V)}^2 = \norm{u}_{L^2_TV}^2 + \norm{\Dot{u}}_{L^2_TV}^2.
	\end{align*}
\end{definition}

\subsection{Generalized gradients}
Let $\Tilde{X}$ be a reflexive Banach space. In contact mechanics, we are often interested in contact conditions of the form $\zeta_\nu \in \partial h(u_\nu)$, where $\zeta_\nu$ represents an interface force, $u_\nu = u \cdot \nu$ the normal component of the displacement, and $\partial h(u_\nu)$ being the Clarke subdifferential of $h$ defined as below.
\begin{definition}\label{def:subdiffernetial}
Let $h : \Tilde{X} \rightarrow \mathbb{R}$ be a locally Lipschitz function. The generalized (Clarke) directional derivative of $h$ at $\Tilde{x} \in \Tilde{X}$ in the direction $v\in \Tilde{X}$, denoted $h^\circ(\Tilde{x};v)$, is defined by
\begin{equation*}
	h^\circ(\Tilde{x};v) = \limsup_{\Tilde{y}\rightarrow \Tilde{x}, \ \Tilde{\epsilon} \downarrow 0} \frac{h(\Tilde{y} + \Tilde{\epsilon} v) - h(\Tilde{y})}{\Tilde{\epsilon}}.
\end{equation*}
Moreover, the subdifferential in the sense of Clarke of $h$ at $\Tilde{x}$, denoted $\partial h(\Tilde{x})$, is a subset of $\Tilde{X}^\ast$ of the form
\begin{equation*}
	\partial h(\Tilde{x}) = \{ \zeta \in \Tilde{X}^\ast  :  h^\circ(\Tilde{x};v) \geq \inner{\zeta}{v}_{ \Tilde{X}^\ast \times  \Tilde{X}} \ \ \text{ for all } v\in \Tilde{X} \}.
\end{equation*}
\end{definition}
\begin{remark}
    To say that a function $h : \Tilde{X} \rightarrow \mathbb{R}$ is  locally Lipschitz on $\Tilde{X}$ means that $h(\Tilde{x})$ is Lipschitz continuous in a neighborhood of $\Tilde{x} \in \Tilde{X}$.
\end{remark}
\begin{remark}
We refer the reader to, e.g., \cite[p.185-187]{han} and \cite[Section 6.3]{Migorski2012} for examples related to contact mechanics. Other examples may be found in, e.g., \cite{clarke,Denkowski_theory}.
\end{remark}
\begin{prop}\label{prop:chainrule_subdiff}
	Let $\Tilde{X}$ be a Banach space and $h : \Tilde{X} \rightarrow \mathbb{R}$ be locally Lipschitz on $\Tilde{X}$.
	Then $(\Tilde{x} ,v) \mapsto h^\circ(\Tilde{x} ;v)$ is upper semicontinuous.
\end{prop}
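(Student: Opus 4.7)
The plan is to establish upper semicontinuity sequentially: fix a sequence $(\Tilde{x}_n, v_n) \to (\Tilde{x}, v)$ in $\Tilde{X} \times \Tilde{X}$ and show $\limsup_{n\to\infty} h^\circ(\Tilde{x}_n; v_n) \leq h^\circ(\Tilde{x}; v)$. Since $h$ is locally Lipschitz at $\Tilde{x}$, there exists a neighborhood $U$ of $\Tilde{x}$ and a constant $L > 0$ such that $|h(y) - h(z)| \leq L\|y - z\|_{\Tilde{X}}$ for all $y, z \in U$; for $n$ large enough and $\epsilon > 0$ small enough, the relevant arguments stay inside $U$.

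Next, I would unpack the $\limsup$ defining $h^\circ(\Tilde{x}_n; v_n)$ to select, for each $n$, a point $\Tilde{y}_n \in \Tilde{X}$ and $\epsilon_n > 0$ with $\|\Tilde{y}_n - \Tilde{x}_n\|_{\Tilde{X}} \leq 1/n$, $\epsilon_n \leq 1/n$, and
\begin{equation*}
h^\circ(\Tilde{x}_n; v_n) \leq \frac{h(\Tilde{y}_n + \epsilon_n v_n) - h(\Tilde{y}_n)}{\epsilon_n} + \frac{1}{n}.
\end{equation*}
Note that $\Tilde{y}_n \to \Tilde{x}$ and $\epsilon_n \downarrow 0$. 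The standard trick is then to split the difference quotient by inserting $h(\Tilde{y}_n + \epsilon_n v)$:
\begin{equation*}
\frac{h(\Tilde{y}_n + \epsilon_n v_n) - h(\Tilde{y}_n)}{\epsilon_n} = \frac{h(\Tilde{y}_n + \epsilon_n v_n) - h(\Tilde{y}_n + \epsilon_n v)}{\epsilon_n} + \frac{h(\Tilde{y}_n + \epsilon_n v) - h(\Tilde{y}_n)}{\epsilon_n}.
\end{equation*}

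Then I would bound the first term using the local Lipschitz property: once $n$ is large, both $\Tilde{y}_n + \epsilon_n v_n$ and $\Tilde{y}_n + \epsilon_n v$ lie in $U$, so the term is bounded in absolute value by $L\|v_n - v\|_{\Tilde{X}} \to 0$. For the second term, since $\Tilde{y}_n \to \Tilde{x}$ and $\epsilon_n \downarrow 0$, the very definition of $h^\circ(\Tilde{x}; v)$ as a $\limsup$ over all such approximating nets yields
\begin{equation*}
\limsup_{n \to \infty} \frac{h(\Tilde{y}_n + \epsilon_n v) - h(\Tilde{y}_n)}{\epsilon_n} \leq h^\circ(\Tilde{x}; v).
\end{equation*}
Combining these three estimates and sending $n \to \infty$ gives the desired inequality.

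There is no serious obstacle here; the only subtle point is verifying that the $\limsup$ in the definition of $h^\circ(\Tilde{x}; v)$ legitimately absorbs the sequence $(\Tilde{y}_n, \epsilon_n)$, which is immediate from its characterization as the supremum over all sequences approaching $\Tilde{x}$ from above in $\epsilon$. The local Lipschitz hypothesis is used only once, to decouple the dependence on $v_n$ from the dependence on the base point, which is the standard mechanism that makes this classical Clarke result work.
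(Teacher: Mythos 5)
Your proof is correct. The paper does not prove this proposition itself but cites it (Proposition 5.6.6 in Denkowski--Mig\'{o}rski--Papageorgiou, essentially Clarke's classical argument), and your proof is exactly that standard argument: pick near-optimal $(\Tilde{y}_n,\epsilon_n)$ from the defining $\limsup$, insert $h(\Tilde{y}_n+\epsilon_n v)$, control the $v_n-v$ discrepancy with the local Lipschitz constant, and let the $\limsup$ in the definition of $h^\circ(\Tilde{x};v)$ absorb the remaining quotient; note also that arguing with sequences suffices here since $\Tilde{X}\times\Tilde{X}$ is metrizable.
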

\begin{prop}	\label{prop:convex_lsc_implies_locallyLipschitz}
	Let $\Tilde{X}$ be a Banach space.
	If $h: \Tilde{X} \rightarrow \mathbb{R}\cup\{\infty\}$ is proper, convex, and lower semicontinuous, then $h$ is locally Lipschitz on  the
	interior of the domain of $h$.
\end{prop}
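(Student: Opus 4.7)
The plan is to establish local Lipschitz continuity at an arbitrary $\Tilde{x}_0\in\mathrm{int}(\mathrm{dom}(h))$ in three standard convex-analytic steps: first local boundedness from above, then local boundedness from below via reflection, and finally a Lipschitz estimate from convexity on a smaller ball.

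For the upper bound, I would fix $r>0$ with $\overline{B}(\Tilde{x}_0,r)\subset\mathrm{dom}(h)$ and consider the sublevel sets $C_n=\{\Tilde{x}\in\overline{B}(\Tilde{x}_0,r):h(\Tilde{x})\le n\}$. Lower semicontinuity makes each $C_n$ closed in the complete metric space $\overline{B}(\Tilde{x}_0,r)$, and since $h$ is finite on $\mathrm{dom}(h)$, these sets cover the ball. The Baire category theorem then produces some $N\in\mathbb{N}$ and a ball $B(\Tilde{y}_0,\rho)\subset C_N$ on which $h\le N$. To move this bound to a neighborhood of $\Tilde{x}_0$, I would pick $\rho'\in(0,\rho)$ small enough that the reflected ball $B(2\Tilde{x}_0-\Tilde{y}_0,\rho')$ is still inside $\mathrm{dom}(h)$, bound $h$ there by continuity of translation/reflection in $\mathrm{dom}(h)$, and then express each point of a ball around $\Tilde{x}_0$ as a convex combination of one point from $B(\Tilde{y}_0,\rho')$ and one point from $B(2\Tilde{x}_0-\Tilde{y}_0,\rho')$. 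Convexity of $h$ then yields an upper bound $h\le M$ on a ball $B(\Tilde{x}_0,\rho_1)$.

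For the lower bound and the Lipschitz estimate, I would argue as follows. For any $\Tilde{x}\in B(\Tilde{x}_0,\rho_1/2)$ the reflected point $\Tilde{y}=2\Tilde{x}_0-\Tilde{x}$ also lies in $B(\Tilde{x}_0,\rho_1/2)$, and convexity at the midpoint gives $h(\Tilde{x}_0)\le\tfrac{1}{2}h(\Tilde{x})+\tfrac{1}{2}h(\Tilde{y})$, hence $h(\Tilde{x})\ge 2h(\Tilde{x}_0)-M=:m$. For the Lipschitz estimate, take $\Tilde{x},\Tilde{y}\in B(\Tilde{x}_0,\rho_1/4)$ with $\Tilde{x}\ne\Tilde{y}$, set
\[
\Tilde{z}=\Tilde{y}+\frac{\rho_1/4}{\norm{\Tilde{y}-\Tilde{x}}_{\Tilde{X}}}(\Tilde{y}-\Tilde{x})\in B(\Tilde{x}_0,\rho_1/2),
\]
and write $\Tilde{y}=\lambda \Tilde{z}+(1-\lambda)\Tilde{x}$ with $\lambda=\norm{\Tilde{y}-\Tilde{x}}_{\Tilde{X}}/(\norm{\Tilde{y}-\Tilde{x}}_{\Tilde{X}}+\rho_1/4)$. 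Convexity gives $h(\Tilde{y})-h(\Tilde{x})\le\lambda(h(\Tilde{z})-h(\Tilde{x}))\le\tfrac{4(M-m)}{\rho_1}\norm{\Tilde{y}-\Tilde{x}}_{\Tilde{X}}$, and swapping the roles of $\Tilde{x}$ and $\Tilde{y}$ produces the reverse inequality; this is the desired local Lipschitz bound.

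The main obstacle is the first step: producing the initial open set on which $h$ is bounded above. This is where the two abstract hypotheses on $h$, namely lower semicontinuity and the Banach (hence Baire) structure of $\Tilde{X}$, are genuinely used; the remainder of the argument is routine manipulation of convex combinations. A subtle point worth checking carefully is that the initial Baire-produced ball $B(\Tilde{y}_0,\rho)$ on which $h\le N$ is typically not centered at $\Tilde{x}_0$, so one must argue that both $B(\Tilde{y}_0,\rho')$ and its reflection through $\Tilde{x}_0$ lie inside $\mathrm{dom}(h)$, possibly after shrinking $\rho'$ using that $\Tilde{x}_0$ is interior. Once that is in place, the convex combination trick transfers the bound to a genuine neighborhood of $\Tilde{x}_0$, and everything else follows.
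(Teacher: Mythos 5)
You are judged here on your own argument, since the paper does not prove this proposition at all: it simply cites \cite[Proposition 5.2.10]{Denkowski_theory}. Your overall scheme (Baire category to get a set where $h$ is bounded above, convexity to move that bound to a neighborhood of $\Tilde{x}_0$, reflection through $\Tilde{x}_0$ for the lower bound, and the point $\Tilde{z}=\Tilde{y}+\frac{\rho_1/4}{\norm{\Tilde{y}-\Tilde{x}}_{\Tilde{X}}}(\Tilde{y}-\Tilde{x})$ with $\lambda=\norm{\Tilde{y}-\Tilde{x}}_{\Tilde{X}}/(\norm{\Tilde{y}-\Tilde{x}}_{\Tilde{X}}+\rho_1/4)$ for the Lipschitz estimate) is the standard one, and the lower-bound and Lipschitz steps are correct as written.

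The gap is in the transfer of the upper bound. You propose to ``bound $h$'' on the reflected ball $B(2\Tilde{x}_0-\Tilde{y}_0,\rho')$ ``by continuity of translation/reflection in $\mathrm{dom}(h)$''. That justification does not work: inclusion of this ball in $\mathrm{dom}(h)$ only gives finiteness of $h$ there, and neither lower semicontinuity nor continuity of the reflection map $v\mapsto 2\Tilde{x}_0-v$ yields an upper bound for $h$ on a ball --- local boundedness above of a convex l.s.c.\ function near an interior point is precisely the statement you are in the middle of proving, so as stated the step is circular (and in fact reflecting the known bound through $\Tilde{x}_0$ only produces a \emph{lower} bound on that ball). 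The standard repair is that you do not need a bound on a whole ball on the far side; a single point suffices. If $r$ is chosen with $\overline{B}(\Tilde{x}_0,r)\subset\mathrm{dom}(h)$, then $\Tilde{z}:=2\Tilde{x}_0-\Tilde{y}_0\in\overline{B}(\Tilde{x}_0,r)$, so $h(\Tilde{z})<\infty$, and every $w\in B(\Tilde{x}_0,\rho/2)$ can be written as $w=\tfrac12\Tilde{z}+\tfrac12 u$ with $u=2w-\Tilde{z}\in B(\Tilde{y}_0,\rho)$; convexity then gives $h(w)\le\tfrac12 h(\Tilde{z})+\tfrac12 N=:M$ on $B(\Tilde{x}_0,\rho/2)$, after which the rest of your argument goes through unchanged. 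A minor additional point: Baire applied in the complete metric space $\overline{B}(\Tilde{x}_0,r)$ gives some $C_N$ with nonempty \emph{relative} interior; you should note that such a relatively open set meets the open ball $B(\Tilde{x}_0,r)$ and hence contains a genuine ball $B(\Tilde{y}_0,\rho)$ of $\Tilde{X}$ before proceeding.
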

Proposition \ref{prop:chainrule_subdiff} can be found in \cite[Proposition 5.6.6]{Denkowski_theory}, and 
 Proposition \ref{prop:convex_lsc_implies_locallyLipschitz} can be found in \cite[Proposition 5.2.10]{Denkowski_theory}.
\begin{definition}\label{def:convex_subdiffernetial}
	Let $h : \Tilde{X} \rightarrow \mathbb{R} \cup \{\infty\}$ be a proper and convex function. The (generally multivalued) mapping $\partial_c h : \Tilde{X} \rightarrow 2^{\Tilde{X}^\ast}$, written
\begin{equation*}
	\partial_c h(\Tilde{x} ) = \{ x^\ast \in \Tilde{X}^\ast  : h(v) -  h(\Tilde{x} ) \geq \inner{x^\ast}{v-\Tilde{x} }_{ \Tilde{X}^\ast \times  \Tilde{X}} \ \text{ for all } v\in \Tilde{X} \}
\end{equation*}
is called the convex subdifferential of $h$ at $\Tilde{x}  \in \Tilde{X}$.
\end{definition}
\begin{prop}
    Let  $h:\Tilde{X} \rightarrow \mathbb{R}$ be locally Lipschitz on $\Tilde{X}$. If $h$ is convex, then $\partial h$ coincides with $\partial_c h$.
\end{prop}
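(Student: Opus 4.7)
The plan is to establish $\partial h(\Tilde{x}) = \partial_c h(\Tilde{x})$ by first identifying the two directional derivatives
\[
h^\circ(\Tilde{x};v) \;=\; h'(\Tilde{x};v) \;:=\; \lim_{\epsilon \downarrow 0} \frac{h(\Tilde{x}+\epsilon v)-h(\Tilde{x})}{\epsilon}
\]
under the convex, locally Lipschitz hypothesis, and then translating this equality into the coincidence of the two subgradient inequalities. The existence of the one-sided directional derivative $h'(\Tilde{x};v)$ is itself a consequence of convexity: for each $\Tilde{y}$, the map $s\mapsto s^{-1}(h(\Tilde{y}+sv)-h(\Tilde{y}))$ is nondecreasing on $(0,\infty)$ (a standard consequence of writing $\Tilde{y}+\epsilon v$ as a convex combination of $\Tilde{y}$ and $\Tilde{y}+tv$ for $0<\epsilon\le t$).

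First I would prove $h^\circ(\Tilde{x};v)=h'(\Tilde{x};v)$. The inequality $h^\circ(\Tilde{x};v) \geq h'(\Tilde{x};v)$ is immediate, since the limit defining $h'(\Tilde{x};v)$ is recovered from the limsup in the definition of $h^\circ(\Tilde{x};v)$ by taking $\Tilde{y}\equiv\Tilde{x}$. For the reverse inequality, I fix $t>0$ and use the monotonicity of the convex difference quotient to write, for all $0<\epsilon\leq t$,
\[
\frac{h(\Tilde{y}+\epsilon v)-h(\Tilde{y})}{\epsilon} \;\leq\; \frac{h(\Tilde{y}+tv)-h(\Tilde{y})}{t}.
\]
Taking the limsup as $\Tilde{y}\to\Tilde{x}$ and $\epsilon\downarrow 0$, and using local Lipschitz continuity of $h$ to pass the outer limit inside, gives $h^\circ(\Tilde{x};v) \leq t^{-1}(h(\Tilde{x}+tv)-h(\Tilde{x}))$. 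Sending $t\downarrow 0$ yields $h^\circ(\Tilde{x};v)\leq h'(\Tilde{x};v)$.

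Second, I would translate the identification $h^\circ=h'$ into the equivalence of the two subdifferentials. If $\zeta\in\partial_c h(\Tilde{x})$, substituting $v=\Tilde{x}+tw$ with $t>0$ in Definition \ref{def:convex_subdiffernetial} and dividing by $t$ gives $\inner{\zeta}{w}_{\Tilde{X}^\ast\times\Tilde{X}}\leq t^{-1}(h(\Tilde{x}+tw)-h(\Tilde{x}))$; letting $t\downarrow 0$ produces $\inner{\zeta}{w}_{\Tilde{X}^\ast\times\Tilde{X}}\leq h'(\Tilde{x};w)=h^\circ(\Tilde{x};w)$, so $\zeta\in\partial h(\Tilde{x})$. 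Conversely, for $\zeta\in\partial h(\Tilde{x})$ the bound $\inner{\zeta}{w}_{\Tilde{X}^\ast\times\Tilde{X}}\leq h^\circ(\Tilde{x};w)=h'(\Tilde{x};w)\leq h(\Tilde{x}+w)-h(\Tilde{x})$, where the last step is the $t=1$ entry in the monotone infimum representation of $h'$, gives, after the substitution $w=v-\Tilde{x}$, the convex subgradient inequality and hence $\zeta\in\partial_c h(\Tilde{x})$.

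The only delicate point is the upper bound $h^\circ(\Tilde{x};v)\leq h'(\Tilde{x};v)$: one has to exchange the $\Tilde{y}\to\Tilde{x}$ limsup with a quotient evaluated at a \emph{fixed} positive step $t$. Once $t$ is fixed, $\Tilde{y}\mapsto t^{-1}(h(\Tilde{y}+tv)-h(\Tilde{y}))$ is continuous on a neighbourhood of $\Tilde{x}$ by local Lipschitz continuity, so its limsup at $\Tilde{x}$ equals its value at $\Tilde{x}$. This is the single nontrivial use of the Lipschitz hypothesis; every other step relies solely on the monotonicity of convex difference quotients.
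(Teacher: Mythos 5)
Your proof is correct: the identification $h^\circ(\Tilde{x};v)=h'(\Tilde{x};v)$ via monotone convex difference quotients plus continuity from the local Lipschitz hypothesis, followed by the translation into the two subgradient inequalities, is exactly the standard argument. The paper itself offers no proof but cites \cite[Proposition 2.2.7]{clarke}, and your argument is essentially the one given there, so you have simply made the cited proof explicit rather than taken a different route.
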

The proof of the above proposition is found in \cite[Proposition 2.2.7]{clarke}.
Lastly, to show that $(w,\alpha)$ is indeed a solution to Problem \ref{prob:fullproblem} (see Section \ref{sec:problem_and_mainresult}), we require the following result found in \cite[Lemma 3.43]{Migorski2012}.
\begin{lem}\label{lemma:phicontinuous}
	Let $\Tilde{X}$ and $\Tilde{Y}$ be two Banach spaces and $h :  \Tilde{X} \times \Tilde{Y} \rightarrow \mathbb{R}$ be such that
	\begin{enumerate}[labelindent=0pt,labelwidth=\widthof{\ref{last-item}},label=(\arabic*),itemindent=1em,leftmargin=!]
		\item $h(\cdot,\Tilde{y})$ is continuous on $\Tilde{X}$ for all $\Tilde{y} \in \Tilde{Y}$.
		\label{list:phicont1}
		\item  $h(\Tilde{x},\cdot)$ is locally Lipschitz on $\Tilde{Y}$ for all $\Tilde{x} \in \Tilde{X}$.
		\label{list:phicont2}
		\item There exists $c > 0$ such that for all $\zeta \in \partial h(\Tilde{x},\Tilde{y})$ we have
		\begin{align*}
			\norm{\zeta}_{\Tilde{Y}^\ast} \leq c (1 + \norm{\Tilde{x}}_{\Tilde{X}} + \norm{\Tilde{y}}_{\Tilde{Y}}),
		\end{align*}
		where $\partial h$ denotes the generalized gradient of $h(\Tilde{x},\cdot)$.
		\label{list:phicont3}
	\end{enumerate}
	Then $h$ is continuous on $ \Tilde{X} \times \Tilde{Y}$.
\end{lem}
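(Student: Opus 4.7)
\medskip
\noindent\textbf{Proof proposal.} The plan is to verify continuity sequentially: fix $(\Tilde x,\Tilde y)\in \Tilde X\times \Tilde Y$ and a sequence $(\Tilde x_n,\Tilde y_n)\to (\Tilde x,\Tilde y)$, then estimate
\begin{equation*}
|h(\Tilde x_n,\Tilde y_n)-h(\Tilde x,\Tilde y)| \leq |h(\Tilde x_n,\Tilde y_n)-h(\Tilde x_n,\Tilde y)| + |h(\Tilde x_n,\Tilde y)-h(\Tilde x,\Tilde y)|.
\end{equation*}
The second term vanishes as $n\to\infty$ by hypothesis \ref{list:phicont1} applied at the fixed second argument $\Tilde y$. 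All the work is in the first term, where the variable slot is the locally Lipschitz one.

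For the first term I would invoke the Lebourg mean value theorem for locally Lipschitz functionals: since $h(\Tilde x_n,\cdot)$ is locally Lipschitz on $\Tilde Y$ by \ref{list:phicont2}, there exist $\theta_n\in(0,1)$, $\Tilde z_n = \Tilde y + \theta_n(\Tilde y_n-\Tilde y)$ on the segment joining $\Tilde y$ and $\Tilde y_n$, and $\zeta_n\in\partial h(\Tilde x_n,\Tilde z_n)$ with
\begin{equation*}
h(\Tilde x_n,\Tilde y_n)-h(\Tilde x_n,\Tilde y) = \inner{\zeta_n}{\Tilde y_n-\Tilde y}_{\Tilde Y^\ast\times \Tilde Y}.
\end{equation*}
Applying the growth condition \ref{list:phicont3} together with the fact that $\{\Tilde x_n\}$ and $\{\Tilde z_n\}$ are bounded (as convergent sequences, and $\Tilde z_n$ lies between two convergent points), I obtain a constant $C>0$, independent of $n$, such that $\norm{\zeta_n}_{\Tilde Y^\ast}\leq C$. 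Cauchy--Schwarz-type duality then gives
\begin{equation*}
|h(\Tilde x_n,\Tilde y_n)-h(\Tilde x_n,\Tilde y)| \leq C\,\norm{\Tilde y_n-\Tilde y}_{\Tilde Y} \longrightarrow 0.
\end{equation*}

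Combining the two bounds yields $h(\Tilde x_n,\Tilde y_n)\to h(\Tilde x,\Tilde y)$, which is exactly joint continuity since $\Tilde X\times \Tilde Y$ is a metric space. The main (and only nontrivial) obstacle is arranging the growth estimate to control $\zeta_n$ uniformly in $n$, which is why hypothesis \ref{list:phicont3} is tailored to the affine form $1+\norm{\cdot}_{\Tilde X}+\norm{\cdot}_{\Tilde Y}$: boundedness of $\{\Tilde x_n\}$ and $\{\Tilde z_n\}$ makes this immediate. Everything else is a routine application of the nonsmooth mean value theorem plus separate continuity.
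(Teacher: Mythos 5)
Your proposal is correct: the decomposition into a fixed-second-argument term (handled by hypothesis \ref{list:phicont1}) plus a Lebourg mean value theorem estimate, with the subgradient controlled uniformly via the growth condition \ref{list:phicont3} and the boundedness of the convergent sequences, is exactly the standard argument for this result. The paper itself does not prove the lemma but cites \cite[Lemma 3.43]{Migorski2012}, whose proof proceeds along the same lines, so your argument is essentially the same as the source's and is complete.
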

To read more on the generalized directional derivatives, subdifferential, and nonsmooth analysis see, e.g., \cite[Chapter 2]{clarke}, \cite[Chapter 5]{Denkowski_theory}, and \cite[Chapter 1-3]{Hu_application}.

\section{Problem statement and main result}\label{sec:problem_and_mainresult}
In this section, we first introduce the problem and then present the main result.
\subsection{Problem statement}
Let $(V,H,V^\ast)$ be an evolution triple, and $U$, $X$, $Y$, $Z$ real separable reflexive Banach spaces, with the other function spaces defined in Section \ref{sec:bochner_spaces}. We only seek a solution of \eqref{eq:prob} in the sense of Definition \ref{def:sols}. We are therefore interested in the following evolutionary differential variational-hemivariational inequality:
\begin{prob}\label{prob:fullproblem}
	Find $w\in \mathcal{W}^{1,2}_T$ and $\alpha \in C([0,T];Y)$ such that
\begin{subequations}
	\begin{align}\label{eq:full_nonlinear_problem_1}
		&\alpha(t) = \alpha_0 + \int_0^t \mathcal{G}(s,\alpha(s),Mw(s))ds,\\
		&\inner{\Dot{w}(t) + A(t, w(t))- f(t) + \mathcal{R}w(t) }{v- w(t) }  
		+ \varphi(t, \alpha(t),\mathcal{S}_\varphi  w(t), Mw(t), Kv)  \label{eq:full_nonlinear_problem_2} \\
		&-  \varphi(t,  \alpha(t),\mathcal{S}_\varphi  w(t), Mw(t), Kw(t)) + j^\circ(t,\alpha(t), \mathcal{S}_jw(t),Nw(t); Nv -Nw(t)) \geq 0
		\notag
	\end{align}
	for all $v\in V$, a.e. $t\in (0,T)$ with
	\begin{equation}\label{eq:intialdata_originalprob}
		w(0) = w_0.
	\end{equation}
\end{subequations}
\end{prob}
We require the following assumptions on the operators and data:
\\
$\underline{\H{A}}$: \label{assumptionA}  $A : (0,T) \times V \rightarrow V^\ast \text{ is such that }$
\begin{enumerate}[labelindent=0pt,labelwidth=\widthof{\ref{last-item}},label=(\roman*),itemindent=1em,leftmargin=!]
	\item $A(\cdot, v)$ is measurable on $(0,T)$ for all $v \in V$.
	\label{list:A_measurable}
	\item $ A(t,\cdot) \text{ is demicontinuous on } V \text{ for a.e. } t \in (0,T)$, i.e., if $v^n \rightarrow v$ strongly in $V$, then $A(t,v^n) \rightarrow A(t,v)$ weakly in $V^\ast$ as $n \rightarrow \infty$ for a.e. $t\in (0,T)$.
	\label{list:A_demicont}
	\item $\norm{A(t,v)}_{V^\ast} \leq a_0(t) + a_1 \norm{v}_V$ for all  $v\in V$, a.e. $t\in (0,T)$ with  $a_0 \in L^2(0,T)$, $a_0  \geq 0, \ a_1 \geq 0$. \label{list:A_bounded} 
	\item There is a $m_A >0$ such that  $\inner{A(t,v_1) - A(t,v_2)}{v_1 - v_2} \geq m_A \norm{v_1 - v_2 }_V^2$ for all  $v_i\in V$, $i=1,2$, a.e. $t\in (0,T)$. \label{list:A_maximalmonotone}
\end{enumerate}
$\underline{\H{\varphi}}$: \label{assumptionphi}  $ \varphi :  (0,T) \times Y \times  X \times U \times Z \rightarrow \mathbb{R} \text{ is such that }$
\begin{enumerate}[labelindent=0pt,labelwidth=\widthof{\ref{last-item}},label=(\roman*),itemindent=1em,leftmargin=!]
\item $\varphi(\cdot,y,z,\Tilde{w},\Tilde{v}) \text{ is measurable on } (0,T) \text{ for all } y\in Y, \ z\in X, \ \Tilde{w} \in U, \ \Tilde{v}\in Z$.
\label{list:phi_measurable}
\item $ \varphi(t,\cdot,\cdot,\cdot,\Tilde{v}) \text{ is continuous on } Y \times X \times U \text{ for all } \Tilde{v}\in Z,  \text{ a.e. } t \in (0,T)$.
\label{list:phi_cont}
\item $\varphi(t,y,z,\Tilde{w},\cdot) \text{ is convex and lower semicontinuous on } Z \text{ for all}$ $y\in Y$, $z\in X$, $\Tilde{w}\in U$, a.e. $t\in (0,T)$.
\label{list:phi_convex_lsc}
\item $\norm{\partial_c \varphi (t,y,z,\Tilde{w},\Tilde{v})}_{Z^\ast}\leq c_{0\varphi} (t) +  c_{1\varphi} \norm{y}_Y  + c_{2\varphi} \norm{z}_X + c_{3\varphi} \norm{\Tilde{w}}_U + c_{4\varphi} \norm{\Tilde{v}}_Z$ $\text{ for all }$ $y\in Y, \ z\in X, \ \Tilde{w}\in U, \ \Tilde{v}\in Z, \text{ a.e. } t\in (0,T)$, $\text{ with } c_{0\varphi} \in L^2(0,T),\text{ and } c_{0\varphi}, c_{1\varphi},  c_{2\varphi}, c_{3\varphi}, c_{4\varphi} \geq 0.$
\label{list:phi_bounded}
\item $\text{There are $\beta_{i\varphi} \geq 0$ for $i=1,\dots,7$ such that }$ 
\begin{align*}
	& \varphi (t,y_1,z_1,\Tilde{w}_1, \Tilde{v}_2) - \varphi (t, y_1,z_1,\Tilde{w}_1, \Tilde{v}_1) +\varphi (t,y_2,z_2,\Tilde{w}_2, \Tilde{v}_1)  - \varphi (t, y_2, z_2,\Tilde{w}_2, \Tilde{v}_2) \\
	&\leq \beta_{1\varphi} \norm{\Tilde{w}_1}_U \norm{y_1 - y_2}_Y\norm{\Tilde{v}_1 - \Tilde{v}_2 }_Z 
    +\beta_{2\varphi}\norm{y_1 - y_2}_Y\norm{\Tilde{v}_1 - \Tilde{v}_2 }_Z
    \\
    &+ \beta_{3\varphi} \norm{z_1 - z_2}_X\norm{\Tilde{v}_1 - \Tilde{v}_2 }_Z  +  \beta_{4\varphi} \norm{\Tilde{w}_1 - \Tilde{w}_2}_U\norm{\Tilde{v}_1 - \Tilde{v}_2 }_Z \\
    &
    + \beta_{5\varphi} \norm{y_2}_Y\norm{\Tilde{w}_1 - \Tilde{w}_2}_U\norm{\Tilde{v}_1 - \Tilde{v}_2}_Z
+ \beta_{6\varphi} \norm{\Tilde{w}_1}_U\norm{z_1-z_2}_X\norm{\Tilde{v}_1 - \Tilde{v}_2}_Z 
    \\
    &+ \beta_{7\varphi}  \norm{y_1}_Y\norm{z_1-z_2}_X \norm{\Tilde{v}_1 - \Tilde{v}_2}_Z 
\end{align*}
$\text{for all } y_i \in Y$, $z_i \in X$, $\Tilde{w}_i \in U$, $\Tilde{v}_i \in Z$, $i=1,2$,  a.e. $t\in (0,T)$.
\label{list:phi_estimate}
\end{enumerate}
$\underline{\H{j}}$: \label{assumptionj}  $j : (0,T) \times Y \times X \times X \rightarrow \R \text{ is such that }$
\begin{enumerate}[labelindent=0pt,labelwidth=\widthof{\ref{last-item}},label=(\roman*),itemindent=1em,leftmargin=!]
\item $j(\cdot, y,z, \Tilde{v}) \text{ is measurable on } (0,T) \text{ for all } y\in Y, z,\Tilde{v} \in X$.
\label{list:j_measurable}
\item $j(t,y,z,\cdot) \text{ is locally Lipschitz on } X \text{ for all } y\in Y, z\in X, \text{ a.e. } t \in (0,T)$.
\label{list:j_locallyLipschitz}
\item For $\{y_n\}_{n\geq 1} \subset Y$ such that $y_n \rightarrow y$ strongly in $Y$, $\{z_n\}_{n\geq 1}  \subset X$ such that $z_n \rightarrow z$ strongly in $X$, $\{\Tilde{v}_n\}_{n\geq 1} \subset X$ satisfying $\Tilde{v}_n \rightarrow \Tilde{v}$ strongly in $X$, and $\{\Tilde{u}_n\}_{n\geq 1} \subset X$ such that $\Tilde{u}_n \rightarrow \Tilde{u}$ strongly in $X$, when $n\rightarrow \infty$, we have
\begin{equation*}
    \limsup_{n\rightarrow \infty} j^\circ(t, y_n,z_n, \Tilde{v}_n;\Tilde{u}_n) \leq j^\circ(t, y, z, \Tilde{v};\Tilde{u})
\end{equation*}
for a.e. $t\in (0,T)$. \label{list:j_convergence}
\item 	$ \norm{\partial j (t,y,z,\Tilde{v})}_{X^\ast}\leq c_{0j} (t) + c_{1j}\norm{y}_Y + c_{2j}\norm{z}_X  + c_{3j} \norm{\Tilde{v}}_X \text{ for all } y\in Y,  z,\Tilde{v}\in X$, $\text{ a.e. } t\in (0,T)$ with  $c_{0j} \in L^2(0,T)$ and $c_{0j},  c_{1j} , c_{2j}, c_{3j} \geq 0$.
\label{list:j_bounded}
\item $\text{There are $m_j,\Bar{m}_j \geq 0$ such that }$ 
$j^\circ(t,y_1,z_1,\Tilde{v}_1; \Tilde{v}_2 - \Tilde{v}_1) + j^\circ(t,y_2,z_2,\Tilde{v}_2; \Tilde{v}_1 - \Tilde{v}_2) \leq m_j \norm{\Tilde{v}_1 - \Tilde{v}_2 }_X^2 + \Bar{m}_j (\norm{y_1 - y_2 }_Y + \norm{z_1 - z_2 }_X)\norm{\Tilde{v}_1 - \Tilde{v}_2 }_X$ $\text{for all } y_i\in Y, \ z_i,\Tilde{v}_i \in X,\ i=1,2,  \text{ a.e. } t\in (0,T)$.
\label{list:j_estimate}
\end{enumerate}
We note that hypothesis \hyperref[assumptionj]{$\H{j}$}\ref{list:j_estimate} is equivalent to
\begin{align}\label{eq:jeq}
    &\inner{z^\ast_1- z^\ast_2}{\Tilde{v}_1 - \Tilde{v}_2}_{X^\ast \times X}\\ &\geq - m_j \norm{\Tilde{v}_1-\Tilde{v}_2}_X 
    - \bar{m}_j (\norm{y_1 - y_2}_Y + \norm{z_1 - z_2}_X)  \norm{\Tilde{v}_1-\Tilde{v}_2}_X \notag
\end{align}
for all $z^\ast_i \in \partial j(t,y_i,z_i,\Tilde{v}_i)$, $y_i \in Y$, $z_i,\Tilde{v}_i \in X$, $i=1,2$, a.e. $t\in (0,T)$ with $m_j \geq 0$, $\bar{m}_j \geq 0$. With small modifications, the equivalence follows by the proof in \cite[Lemma 7, p.124]{sofonea2017}.\\
\\
$\underline{\H{\mathcal{R} }}$: \label{assumptionR}  $ \mathcal{R}   :L^2_TV  \rightarrow L^2_TV^\ast$ is such that
\begin{enumerate}[labelindent=0pt,labelwidth=\widthof{\ref{last-item}},label=(\roman*),itemindent=1em,leftmargin=!]
	\item $\mathcal{R}  \text{ is a history-dependent operator, i.e., }$
	\begin{align*}
		\norm{ \mathcal{R}   v_1(t) -  \mathcal{R}  v_2 (t)}_{V^\ast} \leq c_{\mathcal{R}} \int_0^t \norm{v_1(s)-v_2(s)}_V ds
	\end{align*}
	$\text{ for all  }v_i \in L^2_TV$, $i=1,2$, a.e. $t\in (0,T) \text{ with } c_{\mathcal{R} } >0$. 
	\label{list:S_hist}
	\item $\mathcal{R}0$ belongs to a bounded subset of $L^2_TV^\ast$. 
	\label{list:R0}
\end{enumerate}
$\underline{\H{\mathcal{S}_\varphi }}$: \label{assumptionS1}  $ \mathcal{S}_\varphi   :L^2_TV  \rightarrow L^2_TX$ is such that
\begin{enumerate}[labelindent=0pt,labelwidth=\widthof{\ref{last-item}},label=(\roman*),itemindent=1em,leftmargin=!]
	\item $\mathcal{S}_\varphi  \text{ is a history-dependent operator, i.e., }$
	\begin{align*}
		\norm{ \mathcal{S}_\varphi   v_1(t) -  \mathcal{S}_\varphi  v_2 (t)}_X \leq c_{\mathcal{S}_\varphi } \int_0^t \norm{v_1(s)-v_2(s)}_V ds
	\end{align*}
	$\text{ for all  }v_i \in L^2_TV$, $i=1,2$, a.e. $t\in (0,T) \text{ with } c_{\mathcal{S}_\varphi } >0$. 
	\label{list:S_hist11}
	\item $\mathcal{S}_\varphi0$ belongs to a bounded subset of $L^2_TX$. 
	\label{list:S011}
\end{enumerate}
$\underline{\H{\mathcal{S}_j}}$: \label{assumptionS2}  $ \mathcal{S}_j  :L^2_TV  \rightarrow L^2_TX$ is such that
\begin{enumerate}[labelindent=0pt,labelwidth=\widthof{\ref{last-item}},label=(\roman*),itemindent=1em,leftmargin=!]
	\item $\mathcal{S}_j \text{ is a history-dependent operator, i.e., }$
	\begin{align*}
		\norm{ \mathcal{S}_j  v_1(t) -  \mathcal{S}_j v_2 (t)}_X \leq c_ {\mathcal{S}_j} \int_0^t \norm{v_1(s)-v_2(s)}_V ds
	\end{align*}
	$\text{ for all  }v_i \in L^2_TV$, $i=1,2$, a.e. $t\in (0,T) \text{ with } c_{\mathcal{S}_j} >0$. 
	\label{list:S_hist22}
	\item $\mathcal{S}_j0$ belongs to a bounded subset of $L^2_TX$. 
	\label{list:S022}
\end{enumerate}
$\underline{\H{\mathcal{G}}}$: \label{assumptionG}  $ \mathcal{G} : (0,T) \times Y \times U \rightarrow Y \text{ is such that }$
\begin{enumerate}[labelindent=0pt,labelwidth=\widthof{\ref{last-item}},label=(\roman*),itemindent=1em,leftmargin=!]
\item $\mathcal{G}(\cdot,\alpha, \Tilde{v}) \text{ is measurable on } (0,T) \text{ for all } \alpha \in Y, \Tilde{v} \in U$.
\label{list:G_measurable}
\item There exists an $L_\mathcal{G}>0$ such that
\begin{align*}
	\norm{ \mathcal{G}(t,\alpha_1(t),\Tilde{v}_1(t)) - \mathcal{G}(t,\alpha_2(t), \Tilde{v}_2(t))}_Y \leq L_\mathcal{G} \Big(\norm{\alpha_1(t) - \alpha_2(t) }_Y +  \norm{\Tilde{v}_1(t) - \Tilde{v}_2(t) }_U \Big)
\end{align*}
$\text{for all } \alpha_i \in Y$, $\Tilde{v}_i \in U$, $i=1,2$,   a.e.  $t\in (0,T).$
\label{list:G_Lipschitz}
\item $\norm{\mathcal{G}(\cdot,0,0)}_{L^\infty_TY} < \infty$. 
\label{list:G_00}
\end{enumerate}
$\underline{\H{MNK}}$: \label{assumptionMNK} $M \in \mathcal{L}(V,U), \ \ N \in \mathcal{L}(V,X), \ \ K \in \mathcal{L}(V,Z)$.
\\
\\
\indent
We also assume the following regularity on the source term and initial data:
\begin{subequations}\label{eq:initaldata}
	\begin{align}\label{eq:initaldata1}
		f \in L^2_TV^\ast,& \ \ \ \ w_0 \in V,\\
		\alpha_0 &\in Y.
		\label{eq:initaldata2}
	\end{align}
\end{subequations}

Lastly, we require the following smallness-condition:
\begin{equation}\label{eq:assumptionbound_mA_max}
	m_A > 
	m_j\norm{N}^2  + \sqrt{2}(\beta_{4\varphi}+\beta_{5\varphi}\norm{\alpha_0}_{Y})\norm{K}\norm{M}.
\end{equation}
\begin{remark}
	Similar assumptions can be found in, e.g., \cite{Patrulescu2017,Migorski2022,Migorski2019,shillor2004,sofonea2017}. The same type of condition as \hyperref[assumptionj]{$\H{j}$}\ref{list:j_convergence} is found in, e.g., \cite{Migorski2022_2,Migorski2023}. If $j^0$ is independent of $\alpha$ and $\mathcal{S}_jw$ in \eqref{eq:w1}, we may relax the assumption \hyperref[assumptionj]{$\H{j}$}\ref{list:j_convergence}, and \hyperref[assumptionj]{$\H{j}$}\ref{list:j_locallyLipschitz} and Proposition \ref{prop:chainrule_subdiff} are enough.
\end{remark}
\begin{remark}\label{remark:mubounded}
    If $\beta_{1\varphi} =\beta_{4\varphi} =\beta_{5\varphi} = \beta_{6\varphi}=   \beta_{7\varphi} = 0$ in \hyperref[assumptionphi]{$\H{\varphi}$}\ref{list:phi_estimate}, then the friction coefficient $\mu$ is bounded. Consequently, Problem \ref{prob:fullproblem} reduces to the one found in \cite{Migorski2022}. In the quasi-static setting, taking $\beta_{1\varphi} = \beta_{5\varphi} = \beta_{6\varphi}=   \beta_{7\varphi} = 0$, the problem is covered in \cite{Patrulescu2017}. Lastly, taking $\beta_{3\varphi} = \beta_{6\varphi}=   \beta_{7\varphi} = 0$ reduces Problem \ref{prob:fullproblem} to the one found in \cite{Pipping2015,Pipping2019} for the first-order approximation of \eqref{eq:aginglaw} and \eqref{eq:regularized} introduced in Section \ref{sec:rateandstate}. 
\end{remark}

We make a brief remark on the assumptions in Appendix \ref{appendix:comments_app}.

\subsection{Main result}
We will now state the main result, i.e., Theorem \ref{thm:mainresult}; the first part is an existence and uniqueness result, and the latter provides that the flow map depends continuously on the initial data. The proof of Theorem \ref{thm:mainresult} is deferred to Section \ref{sec:proof_main_result} after the preparation in Section \ref{sec:preliminary}.
\begin{thm}\label{thm:mainresult}
	Assume \hyperref[assumptionA]{$\H{A}$},
	\hyperref[assumptionphi]{$\H{\varphi}$},
	\hyperref[assumptionj]{$\H{j}$},   \hyperref[assumptionR]{$\H{\mathcal{R}}$}, \hyperref[assumptionS1]{$\H{\mathcal{S}_\varphi}$}, \hyperref[assumptionS2]{$\H{\mathcal{S}_j}$},
	\hyperref[assumptionG]{$\H{\mathcal{G}}$},
	 \hyperref[assumptionMNK]{$\H{MNK}$}, and \eqref{eq:initaldata}-\eqref{eq:assumptionbound_mA_max} holds. 
	\begin{enumerate}[labelindent=0pt,labelwidth=\widthof{\ref{last-item}},label=(\alph*),itemindent=1em,leftmargin=1em]
		\item Then there exists a $T>0$ satisfying
        \begin{subequations}\label{eq:times_T}
            \begin{align}
                T= T(\norm{(f,w_0,\alpha_0)}_{L^2_T V^\ast \times V \times Y})
            \end{align}
            and
            \begin{align}
                T(a) > T(b) \quad   \text{if }\quad  a< b  
            \end{align}
        \end{subequations}
        so that $w\in \mathcal{W}^{1,2}_T \subset C([0,T];H)$ and $\alpha \in C([0,T];Y)$ is a unqiue solution to Problem \ref{prob:fullproblem}. 
		\item Moreover, there exists a neighborhood around $(w_0 , \alpha_0)$ so that the flow map 
            $F : V \times Y \rightarrow L^2_{T/2}V \times C([0,T/2];Y)$ defined by $(w_0 , \alpha_0) \mapsto (w,\alpha)$  is continuous. 
		\label{list:continuous_dependence_on_inital_data}
            \item If $\beta_{1\varphi} =\beta_{4\varphi} =\beta_{5\varphi} = \beta_{6\varphi}=   \beta_{7\varphi} = 0$ in \hyperref[assumptionphi]{$\H{\varphi}$}\ref{list:phi_estimate}, we obtain global time of existence, i.e., the existence of a solution holds for any finite time $T>0$. \label{list:global}
	\end{enumerate}
\end{thm}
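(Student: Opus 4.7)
The plan is to view Problem \ref{prob:fullproblem} as a coupled system of a nonlinear ODE for $\alpha$ driven by $Mw$, and an evolutionary variational-hemivariational inequality for $w$ in which $\alpha$ enters through $\varphi$ and $j^\circ$ as a known time-dependent parameter. I will use an iterative decoupling scheme on a small interval $[0,T]$: freeze $\alpha \in C([0,T];Y)$, invoke the preliminary result of Section \ref{sec:preliminary} to obtain a unique $w = \Lambda(\alpha) \in \mathcal{W}^{1,2}_T$ solving \eqref{eq:full_nonlinear_problem_2}--\eqref{eq:intialdata_originalprob} under hypotheses \hyperref[assumptionA]{$\H{A}$}, \hyperref[assumptionphi]{$\H{\varphi}$}, \hyperref[assumptionj]{$\H{j}$}, \hyperref[assumptionR]{$\H{\mathcal{R}}$}, \hyperref[assumptionS1]{$\H{\mathcal{S}_\varphi}$}, \hyperref[assumptionS2]{$\H{\mathcal{S}_j}$}, \hyperref[assumptionMNK]{$\H{MNK}$}, and the smallness condition \eqref{eq:assumptionbound_mA_max}; then update $\alpha$ by integrating the ODE \eqref{eq:full_nonlinear_problem_1}, which is well-posed and Lipschitz in the data by Picard--Lindel\"of applied via hypothesis \hyperref[assumptionG]{$\H{\mathcal{G}}$}\ref{list:G_Lipschitz} together with $\norm{\mathcal{G}(\cdot,0,0)}_{L^\infty_TY}<\infty$. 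Composition gives a fixed-point operator $\mathcal{T} : \alpha \mapsto \alpha_0 + \int_0^\cdot \mathcal{G}(s,\alpha(s), M\Lambda(\alpha)(s))\,ds$ on $C([0,T];Y)$.

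The core of part (a) is then to show that $\mathcal{T}$ is a contraction on a closed ball $B_R \subset C([0,T];Y)$ centered at the constant function $\alpha \equiv \alpha_0$, for a suitable $R > 0$ and a sufficiently small $T$. Two ingredients drive the choice of $T$. First, a stability estimate
\begin{equation*}
    \norm{\Lambda(\alpha^1) - \Lambda(\alpha^2)}_{L^2_T V} \leq C\,\norm{\alpha^1 - \alpha^2}_{C([0,T];Y)}
\end{equation*}
is derived by taking $v = \Lambda(\alpha^2)(t)$ and $v = \Lambda(\alpha^1)(t)$ in the two inequalities, adding, and using \hyperref[assumptionA]{$\H{A}$}\ref{list:A_maximalmonotone}, \hyperref[assumptionphi]{$\H{\varphi}$}\ref{list:phi_estimate}, \hyperref[assumptionj]{$\H{j}$}\ref{list:j_estimate}, a Gronwall argument absorbing the history operators $\mathcal{R},\mathcal{S}_\varphi,\mathcal{S}_j$, and the integration-by-parts formula of Proposition \ref{prop:integrationbypartsformula} applied to $\tfrac{1}{2}\tfrac{d}{dt}\norm{\Lambda(\alpha^1)(t)-\Lambda(\alpha^2)(t)}_H^2$; absorption is possible precisely because \eqref{eq:assumptionbound_mA_max} is strict. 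Second, composing with the ODE yields a factor proportional to $T$, which controls the Lipschitz constant of $\mathcal{T}$. Since the smallness condition \eqref{eq:assumptionbound_mA_max} only involves $\norm{\alpha_0}_Y$, the radius $R$ can be taken so that for every $\alpha \in B_R$ the shifted condition with $\norm{\alpha(t)}_Y$ in place of $\norm{\alpha_0}_Y$ still holds; choosing $T = T(\norm{(f,w_0,\alpha_0)}_{L^2_T V^\ast \times V \times Y})$ small enough to keep the image of $\mathcal{T}$ inside $B_R$ and to make $\mathcal{T}$ strictly contractive gives existence and uniqueness, together with the monotonicity in \eqref{eq:times_T}.

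For part \ref{list:continuous_dependence_on_inital_data}, running the same estimates with two pairs of initial data $(w_0^i, \alpha_0^i)$, $i=1,2$, yields after a Gronwall loop a bound of the form $\norm{w^1-w^2}_{L^2_{T/2}V} + \norm{\alpha^1-\alpha^2}_{C([0,T/2];Y)} \le C(\norm{w_0^1-w_0^2}_V + \norm{\alpha_0^1-\alpha_0^2}_Y)$; the halving leaves a margin so that the smallness condition and the contraction radius remain valid under small perturbations of the data. For part \ref{list:global}, when $\beta_{1\varphi} = \beta_{4\varphi} = \beta_{5\varphi} = \beta_{6\varphi} = \beta_{7\varphi} = 0$, the condition \eqref{eq:assumptionbound_mA_max} collapses to $m_A > m_j \norm{N}^2$, independent of the state, so the local time $T$ no longer degenerates as $\norm{\alpha(t)}_Y$ grows; standard a priori bounds on $\norm{w}_{\mathcal{W}^{1,2}_T}$ and $\norm{\alpha}_{C([0,T];Y)}$ then permit concatenation of local solutions up to any finite horizon. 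The principal obstacle throughout will be the new product-type cross-term $\beta_{5\varphi}\norm{y_2}_Y\norm{\Tilde{w}_1-\Tilde{w}_2}_U\norm{\Tilde{v}_1-\Tilde{v}_2}_Z$ in \hyperref[assumptionphi]{$\H{\varphi}$}\ref{list:phi_estimate}: it forces the effective coercivity threshold to depend on $\norm{\alpha(t)}_Y$, which is precisely what limits local existence to the $T$ in \eqref{eq:times_T} and must vanish for global existence.
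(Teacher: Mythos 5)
There is a genuine gap at the very first step of your scheme: with $\alpha$ frozen, the $w$-problem \eqref{eq:full_nonlinear_problem_2}--\eqref{eq:intialdata_originalprob} is \emph{not} covered by the preliminary result of Section \ref{sec:preliminary}. Theorem \ref{thm:inclusionsol} applies to the inclusion $\Dot{w}+A(t,w)+\partial\psi(t,w)\ni f$ with a functional $\psi$ depending only on $(t,v)$, whereas after freezing $\alpha$ the inequality still contains the history-dependent operators $\mathcal{R}w$, $\mathcal{S}_\varphi w$, $\mathcal{S}_j w$ and, crucially, the current value $Mw(t)$ in the fourth slot of $\varphi$ — the quasi-variational feature that is precisely the novelty here and is responsible for the $\beta_{4\varphi}+\beta_{5\varphi}\norm{\alpha_0}_Y$ contribution in \eqref{eq:assumptionbound_mA_max}. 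So your operator $\Lambda(\alpha)$ is not well-defined by citation; establishing its existence, uniqueness and the Lipschitz estimate $\norm{\Lambda(\alpha^1)-\Lambda(\alpha^2)}_{L^2_TV}\le C\norm{\alpha^1-\alpha^2}_{C([0,T];Y)}$ requires an inner iteration in $w$ of exactly the kind the paper carries out: there the auxiliary Problem \ref{prob:first_step} freezes the five quantities $(\alpha,\xi,\eta,g,\chi)=(\alpha,\mathcal{R}w,\mathcal{S}_\varphi w,w,\mathcal{S}_j w)$ so that Theorem \ref{thm:inclusionsol} genuinely applies, and the coupling is then resolved by the combined scheme \eqref{eq:algorithm11}--\eqref{eq:algorithm12}, the Cauchy estimates of Proposition \ref{prop:cauchysequences}, and a limit passage that needs \hyperref[assumptionj]{$\H{j}$}\ref{list:j_convergence}, Lemma \ref{lemma:phicontinuous}, and the demicontinuity of $A$ — none of which your outer contraction in $\alpha$ alone supplies. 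Your fixed point in $\alpha$ hides the actual heart of the matter inside the unproved solvability of the inner $w$-problem.

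A second, smaller but real problem is part \ref{list:global}: your concatenation of local solutions cannot be run as stated, because the restart datum at time $T$ is only $w(T)\in H$ (from $\mathcal{W}^{1,2}_T\subset C([0,T];H)$), while the hypotheses \eqref{eq:initaldata1} and all the estimates (e.g.\ \eqref{eq:estiamte_w_1}, \eqref{eq:uniformlybounded}) require initial velocity in $V$. The paper avoids restarting altogether: when $\beta_{1\varphi}=\beta_{4\varphi}=\beta_{5\varphi}=\beta_{6\varphi}=\beta_{7\varphi}=0$ the Cauchy property of the iterates is proved directly on any interval $[0,T]$ via the iterated Gr\"onwall kernel giving the factorial decay $T^{n-1}/(n-1)!$ (Corollary \ref{cor:cauchysequences}). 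If you want to keep a concatenation argument you would have to either upgrade the regularity of $w$ at the stopping time or replace concatenation by such a global-in-time contraction estimate.
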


\begin{remark}
	The theorem can easily be extended to include more than three history-dependent operators without needing any additional assumptions other than the once put on $\mathcal{R}$, $\mathcal{S}_\varphi$, and $\mathcal{S}_j$, i.e.,  \hyperref[assumptionR]{$\H{\mathcal{R}}$}, \hyperref[assumptionS1]{$\H{\mathcal{S}_\varphi}$}, and \hyperref[assumptionS2]{$\H{\mathcal{S}_j}$}, respectively.
\end{remark}
\subsection{Strategy of the proof of Theorem \ref{thm:mainresult}}
The proof of the theorem is divided into six steps. In the first step, we introduce an auxiliary problem to Problem \ref{prob:fullproblem}, calling this Problem \ref{prob:first_step}.
Specifically, we fix five of the functions in \eqref{eq:full_nonlinear_problem_2} and leave \eqref{eq:full_nonlinear_problem_1} intact. We recast the auxiliary problem as a differential inclusion (introduced in Section \ref{sec:preliminary}) and use existing results to prove that Problem \ref{prob:first_step} has a unique solution (Step \ref{linearization}-\ref{estimate1}). Next, we define an iterative scheme for Problem  \ref{prob:fullproblem} using Problem \ref{prob:first_step}. This iterative scheme decouples  \eqref{eq:full_nonlinear_problem_1} and Problem \ref{prob:first_step} at each step. Then, we study the difference between two successive iterates and show that these iterates are Cauchy sequences. We then pass to the limit to show that the iterative scheme converges to Problem \ref{prob:fullproblem} (Step \ref{convergence}). Finally, we show that the flow map continuously depends on the initial data (Step \ref{continuous}).

\section{Preliminary result}\label{sec:preliminary}
\noindent
Before proving Theorem \ref{thm:mainresult}, we present an existence and uniqueness result for a differential inclusion problem see, e.g., \cite{Aubin1984}. The forthcoming result will be used to prove existence of a solution to an auxiliary problem of \eqref{eq:full_nonlinear_problem_2}-\eqref{eq:intialdata_originalprob} in Problem \ref{prob:fullproblem}. To utilize this result, we need to introduce a differential inclusion which we relate to the auxiliary problem of \eqref{eq:full_nonlinear_problem_2}-\eqref{eq:intialdata_originalprob}. This will be made clear in Step \ref{linearization}-\ref{exunique} in the proof of Theorem \ref{thm:mainresult}.
\\
\indent
We begin by introducing the inclusion problem.
\begin{prob}\label{prob:inclusion}
Find $w\in \mathcal{W}^{1,2}_T$ such that
\begin{align*}
		\Dot{w}(t) &+ A(t,w(t)) + \partial \psi (t,w(t)) \ni f(t)  \ \text{ for a.e. } \ t\in (0,T),\\
		w(0) &= w_0.
	\end{align*}
\end{prob}
For clarity on how to work with the preceding problem, we include the following definition:
\begin{definition}\label{def:def_inclusion}
A function $w\in \mathcal{W}^{1,2}_T$ is called a solution to Problem \ref{prob:inclusion} if there exists $w^\ast \in L^2_TV^\ast$ such that
	\begin{align*}
		&\Dot{w}(t) + A(t,w(t)) + w^\ast(t) = f(t)   \ \ \text{ for a.e. } \ t\in (0,T),\\
		&w^\ast(t) \in \partial \psi (t,w(t)) 
	\end{align*}
  for a.e.  $t\in (0,T)$ with
  \begin{equation*}
      w(0) = w_0.
  \end{equation*}
\end{definition}
In the preliminary existence and uniqueness result, we consider the following assumptions:
\\
$\underline{\H{\psi}}$: \label{assumptionpsi} $\psi : (0,T) \times V \rightarrow \mathbb{R} \text{ is such that }$
\begin{enumerate}[labelindent=0pt,labelwidth=\widthof{\ref{last-item}},label=(\roman*),itemindent=1em,leftmargin=!]
\item $\psi(\cdot, v) $ is measurable on $(0,T)$ for all $v \in V$.
\label{list:psi_measurable}
\item $ \psi(t,\cdot) \text{ is locally Lipschitz on } V \text{ for a.e. } t \in (0,T)$.
\label{list:psi_locallyLipschitz}
\item 	$\norm{\partial \psi (t,v)}_{V^\ast}\leq c_0 (t) + c_1 \norm{v}_V \text{ for all } v\in V$, $\text{ a.e. } t\in (0,T) \text{ with } c_0 \in L^2(0,T)$, $c_0 \geq 0, \ c_1 \geq 0$.\label{list:psi_bounded}
\item $\text{There is a $m_\psi \geq 0$ such that }$ 
$ \inner{z_1 - z_2}{v_1 - v_2} \geq -m_\psi \norm{v_1 - v_2 }_V^2$ $\text{ for all } z_i \in \partial \psi(t,v_i)$, $z_i \in V^\ast, \ v_i\in V, \ i=1,2,  \text{ a.e. } t\in (0,T)$.
\label{list:psi_maximalmonotone}
\end{enumerate}
We further assume that the operator $A:(0,T) \times V \rightarrow V^\ast$ satisfies \hyperref[assumptionA]{$\H{A}$}, and the source term $f$ and the initial data $w_0$ satisfy \eqref{eq:initaldata1}. Additionally, we assume that the following smallness-condition holds: 
\begin{equation}\label{eq:assumptionbound_on_mA}
	m_A >m_\psi.
\end{equation}

\begin{thm}\label{thm:inclusionsol}
    Assume that \hyperref[assumptionA]{$\H{A}$}, \hyperref[assumptionpsi]{$\H{\psi}$},  \eqref{eq:initaldata1}, and \eqref{eq:assumptionbound_on_mA} hold. 
    Then  Problem \ref{prob:inclusion} has a unique solution $w\in \mathcal{W}^{1,2}_T$ in the sense of Definition \ref{def:def_inclusion} for any $T>0$.
\end{thm}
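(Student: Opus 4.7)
The plan is to cast Problem~\ref{prob:inclusion} as a first-order evolution inclusion driven by a generalized-monotone multivalued operator, apply a standard existence theorem, and then obtain uniqueness by an energy estimate. I would introduce $\mathcal{F}: (0,T) \times V \to 2^{V^{\ast}}$ defined by $\mathcal{F}(t,v) := A(t,v) + \partial \psi(t,v)$, so that the problem becomes the Cauchy inclusion $\dot{w}(t) + \mathcal{F}(t,w(t)) \ni f(t)$ with $w(0) = w_0$. The smallness condition \eqref{eq:assumptionbound_on_mA} is exactly what renders $\mathcal{F}(t,\cdot)$ strongly monotone in the generalized sense: for any selections $w_i^{\ast} \in \mathcal{F}(t,w_i)$, combining \hyperref[assumptionA]{$\H{A}$}\ref{list:A_maximalmonotone} with \hyperref[assumptionpsi]{$\H{\psi}$}\ref{list:psi_maximalmonotone} yields
$$\inner{w_1^{\ast} - w_2^{\ast}}{w_1 - w_2} \geq (m_A - m_\psi)\,\norm{w_1 - w_2}_V^2 \geq 0.$$
Together with the linear growth $\norm{\mathcal{F}(t,v)}_{V^{\ast}} \leq (a_0(t)+c_0(t)) + (a_1+c_1)\norm{v}_V$ obtained from \hyperref[assumptionA]{$\H{A}$}\ref{list:A_bounded} and \hyperref[assumptionpsi]{$\H{\psi}$}\ref{list:psi_bounded}, this yields the standard coercivity estimate $\inner{w^{\ast}}{v} \geq \tfrac{1}{2}(m_A - m_\psi)\norm{v}_V^2 - C\bigl(1+a_0(t)^2 + c_0(t)^2\bigr)$ for $w^{\ast}\in \mathcal{F}(t,v)$.

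For existence, I would invoke a standard theorem for evolution inclusions with multivalued pseudomonotone-type operators from \cite{Migorski2012} or \cite{Denkowski_application}. The required measurability of $\mathcal{F}(\cdot,v)$ comes from \hyperref[assumptionA]{$\H{A}$}\ref{list:A_measurable} and \hyperref[assumptionpsi]{$\H{\psi}$}\ref{list:psi_measurable}; the upper semicontinuity and graph-closedness of $v \mapsto \partial\psi(t,v)$ in the weak-$\ast$ topology of $V^{\ast}$ are standard consequences of \hyperref[assumptionpsi]{$\H{\psi}$}\ref{list:psi_locallyLipschitz} and Proposition~\ref{prop:chainrule_subdiff}; and the generalized pseudomonotonicity of $\mathcal{F}(t,\cdot)$ then follows from the demicontinuity of $A(t,\cdot)$ in \hyperref[assumptionA]{$\H{A}$}\ref{list:A_demicont} combined with the strong monotonicity noted above. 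Alternatively, a Galerkin scheme in an increasing family of finite-dimensional subspaces, together with the compact embedding $\mathcal{W}^{1,2}_T \hookrightarrow L^2_TH$ and passage to the limit using upper semicontinuity of $\partial\psi$, produces a solution $w \in \mathcal{W}^{1,2}_T$ in the sense of Definition~\ref{def:def_inclusion}.

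For uniqueness, suppose $w_1, w_2$ are two solutions with associated selections $z_i \in L^2_T V^{\ast}$ satisfying $z_i(t) \in \partial\psi(t,w_i(t))$. Setting $\delta := w_1 - w_2 \in \mathcal{W}^{1,2}_T$, subtracting the two inclusions, pairing with $\delta$ and applying the integration by parts formula of Proposition~\ref{prop:integrationbypartsformula} gives
$$\tfrac{1}{2}\norm{\delta(t)}_H^2 + \int_0^t \bigl[\inner{A(s,w_1(s))-A(s,w_2(s))}{\delta(s)} + \inner{z_1(s)-z_2(s)}{\delta(s)}\bigr] ds = 0,$$
because $\delta(0)=0$. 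The strong monotonicity of $A$ and the relaxed monotonicity of $\partial\psi$ bound the integrand from below by $(m_A - m_\psi)\norm{\delta(s)}_V^2 \geq 0$, which forces $\delta \equiv 0$.

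The main obstacle will be matching the abstract setup to an off-the-shelf existence theorem, since the sum $A + \partial\psi$ is neither classically monotone nor single-valued: one has to work carefully within the theory of generalized pseudomonotone multivalued operators, paying attention to the measurability of the selections and to the weak-$\ast$ closure properties of the Clarke subdifferential $\partial\psi$ in the triple $(V,H,V^{\ast})$. Once this structural verification is done, both existence and uniqueness follow by now-standard arguments, and uniqueness in particular will be the prototype estimate reused when proving the Cauchy property of the iterates in the main theorem.
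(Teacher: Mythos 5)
The paper does not prove Theorem \ref{thm:inclusionsol} directly: its entire proof is the citation \cite[Theorem 3]{Migorski2019}, and your outline essentially reconstructs the route taken in that reference — recast Problem \ref{prob:inclusion} as $\dot w(t)+A(t,w(t))+\partial\psi(t,w(t))\ni f(t)$, use \hyperref[assumptionA]{$\H{A}$} and \hyperref[assumptionpsi]{$\H{\psi}$} together with \eqref{eq:assumptionbound_on_mA} to get linear growth, coercivity and strong monotonicity of the sum (your constant $m_A-m_\psi$ and the Young-inequality coercivity bound are right), obtain existence from surjectivity theory for multivalued pseudomonotone perturbations of the time derivative, and prove uniqueness by pairing the difference of two solutions with itself and applying Proposition \ref{prop:integrationbypartsformula}; that uniqueness estimate is exactly the standard one and is correct. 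The one place where your sketch is noticeably thinner than the actual work is the existence step: the claim that generalized pseudomonotonicity of $A(t,\cdot)+\partial\psi(t,\cdot)$ \emph{follows} from demicontinuity plus strong monotonicity glosses over the technical core of \cite[Theorem 3]{Migorski2019}, namely verifying pseudomonotonicity with respect to the operator $L=d/dt$ (with its domain in $\mathcal{W}^{1,2}_T$) of the multivalued Nemytskii map built from $A+\partial\psi$, which requires the nonemptiness, convexity and weak$^\ast$ compactness of the Clarke subdifferential values, measurable selections, the growth bound in \hyperref[assumptionpsi]{$\H{\psi}$}(iii), the relaxed monotonicity \hyperref[assumptionpsi]{$\H{\psi}$}(iv), and closedness of the graph of $\partial\psi(t,\cdot)$ in $V\times(\text{w-}V^\ast)$; the Galerkin alternative needs the same closure properties to pass to the limit in the selections. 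Since the theorem in question is precisely the packaged form of that verification, either citing it (as the paper does) or carrying the verification out in full is acceptable; your plan is consistent with the latter but those details would have to be supplied rather than asserted.
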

The theorem was proved in \cite[Theorem 3]{Migorski2019}. This result will be used in Step \ref{exunique} of the proof of Theorem \ref{thm:mainresult}.

\section{Proof of Theorem \ref{thm:mainresult}}\label{sec:proof_main_result}
With the preparation in Section \ref{sec:func_nonsmooth}-\ref{sec:preliminary}, we proceed to the proof of Theorem \ref{thm:mainresult}. For the convenience of the reader, the proof is established in several steps, and some of the proofs have been moved to the appendix. We recall that the function spaces are defined in Section \ref{sec:bochner_spaces}.
\\
\indent 
\textbf{Step \ref{linearization}}\rtask{linearization} \textit{(Auxiliary problem to the evolutionary hemivariational-variational inequality \eqref{eq:full_nonlinear_problem_2}-\eqref{eq:intialdata_originalprob})}.
Let $(\alpha,\xi, \eta, g,\chi) \in  C([0,T];Y) \times L^2_TV^\ast  \times L^2_TX \times L^2_TV \times L^2_TX$ be given, then we define an auxiliary problem to \eqref{eq:full_nonlinear_problem_2}-\eqref{eq:intialdata_originalprob} in Problem \ref{prob:fullproblem}.
\begin{prob}\label{prob:first_step}
Find $w_{\alpha\xi\eta g\chi} \in \mathcal{W}^{1,2}_T$ corresponding to $(\alpha,\xi,\eta, g,\chi) \in C([0,T];Y) \times L^2_TV^\ast  \times L^2_TX \times L^2_TV\times L^2_TX$ such that
\begin{align*}
	&\inner{\Dot{w}_{\alpha\xi\eta g\chi}(t) + A(t, w_{\alpha\xi\eta g\chi}(t)) - f(t) + \xi(t) }{v- w_{\alpha\xi\eta g\chi}(t) } + \varphi(t,\alpha(t), \eta(t), Mg(t), Kv)  \\
	&-  \varphi(t,\alpha(t), \eta(t), Mg(t), Kw_{\alpha\xi\eta g\chi}(t)) +j^\circ(t,\alpha(t), \chi(t), Nw_{\alpha\xi\eta g\chi}(t); Nv -Nw_{\alpha\xi\eta g\chi}(t) ) \geq 0
\end{align*}
for all $v\in V$, a.e. $t\in (0,T)$ with
\begin{equation*}
w_{\alpha\xi\eta g\chi} (0) = w_0.
\end{equation*}
\end{prob}
\begin{remark}
A glance at Problem \ref{prob:first_step} and \eqref{eq:full_nonlinear_problem_2} lets us see that the auxiliary problem keeps $\xi= \mathcal{R}w$, $\alpha$ (still denoted by $\alpha$), $\eta = \mathcal{S}_\varphi  w$, $g=w$, and $\chi = \mathcal{S}_j w$  known in contrast to \eqref{eq:full_nonlinear_problem_2}. We find it worth mentioning that we use the subscripts on $w$ to emphasize that a solution $w_{\alpha\xi\eta g\chi}$ to Problem \ref{prob:first_step} corresponds to $(\alpha,\xi,\eta, g,\chi) \in  C([0,T];Y) \times L^2_TV^\ast \times L^2_TX \times L^2_TV \times L^2_TX$. This also helps to distinguish between a solution to Problem \ref{prob:fullproblem} and a solution to Problem \ref{prob:first_step}.
\end{remark}
\textbf{Step \ref{exunique}}\rtask{exunique} \textit{(Existence of a solution to Problem \ref{prob:first_step})}.
Let $(\alpha,\xi,\eta, g,\chi) \in C([0,T];Y) \times L^2_TV^\ast  \times L^2_TX \times L^2_TV\times L^2_TX$ be given.
We wish to utilize Theorem \ref{thm:inclusionsol} in order to prove that Problem \ref{prob:first_step} has a solution. We therefore define the functional $\psi_{\alpha\xi\eta g\chi} : (0,T) \times V \rightarrow \mathbb{R}$ by
\begin{align}\label{eq:psixietag}
	\psi_{\alpha\xi\eta g\chi}(t,v) &=  \inner{\xi(t)}{v} + \varphi(t,\alpha(t),\eta(t),Mg(t),Kv) + j(t,\alpha(t),\chi(t),Nv)
\end{align}
for all $v\in V$, a.e. $t\in (0,T)$. 
Verification of the hypothesis of Theorem \ref{thm:inclusionsol} follows from the same approach as the first part of the proof in \cite[Theorem 5]{Migorski2019}. We investigate the assumption \hyperref[assumptionpsi]{$\H{\psi}$} and the smallness-condition \eqref{eq:assumptionbound_on_mA}, as there are some modifications in comparison to \cite[Theorem 5]{Migorski2019}. Keeping Proposition \ref{prop:convex_lsc_implies_locallyLipschitz} in mind, we  only comment on the changes and leave the reader to visit \cite[Theorem 5]{Migorski2019} for a detailed verification. Using \eqref{eq:jeq}, we find that \hyperref[assumptionpsi]{$\H{\psi}$} holds with $c_0(t) =  \norm{\xi(t)}_{V^\ast} + c_{0j} (t)\norm{N} +c_{2j} \norm{N} \norm{\chi(t)}_X + c_{0\varphi} (t)\norm{K} +  c_{2\varphi}\norm{K} \norm{\eta(t)}_X + (c_{1j}\norm{N}  + c_{1\varphi} \norm{K}) \norm{\alpha(t)}_Y + c_{3\varphi}\norm{K}\norm{M} \norm{g(t)}_V$,
    $c_1 = c_{3j}\norm{N}^2+ c_{4\varphi}\norm{K}^2$, and
   $m_\psi = m_j \norm{N}^2$. 
This, together with the smallness-condition \eqref{eq:assumptionbound_mA_max} leads to  \eqref{eq:assumptionbound_on_mA}. Thus, we conclude by Theorem \ref{thm:inclusionsol} that there exists a solution $w_{\alpha\xi\eta g\chi} \in \mathcal{W}^{1,2}_T$ of Problem \ref{prob:inclusion} with $\psi_{\alpha\xi\eta g\chi}$ defined in \eqref{eq:psixietag}. It remains to show that the existence of a solution to Problem \ref{prob:inclusion} implies the existence of a solution to Problem \ref{prob:first_step}. This is a consequence of Definition \ref{def:subdiffernetial} and \ref{def:convex_subdiffernetial}, and basic results of the generalized gradients, see, e.g., \cite[Theorem 3.7, Proposition 3.10-3.12]{han}, where they have summarized these properties, and \cite[Lemma 7, p.124]{sofonea2017}. A more detailed approach to this part can be found in, e.g., \cite[Section 6]{han} or \cite[p.190-192]{sofonea2017}.

\textbf{Step \ref{unique}}\rtask{unique} \textit{(Uniqueness of a solution to Problem \ref{prob:first_step})}.
Uniqueness is immediate from the proof in \cite[Theorem 98]{sofonea2017} with $\alpha_j = m_j\norm{N}^2$ and the smallness-condition \eqref{eq:assumptionbound_mA_max}, i.e., $m_A > m_j\norm{N}^2$.

\textbf{Step \ref{estimate1}}\rtask{estimate1} \textit{(Estimate on the solution to Problem \ref{prob:first_step}, that is, $w_{\alpha\xi\eta g\chi} \in \mathcal{W}^{1,2}_T \subset C([0,T];H)$)}.
We now find an estimate on the solution to Problem \ref{prob:first_step}, which will come in handy later:
\begin{prop}\label{prop:estimatewfirst}
	Under the assumptions of Theorem \ref{thm:mainresult}, for given $(\alpha,\xi,\eta,g,\chi) \in$ \\$C([0,T];Y)  \times L^2_TV^\ast \times L^2_TX\times L^2_TV  \times L^2_TX$, 
	let $w_{\alpha\xi\eta g\chi}$ be a solution to Problem \ref{prob:first_step}. Then, there exists a constant $c>0$ independent of $w_{\alpha\xi\eta g\chi}$ such that
 \begin{subequations}
     \begin{align}\label{eq:estwwww}
		&\norm{w_{\alpha\xi\eta g\chi}}_{L^\infty_TH}^2 + \norm{w_{\alpha\xi\eta g\chi}}_{L^2_TV}^2 \\ \notag
		&\leq c(1 +\norm{w_0}_V^2+  \norm{f}_{L^2_TV^\ast}^2 + \norm{\xi}_{L^2_TV^\ast}^2    
		 + \norm{\alpha}_{L^\infty_TY}^2 
           +  \norm{\eta}_{L^2_TX}^2
         +  \norm{\chi}_{L^2_TX}^2   )
        \\ \notag
        &+ \frac{\beta_{4\varphi} \norm{K}\norm{M} +\beta_{5\varphi} \norm{K}\norm{M} \norm{\alpha}_{L^\infty_TY}}{m_A  - m_j \norm{N}^2}
          \norm{g}_{L^2_TV}^2 \\
         &+  \frac{\beta_{4\varphi} \norm{K}\norm{M} +\beta_{5\varphi} \norm{K}\norm{M} \norm{\alpha}_{L^\infty_TY}}{m_A  - m_j \norm{N}^2} \norm{w_{\alpha\xi\eta g\chi}}_{L^2_TV}^2 \notag
    \end{align}
    and
    \begin{align}\label{eq:estwdot}
        \norm{\Dot{w}_{\alpha\xi\eta g\chi}}_{L^2_T V^\ast}^2
        &\leq c(1+  \norm{f}_{L^2_TV^\ast}^2 + \norm{\xi}_{L^2_TV^\ast}^2 
    		+\norm{\chi}_{L^2_TX}^2
    		+\norm{\alpha}_{L^\infty_TY}^2 +
               \norm{\eta}_{L^2_TX}^2 \\
               &+ \norm{g}_{L^2_TV}^2 + \norm{w_{\alpha\xi\eta g\chi}}_{L^2_TV}^2) + 2\beta_{5\varphi}^2 \norm{K}^2\norm{M}^2\norm{\alpha}_{L^\infty_TY}^2\norm{g}_{L^2_TV}^2 .\notag
    \end{align}
    \end{subequations}
\end{prop}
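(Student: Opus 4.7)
The plan is to prove \eqref{eq:estwwww} by an energy argument based on substituting $v=0$ in the inequality of Problem~\ref{prob:first_step}, and to deduce \eqref{eq:estwdot} by extracting a pointwise $V^\ast$-expression for $\Dot w_{\alpha\xi\eta g\chi}$ from the same inequality. Setting $v=0$, integrating on $(0,t)$, and invoking the integration-by-parts formula of Proposition~\ref{prop:integrationbypartsformula} converts $\int_0^t\inner{\Dot w_{\alpha\xi\eta g\chi}(s)}{w_{\alpha\xi\eta g\chi}(s)}ds$ into $\tfrac12(\norm{w_{\alpha\xi\eta g\chi}(t)}_H^2-\norm{w_0}_H^2)$. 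Splitting $A(s,w)=(A(s,w)-A(s,0))+A(s,0)$ and using strong monotonicity in \hyperref[assumptionA]{$\H{A}$}\ref{list:A_maximalmonotone} together with the growth bound \ref{list:A_bounded} gives $\int_0^t\inner{A(s,w_{\alpha\xi\eta g\chi})}{w_{\alpha\xi\eta g\chi}}ds\geq m_A\norm{w_{\alpha\xi\eta g\chi}}_{L^2_tV}^2$ up to linear terms controlled by $a_0$ and $w_0$.

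The delicate piece is the $\varphi$-difference $\varphi(s,\alpha,\eta,Mg,0)-\varphi(s,\alpha,\eta,Mg,Kw_{\alpha\xi\eta g\chi})$, which I would bound via \hyperref[assumptionphi]{$\H{\varphi}$}\ref{list:phi_estimate} applied with the four-point choice $(y_1,z_1,\Tilde w_1)=(\alpha,\eta,Mg)$, $(y_2,z_2,\Tilde w_2)=(\alpha,\eta,0)$, $\Tilde v_1=Kw_{\alpha\xi\eta g\chi}$, $\Tilde v_2=0$. Since $y_1=y_2$ and $z_1=z_2$, the $\beta_{1\varphi}$, $\beta_{2\varphi}$, $\beta_{3\varphi}$, $\beta_{6\varphi}$, $\beta_{7\varphi}$ contributions vanish, leaving exactly $(\beta_{4\varphi}+\beta_{5\varphi}\norm{\alpha}_Y)\norm{M}\norm{K}\norm{g}_V\norm{w_{\alpha\xi\eta g\chi}}_V$, together with the residual $\varphi(s,\alpha,\eta,0,0)-\varphi(s,\alpha,\eta,0,Kw_{\alpha\xi\eta g\chi})$ which is controlled by convexity in the last slot and \hyperref[assumptionphi]{$\H{\varphi}$}\ref{list:phi_bounded} at $(\alpha,\eta,0,0)$. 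For the hemivariational part I would invoke \hyperref[assumptionj]{$\H{j}$}\ref{list:j_estimate} with $\Tilde v_1=Nw_{\alpha\xi\eta g\chi}$, $\Tilde v_2=0$, and identical $\alpha$, $\chi$ slots, obtaining $j^\circ(s,\alpha,\chi,Nw_{\alpha\xi\eta g\chi};-Nw_{\alpha\xi\eta g\chi})\leq m_j\norm{N}^2\norm{w_{\alpha\xi\eta g\chi}}_V^2-j^\circ(s,\alpha,\chi,0;Nw_{\alpha\xi\eta g\chi})$, with the last term bounded by \hyperref[assumptionj]{$\H{j}$}\ref{list:j_bounded} at $(\alpha,\chi,0)$.

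Collecting everything, using Young's inequality $ab\leq\tfrac12(a^2+b^2)$ on mixed products such as $\norm{g}_V\norm{w_{\alpha\xi\eta g\chi}}_V$, and absorbing $m_j\norm{N}^2\norm{w_{\alpha\xi\eta g\chi}}_{L^2_TV}^2$ into the coercivity contribution produces the coefficient $m_A-m_j\norm{N}^2>0$, which is positive by the smallness condition \eqref{eq:assumptionbound_mA_max}; taking the supremum in $t$ isolates $\norm{w_{\alpha\xi\eta g\chi}}_{L^\infty_TH}^2$, and dividing the surviving $\beta$-contributions by $m_A-m_j\norm{N}^2$ produces the form displayed in \eqref{eq:estwwww}. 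For \eqref{eq:estwdot} I would use that the variational inequality is equivalent (as in Step~\ref{exunique}) to the inclusion $\Dot w_{\alpha\xi\eta g\chi}=f-\xi-A(\cdot,w_{\alpha\xi\eta g\chi})-K^\ast\eta^\ast-N^\ast\zeta^\ast$ in $V^\ast$ for some selections $\eta^\ast\in\partial_c\varphi(\cdot,\alpha,\eta,Mg,Kw_{\alpha\xi\eta g\chi})$ and $\zeta^\ast\in\partial j(\cdot,\alpha,\chi,Nw_{\alpha\xi\eta g\chi})$; taking $V^\ast$-norms and using the growth bounds in \hyperref[assumptionA]{$\H{A}$}\ref{list:A_bounded}, \hyperref[assumptionphi]{$\H{\varphi}$}\ref{list:phi_bounded}, \hyperref[assumptionj]{$\H{j}$}\ref{list:j_bounded} combined with \hyperref[assumptionMNK]{$\H{MNK}$}, squaring, and integrating over $(0,T)$ delivers the bound. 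The isolated $2\beta_{5\varphi}^2$ term arises from a Young split $ab\leq a^2+\tfrac14 b^2$ applied to the cross product $\beta_{5\varphi}\norm{\alpha}_Y\norm{M}\norm{g}_V\cdot\norm{K}\norm{w_{\alpha\xi\eta g\chi}}_V$, which is necessary so that the surviving $\norm{w_{\alpha\xi\eta g\chi}}_{L^2_TV}^2$ contribution remains compatible with \eqref{eq:estwwww}. The main obstacle is the bookkeeping in the $\varphi$ structure condition: the four-point choice must be arranged so that precisely $\beta_{4\varphi}$ and $\beta_{5\varphi}$ survive and the residual $\varphi(\cdot,\alpha,\eta,0,\cdot)$ difference is controlled without reintroducing $w_{\alpha\xi\eta g\chi}$ nonlinearly on the right-hand side.
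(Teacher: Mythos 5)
Your proof of \eqref{eq:estwwww} is essentially the paper's argument: test with $v=0$, integrate, use Proposition \ref{prop:integrationbypartsformula}, the monotonicity and growth of $A$, and the structure conditions for $\varphi$ and $j$, then Young's inequality and absorption using $m_A-m_j\norm{N}^2>0$ from \eqref{eq:assumptionbound_mA_max}. Your choice of comparison points differs slightly — in \hyperref[assumptionphi]{$\H{\varphi}$}\ref{list:phi_estimate} you take $(y_2,z_2,\Tilde{w}_2)=(\alpha,\eta,0)$ where the paper takes $(0,0,0)$, and in \hyperref[assumptionj]{$\H{j}$}\ref{list:j_estimate} you keep equal $(\alpha,\chi)$-slots where the paper compares with $(0,0)$ — but this only changes which lower-order constants ($c_{1j},c_{2j},c_{1\varphi},c_{2\varphi}$ versus $\Bar{m}_j$, $c_{0\varphi}$, $c_{4\varphi}$) end up inside the generic $c(1+\dots)$ term; the surviving $\beta_{4\varphi}$, $\beta_{5\varphi}$ contributions and the final division are identical. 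For \eqref{eq:estwdot} you take a genuinely different route: the paper stays with the variational inequality, passes to a supremum over unit directions, and bounds the $\varphi$-difference again by \hyperref[assumptionphi]{$\H{\varphi}$}\ref{list:phi_estimate}, which is precisely where its isolated term $2\beta_{5\varphi}^2\norm{K}^2\norm{M}^2\norm{\alpha}_{L^\infty_TY}^2\norm{g}_{L^2_TV}^2$ originates; you instead use the representation $\Dot{w}=f-A(\cdot,w)-w^\ast$ with $w^\ast\in\partial\psi_{\alpha\xi\eta g\chi}(\cdot,w)$ and only the growth bounds \hyperref[assumptionA]{$\H{A}$}\ref{list:A_bounded}, \hyperref[assumptionphi]{$\H{\varphi}$}\ref{list:phi_bounded}, \hyperref[assumptionj]{$\H{j}$}\ref{list:j_bounded}, which gives the cleaner bound $c(1+\norm{f}_{L^2_TV^\ast}^2+\norm{\xi}_{L^2_TV^\ast}^2+\norm{\chi}_{L^2_TX}^2+\norm{\alpha}_{L^\infty_TY}^2+\norm{\eta}_{L^2_TX}^2+\norm{g}_{L^2_TV}^2+\norm{w_{\alpha\xi\eta g\chi}}_{L^2_TV}^2)$ with no $\norm{\alpha}\,\norm{g}$ cross term, so \eqref{eq:estwdot} follows a fortiori.

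Two caveats, both fixable. First, your attribution of the $2\beta_{5\varphi}^2$-term to a Young split is inconsistent with your own method: the growth condition \hyperref[assumptionphi]{$\H{\varphi}$}\ref{list:phi_bounded} produces no product $\norm{\alpha}_Y\norm{g}_V$, so in your route that term simply never appears; this is harmless (your bound implies the stated one), but the explanatory sentence should be dropped or replaced. Second, your route for \eqref{eq:estwdot} presumes that the given solution of Problem \ref{prob:first_step} admits the inclusion representation of Problem \ref{prob:inclusion}; Step \ref{exunique} only proves the implication from the inclusion to the inequality, so you should either invoke the uniqueness of Step \ref{unique} to identify the Problem \ref{prob:first_step} solution with the one constructed from the inclusion, or prove the converse directly (the inequality states that $f(t)-\xi(t)-\Dot{w}(t)-A(t,w(t))$ belongs to the convex subdifferential at the origin of $d\mapsto\varphi(t,\alpha,\eta,Mg,K(w+d))-\varphi(t,\alpha,\eta,Mg,Kw)+j^\circ(t,\alpha,\chi,Nw;Nd)$, and the sum and chain rules then yield selections in $K^\ast\partial_c\varphi$ and $N^\ast\partial j$). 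With either justification the argument is complete.
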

The proof of Proposition \ref{prop:estimatewfirst} is postponed to Appendix \ref{appendix:proof_part2}.

\textbf{Step \ref{convergence}}\rtask{convergence} \textit{(Scheme for the approximated solution to Problem \ref{prob:fullproblem})}.
For $n\in \mathbb{Z}_+$, let $\alpha^{n-1} \in C([0,T];Y)$, and  $w^{n-1} \in \mathcal{W}^{1,2}_T$ be known. We construct the approximated solutions $\\ \{(w^n,\alpha^n)\}_{n\geq 1} \subset \mathcal{W}^{1,2}_T \times C([0,T];Y)$ to Problem \ref{prob:fullproblem}, where $(w^{n},\alpha^n)$ is a solution of the scheme:
\begin{subequations}
\begin{align}\label{eq:algorithm11}
	&\inner{\Dot{w}^{n}(t) + A(t, 	w^{n}(t)) - f(t) + \mathcal{R}w^{n-1}(t) }{v- w^{n}(t) } \\ \notag
	&\hspace{1.1cm}+ \varphi(t,\alpha^{n-1}(t), \mathcal{S}_\varphi  w^{n-1}(t), 	Mw^{n-1}(t), Kv) \\ \notag
	&\hspace{1.1cm}-  \varphi(t,\alpha^{n-1}(t), \mathcal{S}_\varphi  w^{n-1}(t), Mw^{n-1}(t), Kw^{n}(t)) \\ 
	&\hspace{1.1cm}+ j^\circ(t,\alpha^{n-1}(t), \mathcal{S}_j w^{n-1}(t), Nw^{n}(t); Nv -Nw^{n}(t) )	  \geq 0 \notag
\end{align}
for all $v\in V$, a.e. $t \in (0,T)$, and 
\begin{equation}\label{eq:boundaryconditions_walphaxietag1} 
	w^{n}(0) = w_0.
\end{equation}
\begin{equation}\label{eq:algorithm12}
	\alpha^n (t) = \alpha_0 + \int_0^t \mathcal{G}(s,\alpha^n(s),Mw^n(s)) ds \ \ \text{ for a.e. } t\in (0,T),\\
\end{equation}
\end{subequations}
with $w^0 = w_0 \in V$ and $\alpha^0 = \alpha_0 \in Y$.
\\

\indent\textbf{Step \ref{convergence}.\ref{welldefinedscehem}}\rsubtask{welldefinedscehem} \textit{(Existence and uniqueness of $(w^n,\alpha^n) \in\mathcal{W}^{1,2}_T \times C([0,T];Y)$ to \eqref{eq:algorithm11}-\eqref{eq:algorithm12} for all $n\in \N$)}. We establish existence and uniqueness by induction on $n$. First, applying Minkowski's inequality, Young's inequality, integrating over the time interval $(0,t') \subset (0,T)$, and lastly applying the Cauchy-Schwarz inequality to hypothesis \hyperref[assumptionR]{$\H{\mathcal{R}}$}, \hyperref[assumptionS1]{$\H{\mathcal{S}_\varphi}$}, and \hyperref[assumptionS2]{$\H{\mathcal{S}_j}$}, respectively, yield
\begin{align}\label{eq:hist_est_R}
	\int_0^{t'}\norm{\mathcal{R}w^{n-1}(t)}_{V^\ast}^2dt 
	&\leq 2 \int_0^{t'}\norm{\mathcal{R}w^{n-1}(t) -\mathcal{R}0(t)}_{V^\ast}^2 dt + 2 \int_0^{t'} \norm{\mathcal{R}0(t)}_{V^\ast}^2 dt\\ \notag
	&\leq 2T^2c_\mathcal{R}^2 \int_0^{t'} \norm{w^{n-1}(t)}_{V}^2 dt + 2  \norm{\mathcal{R}0}_{L^2_{t'}V^\ast}^2
	\\
	\int_0^{t'}\norm{\mathcal{S}_\varphi w^{n-1}(t)}_X^2dt  &\leq 2T^2c_{\mathcal{S}_\varphi }^2 \int_0^{t'} \norm{w^{n-1}(t)}_{V}^2 dt + 2  \norm{\mathcal{S}_\varphi 0}_{L^2_{t'}X}^2
	\label{eq:hist_est_S1}\\
	\int_0^{t'}\norm{\mathcal{S}_jw^{n-1}(t)}_X^2dt  &\leq 2T^2c_{\mathcal{S}_j}^2 \int_0^{t'} \norm{w^{n-1}(t)}_{V}^2 dt + 2  \norm{\mathcal{S}_j0}_{L^2_{t'}X}^2
	\label{eq:hist_est_S2}
\end{align}
for all $t' \in [0,T]$. 
We combine  \eqref{eq:hist_est_R}-\eqref{eq:hist_est_S2} with the estimates \eqref{eq:estwwww}-\eqref{eq:estwdot} in Proposition \ref{prop:estimatewfirst} for $w_{\alpha\xi\eta g\chi} = w^n$, $\Dot{w}_{\alpha\xi\eta g\chi} = \Dot{w}^n$, $\xi = \mathcal{R}w^{n-1}$, $\alpha = \alpha^{n-1}$, $\eta = \mathcal{S}_\varphi  w^{n-1}$, $g=w^{n-1}$, and  $\chi = \mathcal{S}_j w^{n-1}$. This implies
\begin{subequations}
\begin{align}\label{eq:est_w_n}
      \norm{w^n}_{L^\infty_TH}^2 &+ \norm{w^n}_{\mathcal{W}^{1,2}_T}^2 
	\\
        &\leq c(1 +\norm{f}_{L^2_TV^\ast}^2 + \norm{w_0}_V^2  + \norm{\alpha^{n-1}}_{L^\infty_TY}^2  \notag
        ) \notag
        \\
        &+ \frac{\beta_{4\varphi} \norm{K}\norm{M} +\beta_{5\varphi} \norm{K}\norm{M} \norm{\alpha^{n-1}}_{L^\infty_TY}}{m_A  - m_j \norm{N}^2}\norm{w^{n-1}}_{L^2_TV}^2 \notag \\
         &+ \frac{\beta_{4\varphi} \norm{K}\norm{M} +\beta_{5\varphi} \norm{K}\norm{M} \norm{\alpha^{n-1}}_{L^\infty_TY}}{m_A  - m_j \norm{N}^2}\norm{w^{n}}_{L^2_TV}^2,
         \notag
\end{align}
\begin{align}\label{eq:est_wdot_n}
    \norm{\Dot{w}^n}_{L^2_T V^\ast}^2 &\leq c(1+  \norm{f}_{L^2_TV^\ast}^2 + \norm{w^{n-1}}_{L^2_TV}^2 
    		+\norm{\alpha^{n-1}}_{L^\infty_TY}^2 + \norm{w^n}_{L^2_TV}^2) \\
      &+ \big(\sqrt{2}\beta_{5\varphi} \norm{K}\norm{M}\norm{\alpha^{n-1}}_{L^\infty_TY}\big)^2\norm{w^{n-1}}_{L^2_TV}^2 .\notag
\end{align}
\end{subequations}
for all $n\in \mathbb{Z}_+$. Applying Minkowski's inequality to \eqref{eq:algorithm12} reads
\begin{align*}
    \norm{\alpha^n(t)}_Y \leq  \norm{\alpha_0}_Y +  \int_0^t \norm{\mathcal{G}(s,\alpha^n(s),Mw^n(s))}_Y  ds
\end{align*}
for a.e. $t\in (0,T)$. We observe that by  Minkowski's inequality, \hyperref[assumptionG]{$\H{\mathcal{G}}$}\ref{list:G_Lipschitz} and \hyperref[assumptionMNK]{$\H{MNK}$}
\begin{align}\label{eq:est_on_G}
    \norm{\mathcal{G}(s,\alpha^n(s),Mw^n(s))}_Y &\leq  \norm{\mathcal{G}(s,\alpha^n(s),Mw^n(s)) - \mathcal{G}(s,0,0)}_Y + \norm{\mathcal{G}(s,0,0)}_Y\\
    &\leq L_\mathcal{G} \norm{\alpha^n(s)}_Y + L_\mathcal{G}\norm{M}\norm{w^n(s)}_V + \norm{\mathcal{G}(s,0,0)}_{Y} \notag
\end{align}
for a.e. $s\in (0,t) \subset (0,T)$.
Accordingly, the Cauchy-Schwarz inequality and \hyperref[assumptionG]{$\H{\mathcal{G}}$}\ref{list:G_00} implies
\begin{align*}
    \norm{\alpha^n(t)}_Y &\leq  c(\norm{\alpha_0}_Y + T\norm{\mathcal{G}(\cdot,0,0)}_{L^\infty_TY} + T^{1/2} \norm{w^n}_{L^2_TV})  +  c\int_0^t\norm{\alpha^n(s)}_Y ds
\end{align*}
for a.e $t\in (0,T)$. By Gr\"{o}nwall's inequality (see, e.g., \cite{evans}), we have
\begin{subequations}
\begin{align}\label{eq:alpha_estimate1}
    \norm{\alpha^n}_{L^\infty_TY} \leq  c(\norm{\alpha_0}_Y  &+ T^k\norm{\mathcal{G}(\cdot,0,0)}_{L^\infty_TY} + T^k \norm{w^n}_{L^2_TV} )(1+cT\mathrm{e}^{cT}),
\end{align}
and from Young's inequality 
\begin{align}\label{eq:alpha_estimate2}
    \norm{\alpha^n}_{L^\infty_TY}^2 \leq  c(\norm{\alpha_0}_Y^2  &+ T^k(\norm{\mathcal{G}(\cdot,0,0)}_{L^\infty_TY}^2 +  T^k\norm{w^n}_{L^2_TV}^2  ))(1+cT^2\mathrm{e}^{2ct}) 
\end{align}
\end{subequations}
for $k\geq 1/2$. We will show the uniform bound by induction on $n$. 
\\
\indent
For $n=1$, with the initial guesses; $w^0 = w_0$ and $\alpha^0 = \alpha_0$, \eqref{eq:est_w_n}-\eqref{eq:est_wdot_n} becomes
\begin{align*}
   \norm{w^1}_{L^\infty_TH}^2 + \norm{w^1}_{L^2_TV}^2 
		&\leq c(1 +\norm{w_0}_V^2+  \norm{f}_{L^2_TV^\ast}^2    + \norm{\alpha_0}_{Y}^2 
  ) 
        \\ \notag
         &+ T \frac{\beta_{4\varphi} \norm{K}\norm{M} +\beta_{5\varphi} \norm{K}\norm{M} \norm{\alpha_0}_{Y}}{m_A  - m_j \norm{N}^2}
          \norm{w_0}_{V}^2 \\
         &+ \frac{\beta_{4\varphi} \norm{K}\norm{M} +\beta_{5\varphi} \norm{K}\norm{M} \norm{\alpha_0}_{Y}}{m_A  - m_j \norm{N}^2}\norm{w^1}_{L^2_TV}^2, \notag
\end{align*} 
\begin{align*}
    \norm{\Dot{w}^1}_{L^2_T V^\ast}^2
        &\leq c(1+  \norm{f}_{L^2_TV^\ast}^2 + \norm{w_0}_{V}^2 
    		+\norm{\alpha_0}_{Y}^2 + \norm{w^1}_{L^2_TV}^2)\\
      &+ T\big(\sqrt{2}\beta_{5\varphi}\norm{K}\norm{M}\norm{\alpha_0}_Y\big)^2\norm{w_0}_{V}^2 .\notag
\end{align*}
From the smallness-assumption \eqref{eq:assumptionbound_mA_max}, it follows that
\begin{align}\label{eq:estiamte_w_1}
    \norm{w^1}_{L^\infty_TH}^2+ \norm{w^1}_{\mathcal{W}^{1,2}_T}^2 &\leq c (1 +   \norm{f}_{L^2_TV^\ast}^2 + \norm{w_0}_V^2 + \norm{\alpha_0}_Y^2 ).
\end{align}
We next define the complete metric space
\begin{equation*}
	X_T(a) = \{ h \in C([0,T];Y) : \norm{h}_{L^\infty_TY} \leq a, \ \text{with } a\in \R_+\}
\end{equation*}
and the operator $\Lambda : X_T(a) \rightarrow X_T(a)$ by
\begin{align}\label{eq:Lambda}
	\Lambda\alpha^1 (t) = \alpha_0 + \int_0^t \mathcal{G}(s,\alpha^1(s),Mw^1(s)) ds
\end{align}
for $w^1 \in \mathcal{W}^{1,2}_T$. We verify that $\alpha^1$ is indeed a solution to \eqref{eq:algorithm12} for $n=1$ in the next lemma.
\begin{lem}\label{lemma:lambda_fixedpoint}
	Let $w^1 \in \mathcal{W}^{1,2}_T$ be a solution of \eqref{eq:algorithm11}-\eqref{eq:boundaryconditions_walphaxietag1} with $w^0 = w_0\in V$ and $\alpha^0 = \alpha_0 \in Y$. 
	Under the assumptions of Theorem \ref{thm:mainresult}, the operator $\Lambda : X_T(a) \rightarrow X_T(a)$, defined by \eqref{eq:Lambda}, has a unique fixed-point, i.e., there exists a constant $0 \leq L < 1$ such that 
  \begin{equation*}
  	\norm{\Lambda \alpha^1_1 - \Lambda \alpha^1_2}_{X_T(a)} \leq L \norm{ \alpha^1_1 - \alpha^1_2}_{X_T(a)}.
  \end{equation*} 
\end{lem}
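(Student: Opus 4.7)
The plan is to apply Banach's fixed-point theorem to $\Lambda$ on the complete metric space $X_T(a)$, choosing $a$ and $T$ suitably. First, I would verify that $\Lambda\alpha^1 \in C([0,T];Y)$ for every $\alpha^1 \in X_T(a)$: absolute continuity of the Bochner integral yields the continuity in $t$, while Bochner measurability and integrability of the integrand $s \mapsto \mathcal{G}(s,\alpha^1(s), Mw^1(s))$ follow from \hyperref[assumptionG]{$\H{\mathcal{G}}$}\ref{list:G_measurable} together with the pointwise growth bound \eqref{eq:est_on_G} and $w^1 \in \mathcal{W}^{1,2}_T$.

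To check the self-mapping property $\Lambda(X_T(a)) \subset X_T(a)$, I would combine \hyperref[assumptionG]{$\H{\mathcal{G}}$}\ref{list:G_Lipschitz}, \hyperref[assumptionG]{$\H{\mathcal{G}}$}\ref{list:G_00}, Minkowski, and Cauchy--Schwarz to obtain, for every $t \in [0,T]$,
\begin{equation*}
\norm{\Lambda\alpha^1(t)}_Y \leq \norm{\alpha_0}_Y + T\norm{\mathcal{G}(\cdot,0,0)}_{L^\infty_T Y} + L_\mathcal{G} T a + L_\mathcal{G}\norm{M} T^{1/2}\norm{w^1}_{L^2_T V}.
\end{equation*}
Invoking the a priori bound \eqref{eq:estiamte_w_1}, the last term is controlled purely in terms of $\norm{f}_{L^2_T V^\ast}$, $\norm{w_0}_V$, and $\norm{\alpha_0}_Y$. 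Provided $T$ is small enough that $L_\mathcal{G} T < 1$, rearranging the inequality $\norm{\Lambda\alpha^1}_{L^\infty_T Y} \leq a$ yields
\begin{equation*}
a \;\geq\; \frac{\norm{\alpha_0}_Y + T\norm{\mathcal{G}(\cdot,0,0)}_{L^\infty_T Y} + L_\mathcal{G}\norm{M} T^{1/2}\norm{w^1}_{L^2_T V}}{1 - L_\mathcal{G} T},
\end{equation*}
so choosing $a$ above this finite threshold secures $\Lambda(X_T(a)) \subset X_T(a)$.

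For the contraction estimate, since the same $w^1$ appears on both sides, \hyperref[assumptionG]{$\H{\mathcal{G}}$}\ref{list:G_Lipschitz} gives immediately
\begin{equation*}
\norm{\Lambda\alpha^1_1(t) - \Lambda\alpha^1_2(t)}_Y \leq L_\mathcal{G} \int_0^t \norm{\alpha^1_1(s)-\alpha^1_2(s)}_Y\,ds \leq L_\mathcal{G} T\,\norm{\alpha^1_1-\alpha^1_2}_{X_T(a)},
\end{equation*}
so taking $L := L_\mathcal{G} T$ produces the desired Lipschitz constant, strictly less than $1$ whenever $T < 1/L_\mathcal{G}$. Banach's fixed-point theorem then delivers the unique $\alpha^1 \in X_T(a)$ with $\Lambda\alpha^1 = \alpha^1$, which is precisely the sought solution of \eqref{eq:algorithm12} for $n=1$.

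The main subtlety, more bookkeeping than analytical, is to ensure compatibility between the $T$ imposed here (via $L_\mathcal{G} T < 1$ and the self-mapping inequality) and the time of existence of $w^1$ from Problem \ref{prob:first_step} encoded in \eqref{eq:times_T}. Since all three conditions amount to upper bounds on $T$ that depend only on $\norm{(f,w_0,\alpha_0)}_{L^2_T V^\ast \times V \times Y}$, their intersection is nonempty and a single admissible $T$, respecting the monotonicity in \eqref{eq:times_T}, can be selected.
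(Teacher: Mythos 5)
Your argument is correct, but it diverges from the paper on the key contraction step. For the self-mapping property you proceed essentially as the paper does (Minkowski, the growth bound \eqref{eq:est_on_G}, Cauchy--Schwarz, and the a priori estimate \eqref{eq:estiamte_w_1} for $w^1$), only making the choice of $a$ more explicit by solving $\norm{\Lambda\alpha^1}_{L^\infty_TY}\leq a$ under the extra restriction $L_\mathcal{G}T<1$; you also replace the paper's explicit $|t'-t|\to 0$ continuity estimate by absolute continuity of the Bochner integral, which is fine. The real difference is the contraction: you take $L=L_\mathcal{G}T$ and therefore need $T<1/L_\mathcal{G}$, whereas the paper works in the Bielecki-type weighted norm $\norm{z}_\gamma=\max_{s\in[0,T]}\mathrm{e}^{-\gamma s}\norm{z(s)}_Y$, equivalent to the $C([0,T];Y)$ norm, and obtains the contraction constant $L_\mathcal{G}/\gamma<1$ for any $\gamma>L_\mathcal{G}$, hence for \emph{arbitrary} $T>0$ with no smallness requirement. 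For the local statement of Theorem \ref{thm:mainresult} your extra smallness on $T$ is harmless (it merely adds a structural constant to the admissible bound on $T$, and your final compatibility remark addresses this), but the paper's weighted-norm route is what makes the scheme \eqref{eq:algorithm11}-\eqref{eq:algorithm12} well defined on any finite interval, which is needed for the global-existence case in Theorem \ref{thm:mainresult}\ref{list:global} and Corollary \ref{cor:cauchysequences}; with your version one would have to add a continuation/subinterval iteration to cover that case.
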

The proof of Lemma \ref{lemma:lambda_fixedpoint} is moved to Appendix \ref{appendix:lambda_fixedpoint} as it follows from the standard ODE arguments combined with the estimate \eqref{eq:estiamte_w_1}, and the assumptions \hyperref[assumptionG]{$\H{\mathcal{G}}$} and \hyperref[assumptionMNK]{$\H{MNK}$}.
\indent Next, investigating $n=2$. This implies

\begin{align*}
     \norm{w^2}_{L^\infty_TH}^2 + \norm{w^2}_{L^2_TV}^2 
		&\leq c(1 +\norm{w_0}_V^2+  \norm{f}_{L^2_TV^\ast}^2    + \norm{\alpha^1}_{L^\infty_TY}^2  ) 
        \\ \notag
         &+ \frac{\beta_{4\varphi} \norm{K}\norm{M} +\beta_{5\varphi} \norm{K}\norm{M} \norm{\alpha^1}_{L^\infty_TY}}{m_A  - m_j \norm{N}^2} \norm{w^1}_{L^2_TV}^2 \\
         &+ \frac{\beta_{4\varphi} \norm{K}\norm{M} +\beta_{5\varphi} \norm{K}\norm{M} \norm{\alpha^1}_{L^\infty_TY}}{m_A  - m_j \norm{N}^2}\norm{w^2}_{L^2_TV}^2, \notag\\
         \norm{\Dot{w}^2}_{L^2_T V^\ast}^2 &\leq c(1+  \norm{f}_{L^2_TV^\ast}^2 + \norm{w^{1}}_{L^2_TV}^2 
    		+\norm{\alpha^{1}}_{L^\infty_TY}^2 + \norm{w^2}_{L^2_TV}^2) \\
      &+ \big(\sqrt{2}\beta_{5\varphi} \norm{K}\norm{M}\norm{\alpha^{1}}_{L^\infty_TY}\big)^2\norm{w^{1}}_{L^2_TV}^2. \notag
\end{align*}
From the estimate \eqref{eq:alpha_estimate1} for $n=1$, we have that
\begin{align*}
    \norm{w^2}_{L^\infty_TH}^2 + &\norm{w^2}_{L^2_TV}^2 \leq c (1+\norm{w_0}_V^2 +   \norm{f}_{L^2_TV^\ast}^2 + 
\norm{\alpha^1}_{L^\infty_TY}^2 )
       \\
       &\hspace{0.2cm}+ \frac{\beta_{4\varphi} \norm{K}\norm{M}+\beta_{5\varphi} \norm{K}\norm{M}(\norm{\alpha_0}_{Y}+ T^k c(\norm{(f,w_0,\alpha_0)}_{L^2_TV^\ast \times V\times Y}))}{m_A  - m_j \norm{N}^2}
         \norm{w^1}_{L^2_TV}^{2}\\
         &\hspace{0.2cm}+ \frac{\beta_{4\varphi} \norm{K}\norm{M}+\beta_{5\varphi} \norm{K}\norm{M}(\norm{\alpha_0}_{Y}+ T^k c(\norm{(f,w_0,\alpha_0)}_{L^2_TV^\ast \times V\times Y}))}{m_A  - m_j \norm{N}^2}
         \norm{w^2}_{L^2_TV}^2,
         \\
         \norm{\Dot{w}^2}_{L^2_T V^\ast}^2 &\leq c(1+  \norm{f}_{L^2_TV^\ast}^2 + \norm{w^{1}}_{L^2_TV}^2 
    		+\norm{\alpha^{1}}_{L^\infty_TY}^2 + \norm{w^2}_{L^2_TV}^2) \\
      &+ \big(\sqrt{2}\beta_{5\varphi} \norm{K}\norm{M}(\norm{\alpha_0}_Y + T^k c(\norm{(f,w_0,\alpha_0)}_{L^2_TV^\ast \times V\times Y})))\big)^2\norm{w^{1}}_{L^2_TV}^2 
\end{align*}
for $k\geq 1/2$. Choosing $T>0$ such that 
\begin{align}\label{eq:T_k}
    T^k &\sim \frac{1}{c(\norm{(f,w_0,\alpha_0)}_{L^2_TV^\ast \times V\times Y})}
\end{align}
small enough for some $k\geq 1/2$. From the smallness-condition \eqref{eq:assumptionbound_mA_max} and the choice of $T>0$, we have that
\begin{align*}
    \frac{\sqrt{2}\beta_{5\varphi} \norm{K}\norm{M}}{m_A  - m_j \norm{N}^2-\sqrt{2}\beta_{4\varphi}\norm{K}\norm{M}}(\norm{\alpha_0}_{Y}+ T^kc(\norm{(f,w_0,\alpha_0)}_{L^2_TV^\ast \times V\times Y})) &<1,\\
    \sqrt{2}\beta_{5\varphi} \norm{K}\norm{M}(\norm{\alpha_0}_{Y}+ T^kc(\norm{(f,w_0,\alpha_0)}_{L^2_TV^\ast \times V\times Y})) &<m_A.
\end{align*}
Gathering the above and using \eqref{eq:estiamte_w_1} implies
\begin{align*}
    \norm{w^2}_{L^\infty_TH}^2 + \norm{w^2}_{\mathcal{W}^{1,2}_T}^2  \leq c (1+   \norm{f}_{L^2_TV^\ast}^2 + \norm{w_0}_V^2 +
    \norm{\alpha_0}_Y^2 ).
\end{align*} 
Moreover, verifying that $\alpha^2 \in C([0,T];Y)$ is indeed a solution to \eqref{eq:algorithm12} for $n=2$ follows by the same approach as for $n=1$.
\\
\indent
The induction step follows the same procedure as for $n=2$. Consequently, $(w^n, \alpha^n) \in \mathcal{W}^{1,2}_T \times C([0,T];Y)$ is the approximated solution of \eqref{eq:algorithm11}-\eqref{eq:algorithm12}. Further, we obtain the following uniform bound
\begin{align}\label{eq:uniformlybounded}
     \norm{w^n}_{L^\infty_TH}^2 + \norm{w^n}_{\mathcal{W}^{1,2}_T}^2    +\norm{\alpha^n}_{L^\infty_TY}^2 \leq c (1 + \norm{f}_{L^2_TV^\ast}^2 + \norm{w_0}_V^2 + \norm{\alpha_0}_Y^2) 
\end{align}
for all $n \in \mathbb{Z}_+$. Additionally, it follows from Proposition \ref{prop:integrationbypartsformula} that $\{ w^n \}_{n\geq 1} \subset C([0,T];H)$.
\\
\indent\textbf{Step \ref{convergence}.\ref{passinglimit}} \rsubtask{passinglimit} \textit{(Convergence of the approximated solution)}.
We first show that $\{(w^n,\alpha^n)\}_{n\geq 1}$ is a Cauchy sequence in $L^2_TV \times C([0,T];Y)$. This is summarized in the proposition below.
\begin{prop}\label{prop:cauchysequences}
	Let $w^0 = w_0 \in V$ and $\alpha^0 = \alpha_0 \in Y$. 
	Under the hypothesis of Theorem \ref{thm:mainresult}, let $\{(w^n,\alpha^n)\}_{n\geq 1} \subset \mathcal{W}^{1,2}_T \times C([0,T];Y)$ be the solution of \eqref{eq:algorithm11}-\eqref{eq:algorithm12}. Then, $\{(w^n,\alpha^n)\}_{n\geq 1}$ is a Cauchy sequence in $L^2_TV \times C([0,T];Y)$.	In addition, $\{\mathcal{R}w^n\}_{n\geq 1}$, $\{\mathcal{S}_\varphi w^n\}_{n\geq 1}$, and $\{\mathcal{S}_jw^n\}_{n\geq 1}$ are Cauchy sequences in $L^2_TV^\ast$, $L^2_TX$ and $L^2_TX$, respectively.
\end{prop}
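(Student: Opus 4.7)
The plan is to derive a recursion of the form
$\norm{w^{n+1}-w^n}_{L^2_tV}^2 \leq c(T)\int_0^t \norm{w^n-w^{n-1}}_{L^2_sV}^2\,ds$
on a suitably small interval $[0,T]$. Iterating the recursion produces the standard bound $\norm{w^{n+1}-w^n}_{L^2_TV}^2 \leq (c(T)T)^n/n!\,\norm{w^1-w^0}_{L^2_TV}^2$, which forces the Cauchy property in $L^2_TV$. Once $\{w^n\}$ is Cauchy in $L^2_TV$, the Cauchy property of $\{\alpha^n\}$ in $C([0,T];Y)$ will follow from the ODE \eqref{eq:algorithm12} together with \hyperref[assumptionG]{$\H{\mathcal{G}}$}\ref{list:G_Lipschitz} and Gr\"onwall's inequality, while that of $\{\mathcal{R}w^n\}$, $\{\mathcal{S}_\varphi w^n\}$, $\{\mathcal{S}_j w^n\}$ is immediate from the history-dependent hypotheses \hyperref[assumptionR]{$\H{\mathcal{R}}$}\ref{list:S_hist}, \hyperref[assumptionS1]{$\H{\mathcal{S}_\varphi}$}\ref{list:S_hist11}, \hyperref[assumptionS2]{$\H{\mathcal{S}_j}$}\ref{list:S_hist22}.

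The main computation is to take \eqref{eq:algorithm11} at level $n+1$ with test function $v=w^n(t)$ and \eqref{eq:algorithm11} at level $n$ with test function $v=w^{n+1}(t)$, add the two inequalities, and integrate over $(0,t)$. Because the initial conditions coincide, Proposition \ref{prop:integrationbypartsformula} converts the $\Dot w$-term into $\tfrac12\norm{w^{n+1}(t)-w^n(t)}_H^2$, and the monotonicity \hyperref[assumptionA]{$\H{A}$}\ref{list:A_maximalmonotone} contributes $m_A\norm{w^{n+1}-w^n}_{L^2_tV}^2$ on the left. The $j^\circ$-combination is controlled by \hyperref[assumptionj]{$\H{j}$}\ref{list:j_estimate} (with $\tilde v_1=Nw^{n+1}, \tilde v_2=Nw^n$), and the $\varphi$-combination by \hyperref[assumptionphi]{$\H{\varphi}$}\ref{list:phi_estimate} (with $y_1=\alpha^n, z_1=\mathcal{S}_\varphi w^n, \tilde w_1=Mw^n$, $y_2=\alpha^{n-1}$, etc., and the $\tilde v$-arguments chosen so that the desired combination lies on the correct side of the inequality). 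This bounds the remainder by terms involving $\norm{\alpha^n-\alpha^{n-1}}_Y$, $\norm{\mathcal{S}_\varphi w^n-\mathcal{S}_\varphi w^{n-1}}_X$, $\norm{\mathcal{S}_j w^n-\mathcal{S}_j w^{n-1}}_X$, $\norm{M(w^n-w^{n-1})}_U$, and $\norm{K(w^{n+1}-w^n)}_Z$, with prefactors $\norm{Mw^n}_U$, $\norm{\alpha^{n-1}}_Y$, $\norm{\alpha^n}_Y$ coming from the $\beta_{1\varphi},\beta_{5\varphi},\beta_{6\varphi},\beta_{7\varphi}$ contributions; these prefactors are uniformly controlled by \eqref{eq:uniformlybounded}. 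The $\mathcal{R}$-term is handled by Cauchy--Schwarz together with \hyperref[assumptionR]{$\H{\mathcal{R}}$}\ref{list:S_hist}.

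For the ODE part, subtracting \eqref{eq:algorithm12} at successive levels, applying \hyperref[assumptionG]{$\H{\mathcal{G}}$}\ref{list:G_Lipschitz} and \hyperref[assumptionMNK]{$\H{MNK}$}, and invoking Gr\"onwall delivers $\norm{\alpha^{n+1}-\alpha^n}_{L^\infty_tY}^2 \leq C(T)\int_0^t\norm{w^{n+1}-w^n}_V^2\,ds$. Substituting this back into the $w$-estimate (with the index of $\alpha$ shifted so that the $\norm{\alpha^n-\alpha^{n-1}}_Y^2$ coming from the $\varphi$- and $j^\circ$-bounds is replaced by $\int_0^t\norm{w^n-w^{n-1}}_V^2\,ds$), and using the history-dependent bounds \eqref{eq:hist_est_R}--\eqref{eq:hist_est_S2} to convert the operator differences in the same way, reduces everything to an inequality for $\norm{w^{n+1}-w^n}_V^2$. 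Young's inequality is then used to split the $\norm{w^{n+1}-w^n}_V$-factors between the coercive term and the right-hand side; the absorption is legitimate provided the coefficient left multiplying $\norm{w^{n+1}-w^n}_V^2$ on the right stays strictly below $m_A-m_j\norm{N}^2$, which is precisely what \eqref{eq:assumptionbound_mA_max} and the smallness of $T$ from \eqref{eq:T_k} guarantee.

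The main obstacle is the triple-product structure in \hyperref[assumptionphi]{$\H{\varphi}$}\ref{list:phi_estimate}: the weights $\norm{Mw^n}_U$ and $\norm{\alpha^n}_Y$ in the $\beta_{1\varphi},\beta_{5\varphi},\beta_{6\varphi},\beta_{7\varphi}$ terms must stay uniformly bounded in $n$ (which is supplied by \eqref{eq:uniformlybounded}), and after Young's inequality the effective coefficient in front of $\norm{w^{n+1}-w^n}_V^2$ must remain controllably smaller than $m_A-m_j\norm{N}^2$. This is exactly the combination of smallness conditions appearing in \eqref{eq:assumptionbound_mA_max}, and a careful balancing of Young constants together with the time horizon $T$ is the crux of the contraction argument.
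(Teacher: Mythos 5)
Your overall setup (testing the two consecutive inequalities against each other, using \hyperref[assumptionA]{$\H{A}$}\ref{list:A_maximalmonotone}, \hyperref[assumptionphi]{$\H{\varphi}$}\ref{list:phi_estimate}, \hyperref[assumptionj]{$\H{j}$}\ref{list:j_estimate}, Gr\"onwall for the $\alpha$-difference, and the history-dependence of $\mathcal{R},\mathcal{S}_\varphi,\mathcal{S}_j$) matches the paper, but the claimed recursion
$\norm{w^{n+1}-w^n}_{L^2_tV}^2 \leq c(T)\int_0^t \norm{w^n-w^{n-1}}_{L^2_sV}^2\,ds$
and the ensuing $(c(T)T)^n/n!$ bound do not hold in the general setting of this proposition, and this is a genuine gap. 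The point is that $M$ is \emph{not} a history-dependent operator: the $\beta_{4\varphi}$ and $\beta_{5\varphi}$ contributions from \hyperref[assumptionphi]{$\H{\varphi}$}\ref{list:phi_estimate} produce terms of the form $\beta_{4\varphi}\norm{K}\norm{M}\norm{e_w^{n-1}(t)}_V\norm{e_w^{n}(t)}_V$ evaluated at the \emph{same} time $t$, so after Cauchy--Schwarz they give $c\,\norm{e_w^{n-1}}_{L^2_{t'}V}^2$ with a coefficient that contains no factor of $T$ and gains no extra time integration. Only the differences coming from $\mathcal{R}$, $\mathcal{S}_\varphi$, $\mathcal{S}_j$ and from $e_\alpha^{n-1}$ (via Gr\"onwall) acquire the additional $\int_0^t$ and hence a small $T^k$ factor. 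Consequently the best available recursion is of the form $\int_0^{t'}\norm{e_w^{n}}_V^2\,dt \leq \theta \int_0^{t'}\norm{e_w^{n-1}}_V^2\,dt$ with
$\theta = T^kR + \tfrac{2(\beta_{4\varphi}+\beta_{5\varphi}\norm{\alpha_0}_Y)^2\norm{K}^2\norm{M}^2}{(m_A-m_j\norm{N}^2)^2}$,
exactly as in \eqref{eq:estimate_before_iteration}, and the Cauchy property follows from showing $\theta<1$ — which is what the smallness condition \eqref{eq:assumptionbound_mA_max} together with the choice of $T$ in \eqref{eq:sometimething} delivers — i.e.\ geometric, not factorial, decay. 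The $1/n!$ iteration you describe is precisely the mechanism of Corollary \ref{cor:cauchysequences}, which requires $\beta_{1\varphi}=\beta_{4\varphi}=\beta_{5\varphi}=\beta_{6\varphi}=\beta_{7\varphi}=0$; it cannot be used here.

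A secondary caveat: the way you propose to handle the same-time terms (Young's inequality splitting $\norm{e_w^{n}}_V$-factors against the coercive term $m_A-m_j\norm{N}^2$) must be done with care, since \eqref{eq:assumptionbound_mA_max} is tuned to the constant $\sqrt{2}(\beta_{4\varphi}+\beta_{5\varphi}\norm{\alpha_0}_Y)\norm{K}\norm{M}$; the paper instead divides the estimate by $\norm{e_w^n}_{L^2_{t'}V}$ and squares, which preserves exactly the constant needed to conclude $\theta<1$. An unbalanced Young split can easily lose this sharpness and fail to close the contraction.
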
 
\begin{remark}
    To cover the case where we obtain global time of existence when $\beta_{1\varphi} = \beta_{4\varphi} = \beta_{5\varphi}= \beta_{6\varphi} = \beta_{7\varphi} = 0$, the proof needs to be slightly modified. This case is included in Corollary \ref{cor:cauchysequences}.
\end{remark}

\begin{proof}
Let $e_{\Dot{w}}^n =  \Dot{w}^n - \Dot{w}^{n-1}$, $e_{w}^n =  w^n - w^{n-1}$, and $e_\alpha^n =  \alpha^n - \alpha^{n-1}$.
To begin with, we add \eqref{eq:algorithm11} for two iterations at the levels  $n$ and $n-1$. Then, choosing $v=w^{n-1}$ and $v=w^n$ for the levels $n$ and $n-1$, respectively, implies
\begin{align*}
    \inner{e^n_{\Dot{w}}(t)}{e^n_{w}(t) } &+ \inner{  A(t, w^n(t)) - A(t, w^{n-1}(t)) }{w^n(t) - w^{n-1}(t) } \\ \notag
	&\leq \inner{ \mathcal{R}w^{n-2}(t) - \mathcal{R}w^{n-1}(t) }{e_{w}^n(t)}
	\\ \notag
	&+\varphi(t, \alpha^{n-1}(t),\mathcal{S}_\varphi w^{n-1}(t), Mw^{n-1}(t), Kw^{n-1}(t)) \\ \notag
        &-  \varphi(t, \alpha^{n-1}(t),\mathcal{S}_\varphi w^{n-1}(t), Mw^{n-1}(t), Kw^{n}(t)) \\ \notag
	&+  \varphi(t,\alpha^{n-2}(t),\mathcal{S}_\varphi w^{n-2}(t), Mw^{n-2}(t), Kw^{n}(t))  \\ \notag
        &-  \varphi(t,\alpha^{n-2}(t),\mathcal{S}_\varphi w^{n-2}(t), Mw^{n-2}(t), Kw^{n-1}(t))\\ \notag
	&+ j^\circ(t,\alpha^{n-1}(t),\mathcal{S}_j w^{n-1}(t),Nw^{n}(t); -Ne_w^{n}(t)) \\ \notag
        &+ j^\circ(t,\alpha^{n-2}(t),\mathcal{S}_j w^{n-2}(t), Nw^{n-1}(t); Ne_w^{n}(t) )  
\end{align*}
for a.e. $t \in (0,T)$ with $w^n(0) = w^{n-1}(0) = w_0$. We deduce from hypotheses  \hyperref[assumptionphi]{$\H{A}$}\ref{list:A_maximalmonotone}, \hyperref[assumptionphi]{$\H{\varphi}$}\ref{list:phi_estimate}, \hyperref[assumptionj]{$\H{j}$}\ref{list:j_estimate}, \hyperref[assumptionMNK]{$\H{MNK}$}, and the Cauchy-Schwarz inequality that
\begin{align*}	
	&\inner{e_{\Dot{w}}^n(t)}{e_w^{n}(t) } + (m_A-m_j\norm{N}^2) \norm{e_w^n(t)}_V^2  \\
	&\leq  \norm{\mathcal{R}w^{n-1}(t) - \mathcal{R}w^{n-2}(t)}_{V^\ast}\norm{e_w^n(t)}_V 
   \\
   &+(\Bar{m}_j \norm{N}+\beta_{2\varphi} \norm{K})  \norm{ e_\alpha^{n-1}(t)}_Y\norm{e_w^n(t)}_V 
   \\
   &+ \beta_{1\varphi} \norm{K} \norm{M} \norm{w^{n-1}(t)}_V \norm{e_\alpha^{n-1}(t)}_Y\norm{e_w^n(t)}_V 
        \\
    &+
	\beta_{3\varphi} \norm{K}  \norm{ \mathcal{S}_\varphi  w^{n-1}(t) - \mathcal{S}_\varphi  w^{n-2}(t)}_X\norm{e_w^n(t)}_V  \\
	&+  \beta_{4\varphi} \norm{K}\norm{M} \norm{e_w^{n-1}(t)}_V \norm{e_w^n(t)}_V \\
    &+ \beta_{5\varphi} \norm{K} \norm{M}\norm{\alpha^{n-2}(t)}_Y\norm{e_w^{n-1}(t)}_V \norm{e_w^n(t)}_V\\
    &+\beta_{6\varphi} \norm{K} \norm{M}\norm{w^{n-1}(t)}_V\norm{ \mathcal{S}_\varphi  w^{n-1}(t) - \mathcal{S}_\varphi  w^{n-2}(t)}_X\norm{e_w^n(t) }_V \\
         &+ \beta_{7\varphi} \norm{K}  \norm{\alpha^{n-1}(t)}_Y \norm{ \mathcal{S}_\varphi  w^{n-1}(t) - \mathcal{S}_\varphi  w^{n-2}(t)}_X\norm{e_w^n(t)}_V
       \\
	&+ \Bar{m}_j \norm{N} \norm{\mathcal{S}_j w^{n-1}(t) - \mathcal{S}_jw^{n-2}(t)}_X \norm{e_w^{n}(t)}_V
\end{align*}
for a.e. $t \in (0,T)$. Integrating over the time interval $(0,t')\subset (0,T)$, we observe after using the integration by parts formula in Proposition \ref{prop:integrationbypartsformula} (with $v_1=v_2=e_w^n(t)$ for a.e. $t\in(0,T)$) and the Cauchy-Schwarz inequality that
\begin{align*}
    &I_1 := \frac{1}{2}\norm{e_w^n(t')}_H^2 - \frac{1}{2}\norm{e_w^n(0)}_H^2   + (m_A- m_j \norm{N}^2)\int_0^{t'}  \norm{e_w^n(t)}_V^2 dt \\
	&\leq \int_0^{t'} \norm{ \mathcal{R}w^{n-1}(t) - \mathcal{R}w^{n-2}(t)}_{V^\ast}  \norm{e_w^n(t)}_V dt
	\\
	&+ (\bar{m}_{j} \norm{N} + \beta_{2\varphi} \norm{K})  \int_0^{t'} \norm{e_\alpha^{n-1}(t)}_Y\norm{e_w^n(t)}_V dt \\
        &+ \beta_{1\varphi} \norm{K} \norm{M} \int_0^{t'} \norm{w^{n-1}(t)}_V  \norm{e_\alpha^{n-1}(t)}_Y \norm{e_w^n(t)}_V dt\\
	&+
	\beta_{3\varphi} \norm{K} \int_0^{t'} \norm{ \mathcal{S}_\varphi  w^{n-1}(t) - \mathcal{S}_\varphi  w^{n-2}(t)}_X\norm{e_w^n(t)}_V dt \\
	&+  \beta_{4\varphi} \norm{K}\norm{M} \int_0^{t'} \norm{e_w^{n-1}(t)}_V \norm{e_w^n(t)}_V dt\\
        &+  \beta_{5\varphi} \norm{K} \norm{M}\int_0^{t'} \norm{\alpha^{n-2}(t)}_Y \norm{e_w^{n-1}(t) }_V \norm{e_w^n(t)}_V dt\\
	&+  \beta_{6\varphi} \norm{K} \norm{M}   \int_0^{t'} \norm{w^{n-1}(t)}_V \norm{ \mathcal{S}_\varphi  w^{n-1}(t) - \mathcal{S}_\varphi  w^{n-2}(t)}_X\norm{e_w^n(t)}_V dt\\
         &+ \beta_{7\varphi} \norm{K}  \int_0^{t'} \norm{\alpha^{n-1}(t)}_Y \norm{ \mathcal{S}_\varphi  w^{n-1}(t) - \mathcal{S}_\varphi  w^{n-2}(t)}_X\norm{e_w^n(t)}_V dt
        \\
        &+
	\bar{m}_{j} \norm{N} \int_0^{t'} \norm{ \mathcal{S}_j w^{n-1}(t) - \mathcal{S}_j w^{n-2}(t)}_X\norm{e_w^n(t)}_V dt\\
        &=: I_{1,1} + I_{1,2} + I_{1,3} + I_{1,4} + I_{1,5} + I_{1,6} + I_{1,7} + I_{1,8} + I_{1,9}  
\end{align*}
for a.e. $t'\in (0,T)$. We may apply the Cauchy-Schwarz inequality to $I_{1,1}$, $I_{1,2}$, $I_{1,4}$ , $I_{1,5}$, and $I_{1,9}$. For $I_{1,3}$ and $I_{1,6}+I_{1,8}$, we respectively apply H\"{o}lder's inequality with $\frac{1}{2} + \frac{1}{\infty}+ \frac{1}{2} = 1$ and $ \frac{1}{\infty}+\frac{1}{2} +\frac{1}{2} = 1$. To treat $I_{1,7}$, we observe by the Cauchy-Schwarz inequality and  \hyperref[assumptionS1]{$\H{\mathcal{S}_\varphi}$}\ref{list:S_hist11} that 
\begin{align}
    \label{eq:S_hist_est2}
		\norm{\mathcal{S}_\varphi w^{n-1}(t) - \mathcal{S}_\varphi  w^{n-2}(t)}_X^2 &\leq c_{\mathcal{S}_\varphi }^2 T 	\int_0^t  \norm{e_w^{n-1}(s)}_V^2 ds
\end{align}
for a.e. $t \in (0,T)$. Therefore, we may use H\"{o}lder's inequality with $\frac{1}{2}+ \frac{1}{\infty} +\frac{1}{2} = 1$. Similarly, by \hyperref[assumptionR]{$\H{\mathcal{R}}$}\ref{list:S_hist} and \hyperref[assumptionS2]{$\H{\mathcal{S}_j}$}\ref{list:S_hist22}, respectively, we obtain
\begin{align}
	\label{eq:R_hist_est}
	\norm{\mathcal{R}w^{n-1}(t) - \mathcal{R}w^{n-2}(t)}_{V^\ast}^2 &\leq c_\mathcal{R}^2 T \int_0^t  \norm{e_w^{n-1}(s)}_V^2 ds 
\\
	 \label{eq:S_hist_est1}
	\norm{\mathcal{S}_jw^{n-1}(t) - \mathcal{S}_j w^{n-2}(t)}_X^2 &\leq c_{\mathcal{S}_j}^2 T 	\int_0^t  \norm{e_w^{n-1}(s)}_V^2 ds 
\end{align}
for a.e. $t \in (0,T)$. In addition, using that $w^n(0) = w^{n-1}(0)$ and the smallness-assumption \eqref{eq:assumptionbound_mA_max}, then 
\begin{align*}
  (m_A- m_j \norm{N}^2)\int_0^{t'}  \norm{e_w^n(t)}_V^2 dt  \leq I_1
\end{align*}
for all $t' \in [0,T]$. Gathering the above and dividing by $\norm{e_w^n}_{L^2_TV}$, we have
\begin{align}\label{eq:I_2123}
	I_2 &:= 	(m_A -m_j \norm{N}^2 ) \bigg[ \int_0^{t'} \norm{e_w^n(t)}^2_V dt \bigg]^{1/2}\\
	&\leq T^k( c_\mathcal{R} + \beta_{3\varphi}
	\norm{K}  c_{\mathcal{S}_\varphi } + \Bar{m}_j \norm{N} c_{\mathcal{S}_j}   + \beta_{7\varphi}\norm{K}c_{\mathcal{S}_\varphi}\norm{\alpha^{n-1}}_{L^\infty_{T}Y}) \notag \\
 &\times \bigg[ 	\int_0^{t'}\int_0^t  \norm{e_w^{n-1}(s)}_V^2 ds dt \bigg]^{1/2}  \notag \\
	&+(\bar{m}_{j}\norm{N}+\beta_{2\varphi}\norm{K})\bigg[ \int_0^{t'}\norm{e_\alpha^{n-1}(t)}_Y^2dt \bigg]^{1/2} \notag \\
    &+ \beta_{1\varphi} \norm{K}   \norm{M} \norm{w^{n-1}}_{L^2_TV}
 \norm{e_\alpha^{n-1}}_{L^\infty_{t'}Y} \notag\\
	&+  ( \beta_{4\varphi}+\beta_{5\varphi} \norm{\alpha^{n-2}}_{L^\infty_{T}Y})\norm{K}\norm{M}  \bigg[ \int_0^{t'} \norm{ e_w^{n-1}(t) }^2_V dt\bigg]^{1/2} \notag\\
       &+ T^k\beta_{6\varphi} c_{\mathcal{S}_\varphi} \norm{K} \norm{M}  \norm{w^{n-1}}_{L^2_{T}V}  \bigg[ 	\int_0^{t'}  \norm{e_w^{n-1}(t)}_V^2 dt \bigg]^{1/2} \notag \\
  	&=: I_{2,1} + I_{2,2} + I_{2,3} + I_{2,4} + I_{2,5} \notag
\end{align}
for all $t'\in [0,T]$ and $k\geq 1/2$. Next, subtracting \eqref{eq:algorithm12} for two iterations at the levels  $n-1$ and $n-2$. Utilizing Minkowski's inequality and \hyperref[assumptionG]{$\H{\mathcal{G}}$}\ref{list:G_Lipschitz} yields
\begin{align}\label{eq:est_alpha123}
	\norm{e_\alpha^{n-1}(t)}_Y &\leq L_\mathcal{G} \int_0^t \norm{e_\alpha^{n-1}(s)}_Y ds + L_\mathcal{G} \norm{M} \int_0^t \norm{e_w^{n-1}(s)}_V ds
\end{align}
for a.e. $t\in(0,T)$. Applying a standard Gr\"{o}nwall argument and the Cauchy-Schwarz inequality  reads
\begin{align}\label{eq:etimate_alpha}
	\norm{e_\alpha^{n-1}(t)}_Y 
	&\leq cT^k(1+cT\mathrm{e}^{cT})\Big[ \int_0^t\norm{e_w^{n-1}(s)}^2_V ds\Big]^{1/2}
\end{align}
for a.e. $t \in (0,T)$ and $k\geq 1/2$. From \eqref{eq:uniformlybounded}, $I_{2,4}$ becomes
\begin{align*}
    I_{2,4} &\leq (\beta_{4\varphi}+\beta_{5\varphi} \norm{\alpha_0}_Y)\norm{K}\norm{M} \bigg[ \int_0^{t'} \norm{ e_w^{n-1}(t) }^2_V dt\bigg]^{1/2}\\
    &+ T^k c(\norm{(f,w_0,\alpha_0)}_{L^2_TV^\ast \times V\times Y})
    \bigg[ \int_0^{t'} \norm{ e_w^{n-1}(t) }^2_V dt\bigg]^{1/2}\\
    &=: I_{2,4,1} + I_{2,4,2}.
\end{align*}
for some $k\geq 1/2$.  Combining $I_{2,4,2}$ and $I_{2,5}$, we define
\begin{align*}
    I_{2,6} := T^k (\beta_{6\varphi} c_{\mathcal{S}_\varphi}  \norm{w^{n-1}}_{L^2_{T}V} +  c(\norm{(f,w_0,\alpha_0)}_{L^2_TV^\ast \times V\times Y}) )\norm{K}\norm{M}\bigg[ \int_0^{t'} \norm{ e_w^{n-1}(t) }^2_V dt\bigg]^{1/2}
\end{align*}
for some $k\geq 1/2$. 
Accordingly, we have that
\begin{align*}
    I_2 \leq  I_{2,1} + I_{2,2} + I_{2,3} + I_{2,4,1} + I_{2,6}.
\end{align*}
Applying Young's inequality to $I_{2,4,1}$ and $I_{2,1} + I_{2,2} + I_{2,3}  + I_{2,6}$ and then the arithmetic-quadratic mean inequality to the latter term, we obtain 
\begin{align*}
	(I_2)^2 &\leq (I_{2,1} + I_{2,2} + I_{2,3} + I_{2,4,1} + I_{2,6} )^2 \\
	&\leq  2(I_{2,4,1})^2 + 2(I_{2,1}+ I_{2,2} + I_{2,3}+I_{2,6})^2  \notag\\
        &\leq 2(I_{2,4,1})^2 + 8\Big[(I_{2,1})^2 +  (I_{2,2})^2 + (I_{2,3})^2+(I_{2,6})^2\Big].
        \notag
\end{align*}
From \eqref{eq:etimate_alpha} and Young's inequality, the terms $(I_{2,2})^2$ and $(I_{2,3})^2$ becomes
\begin{align*}
    (I_{2,2})^2 &\leq T^k c(1+cT\mathrm{e}^{2cT})
   \int_0^{t'} \norm{e_w^{n-1}(t)}_V^2 dt  \\
    (I_{2,3})^2 &\leq  T^k c(1+cT^2\mathrm{e}^{2cT})
    \int_0^{t'} \norm{e_w^{n-1}(t)}_V^2 dt 
\end{align*}
for some $k\geq 1/2$. Gathering the above estimates and noting that $m_A - m_j \norm{N}^2>0$ by the smallness-assumption \eqref{eq:assumptionbound_mA_max} reads
\begin{align}\label{eq:estimate_before_iteration}
   I_3 &:= \int_0^{t'}  \norm{e_w^{n}(t)}_V^2 dt 
    \\ \notag
    &\leq 
     T^k \frac{c}{(m_A -m_j \norm{N}^2)^2} 
    \int_0^{t'}\int_0^t  \norm{e_w^{n-1}(s)}_V^2 ds dt  \\  \notag
    &+ \frac{2(\beta_{4\varphi} + \beta_{5\varphi}\norm{\alpha_0}_Y)^2 \norm{K}^2 \norm{M}^2}{(m_A -m_j \norm{N}^2)^2}  \int_0^{t'}\norm{e_w^{n-1}(t)}^2_V dt \\ \notag
    &+T^k\frac{c(1+cT^2\mathrm{e}^{2cT})}{(m_A -m_j \norm{N}^2)^2} \int_0^{t'}\norm{e_w^{n-1}(t) }^2_V dt    \\  
    &=: \Big(T^kR + \frac{2(\beta_{4\varphi} + \beta_{5\varphi}\norm{\alpha_0}_Y)^2 \norm{K}^2 \norm{M}^2}{(m_A -m_j \norm{N}^2)^2} \Big)  \int_0^{t'}\norm{e_w^{n-1}(t) }^2_V dt  \notag
\end{align}
for all $t' \in [0,T]$ and some $k\geq 1/2$. Here, 
\begin{align*}
    R &= \frac{c(\norm{(f,w_0,\alpha_0)}_{L^2_TV^\ast\times V \times Y})}{(m_A - m_j\norm{N}^2)^2}.
\end{align*} 
The aim is to get the right-hand side of \eqref{eq:estimate_before_iteration} to go to zero as $n\rightarrow \infty$. We will combine the smallness condition \eqref{eq:assumptionbound_mA_max} and the assumption on the final time $T$. \\
\indent
Iterating over $n\in \mathbb{Z}_+$, we have that
\begin{align*}
   I_3 \leq  c\bigg(T^kR + \frac{2(\beta_{4\varphi} + \beta_{5\varphi}\norm{\alpha_0}_Y)^2 \norm{K}^2 \norm{M}^2}{(m_A -m_j \norm{N}^2)^2} \bigg)^n (\norm{w^1}_{L^2_TV}^2 + \norm{w_0}_V^2)
\end{align*}
for some $k\geq 1/2$. Choosing $T>0$ such that
\begin{align*}
    T^k \sim \frac{1}{R}
\end{align*}
small enough for some $k\geq 1/2$,  the smallness-assumption \eqref{eq:assumptionbound_mA_max}
implies that 
\begin{align}\label{eq:sometimething}
    T^kR + \frac{2(\beta_{4\varphi} + \beta_{5\varphi}\norm{\alpha_0}_Y)^2 \norm{K}^2 \norm{M}^2}{(m_A -m_j \norm{N}^2)^2} <1.
\end{align}
Hence, \eqref{eq:estiamte_w_1} and then passing the limit $n\rightarrow \infty$ gives us
\begin{align*}
     \lim_{n\rightarrow\infty} c \bigg(T^kR + \frac{2(\beta_{4\varphi} + \beta_{5\varphi}\norm{\alpha_0}_Y)^2 \norm{K}^2 \norm{M}^2}{(m_A -m_j \norm{N}^2)^2} \bigg)^n =0
\end{align*}
as desired.
\\
\indent
Consequently, iterating over $n \in \mathbb{Z}_+$ in \eqref{eq:estimate_before_iteration}, and then passing the limit $n \rightarrow \infty$ gives us that $\{w^n\}_{n\geq1}$ is a Cauchy sequence in $L^2_TV$, and \eqref{eq:etimate_alpha} implies that $\{\alpha^n\}_{n\geq 1}$ is a Cauchy sequence in $C([0,T];Y)$. Moreover, $\{\mathcal{R}w^n\}_{n\geq 1}$ is a Cauchy sequence in $L^2_TV^\ast$ by \eqref{eq:R_hist_est}. Similarly, $\{\mathcal{S}_\varphi w^n\}_{n\geq 1}$ is a Cauchy sequence in $L^2_TX$ by \eqref{eq:S_hist_est2}, and $\{\mathcal{S}_j w^n\}_{n\geq 1}$ is a Cauchy sequence in $L^2_TX$ by \eqref{eq:S_hist_est1}. Concluding the proof.
\end{proof}

\begin{cor}\label{cor:cauchysequences}
	Let $w^0 = w_0 \in V$ and $\alpha^0 = \alpha_0 \in Y$. 
	Under the hypothesis of Theorem \ref{thm:mainresult} with $\beta_{1\varphi} = \beta_{4\varphi} = \beta_{5\varphi}= \beta_{6\varphi} = \beta_{7\varphi} = 0$, let $\{(w^n,\alpha^n)\}_{n\geq 1} \subset \mathcal{W}^{1,2}_T \times C([0,T];Y)$ be the solution of \eqref{eq:algorithm11}-\eqref{eq:algorithm12} for any time $T>0$. Then, $\{(w^n,\alpha^n)\}_{n\geq 1}$ is a Cauchy sequence in $L^2_TV \times C([0,T];Y)$. In addition, $\{\mathcal{R}w^n\}_{n\geq 1}$, $\{\mathcal{S}_\varphi w^n\}_{n\geq 1}$, and $\{\mathcal{S}_jw^n\}_{n\geq 1}$ are Cauchy sequences in $L^2_TV^\ast$, $L^2_TX$ and $L^2_TX$, respectively.
\end{cor}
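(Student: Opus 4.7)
The plan is to revisit the proof of Proposition \ref{prop:cauchysequences}, setting $\beta_{1\varphi} = \beta_{4\varphi} = \beta_{5\varphi} = \beta_{6\varphi} = \beta_{7\varphi} = 0$ from the outset, and observing that in this reduced setting the only troublesome ``non-integrated'' contribution to the estimate on $e_w^n$ disappears. More precisely, inspecting \eqref{eq:I_2123}, the terms $I_{2,3}$, $I_{2,4}$, $I_{2,5}$, and the $\beta_{7\varphi}$-piece of $I_{2,1}$ all vanish, so we are left with $I_2 \leq I_{2,1} + I_{2,2}$ where both surviving terms control $\norm{e_w^n}_{L^2_{t'}V}$ by an integral in $t'$ of integrals of $\norm{e_w^{n-1}(s)}_V^2$ over $[0,t]$. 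Crucially, the large term $\frac{2(\beta_{4\varphi}+\beta_{5\varphi}\norm{\alpha_0}_Y)^2\norm{K}^2\norm{M}^2}{(m_A-m_j\norm{N}^2)^2}\int_0^{t'}\norm{e_w^{n-1}(t)}_V^2\,dt$ that forced the smallness of $T$ in \eqref{eq:sometimething} is now absent.

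Carrying this through, I would derive, for an arbitrary but fixed $T>0$, a Volterra-type recursion of the form
\begin{equation*}
\int_0^{t'}\norm{e_w^n(t)}_V^2\,dt \;\leq\; C\int_0^{t'}\!\!\int_0^{t}\norm{e_w^{n-1}(s)}_V^2\,ds\,dt \quad\text{for all } t'\in[0,T],
\end{equation*}
where $C = C(T, c_\mathcal{R}, c_{\mathcal{S}_\varphi}, c_{\mathcal{S}_j}, \beta_{2\varphi}, \beta_{3\varphi}, \bar{m}_j, m_A, m_j, \norm{N}, \norm{K}, L_\mathcal{G})$ is a finite constant depending on $T$ but not on $n$. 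The bound on $I_{2,2}$ uses \eqref{eq:etimate_alpha} (which itself bounds $\norm{e_\alpha^{n-1}}_Y^2$ in terms of $\int_0^{t}\norm{e_w^{n-1}(s)}_V^2\,ds$), and hence contributes an analogous iterated-integral term rather than a pointwise one.

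The main step is then a standard inductive argument: setting $\Phi_n(t') := \int_0^{t'}\norm{e_w^n(t)}_V^2\,dt$, the recursion $\Phi_n(t')\leq C\int_0^{t'}\Phi_{n-1}(t)\,dt$ together with the uniform bound \eqref{eq:uniformlybounded} (which in particular controls $\Phi_1(T)$) yields
\begin{equation*}
\Phi_n(T) \;\leq\; \frac{(CT)^{n-1}}{(n-1)!}\,\Phi_1(T) \;\longrightarrow\; 0 \quad \text{as } n\to\infty,
\end{equation*}
so $\{w^n\}_{n\geq 1}$ is Cauchy in $L^2_TV$ for \emph{any} finite $T>0$. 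The Cauchy property of $\{\alpha^n\}$ in $C([0,T];Y)$ follows from \eqref{eq:etimate_alpha}, and the Cauchy properties of $\{\mathcal{R}w^n\}$, $\{\mathcal{S}_\varphi w^n\}$, $\{\mathcal{S}_j w^n\}$ follow from \eqref{eq:R_hist_est}, \eqref{eq:S_hist_est2}, \eqref{eq:S_hist_est1}, respectively.

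The only genuinely delicate point is verifying that, after dropping the indicated $\beta$'s, \emph{every} surviving contribution to $I_2$ can be absorbed into a double-integral of $\norm{e_w^{n-1}}_V^2$; the $I_{2,2}$ term requires the uniform bound on $\norm{w^{n-1}}_{L^2_TV}$ from \eqref{eq:uniformlybounded} (now valid globally in $T$, since the recursion for $\norm{w^n}$ obtained in Step \ref{convergence}.\ref{welldefinedscehem} no longer needs a smallness condition on $T$ when the problematic $\beta$'s vanish) together with \eqref{eq:etimate_alpha}. Once this bookkeeping is done, the Volterra iteration delivers the conclusion and no restriction on $T$ is imposed.
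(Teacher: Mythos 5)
Your proposal is correct and follows essentially the same route as the paper's own proof: with $\beta_{1\varphi}=\beta_{4\varphi}=\beta_{5\varphi}=\beta_{6\varphi}=\beta_{7\varphi}=0$ the estimate \eqref{eq:I_2123} reduces to the double-integral (Volterra) recursion plus the $e_\alpha^{n-1}$ term handled via \eqref{eq:est_alpha123}/\eqref{eq:etimate_alpha}, and iterating yields the factorial decay $\frac{(CT)^{n-1}}{(n-1)!}\to 0$ for any fixed $T>0$, exactly as in Appendix \ref{appendix:cor_cauchy}. The remaining Cauchy properties are obtained from \eqref{eq:R_hist_est}, \eqref{eq:S_hist_est2}, \eqref{eq:S_hist_est1} just as in the paper.
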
 
The proof  of Corollary \ref{cor:cauchysequences} can be found in Appendix \ref{appendix:cor_cauchy}.
\\
\\
\indent\textbf{Step \ref{convergence}.\ref{weaksol}} \rsubtask{weaksol} \textit{(Passing the limit in \eqref{eq:algorithm11}-\eqref{eq:algorithm12})}. From Proposition \ref{prop:cauchysequences}, it follows as $n\rightarrow \infty$ that
\begin{subequations}
	\begin{align}\label{eq:strong_convergences1}
		w^n \rightarrow w \text{ strongly in } &L^2_TV,
		&\alpha^n \rightarrow \alpha \text{ strongly in } C([0,T];Y),
		\\
		\mathcal{S}_\varphi w^n \rightarrow \mathcal{S}_\varphi w \text{ strongly in } &L^2_TX, &\mathcal{S}_j w^n \rightarrow \mathcal{S}_j  w \text{ strongly in } L^2_TX, \\
			\mathcal{R}w^n \rightarrow \mathcal{R}w \text{ strongly in }& L^2_TV^\ast.
		\label{eq:strong_convergences2}
	\end{align}
\end{subequations}
We are now in a position to pass the limit in \eqref{eq:algorithm11}-\eqref{eq:algorithm12}. First, by \eqref{eq:uniformlybounded}, we have that $\{w^n\}_{n\geq 1}$ and $\{\Dot{w}^n\}_{n\geq 1}$ are uniformly bounded in $L^2_TV$ and $L^2_TV^\ast$, respectively. Then, by the Eberlein–\v{S}mulian theorem, as $L^2_TV$ is a reflexive Banach space, we have, upon passing to a subsequence, that
\begin{align*}
	w^n \rightarrow w& \ \ \text{ weakly in } L^2_TV,
	&\Dot{w}^n \rightarrow \Dot{w} \ \ \text{ weakly in } L^2_TV^\ast.
\end{align*}
Now, with \eqref{eq:strong_convergences1}-\eqref{eq:strong_convergences2} in mind, we find by a proof of contradiction (assuming that the below does not hold thereby obtaining a contradiction with \eqref{eq:strong_convergences1}-\eqref{eq:strong_convergences2}) that
\begin{subequations}
	\begin{align}
		\label{eq:strong_convergences_t1}
		w^n(t) \rightarrow w(t)  \text{ strongly in } &V \ \  \text{ for a.e. }t\in (0,T),\\
		\alpha^n(t) \rightarrow \alpha(t) \text{ strongly in } &Y \ \ \text{ for a.e. }t\in (0,T),
		\label{eq:strong_convergences_talpha}\\
		\mathcal{S}_\varphi w^n(t) \rightarrow \mathcal{S}_\varphi  w(t) \text{ strongly in } &X \ \ \text{ for a.e. }t\in (0,T),
		\label{eq:strong_convergences_S2}\\
		\mathcal{S}_jw^n(t) \rightarrow  \mathcal{S}_j w(t) \text{ strongly in } &X \ \ \text{ for a.e. }t\in (0,T),
		\label{eq:strong_convergences_S1}\\
		\mathcal{R}w^n(t) \rightarrow \mathcal{R}w(t) \text{ strongly in } &V^\ast \ \ \text{ for a.e. }t\in (0,T),\label{eq:strong_convergences_tR}
	\end{align}
	as $n \rightarrow \infty$. By similar arguments, we find by \eqref{eq:uniformlybounded} that $\{w^n(t)\}_{n\geq 1}$ for a.e. $t\in (0,T)$ is uniformly bounded in $V$ (see, e.g., the first part of the proof of \cite[Lemma 13]{Zeng2018}). Since $V$ is a reflexive Banach space, 
    it follows by the Eberlein–\v{S}mulian  theorem, up to a subsequence, that $w^n(t) \rightarrow \Tilde{w}(t)$ weakly in $V$ for a.e.  $t\in (0,T)$. By uniqueness of limits, we have by \eqref{eq:strong_convergences_t1} that $\Tilde{w}(t) = w(t)$ for a.e. $t\in (0,T)$. Following the same reasoning shows that $\{\Dot{w}^n(t)\}_{n\geq 1}$ for a.e. $t\in (0,T)$ is uniformly bounded in $V^\ast$. Thus, as $n \rightarrow \infty$
	\begin{align}\label{eq:weak_wdot}
		\Dot{w}^n(t) \rightarrow \Dot{w}(t) \text{ weakly in } &V^\ast \ \  \text{ for a.e. }t\in (0,T).
	\end{align}
\end{subequations}
From \eqref{eq:strong_convergences_t1} and \hyperref[assumptionA]{$\H{A}$}\ref{list:A_demicont}, we get
\begin{equation}\label{eq:Aconvergenceweak}
 	\lim_{n\rightarrow \infty} \inner{A(t, w^{n}(t)) }{v}  = \inner{A(t, w(t))}{v} \ \ \text{ for all } v\in V, \text{ a.e. } t\in (0,T).
\end{equation}
Moreover, we utilize \hyperref[assumptionj]{$\H{j}$}\ref{list:j_convergence}, \eqref{eq:strong_convergences_t1}-\eqref{eq:strong_convergences_talpha}, \eqref{eq:strong_convergences_S1}, and \hyperref[assumptionMNK]{$\H{MNK}$} to obtain
\begin{align*}
	\limsup_{n\rightarrow \infty} \ &	j^\circ(t,\alpha^{n-1}(t), \mathcal{S}_jw^{n-1}(t),Nw^{n}(t);Nv-Nw^{n}(t)) \\
	&\leq j^\circ(t,\alpha(t),\mathcal{S}_jw(t),Nw(t);Nv-Nw(t))
\end{align*}
for all $v \in V$, a.e. $t\in(0,T)$. Next, by \eqref{eq:strong_convergences_t1}, \eqref{eq:strong_convergences_tR}-\eqref{eq:weak_wdot}, \eqref{eq:Aconvergenceweak}, and the Cauchy-Schwarz inequality, we can find that 
\begin{align*}
	\lim_{n\rightarrow \infty} \inner{A(t, w^{n}(t)) }{v- 		w^{n}(t) }  &= \inner{A(t, w(t))}{v- w(t)}, \\
	\lim_{n\rightarrow \infty} 		\inner{\Dot{w}^{n}(t)}{v-w^{n}(t)} &= \inner{\Dot{w}(t)}{v-w(t)},\\
	\lim_{n\rightarrow \infty} 		\inner{\mathcal{R}w^{n}(t)}{v-w^{n}(t)} &= \inner{\mathcal{R}w(t)}{v-w(t)},\\
	\lim_{n\rightarrow \infty}\inner{f(t)}{v-w^{n}(t)} &= 	\inner{f(t)}{v-w(t)}
\end{align*}
for all $v\in V$, a.e. $t\in (0,T)$.
Furthermore, let $\mathbb{E} = Y \times X \times U$ be equipped with the norm $\norm{(x,y,z)}_{\mathbb{E}} = \norm{x}_Y + \norm{y}_X + \norm{z}_U$. We wish to deduce that $\varphi(t,\cdot,\cdot,\cdot,\cdot)$ is continuous on $\mathbb{E} \times Z$ for a.e. $t\in (0,T)$ by applying Lemma \ref{lemma:phicontinuous}. The conditions \ref{list:phicont1} and \ref{list:phicont3} are directly obtained by \hyperref[assumptionphi]{$\H{\varphi}$}\ref{list:phi_cont},\ref{list:phi_bounded}. Lastly, we find that condition  \ref{list:phicont2} holds by Proposition \ref{prop:convex_lsc_implies_locallyLipschitz}. Indeed, as
$\varphi$ is lower semicontinuous and convex in its last argument, by \hyperref[assumptionphi]{$\H{\varphi}$}\ref{list:phi_convex_lsc}, and the fact that $\varphi$ is finite (does not take the values $\pm\infty$), it then follows by \eqref{eq:strong_convergences_t1}-\eqref{eq:strong_convergences_S2} and \hyperref[assumptionMNK]{$\H{MNK}$} that
\begin{align*}
	&\lim_{n\rightarrow \infty} \big[\varphi(t,\alpha^{n-1}(t),\mathcal{S}_\varphi w^{n-1}(t),Mw^{n-1}(t),Kv) \\
	&\qquad- \varphi(t,\alpha^{n-1}(t),\mathcal{S}_\varphi w^{n-1}(t),Mw^{n-1}(t),Kw^{n-1}(t)) \big]\\
	&\qquad= \varphi(t,\alpha(t),\mathcal{S}_\varphi  w(t),Mw(t),Kv) - \varphi(t,\alpha(t),\mathcal{S}_\varphi  w(t),Mw(t),Kw(t))
\end{align*}
for all $v\in V$, a.e. $t\in(0,T)$.
Next, we have by \hyperref[assumptionG]{$\H{\mathcal{G}}$}\ref{list:G_Lipschitz}, that $\mathcal{G}(t,\cdot,\cdot)$ is continuous on $Y \times U$ for a.e. $t\in (0,T)$. From \eqref{eq:est_on_G}, \hyperref[assumptionG]{$\H{\mathcal{G}}$}\ref{list:G_00}, and \eqref{eq:uniformlybounded}, we obtain the desired bound, i.e., integrable and independent of $n$.
Combining
\eqref{eq:strong_convergences_t1}, \eqref{eq:strong_convergences_talpha}, and \hyperref[assumptionMNK]{$\H{MNK}$}, we may apply the dominated convergence theorem to conclude
\begin{align*}
	\lim_{n\rightarrow \infty}\int_0^t \mathcal{G}(s,\alpha^n(s),Mw^n(s))ds 
	= \int_0^t \mathcal{G}(s,\alpha(s),Mw(s))ds
\end{align*}
for all $t\in [0,T]$. Thus, passing the upper limit $n\rightarrow \infty$ in \eqref{eq:algorithm11}-\eqref{eq:algorithm12} gives us that $(w,\alpha) \in \mathcal{W}^{1,2}_T \times C([0,T];Y)$ is indeed a solution to Problem \ref{prob:fullproblem}.
\\
\indent

\textbf{Step \ref{continuous}\rtask{continuous}} \textit{(Continuous dependence on initial data)}.
For simplicity of notation, we define $\mathbb{U} = L^2_{T/2}V \times C([0,T/2];Y)$ equipped with the norm $\norm{(x,y)}_\mathbb{U}^2 = \norm{x}_{L^2_{T/2}V}^2 +\norm{y}_{L^\infty_{T/2}Y}^2$.
Consider two sets of initial data $(w_{01},\alpha_{01}),(w_{02},\alpha_{02}) \in V \times Y$, we aim to prove that for all $\lambda >0$ there exists a  $\delta>0$, which will be fixed later, such that
 \begin{subequations}
     \begin{align}\label{eq:continuous1}
		\norm{(w_{01},\alpha_{01})-(w_{02},\alpha_{02})}_{V \times Y} < \delta
	\end{align}
	implies
	\begin{align}\label{eq:continuous2}
		\norm{(w_1,\alpha_1) - (w_2,\alpha_2)}_\mathbb{U} < \lambda.
	\end{align}
  \end{subequations}
    We consider the time interval $(0,T/2)$ with $T$ as in \eqref{eq:times_T} to guarantee that $\norm{(w_{01},\alpha_{01})-(w_{02},\alpha_{02})}_{V \times Y} < \delta$. 
  Here,  $(w_1,\alpha_1),(w_2,\alpha_2) \in \mathcal{W}^{1,2}_{T/2} \times C([0,T/2];Y)$ are two solutions to Problem \ref{prob:fullproblem} corresponding to $(w_{01},\alpha_{01}),(w_{02},\alpha_{02})$. That is,  for $i=1,2$, $(w_i,\alpha_i) \in \mathcal{W}^{1,2}_{T/2} \times C([0,T/2];Y)$ is the solution to
\begin{subequations}
	\begin{align*}
	&\alpha_i(t) = \alpha_{0i} + \int_0^t \mathcal{G}(s,\alpha_i(s),Mw_i(s))ds,\\
	&\inner{\Dot{w}_i (t) + A(t, w_i(t))- f(t) + \mathcal{R}w_i(t) }{v- w_i(t) }   
 \\
	&+ \varphi(t,\alpha_i(t), \mathcal{S}_\varphi  w_i(t), Mw_i(t), Kv)   -  \varphi(t, \alpha_i(t), \mathcal{S}_\varphi  w_i(t), Mw_i(t), Kw_i(t)) \notag\\
	&+ j^\circ(t,\alpha_i(t),\mathcal{S}_j w_i(t), Nw_i(t); Nv -Nw_i(t))\geq 0
	\notag
	\end{align*}
	for all $v\in V$, a.e. $t\in (0,T/2)$ with
	\begin{align*}
	w_i(0) = w_{0i}.
	\end{align*}
\end{subequations}
        Let $\{(w_i^n,\alpha^n_i)\}_{n\geq 1} \subset \mathcal{W}_{T/2}^{1,2} \times C([0,T/2];Y)$ be two solutions of \eqref{eq:algorithm11}-\eqref{eq:algorithm12} corresponding to the data $(w_{0i},\alpha_{0i}) \in  V \times Y$ for $i=1,2$. Then observe that
        \begin{align*}
            &\norm{(w_1,\alpha_1) - (w_2,\alpha_2)}_{\mathbb{U}}\\
            &\leq \norm{(w_1^n,\alpha_1^n) - (w_1,\alpha_1)}_{\mathbb{U}} +\norm{(w_2^n,\alpha_2^n) - (w_2,\alpha_2)}_{\mathbb{U}} +\norm{(w_1^n,\alpha_1^n) - (w_2^n ,\alpha_2^n )}_{\mathbb{U}}\\
            &=: B_1 + B_2 + B_3.
        \end{align*}
        By Step \ref{convergence}.\ref{passinglimit}, we have that $(w^n,\alpha^n) \rightarrow (w,\alpha)$ strongly in $\mathbb{U}$ when $n\rightarrow \infty$. Consequently, $B_1$ and $B_2$ must at least satisfy the estimate
        \begin{align*}
            B_1 + B_2 < \delta.
        \end{align*}
        While for $B_3$, we need the continuity of the flow map with respect to \eqref{eq:algorithm11}-\eqref{eq:algorithm12}. We add the two inequalities and choose $v=w^n_k(t)$ for a.e. $t\in(0,T/2)$ and $k=1,2$, $i\neq k$. Let $(w^{n-1}_1,\alpha^{n-1}_1),(w^{n-1}_2,\alpha^{n-1}_2) \in  \mathcal{W}^{1,2}_{T/2} \times C([0,T/2];Y)$ be given, and $W^0 = W_0 := w_{01} - w_{02}  \in V$ and $\Sigma^0 = \Sigma_0 := \alpha_{01} - \alpha_{02}\in Y$. Then $W^n  = w^n_1-w^n_2$ and $\Sigma^n= \alpha_1^n - \alpha_2^n$ solves
\begin{subequations}
\begin{align}\label{eq:ineq_W_n}
	&\inner{\Dot{W}^n(t) + A(t, w^n_1(t)) - A(t, w^n_2(t)) + \mathcal{R}w^{n-1}_1(t)-\mathcal{R}w^{n-1}_2(t) }{W^n(t)} \\
    &+ \varphi(t,\alpha^{n-1}_1(t), \mathcal{S}_\varphi w^{n-1}_1(t), Mw^{n-1}_1(t), Kw^n_2(t)) \notag\\
    &-  \varphi(t,\alpha^{n-1}_1(t), \mathcal{S}_\varphi w^{n-1}_1(t), Mw^{n-1}_1(t), Kw^n_1(t)) \notag\\
 &+ \varphi(t,\alpha^{n-1}_2(t), \mathcal{S}_\varphi w^{n-1}_2(t), Mw^{n-1}_2(t), Kw^n_1(t)) \notag \\
 &-  \varphi(t,\alpha^{n-1}_2(t), \mathcal{S}_\varphi w^{n-1}_2(t), Mw^{n-1}_2(t), Kw^n_2(t)) \notag\\
 &+j^\circ(t,\alpha^{n-1}_1(t), \mathcal{S}_j w^{n-1}_1(t), Nw^n_1(t); -W^n(t) ) \notag\\
 &+j^\circ(t,\alpha^{n-1}_2(t),  \mathcal{S}_j w^{n-1}_2(t), Nw^n_2(t); W^n(t) ) 
 \geq 0 \notag
\end{align}
for a.e. $t\in (0,T/2)$ with
\begin{equation}\label{eq:bc_W_n}
W^n (0) = W_0 ,
\end{equation}
and
\begin{equation}\label{eq:ineq_A_n}
    \Sigma^n (t) = \Sigma_0 + \int_0^t[\mathcal{G}(s,\alpha^n_1(s),Mw^n_1(s))- \mathcal{G}(s,\alpha^n_2(s),Mw^n_2(s))]ds
\end{equation}
\end{subequations}
for a.e. $t\in (0,T/2)$. To find the desired estimates, we need the following lemma.
\begin{lem}\label{lemma:est_W_n}
    Let $w^0_i = w_{0i} \in V$ and $\alpha^0_i = \alpha_{0i} \in Y$ for $i=1,2$. Under the assumptions of Theorem \ref{thm:mainresult}, let $\{(W^n,\Sigma^n)\}_{n\geq 1} \subset \mathcal{W}_{T/2}^{1,2} \times C([0,T/2];Y)$ be the solution to \eqref{eq:ineq_W_n}-\eqref{eq:ineq_A_n}. Then  
 \begin{align}\label{eq:estU}
     & \norm{W^n}_{L^2_{T/2}V}^2 \\
	&\leq c(\norm{W_0}_V^2 
	+  \norm{\Sigma^{n-1}}_{L^\infty_{T/2}Y}^2
         + \norm{w^{n-1}_1}_{L^2_{T/2}Y}^2 \norm{\Sigma^{n-1}}_{L^\infty_{T/2}Y}^2
         \notag \\
         &+
         T^k \norm{w_1^{n-1}}_{L^2_{T/2}V}^2\norm{W^{n-1}}_{L^2_{T/2}V}^2
             +  T^k \norm{\alpha_1^{n-1}}_{L^\infty_{T/2}Y}^2 \norm{W^{n-1}}_{L^2_{T/2}V}^2 )
            \notag 
             \\
             &+    \frac{\beta_{4\varphi} \norm{K} \norm{M}+ \beta_{5\varphi} \norm{K} \norm{M}\norm{\alpha_2^{n-1}}_{L^\infty_{T/2}Y}}{m_A - m_j\norm{N}^2} 
            \norm{W^{n-1}}_{L^2_{T/2}V}^2  \\
            &+ \frac{\beta_{4\varphi} \norm{K} \norm{M}+ \beta_{5\varphi} \norm{K} \norm{M}\norm{\alpha_2^{n-1}}_{L^\infty_{T/2}Y}}{m_A - m_j\norm{N}^2}  \norm{W^n}_{L^2_{T/2}V}^2\notag
 \end{align}
 for all $n\in \mathbb{Z}_+$ and some $k\geq 1/2$.
\end{lem}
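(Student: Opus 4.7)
The strategy mirrors the derivation of the Cauchy estimate \eqref{eq:estimate_before_iteration} in Step \ref{convergence}.\ref{passinglimit}, with one crucial difference: the initial data of the two solutions is no longer common, so the term $\tfrac12\|W_0\|_H^2$ produced by integration by parts must be retained. I would begin by working directly from the inequality \eqref{eq:ineq_W_n} (the tested combined form is already given). On the $A$-difference I apply the strong monotonicity \hyperref[assumptionA]{$\H{A}$}\ref{list:A_maximalmonotone} to extract $m_A\,\norm{W^n(t)}_V^2$. For the four $\varphi$-terms I invoke \hyperref[assumptionphi]{$\H{\varphi}$}\ref{list:phi_estimate} with $y_i=\alpha_i^{n-1}$, $z_i=\mathcal{S}_\varphi w_i^{n-1}$, $\tilde w_i=Mw_i^{n-1}$, $\tilde v_i=Kw_i^n$, producing seven cross-terms in $\norm{\Sigma^{n-1}}_Y$, $\norm{\mathcal{S}_\varphi w_1^{n-1}-\mathcal{S}_\varphi w_2^{n-1}}_X$, and $\norm{MW^{n-1}}_U$, each multiplied by $\norm{KW^n}_Z$. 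For the two $j^\circ$-terms I use \hyperref[assumptionj]{$\H{j}$}\ref{list:j_estimate}, producing the quadratic $m_j\norm{NW^n}_X^2$ and a linear cross-term in $\norm{\Sigma^{n-1}}_Y+\norm{\mathcal{S}_j w_1^{n-1}-\mathcal{S}_j w_2^{n-1}}_X$. The $\mathcal{R}$-term is handled by Cauchy--Schwarz.

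Next, I integrate over $(0,t')\subset(0,T/2)$ and apply the integration-by-parts formula of Proposition \ref{prop:integrationbypartsformula} to convert $\int_0^{t'}\inner{\dot W^n(t)}{W^n(t)}\,dt$ into $\tfrac12\norm{W^n(t')}_H^2-\tfrac12\norm{W_0}_H^2$. Since $m_A-m_j\norm{N}^2>0$ by the smallness condition \eqref{eq:assumptionbound_mA_max}, after absorbing the $m_j\norm{N}^2$ contribution the left-hand side dominates $(m_A-m_j\norm{N}^2)\,\norm{W^n}_{L^2_{t'}V}^2-\tfrac12\norm{W_0}_H^2$. On the right, the history-dependence estimates \eqref{eq:R_hist_est}, \eqref{eq:S_hist_est2}, and \eqref{eq:S_hist_est1} (restricted to the time interval $(0,T/2)$) convert differences of $\mathcal{R}$, $\mathcal{S}_\varphi$, and $\mathcal{S}_j$ into time integrals of $\norm{W^{n-1}}_V^2$, which together with Cauchy--Schwarz and H\"older inequalities produce the $T^k$-prefactored terms in \eqref{eq:estU}. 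The uniform bounds \eqref{eq:uniformlybounded} on $w_1^{n-1}$ and $\alpha_1^{n-1}$ are used to control the coefficients in the $\beta_{1\varphi},\beta_{6\varphi},\beta_{7\varphi}$ contributions.

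The delicate part, paralleling \eqref{eq:I_2123}--\eqref{eq:estimate_before_iteration}, is the treatment of the $\beta_{4\varphi}$ and $\beta_{5\varphi}$ contributions, which produce terms of the form $(\beta_{4\varphi}+\beta_{5\varphi}\norm{\alpha_2^{n-1}}_{L^\infty_{T/2}Y})\,\norm{K}\norm{M}\,\norm{W^{n-1}}_{L^2_{t'}V}\,\norm{W^n}_{L^2_{t'}V}$ that cannot be absorbed into the coercivity constant without care. Dividing through by $\norm{W^n}_{L^2_{t'}V}$ (after noting that the $\tfrac12\norm{W^n(t')}_H^2$ term on the left is non-negative and may be dropped), then squaring and applying Young's inequality with carefully chosen weights, splits this bilinear contribution into separate $\norm{W^n}_{L^2_{T/2}V}^2$ and $\norm{W^{n-1}}_{L^2_{T/2}V}^2$ pieces each multiplied by the ratio $(\beta_{4\varphi}+\beta_{5\varphi}\norm{\alpha_2^{n-1}}_{L^\infty_{T/2}Y})\norm{K}\norm{M}/(m_A-m_j\norm{N}^2)$, exactly as in \eqref{eq:estU}. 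The $\Sigma^{n-1}$-dependent terms are obtained directly, without invoking a Gr\"onwall argument on $\Sigma^{n-1}$, since the lemma keeps $\norm{\Sigma^{n-1}}_{L^\infty_{T/2}Y}$ explicit in its statement.

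The main obstacle is the careful bookkeeping of the seven $\varphi$-cross-terms together with the correct placement of the $\beta_{4\varphi},\beta_{5\varphi}$ coefficients in the denominator $m_A-m_j\norm{N}^2$, since this precise form is what will allow smallness condition \eqref{eq:assumptionbound_mA_max} to be invoked in the subsequent iteration-and-limit step that deduces \eqref{eq:continuous2} from \eqref{eq:continuous1}; the rest is essentially a rewriting of the Cauchy-sequence argument of Proposition \ref{prop:cauchysequences} with $W_0\neq 0$.
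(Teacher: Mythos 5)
Your overall skeleton matches the paper's proof in Appendix \ref{appendix:proof_W_n}: strong monotonicity $\H{A}$\ref{list:A_maximalmonotone}, the estimates $\H{\varphi}$\ref{list:phi_estimate} and $\H{j}$\ref{list:j_estimate} applied to the tested inequality \eqref{eq:ineq_W_n}, integration by parts over $(0,t')\subset(0,T/2)$ with the term $\tfrac12\norm{W_0}_H^2$ retained, the history-operator estimates restricted to $(0,T/2)$, and H\"older/Young to produce the $T^k$-prefactored terms. The genuine gap is in your treatment of the $\beta_{4\varphi},\beta_{5\varphi}$ cross-terms. You propose to drop $\tfrac12\norm{W^n(t')}_H^2$, divide by $\norm{W^n}_{L^2_{t'}V}$ and then square, i.e.\ the device of Proposition \ref{prop:cauchysequences}. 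That device works there precisely because the two iterates share the initial datum, so $e_w^n(0)=0$; here $W_0\neq 0$, and after dividing, the retained initial-data term becomes $\norm{W_0}_H^2/(2\norm{W^n}_{L^2_{t'}V})$, whose square is $\norm{W_0}_H^4/(4\norm{W^n}_{L^2_{t'}V}^2)$ --- this is not bounded by $c\norm{W_0}_V^2$ without an extra case distinction (e.g.\ treating $\norm{W^n}_{L^2_{t'}V}\le\norm{W_0}_H$ separately), which you do not supply. Moreover, dividing removes $\norm{W^n}$ from the $\beta$-products altogether, so squaring yields the \emph{squared} ratio $(\beta_{4\varphi}+\beta_{5\varphi}\norm{\alpha_2^{n-1}}_{L^\infty_{T/2}Y})^2\norm{K}^2\norm{M}^2/(m_A-m_j\norm{N}^2)^2$ multiplying only $\norm{W^{n-1}}_{L^2_{T/2}V}^2$, as in \eqref{eq:estimate_before_iteration}; it cannot produce, as you claim, the first-power ratio multiplying both $\norm{W^{n-1}}_{L^2_{T/2}V}^2$ and $\norm{W^n}_{L^2_{T/2}V}^2$ ``exactly as in \eqref{eq:estU}''.

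The paper avoids division entirely: it applies Young's inequality with weight $1/2$ directly to the $\beta_{4\varphi},\beta_{5\varphi}$ products, producing $\tfrac12\big(\beta_{4\varphi}+\beta_{5\varphi}\norm{\alpha_2^{n-1}}_{L^\infty_{T/2}Y}\big)\norm{K}\norm{M}\big(\norm{W^{n-1}}_{L^2_{T/2}V}^2+\norm{W^n}_{L^2_{T/2}V}^2\big)$, uses an $\epsilon$-weighted Young for all the remaining products, chooses $4\epsilon=\tfrac12(m_A-m_j\norm{N}^2)>0$ (legitimate by \eqref{eq:assumptionbound_mA_max}), and only then divides by this constant; that final division is what creates the denominator $m_A-m_j\norm{N}^2$ in \eqref{eq:estU}, with the $\norm{W^n}^2$ term deliberately left on the right to be absorbed later via the smallness condition. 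A minor further point: the lemma keeps $\norm{w_1^{n-1}}_{L^2_{T/2}V}$ and $\norm{\alpha_1^{n-1}}_{L^\infty_{T/2}Y}$ explicit in the bound; the uniform bound \eqref{eq:uniformlybounded} is invoked only after the lemma, in Step \ref{continuous}, so it should not be consumed inside the proof of \eqref{eq:estU}.
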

The proof of Lemma \ref{lemma:est_W_n} is postponed to Appendix \ref{appendix:proof_W_n}. From \eqref{eq:T_k} and \eqref{eq:uniformlybounded}, the estimate  \eqref{eq:estU} becomes
        \begin{align*}
             \norm{W^n}_{L^2_{T/2}V}^2 &\leq c(\norm{W_0}_V^2 
         +\norm{\Sigma^{n-1}}_{L^\infty_{T/2}Y}^2)\\
                  &+      \frac{\beta_{4\varphi} \norm{K} \norm{M}+ \beta_{5\varphi} \norm{K} \norm{M}\norm{\alpha_2^{n-1}}_{L^\infty_{T/2}Y}}{m_A - m_j\norm{N}^2} 
            \norm{W^{n-1}}_{L^2_{T/2}V}^2  \\
            &+ \frac{\beta_{4\varphi} \norm{K} \norm{M}+ \beta_{5\varphi} \norm{K} \norm{M}\norm{\alpha_2^{n-1}}_{L^\infty_{T/2}Y}}{m_A - m_j\norm{N}^2}  \norm{W^n}_{L^2_{T/2}V}^2\notag
        \end{align*}
        In a similar manner as we obtained \eqref{eq:alpha_estimate2}, we see from \eqref{eq:ineq_A_n}, a standard Gr\"{o}nwall argument, the Cauchy-Schwarz inequality, and Young's inequality that
        \begin{align*}
            \norm{\Sigma^{n-1}}_{L^\infty_{T/2}Y}^2 
            	\leq c(\norm{\Sigma_0}_Y^2 + T^k \norm{W^{n-1}}_{L^2_{T/2}V}^2 ).
        \end{align*}
        By induction of $n$, it follows the same procedure as in Step \ref{convergence}.\ref{welldefinedscehem} that
        \begin{align*}
             \norm{(W^n,\Sigma^n)}_{L^2_{T/2}V \times L^\infty_{T/2}Y}^2 
            &\leq c(\norm{(W_0,\Sigma_0)}_{ V \times Y}^2 ) 
        \end{align*}
        for all $n\in \mathbb{Z}_+$. 
       Consequently, we may choose a $\delta >0$ in \eqref{eq:continuous1} to obtain \eqref{eq:continuous2}.\\
\indent
\textbf{Step \ref{mainproof}\rtask{mainproof}} \textit{(Proof of Theorem \ref{thm:mainresult})}.
	We now have all the tools to prove the main theorem.
	\begin{proof}[Proof of Theorem \ref{thm:mainresult}]
		Combining Step \ref{linearization}-\ref{continuous} gives us well-posedness of Problem \ref{prob:fullproblem}.
	\end{proof}

\section{Viscoelastic frictional contact problems}\label{sec:applications}
\noindent
We will present two applications to frictional contact; the first considering contact with normal compliance, and the second contact with normal damped response. Moreover, in Section \ref{sec:rateandstate}, we introduce a first-order approximation of a rate-and-state friction law that is covered by our framework. Let $u : \Omega \times [0,T] \rightarrow \R^d$ denote the displacement, $\sigma : \Omega \times [0,T] \rightarrow \mathbb{S}^d$ the stress tensor, and $\alpha : \Gamma_C \times [0,T] \rightarrow \R$ the external state variable. In addition, $f_0$ denotes the body forces, $f_N$ the surface traction, and $\rho$ the density.

\begin{figure}[H]
	\centering
	\includegraphics[scale=1]{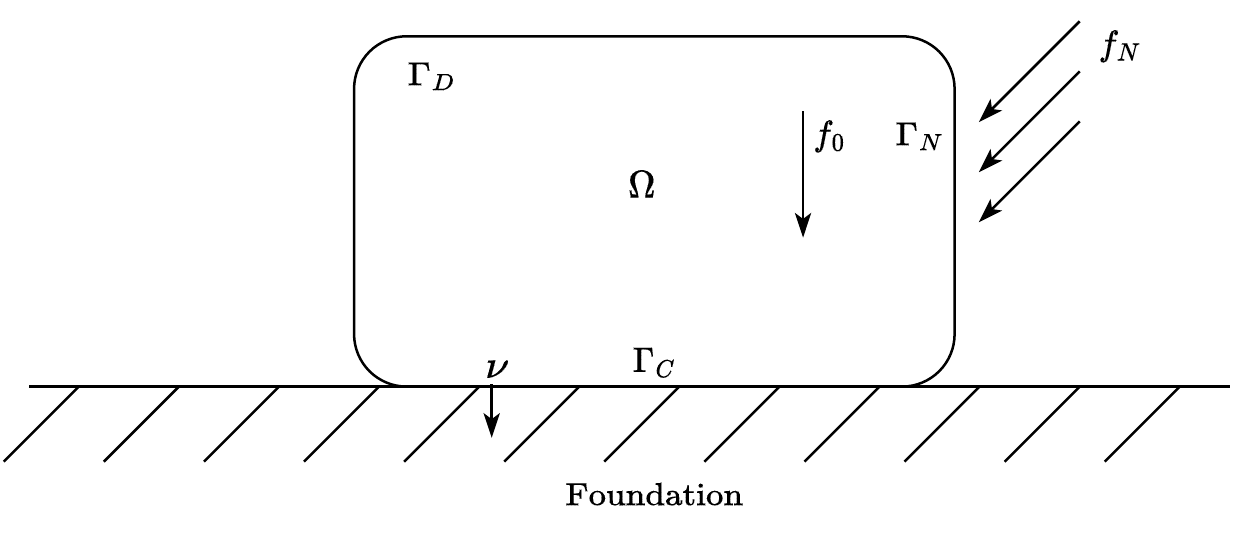}
	\captionsetup{justification=centering}
	\caption{A standard illustration of a sliding block.}
	\label{fig:klosse}
\end{figure}\noindent
We let the spaces $H=L^2(\Omega;\R^d)$, $Q=L^2(\Omega; \mathbb{S}^d)$,  $V$, and $\mathcal{W}^{1,2}_T$ be defined by \eqref{eq:H_space}, \eqref{eq:Q_space}, \eqref{eq:V_space}, and \eqref{eq:spaces_VastW}, respectively. We refer to Section \ref{sec:sobolev_spaces}-\ref{sec:bochner_spaces} for further definitions of the function spaces. Further, let $X=L^4(\Gamma_C)$ and $U=L^4(\Gamma_C;\mathbb{R}^d)$.
Let $\gamma_\nu : V \rightarrow X$ denote the normal trace operator, and $\gamma_\tau : V \rightarrow  U$ denote the tangential trace operator. It then follows by Theorem \ref{thm:trace} that $\gamma_\tau$ and $\gamma_\nu$ are well-defined for $d=2,3$. For all $v\in V$, we let $v_\nu = \gamma_\nu v = v  \cdot \nu$ denote the normal components on $\Gamma$, and $v_\tau =\gamma_\tau v = v - v_\nu\nu$ the tangential components on $\Gamma$. Similarly, let $\sigma_\nu = (\sigma\nu) \cdot \nu$, and $\sigma_\tau = \sigma\nu - \sigma_\nu\nu$ be the normal and tangential components of the tensor $\sigma$ on $\Gamma$, respectively.

\subsection{Dynamic frictional contact problem with normal compliance}\label{sec:application1}
In this section, we present a system of equations describing the evolution of a viscoelastic body in frictional contact with a foundation. Viscoelastic contact problems with normal compliance and friction are discussed in, e.g., \cite[Section 8.3]{shillor2004}. The normal compliance condition is used as an approximation of the Signorini non-penetration  condition. More on this can be found in \cite[Chapter 5]{han2002}, \cite[Chapter 11]{Kikuchi1988} and \cite{Sofonea2012}. We wish to study the following problem:
\begin{prob}\label{prob:application}
	Find the displacement $u: \Omega \times [0,T] \rightarrow \mathbb{R}^d$ and the external state variable $\alpha : \Gamma_C \times [0,T] \rightarrow \mathbb{R}$ such that
	\begin{subequations}
	\begin{align}
		\label{eq:sigma}
		\sigma(t) &= \mathcal{A} \varepsilon (\Dot{u}(t))  + \mathcal{B} \varepsilon (u(t)) + \int_0^t \mathcal{C}(t-s, \varepsilon(\Dot{u}(s))) ds 
		& \text{ on } \Omega \times (0,T)\\
		\label{eq:momentumeq}
		\rho \Ddot{u}(t)
		&= \nabla \cdot \sigma(t) + f_0(t) & \text{ on } \Omega \times (0,T)\\
		u(t) &= 0 & \text{ on } \Gamma_D \times (0,T)
		\label{eq:direchelet}
		\\
		\sigma(t) {\nu} &= f_N(t) & \text{ on } \Gamma_N \times (0,T)
		\label{eq:traction}
		\\
		-\sigma_\nu(t) &= p(u_\nu(t)) & \text{ on } \Gamma_C \times (0,T)
		\label{eq:sigma_nu}
		\\
		|\sigma_\tau(t)| &\leq  \mu (|\Dot{u}_\tau(t)|, \alpha(t))  |\sigma_\nu(t)|
            & \text{ on } \Gamma_C \times (0,T)
		\label{eq:prob1_friction_eq2}
		\\
		-\sigma_\tau(t) &=  \mu (|\Dot{u}_\tau(t)|, \alpha(t)) |\sigma_\nu(t)| \frac{\Dot{u}_\tau (t)}{|\Dot{u}_\tau(t)|}, \ \ \text{ if } \Dot{u}_\tau(t) \neq 0 & \text{ on } \Gamma_C \times (0,T)
		\label{eq:prob1_friction_eq}\\
		\Dot{\alpha}(t) &= G (\alpha(t), |\Dot{u}_\tau(t)|) & \text{ on } \Gamma_C \times (0,T)
		\label{eq:alphaeq}
		\end{align}
		with the initial conditions
		\begin{align}
			u(0) &= u_0, \ \ \Dot{u}(0) = w_0 & \text{ on } \Omega
			\label{eq:bbc1} \\
			\alpha (0) &= \alpha_{0}   & \text{ on } \Gamma_C.
			\label{eq:bbc}
		\end{align}
	\end{subequations}
\end{prob}
In the above problem, \eqref{eq:sigma} is a general viscoelastic constitutive law, where $\mathcal{A}$ is a viscosity operator, $\mathcal{B}$ an elasticity operator, and $\mathcal{C}$ is referred to as a relaxation tensor.  We note that $\mathcal{A}\varepsilon(\Dot{u})$ and $\mathcal{B}\varepsilon(u)$ are short-hand notation for $\mathcal{A}(x,\varepsilon(\Dot{u}))$ and $\mathcal{B}(x,\varepsilon(u))$, respectively. Moreover, \eqref{eq:momentumeq} is a momentum balance equation, \eqref{eq:direchelet} denotes the Dirichlet boundary conditions, and \eqref{eq:traction} the traction applied to the surface. The equation \eqref{eq:sigma_nu} is a contact condition, where $p$ is a prescribed function describing the penetration condition. Next, \eqref{eq:prob1_friction_eq2}-\eqref{eq:prob1_friction_eq} denotes a generalized Coulomb's friction law, and \eqref{eq:alphaeq} describes the evolution of the external state variable, see Section \ref{sec:intro_physics} for a discussion on this equation. Lastly, \eqref{eq:bbc1}-\eqref{eq:bbc} are initial conditions. We wish to investigate \eqref{eq:sigma}-\eqref{eq:bbc} under the following assumptions:
\\
$\underline{\H{\mathcal{A}}}$: \label{assumptionAcal}  $\mathcal{A} : \Omega  \times \mathbb{S}^d \rightarrow \mathbb{S}^d \text{ is such that }$
\begin{enumerate}[labelindent=0pt,labelwidth=\widthof{\ref{last-item}},label=(\roman*),itemindent=1em,leftmargin=!]
	\item For any $ \varepsilon \in \mathbb{S}^d, \ x\mapsto \mathcal{A} (x,\varepsilon)$ is measurable on $\Omega$.
	\item There exists $L_\mathcal{A} > 0$ such that $ |\mathcal{A} (x,\varepsilon_1)  - \mathcal{A} (x,\varepsilon_2)| \leq L_\mathcal{A} |\varepsilon_1 - \varepsilon_2|$ for all $\varepsilon_1,\varepsilon_2 \in \mathbb{S}^d$, a.e. $x \in \Omega$.
	\label{list:Acal_bounded}
	\item There exists $m_\mathcal{A}>0$ such that $ (\mathcal{A}(x,\varepsilon_1) - \mathcal{A}(x,\varepsilon_2)): (\varepsilon_1 - \varepsilon_2) \geq m_\mathcal{A} |\varepsilon_1 - \varepsilon_2|^2$, for all $\varepsilon_1, \varepsilon_2 \in \mathbb{S}^d$,  a.e. $x \in \Omega$.
	\label{list:Acal_maximalmonotone}
	\label{list:Acal_measurable}
	\item $\mathcal{A} (x,0) = 0$ for a.e. $x\in \Omega$.
	\label{list:Acal_0}
\end{enumerate}
\noindent
$\underline{\H{\mathcal{B}}}$: \label{assumptionB}  $\mathcal{B} : \Omega \times \mathbb{S}^d \rightarrow \mathbb{S}^d \text{ is such that }$
\begin{enumerate}[labelindent=0pt,labelwidth=\widthof{\ref{last-item}},label=(\roman*),itemindent=1em,leftmargin=!]
	\item For any $ \varepsilon \in \mathbb{S}^d, \ x\mapsto \mathcal{B} (x,\varepsilon)$ is measurable on $\Omega$.
	\label{list:Bcal_measurable}
	\item There exists $L_\mathcal{B} > 0$ such that $|\mathcal{B} (x,\varepsilon_1)  - \mathcal{B} (x,\varepsilon_2)| \leq L_\mathcal{B} |\varepsilon_1 - \varepsilon_2|$ for all $\varepsilon_1,\varepsilon_2 \in \mathbb{S}^d$, a.e. $x\in \Omega$.
	\label{list:Bcal_bounded}
	\item $\norm{\mathcal{B} (\cdot,0)}_Q < \infty $.
	\label{list:Bcal_0}
\end{enumerate}
\noindent
$\underline{\H{\mu}}$: \label{assumptionmu}  $ \mu : \Gamma_C \times \mathbb{R} \times \mathbb{R} \rightarrow \mathbb{R} \text{ is such that }$
\begin{enumerate}[labelindent=0pt,labelwidth=\widthof{\ref{last-item}},label=(\roman*),itemindent=1em,leftmargin=!]
	\item The mapping $ x\mapsto  \mu(x,r,y)$ is measurable on $\Gamma_C$ for all $r ,y \in \mathbb{R}$.
	\label{list:mu1_measurable}
	\item There exist $L_{1\mu},L_{2\mu},L_{3\mu} \geq 0$ such that 
    $|\mu(x,r_1,y_1)  - \mu (x,r_2,y_2)| \leq ( L_{1\mu} + L_{2\mu}|y_2|)|r_1-r_2| +  L_{3\mu}|r_1||y_1-y_2|  $ for all $r_1,r_2\in \mathbb{R}$, $y_1,y_2\in \mathbb{R}$, a.e. $x\in \Gamma_C$.
	\label{list:mu1_Lipschitz}
	\item There exist $\kappa_1,\kappa_2,\kappa_3 \geq 0$ such that $ |\mu(x,r,y) | \leq   \kappa_1 + \kappa_2|y|+\kappa_3|r|$ for all $r \in \mathbb{R}$, $y\in \mathbb{R}$, a.e. $x\in \Gamma_C$. 
	\label{list:mu1_est}
\end{enumerate}
\noindent
$\underline{\H{p}}$: \label{assumptionp}  $ p : \Gamma_C \times \mathbb{R} \rightarrow \mathbb{R} \text{ is such that }$
\begin{enumerate}[labelindent=0pt,labelwidth=\widthof{\ref{last-item}},label=(\roman*),itemindent=1em,leftmargin=!]
	\item The mapping $ x\mapsto  p(x,r)$ is measurable on $\Gamma_C$ for all $r \in \mathbb{R}$.
	\label{list:p_measurable}
	\item There exists $L_p > 0$ such that $|p(x,r_1)  - p (x,r_2)| \leq L_p |r_1-r_2|$ for all $r_1,r_2 \in \mathbb{R}$, a.e. $x\in \Gamma_C$.
	\label{list:p_Lipschitz}
	\item There exists $ p^\ast >0 $ such that $p(x,r) \leq p^\ast$ for all $r\in \R$, a.e. $x\in \Gamma_C$.
	\label{list:p_bounded}
\end{enumerate}
\noindent
$\underline{\H{G}}$: \label{assumptionGapp}  $G : \Gamma_C \times \mathbb{R} \times \mathbb{R} \rightarrow \mathbb{R} \text{ is such that }$
\begin{enumerate}[labelindent=0pt,labelwidth=\widthof{\ref{last-item}},label=(\roman*),itemindent=1em,leftmargin=!]
	\item The mapping $x\mapsto  G(x,\alpha,r)$ is measurable on $\Omega$ for all $\alpha,r \in \mathbb{R}$.
	\label{list:G_app_measurable}
	\item There exists $L_G > 0$ such that $|G(x,\alpha_1,r_1)  - G (x,\alpha_2,r_2)| \leq  L_G (|\alpha_1-\alpha_2| + |r_1-r_2|)$ for all $\alpha_1,\alpha_2,r_1,r_2 \in \mathbb{R}$, a.e. $x\in \Gamma_C$.
	\label{list:G_app_Lipschitz}
	\item  $\norm{G(\cdot,0,0)}_{L^2(\Gamma_C)} < \infty$.
	\label{list:G_app_0}
\end{enumerate}
\noindent
$\underline{\H{\mathcal{C}}}$: \label{assumptionC}  $\mathcal{C} :  \Omega \times (0,T)  \times \mathbb{S}^d \rightarrow \mathbb{S} \text{ is such that }$
\begin{enumerate}[labelindent=0pt,labelwidth=\widthof{\ref{last-item}},label=(\roman*),itemindent=1em,leftmargin=!]
	\item $\mathcal{C}(x,t,\varepsilon) = (c(x,t) \varepsilon)$ for all $\varepsilon \in \mathbb{S}^d$, a.e. $(x,t)\in  \Omega \times (0,T)$. 
	\item $c(x,t) = (c_{ijkl}(x,t))$ with $c_{ijkl} = c_{jikl} = c_{lkij} \in L^\infty_TL^\infty(\Omega)$.
\end{enumerate}
\begin{equation}\label{eq:denisty}
	\text{The mass density is assumed to be a positive constant } \rho>0 \text{ and}
\end{equation}
\begin{equation}\label{eq:assumptiononsourceterms}
	f_0 \in L^2_TH, \ \  f_N \in L^2_TL^2(\Gamma_N;\mathbb{R}^d)
\end{equation}
with the initial data satisfying
\begin{subequations}\label{eq:assumptionondata}
    \begin{align}\label{eq:assumptionondata1}
	   u_0,\ &w_0 \in V, \\
        \alpha_0 &\in  L^2(\Gamma_C).
        \label{eq:assumptionondata2}
    \end{align}
\end{subequations}

\begin{remark}
	Similar assumptions on the operators and data are found in, e.g., \cite{Migorski2022,Patrulescu2017,sofonea2017}. In comparison,  the assumptions  \hyperref[assumptionmu]{$\H{\mu}$}\ref{list:mu1_Lipschitz}-\ref{list:mu1_est} are generalized. In particular, we relaxed the boundedness assumption on $\mu$.
\end{remark}
We refer the reader to Appendix \ref{appendix:comments_app} for a discussion on applications under these assumptions.

\subsubsection{Variational formulation} \label{sec:variationalformulation_1}
We find a formal derivation of the variational formulation of Problem \ref{prob:application}, i.e., assuming sufficiently regular functions, as we only are interested in a mild solution (see Definition \ref{def:sols}). We refer to, e.g., \cite[Section 5.2]{shillor2004} for a more detailed derivation, especially how to deal with the contact conditions. Inserting \eqref{eq:momentumeq} into Green's formula \eqref{eq:greensformulasigma} yields
\begin{align*}
	\int_{\Omega} \rho \Ddot{u}(t) \cdot \big[v-\Dot{u}(t)\big] dx + \int_{\Omega}  \sigma(t) : \big[\varepsilon (v)-\varepsilon (\Dot{u}(t))\big] dx  &= \int_{\Omega} f_0(t) \cdot \big[v-\Dot{u}(t)\big] dx \\
	&+ \int_{\Gamma}  \sigma(t)\nu \cdot \big[v-\Dot{u}(t)\big] da
\end{align*}
for all $v\in V$, a.e. $t\in (0,T)$. From the terms on $\Gamma_C$, we deduce
\begin{align*}
	&\int_{\Gamma_C}  \sigma_\tau(t) \cdot \big[v_\tau - \Dot{u}_\tau(t)\big] da  + \int_{\Gamma_C}  \sigma_\nu(t) \big[v_\nu - \Dot{u}_\nu(t) \big] da \\
	&\geq  \int_{\Gamma_C}    \mu (|\Dot{u}_\tau(t)|, \alpha(t)) p(u_\nu(t)) \big[ |\Dot{u}_\tau(t)| - |v_\tau| \big] da + \int_{\Gamma_C} p(u_\nu(t))   \big[\Dot{u}_\nu(t) - v_\nu \big] da
\end{align*}
for all $v\in V$, a.e. $t \in (0,T)$.
Combining the above reads
\begin{align*}
	&\int_{\Omega} \rho \Ddot{u}(t) \cdot  \big[v-\Dot{u}(t)\big] dx + \int_{\Omega}  \sigma (t) : \big[\varepsilon (v)- \varepsilon (\Dot{u}(t))\big] dx   \\ \notag
	&+ \int_{\Gamma_C}    \mu (|\Dot{u}_\tau(t)|, \alpha(t)) p(u_\nu(t)) \big[|v_\tau |-|\Dot{u}_\tau(t) |\big]  da + \int_{\Gamma_C} p(u_\nu(t))   \big[v_\nu - \Dot{u}_\nu(t) \big] da \\
	& \geq \int_{\Gamma_N} f_N(t) \cdot \big[v-\Dot{u}(t)\big] da + \int_{\Omega} f_0(t) \cdot \big[v-\Dot{u}(t)\big] dx \notag
\end{align*}
for all $v\in V$, a.e. $t\in (0,T)$.
We write the above inequality slightly  more compactly. We observe that the map $t \mapsto \int_{\Gamma_N} f_N(t) \cdot v da + \int_{\Omega} f_0(t) \cdot v dx$ is linear and bounded in $V$.  Consequently, the Riesz representation theorem implies the existence of $f(t) \in V^\ast$ such that
\begin{align}\label{eq:f_inner}
	\inner{f(t)}{v} = \scalarprod{f_N(t)}{v}_{L^2(\Gamma_N;\R^d)} 
	 + \scalarprod{ f_0(t)}{v}_{H} \ \ \text{ for all } v\in V, \ \ \text{ a.e. } t\in (0,T).
\end{align}
As mentioned, we are interested in a mild solution of  \eqref{eq:alphaeq} (see Definition \ref{def:sols}), so we integrate \eqref{eq:alphaeq} over the time interval $(0,t)$ and use the initial condition \eqref{eq:bbc} to obtain this equation on the desired form. We may now formulate a variational inequality of Problem \ref{prob:application}. 
\begin{prob}\label{prob:weaksol}
	Find $u :  \Omega \times [0,T] \rightarrow \mathbb{R}^d$ and $\alpha: \Gamma_C \times [0,T]\rightarrow \mathbb{R}$ such that
 \begin{subequations}
	\begin{align}\label{eq:alphayo}
		&\alpha(t) = \alpha_0 + \int_0^t G (\alpha(s),  |\Dot{u}_\tau(s)|) ds,\\
		&\int_{\Gamma_C}    \mu (|\Dot{u}_\tau(t)|, \alpha(t)) p(u_\nu(t)) \big[|v_\tau |-|\Dot{u}_\tau(t) |\big] da + \int_{\Gamma_C} p(u_\nu(t))   \big[v_\nu - \Dot{u}_\nu(t) \big] da \label{eq:restofeq} \\
		&+\int_{\Omega} \rho \Ddot{u}(t) \cdot  \big[v-\Dot{u}(t)\big] dx + \scalarprod{\mathcal{A} \varepsilon (\Dot{u}(t))}{ \varepsilon (v)- \varepsilon (\Dot{u}(t)) }_Q  \notag\\
		&+ \scalarprod{\mathcal{B} \varepsilon (u(t)) +  \int_0^t \mathcal{C}(t-s, \varepsilon(\Dot{u}(s))) ds  }{ \varepsilon (v)- \varepsilon (\Dot{u}(t))}_Q \geq  \inner{f(t)}{v-\Dot{u}(t)} \notag
	\end{align}
 \end{subequations}
	for all $v\in V$, a.e. $t\in (0,T)$ with
	\begin{equation*}
		u(0) = u_0, \ \ \Dot{u}(0) = w_0.
	\end{equation*}
\end{prob}\noindent 
\begin{remark}
	Conversely, under the assumption of sufficient regularity, showing that \eqref{eq:alphayo} is equivalent to \eqref{eq:alphaeq} together with \eqref{eq:bbc} is an application of the fundamental theorem of calculus. Moreover, choosing the test functions $v=\Dot{u} \pm \Tilde{v}$ with $\Tilde{v} \in C^\infty_c(\Omega)$ in \eqref{eq:restofeq} implies that Problem \ref{prob:weaksol} is indeed equivalent to Problem \ref{prob:application}, see, e.g., \cite[Section 2.6]{Pipping2015_phd} or \cite[Section 5.2]{shillor2004}.
\end{remark}

The well-posedness result for Problem \ref{prob:weaksol} is summarized below.
\begin{thm}\label{thm:finalthm}
	Assume that \hyperref[assumptionAcal]{$\H{\mathcal{A}}$}, \hyperref[assumptionB]{$\H{\mathcal{B}}$}, \hyperref[assumptionC]{$\H{\mathcal{C}}$}, \hyperref[assumptionmu]{$\H{\mu}$}, \hyperref[assumptionp]{$\H{p}$}, \hyperref[assumptionGapp]{$\H{G}$}, and \eqref{eq:denisty}-\eqref{eq:assumptionondata} holds. Then, there exists a $T>0$ satisfying \eqref{eq:times_T} such that Problem \ref{prob:weaksol} has at most one solution $(u,\alpha)$ under the smallness-assumption
	\begin{align}\label{eq:smallness1}
		m_\mathcal{A} > \sqrt{2} \: p^\ast &(L_{1\mu}\sqrt{\mathrm{meas}(\Gamma_C)}  + L_{2\mu}\norm{\alpha_0}_Y)\\
		&\times \norm{(\gamma_\tau,\gamma_\nu)}_{\mathcal{L}(V,L^4(\Gamma_C;\R^d) \times L^4(\Gamma_C))}\norm{\gamma_\tau}_{\mathcal{L}(V,L^4(\Gamma_C;\R^d))}. \notag
	\end{align}
	In addition $(u,\alpha)$ has the following regularity: 
	\begin{equation*}
		u\in W^{1,2}(0,T;V),\ \ \Dot{u} \in \mathcal{W}^{1,2}_T \subset C([0,T];H), 
        \ \ \alpha \in C([0,T];L^2(\Gamma_C)).
	\end{equation*}
	Moreover, there exists a neighborhood around $(u_0,w_0,\alpha_0)$ so that the flow map $\Tilde{F} :   V \times V \times L^2(\Gamma_C) \rightarrow W^{1,2}(0,T/2;V) \times C([0,T/2]; L^2(\Gamma_C))$ defined by $(u_0,w_0,\alpha_0) \mapsto (u,\alpha)$ is continuous.
\end{thm}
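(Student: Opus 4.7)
The strategy is to recast Problem~\ref{prob:weaksol} in the abstract form of Problem~\ref{prob:fullproblem} via the change of unknown $w=\dot u$, so that $u(t)=u_0+\int_0^t w(s)\,ds$, and then invoke Theorem~\ref{thm:mainresult}. I take the evolution triple $(V,H,V^\ast)$ with $H=L^2(\Omega;\mathbb R^d)$ equipped with the $\rho$-weighted inner product, so that the inertial term $\rho\ddot u$ corresponds to $\dot w$ in $V^\ast$. I set $Y=L^2(\Gamma_C)$, $U=L^4(\Gamma_C;\mathbb R^d)$, $X=L^4(\Gamma_C)$, $Z=L^4(\Gamma_C;\mathbb R^d)\times L^4(\Gamma_C)$, and choose the linear operators $M=\gamma_\tau$, $N=\gamma_\nu$, $K=(\gamma_\tau,\gamma_\nu)$, so that H$(MNK)$ is immediate by Theorem~\ref{thm:trace}, and $f$ defined by \eqref{eq:f_inner} lies in $L^2_T V^\ast$ by \eqref{eq:assumptiononsourceterms}.

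I define $A:(0,T)\times V\to V^\ast$ by $\langle A(t,w),v\rangle=(\mathcal A\varepsilon(w),\varepsilon(v))_Q$, the history operator $\mathcal R:L^2_TV\to L^2_TV^\ast$ by
\[
\langle \mathcal R w(t),v\rangle = \Bigl(\mathcal B\,\varepsilon\bigl(u_0+\int_0^t w\,ds\bigr),\varepsilon(v)\Bigr)_Q+\Bigl(\int_0^t \mathcal C(t-s,\varepsilon(w(s)))\,ds,\varepsilon(v)\Bigr)_Q,
\]
together with $\mathcal S_\varphi w(t)=\bigl(u_0+\int_0^t w\,ds\bigr)_\nu\in X$, $\mathcal S_j\equiv 0$, $j\equiv 0$, and
\[
\varphi(t,y,z,\tilde w,(\xi_\tau,\xi_\nu))=\int_{\Gamma_C}\mu(|\tilde w|,y)\,p(z)\,|\xi_\tau|\,da+\int_{\Gamma_C}p(z)\,\xi_\nu\,da,
\]
which is convex in its last argument since $\mu,p\ge 0$; finally $\mathcal G(t,\alpha,\tilde w)=G(\alpha,|\tilde w|)$. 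With these identifications, Problem~\ref{prob:weaksol} is exactly Problem~\ref{prob:fullproblem} for the pair $(w,\alpha)$.

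The verification of H$(A)$ follows from H$(\mathcal A)$ with $m_A=m_{\mathcal A}$ and Korn's inequality; H$(\mathcal R)$ from H$(\mathcal B)$, H$(\mathcal C)$, and the $L^\infty$ bound on $c$ via Lipschitz estimates on $\int_0^t w\,ds$; H$(\mathcal S_\varphi)$ from continuity of $\gamma_\nu$ and an integration in time; H$(\mathcal G)$ directly from H$(G)$; while H$(j)$ and H$(\mathcal S_j)$ are vacuous. The main obstacle is H$(\varphi)$(v). Pointwise on $\Gamma_C$, the linear normal-compliance contribution cancels in the double difference, leaving $[\mu_1 p_1-\mu_2 p_2]\bigl(|\xi_{2,\tau}|-|\xi_{1,\tau}|\bigr)$ with $\mu_i=\mu(|\tilde w_i|,y_i)$, $p_i=p(z_i)$. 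Applying the reverse triangle inequality $\bigl| |\xi_{2,\tau}|-|\xi_{1,\tau}| \bigr|\le|\xi_{1,\tau}-\xi_{2,\tau}|$, H\"older's inequality in $L^4\times L^{4/3}$ together with the continuous inclusion $L^4(\Gamma_C)\hookrightarrow L^{4/3}(\Gamma_C)$ (since $\mathrm{meas}(\Gamma_C)<\infty$), and the expansion $|\mu_1 p_1-\mu_2 p_2|\le p^\ast|\mu_1-\mu_2|+|\mu_2|L_p|z_1-z_2|$ combined with H$(\mu)$(ii)-(iii) and H$(p)$(ii)-(iii), I recover all seven $\beta_{i\varphi}$-type contributions. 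Careful bookkeeping identifies in particular $\beta_{4\varphi}\sim p^\ast L_{1\mu}\sqrt{\mathrm{meas}(\Gamma_C)}$ and $\beta_{5\varphi}\sim p^\ast L_{2\mu}$ (with the trace constants absorbed into $\|K\|,\|M\|$), so that the abstract smallness condition \eqref{eq:assumptionbound_mA_max} with $m_j=0$ reduces precisely to \eqref{eq:smallness1}; the growth H$(\varphi)$(iv) is analogous using H$(\mu)$(iii) and $p\le p^\ast$.

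Once the hypotheses are verified, Theorem~\ref{thm:mainresult} provides $w\in\mathcal W^{1,2}_T\subset C([0,T];H)$ and $\alpha\in C([0,T];L^2(\Gamma_C))$ solving the abstract problem, together with uniqueness and continuous dependence on $(w_0,\alpha_0)$. Setting $u(t)=u_0+\int_0^t w(s)\,ds$ then yields $\dot u=w\in L^2_TV$, hence $u\in W^{1,2}(0,T;V)$, and $\dot u\in\mathcal W^{1,2}_T\subset C([0,T];H)$, which is the stated regularity. For continuous dependence on the full triple $(u_0,w_0,\alpha_0)$, I repeat the argument of Step~\ref{continuous} from the proof of Theorem~\ref{thm:mainresult} applied to two solutions with different $u_0$: the extra contribution arising from the $u_0$-dependence of $\mathcal R$ and $\mathcal S_\varphi$ is controlled in $V^\ast$ and $X$ by $c\|u_{0,1}-u_{0,2}\|_V$ via H$(\mathcal B)$ and continuity of $\gamma_\nu$, which combined with a Gr\"onwall argument analogous to \eqref{eq:estU} gives $u_n\to u$ in $W^{1,2}(0,T/2;V)$ whenever $(u_{0,n},w_{0,n},\alpha_{0,n})\to(u_0,w_0,\alpha_0)$ in $V\times V\times L^2(\Gamma_C)$, completing the proof.
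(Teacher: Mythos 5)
Your proposal is correct and follows essentially the same route as the paper: the same identifications $M=\gamma_\tau$, $N=\gamma_\nu$, $K=(\gamma_\tau,\gamma_\nu)$, the same operators $A$, $\mathcal{R}$, $\mathcal{S}_\varphi$, $\mathcal{G}$ and functional $\varphi$ with $j\equiv 0$, reduction to Theorem \ref{thm:mainresult}, recovery of \eqref{eq:smallness1} from \eqref{eq:assumptionbound_mA_max} with $m_j=0$, the definition $u(t)=u_0+\int_0^t w(s)\,ds$ for the stated regularity, and the same continuous-dependence argument (your explicit control of the $u_0$-dependence of $\mathcal{R}$ and $\mathcal{S}_\varphi$ is, if anything, more careful than the paper's, which handles $u_0$ only through the estimate on $u_1-u_2$). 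One small slip: the normal-compliance part does not cancel in the double difference of \hyperref[assumptionphi]{$\H{\varphi}$}(v) --- it leaves $\int_{\Gamma_C}[p(z_1)-p(z_2)]\,[v^{(2)}_2-v^{(2)}_1]\,da$, which must be absorbed into an additional $\beta_{3\varphi}$-type contribution of size $L_p\sqrt{\mathrm{meas}(\Gamma_C)}$; since $\beta_{3\varphi}$ does not enter the smallness condition \eqref{eq:smallness1}, this does not affect your conclusion, but the bookkeeping should include it.
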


\begin{remark}
	A similar smallness-assumption \eqref{eq:smallness1} is used in, e.g., \cite{Patrulescu2017}.
\end{remark}

\begin{remark}\label{remark:rho}
	We will show that there exists a solution for $\rho \equiv 1$. For $\rho>0$, we may take $w(t) = w(\rho t)$. 
\end{remark}

\subsubsection{Proof of Theorem \ref{thm:finalthm}}
Our aim is to use Theorem \ref{thm:mainresult} to prove Theorem \ref{thm:finalthm}. Our first task is to rewrite Problem \ref{prob:weaksol} in the same form as Problem \ref{prob:fullproblem}. Then, we will verify the hypothesis of Theorem \ref{thm:mainresult}. 
\\
\indent
Let
$Y=L^2(\Gamma_C)$
and $Z=U\times X$.
We then define the operators $A : (0, T) \times V \rightarrow V^\ast $, $\mathcal{R} : L^2_TV \rightarrow L^2_TV^\ast$, $\mathcal{S}_\varphi  : L^2_TV \rightarrow L^2_T X$,  $\mathcal{G} : (0,T) \times Y \times U \rightarrow Y$, $M  : V \rightarrow U$, and $N : V \rightarrow X$, respectively, by
\begin{subequations}
	\begin{align}\label{eq:defining_ARS}
		\inner{A(t,w)}{v} &= \int_\Omega \mathcal{A} \varepsilon (w) : \varepsilon(v) dx,  \  \text{ for } w,v\in V, \text{ a.e. } t\in (0,T),\\
		\inner{\mathcal{R}w(t)}{v} &= \int_\Omega \mathcal{B} (
		\varepsilon(\int_0^t w(s) ds  + u_0)) : \varepsilon(v) dx \label{eq:definition_R}\\
		&+ \int_\Omega  \int_0^t \mathcal{C}(t-s, \varepsilon(w(s))) ds :  \varepsilon(v) dx, \notag \\
		&  \text{for } w\in L^2_TV, \ v\in V,  \text{ a.e. } t\in (0,T), \notag\\
		\mathcal{S}_\varphi w(t) &= \gamma_\nu \Big( \int_0^t w(s) ds + u_{0} \Big) , \ \text{ for } w\in L^2_TV, \text{ a.e. } t\in (0,T), \label{eq:definition_S}\\
		\mathcal{G}(t,\alpha,Mw) &= G(\alpha,|w_{\tau}|), \text{ for } \alpha \in Y, \ w \in V,  \text{ a.e. } t\in (0,T),
		\label{eq:and_G}\\
		 Mv = &v_\tau, \ \ Nv = v_\nu, \ \ \text{ for  } v\in V.
		\label{eq:def_MN}
	\end{align}
We define $K : V \rightarrow Z$ by 
\begin{equation}\label{eq:def_K}
	Kv = (Mv,Nv), \ \ \text{ for } v\in V,
\end{equation}
which is linear and bounded in both arguments, i.e., $K \in \mathcal{L}(V,Z)$. We consider the functional $\varphi : (0, T) \times Y \times X \times U \times Z \rightarrow \mathbb{R}$ by
\begin{equation}\label{eq:defintion_phi}
	\varphi(t,y,z,\Tilde{w},\Tilde{v}) = \int_{\Gamma_C} p(z)  v^{(2)} da + \int_{\Gamma_C}   \mu (|\Tilde{w}| , y) p(z) |v^{(1)}| da
\end{equation}
for $y \in Y$, $z\in X$, $\Tilde{w} \in U$, $\Tilde{v}=(v^{(1)},v^{(2)}) \in Z$, a.e. $t\in (0,T)$. We let $f:(0,T) \rightarrow V^\ast$ be defined as in \eqref{eq:f_inner}.
\end{subequations}
This yields the following generalization of Problem \ref{prob:weaksol}:
\begin{prob}\label{prob:soondone}
	Find $w\in \mathcal{W}^{1,2}_T$ and $\alpha \in C([0,T];Y)$ 	such that
	\begin{align*}
			&\alpha(t) =  \alpha_0 + \int_0^t \mathcal{G}(s,\alpha(s),Mw(s))ds,\\
		&\inner{\rho\Dot{w}(t)}{v-w(t)}+ \inner{A(t,w(t))}{v-w(t)}  + \inner{\mathcal{R}w(t)}{v-w(t)} \\
		&+ \varphi(t,  \alpha(t),\mathcal{S}_\varphi  w(t),Mw(t),Kv) - \varphi(t,  \alpha(t),\mathcal{S}_\varphi  w(t),Mw(t),Kw(t)) \geq \inner{f(t)}{v-w(t)}
	\end{align*}
	for all $v\in V$, a.e. $t\in (0,T)$,
	with $w(0) = w_0$.
\end{prob}
\begin{remark}
	Since Problem \ref{prob:weaksol} is contained in Problem \ref{prob:soondone}, it suffices to show well-posedness of Problem \ref{prob:soondone}. 
\end{remark}
\begin{lem}\label{lemma:assumptionphi2}
	Under the assumptions of Theorem \ref{thm:finalthm},	the hypothesis of Theorem \ref{thm:mainresult} holds for \eqref{eq:defining_ARS}-\eqref{eq:defintion_phi}. Here,  $c_{0\varphi}(t) = \kappa_1(\mathrm{meas}(\Gamma_C))^{3/2}$, $c_{1\varphi}= \kappa_2\mathrm{meas}(\Gamma_C)$, $c_{2\varphi} = L_p(\mathrm{meas}(\Gamma_C))^{5/4}  $, $c_{3\varphi} = \kappa_3(\mathrm{meas}(\Gamma_C))^{5/4}$,  $\beta_{1\varphi}= p^\ast L_{3\mu}$, $\beta_{3\varphi} = L_p(1+ \kappa_1) \sqrt{\mathrm{meas}(\Gamma_C)} $, $\beta_{4\varphi} = p^\ast  L_{1\mu} \sqrt{\mathrm{meas}(\Gamma_C)}$, $\beta_{5\varphi} = p^\ast L_{2\mu}$, $\beta_{6\varphi} = \kappa_3 L_p(\mathrm{meas}(\Gamma_C))^{1/4}$, $\beta_{7\varphi} =  \kappa_2 L_p$,   and $c_{4\varphi}=\beta_{2\varphi} = 0$.
\end{lem}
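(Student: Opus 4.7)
The plan is to verify each hypothesis of Theorem \ref{thm:mainresult} for the operators defined in \eqref{eq:defining_ARS}--\eqref{eq:defintion_phi}, taking $j \equiv 0$ and $\mathcal{S}_j \equiv 0$ throughout so that $\H{j}$ and $\H{\mathcal{S}_j}$ are trivial and $m_j = 0$ in the smallness-condition \eqref{eq:assumptionbound_mA_max}. Most of the verifications are short consequences of the structural assumptions combined with the trace theorem \ref{thm:trace} and Korn's inequality; the genuine work is confined to verifying $\H{\varphi}$ with the specific constants stated.

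For the routine parts, I would first dispense with $\H{MNK}$, which follows from Theorem \ref{thm:trace} giving $M, N \in \mathcal{L}(V, L^4(\Gamma_C;\R^d))$ and $\mathcal{L}(V, L^4(\Gamma_C))$ respectively, so that $K = (M,N)$ is bounded into $Z = U \times X$. The hypothesis $\H{A}$ is immediate from $\H{\mathcal{A}}$: measurability, demicontinuity, linear growth with $a_0 \equiv 0$ (since $\mathcal{A}(x,0) = 0$), and strong monotonicity all follow directly, with $m_A$ arising from $m_{\mathcal{A}}$ combined with Korn's inequality. For $\H{\mathcal{R}}$, I would split the operator in \eqref{eq:definition_R} into its elasticity and memory parts and apply the Lipschitz bound from $\H{\mathcal{B}}$ to the first and the $L^\infty$ bound on $c_{ijkl}$ from $\H{\mathcal{C}}$ to the second, producing a history-dependent estimate with a suitable $c_\mathcal{R}$; $\mathcal{R}0$ reduces to $\mathcal{B}\varepsilon(u_0)$, which lies in $L^2_T V^\ast$. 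An analogous but simpler argument handles $\H{\mathcal{S}_\varphi}$ via \eqref{eq:definition_S} and Theorem \ref{thm:trace}. Finally $\H{\mathcal{G}}$ reduces through \eqref{eq:and_G} and the pointwise bound $\bigl||w_{\tau,1}| - |w_{\tau,2}|\bigr| \leq |Mw_1 - Mw_2|$ to the Lipschitz and growth assumptions of $\H{G}$.

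The main obstacle is $\H{\varphi}$ applied to $\varphi$ in \eqref{eq:defintion_phi}. Measurability, continuity in $(y,z,\Tilde{w})$, and convexity and lower semicontinuity in $\Tilde{v}$ are immediate from the structure, which is linear in $v^{(2)}$ and a weighted norm-type functional in $v^{(1)}$. For the subdifferential bound, elements of $\partial_c \varphi$ with respect to $\Tilde{v}$ are pairs $(\xi_1, \xi_2)$ with $|\xi_1(x)| \leq \mu(|\Tilde{w}(x)|, y(x))\,p(z(x))$ and $\xi_2 = p(z)$; combining $\H{\mu}$, $\H{p}$, and the continuous embeddings $L^r(\Gamma_C) \hookrightarrow L^{4/3}(\Gamma_C)$ for $r \in \{2,4\}$ yields the claimed $c_{i\varphi}$ after absorbing the appropriate powers of $\mathrm{meas}(\Gamma_C)$.

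The central calculation is the estimate in $\H{\varphi}$. After cancellation, the four-term expression collapses to
\begin{align*}
&\int_{\Gamma_C}[p(z_1)-p(z_2)](v_2^{(2)}-v_1^{(2)})\,da \\
&\quad + \int_{\Gamma_C}[\mu(|\Tilde{w}_1|,y_1)p(z_1) - \mu(|\Tilde{w}_2|,y_2)p(z_2)](|v_2^{(1)}|-|v_1^{(1)}|)\,da.
\end{align*}
I would then use $\bigl||v_2^{(1)}|-|v_1^{(1)}|\bigr| \leq |v_2^{(1)}-v_1^{(1)}|$ and split the second integrand as $\mu(|\Tilde{w}_1|,y_1)[p(z_1)-p(z_2)] + [\mu(|\Tilde{w}_1|,y_1)-\mu(|\Tilde{w}_2|,y_2)]p(z_2)$, applying $\H{p}$ and $\H{\mu}$ respectively. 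Each resulting integral of the form $\int_{\Gamma_C}|a||b||c|\,da$ is then handled by H\"older's inequality with a triple of exponents chosen so that the factors land in $L^2$ or $L^4$ matching their target $Y$, $X$, or $U$ norms, then reduced via $\norm{f}_{L^q} \leq (\mathrm{meas}(\Gamma_C))^{1/q-1/p}\norm{f}_{L^p}$ for $p \geq q$. Careful bookkeeping of which factor carries $p^\ast$ versus $L_p$, and $\kappa_i$ versus $L_{i\mu}$, reads off exactly the seven $\beta_{i\varphi}$ in the statement; in particular $\beta_{2\varphi} = 0$ because the only Lipschitz-in-$y$ contribution from $\H{\mu}$ enters with the factor $L_{3\mu}|\Tilde{w}_1||y_1-y_2|$, which necessarily carries $\norm{\Tilde{w}_1}_U$ and is therefore absorbed into $\beta_{1\varphi}$.
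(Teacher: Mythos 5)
Your proposal is correct and takes essentially the same route as the paper's proof: the routine hypotheses are reduced to \hyperref[assumptionAcal]{$\H{\mathcal{A}}$}, \hyperref[assumptionB]{$\H{\mathcal{B}}$}, \hyperref[assumptionC]{$\H{\mathcal{C}}$}, \hyperref[assumptionGapp]{$\H{G}$}, \hyperref[assumptionp]{$\H{p}$} and Theorem \ref{thm:trace}, while the substantive verification of \hyperref[assumptionphi]{$\H{\varphi}$}\ref{list:phi_estimate} proceeds by the same cancellation, the same splitting $\mu_1 p(z_1)-\mu_2 p(z_2)=\mu_1\big[p(z_1)-p(z_2)\big]+\big[\mu_1-\mu_2\big]p(z_2)$, and the same H\"{o}lder bookkeeping, which recovers the stated $\beta_{i\varphi}$ (in particular $\beta_{2\varphi}=0$, absorbed into $\beta_{1\varphi}$, exactly as in the paper). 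Two harmless deviations: since $\norm{v}_V=\norm{\varepsilon(v)}_Q$ by the paper's choice of inner product, no Korn constant actually enters and one has $m_A=m_\mathcal{A}$ exactly (which is what makes \eqref{eq:smallness1} align with \eqref{eq:assumptionbound_mA_max}), and your pointwise description of $\partial_c\varphi$ in the last variable is an acceptable substitute for the paper's route via the Lipschitz estimate of $\varphi(t,y,z,\Tilde{w},\cdot)$.
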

To maintain the flow of the article, the proof of Lemma \ref{lemma:assumptionphi2} is placed in Appendix \ref{appendix:proofs_app1}. We are now ready to prove Theorem \ref{thm:finalthm}.

\begin{proof}[Proof of Theorem \ref{thm:finalthm}]
	The proof relies on Theorem \ref{thm:mainresult}
	with $j^\circ \equiv 0$.  In light of Lemma \ref{lemma:assumptionphi2}, the hypotheses of Theorem \ref{thm:mainresult} are fulfilled.	Consequently, $(w,\alpha)\in \mathcal{W}^{1,2}_T \times C([0,T];Y)$ is a unique solution of Problem \ref{prob:soondone}. Moreover, we define the function $u : [0,T] \rightarrow V$ by
	\begin{equation}\label{eq:definition_of_u}
		u(t) = u_0 + \int_0^t w(s) ds  
	\end{equation}
        for all $t\in[0,T]$. As a consequence of Bochner space theory, the fact that $w \in L^2_TV \subset L^1_T V$ and \eqref{eq:definition_of_u}, we have that $u \in C([0,T];V)$ and $\Dot{u} = w$.        Consequently, $\Dot{u} \in \mathcal{W}^{1,2}_T$, which implies $u \in W^{1,2}(0,T;V)$. Furthermore, we define the set 
	\begin{equation*}
		\mathbb{Y} =   V \times V \times Y,
	\end{equation*}
	and show that the flow map $\Tilde{F} : \mathbb{Y} \rightarrow W^{1,2}(0,T/2;V)  \times C([0,T/2];Y)$ defined by $(u_{0},w_{0},\alpha_{0}) \mapsto(u,\alpha) $ is continuous. That is, we claim that for all $\lambda >0$, there exists a $\delta>0$, chosen later,
	such that 
	\begin{subequations}
		\begin{equation}\label{eq:cont_1111}
			\norm{(u_{01},w_{01},\alpha_{01})- 	(u_{02},w_{02},\alpha_{02})}_\mathbb{Y} < \delta
		\end{equation}
		implies
		\begin{equation}\label{eq:lastlast123}
			\norm{(u_1,\alpha_1) - (u_2,\alpha_2)}_{W^{1,2}(0,T/2;V) \times L^\infty_{T/2}Y} < 	\lambda.
		\end{equation}
	\end{subequations}
	To check this, let us use the continuous dependence result in Theorem \ref{thm:mainresult}. 	We observe that
	\begin{align}\label{eq:oneestleft}
		&\norm{(w_{01},\alpha_{01})- 	(w_{02},\alpha_{02})}_{  V \times Y}^2 \leq \norm{(u_{01},w_{01},\alpha_{01})-	(u_{02},w_{02},\alpha_{02})}_{\mathbb{Y}}^2 .
	\end{align}
	From \eqref{eq:cont_1111}, \eqref{eq:oneestleft}, and Theorem \ref{thm:mainresult}\ref{list:continuous_dependence_on_inital_data}, we obtain
	\begin{equation}\label{eq:estwest}
		\norm{(w_1,\alpha_1) - (w_2,\alpha_2)}_{L^2_{T/2}V \times L^\infty_{T/2}Y }^2 < \delta.
	\end{equation}
	Next, by \eqref{eq:definition_of_u}, the triangle inequality, Minkowski's inequality, Young's inequality, the Cauchy-Schwarz inequality, and integrating over the time interval $(0,T/2)$, we have
	\begin{align}\label{eq:thisisit}
		\norm{u_1 - u_2}_{L^2_{T/2}V}^2 \leq c(T\norm{u_{01}- u_{02}}_V^2  + T^2\norm{w_1 - w_2}_{L^2_{T/2}V}^2).
	\end{align}
	Combining  \eqref{eq:estwest}-\eqref{eq:thisisit}, while remembering that $w_i = \Dot{u}_i$ for $i=1,2$, we may choose a $\delta  >0$ so that \eqref{eq:lastlast123} is acquired.
	\end{proof}

\subsection{Dynamic frictional contact problem with normal damped response}\label{sec:application2}
In our second application, we consider contact with
normal damped response, i.e., a wet material or some lubrication between the foundation and the reference configuration of a viscoelastic body (see, e.g., \cite{Sofonea2012}). We present the problem:
\begin{prob}\label{prob:application3}
	Find the displacement $u: \Omega \times [0,T] \rightarrow \mathbb{R}^d$ and the external state variable $\alpha : \Gamma_C \times [0,T] \rightarrow \mathbb{R}$ such that
	\begin{subequations}
		\begin{align}\label{eq:constitutive} 
			\sigma(t) &= \mathcal{A} \varepsilon (\Dot{u}(t))  + \mathcal{B} \varepsilon (u(t)) + \int_0^t \mathcal{C}(t-s, \varepsilon(\Dot{u}(s))) ds 
			& \text{ on } \Omega \times (0,T)\\
			\label{eq:momentumeq3}
			\rho \Ddot{u}(t)
			&= \nabla \cdot \sigma(t) + f_0(t) & \text{ on } 	\Omega \times (0,T)\\
			 u(t) &= 0 & \text{ on } \Gamma_D \times (0,T)\\
			\sigma(t) {\nu} &= f_N(t) & \text{ on } \Gamma_N 	\times (0,T)
			\label{eq:sigmanu2}
			\\
			-\sigma_\nu(t) &\in  \partial 	j_\nu(\Dot{u}_\nu(t)) & \text{ on } \Gamma_C \times (0,T)
			\label{eq:sigma_nu2}
			\\
			|\sigma_\tau(t)| &\leq  \mu (|\Dot{u}_\tau(t)|, \alpha(t))
                & \text{ on } \Gamma_C \times (0,T)
			\label{eq:prob1_friction_eq4}
			\\
			-\sigma_\tau(t) &= \mu (|\Dot{u}_\tau(t)|, \alpha(t)) 	\frac{\Dot{u}_\tau (t)}{|\Dot{u}_\tau(t)|},  \ \ \text{ if } \Dot{u}_\tau(t) \neq 0 & \text{ on } \Gamma_C \times (0,T)
			\label{eq:prob1_friction_eq3}\\
			\Dot{\alpha}(t) &= G (\alpha(t), |\Dot{u}_\tau(t)|) & 	\text{ on } \Gamma_C \times (0,T)
			\label{eq:alpha2}
		    \end{align}
		    with
		    \begin{align}
			u(0) &= u_0, \ \ \Dot{u}(0) = w_0 & \text{ on } \Omega\\ 
			\alpha (0) &= \alpha_{0}   & \text{ on } \Gamma_C . 
			\label{eq:laaaaaaaaast}
		\end{align}
	\end{subequations}
\end{prob}
We summarized \eqref{eq:constitutive}-\eqref{eq:sigmanu2}, and \eqref{eq:alpha2}-\eqref{eq:laaaaaaaaast} underneath Problem \ref{prob:application}. However, \eqref{eq:sigma_nu2} is a general form of the contact condition for normal damped response describing the contact with a lubricated foundation \cite[Section 6.3]{Migorski2012}. The equations \eqref{eq:prob1_friction_eq4}-\eqref{eq:prob1_friction_eq3} is a version of Coulomb's law of dry friction, where \eqref{eq:prob1_friction_eq4}-\eqref{eq:prob1_friction_eq3} is a generalization of \cite[Problem 68, p.268]{sofonea2017}.
We  investigate Problem \ref{prob:application3} under the hypotheses of \hyperref[assumptionA]{$\H{A}$}, \hyperref[assumptionB]{$\H{\mathcal{B}}$}, \hyperref[assumptionmu]{$\H{\mu}$}, \hyperref[assumptionGapp]{$\H{G}$}, \hyperref[assumptionC]{$\H{\mathcal{C}}$}, and \eqref{eq:denisty}-\eqref{eq:assumptionondata}. In addition, we require an assumption on $j_\nu$ in \eqref{eq:sigma_nu2}:\\
\\
$\underline{\H{j_\nu}}$: \label{assumptionjnu}  $ j_\nu : \Gamma_C  \times \mathbb{R} \rightarrow \mathbb{R} \text{ is such that }$
\begin{enumerate}[labelindent=0pt,labelwidth=\widthof{\ref{last-item}},label=(\roman*),itemindent=1em,leftmargin=!]
	\item $j_\nu(\cdot,r)$ is measurable on $\Gamma_C$ for all $r\in \mathbb{R}$, and there exists $\Bar{e} \in L^4(\Gamma_C)$ such that $j_\nu(\cdot,\Bar{e}(\cdot)) \in L^1(\Gamma_C )$.
	\item $j_\nu(x,\cdot)$ is locally Lipschitz on $\mathbb{R}$ for a.e. $ x\in \Gamma_C $. 
	\item $|\partial j_\nu (x,r)| \leq \Bar{c}_0 + \Bar{c}_1 |r| $ for all $r \in \mathbb{R}$, a.e. $x \in \Gamma_C$ with $\Bar{c}_0, \Bar{c}_1 \geq 0$.
 \label{list:j_nu_bounded}
	\item $j_\nu^0 (x, r_1; r_2 - r_1) + j_\nu^0 (x, r_2; r_1 - r_2) \leq m_{j_\nu} |r_1 - r_2|^2$ for all $r_1, r_2 \in \R$, a.e. $ x\in \Gamma_C $ with $m_{j_\nu} \geq 0$. 
	\label{list:j_nu_est}
\end{enumerate}
We refer the reader to Appendix \ref{appendix:comments_app} for a small discussion on the assumptions on the operators and functions in Problem \ref{prob:application3}.

\subsubsection{Variational formulation}
We make use of the derivations in Section \ref{sec:application1}, but include the new term for the normal stress. By definition of the Clarke subgradient (see Definition \ref{def:subdiffernetial}) and \eqref{eq:sigma_nu2}, we have
\begin{align*}
	-\sigma_\nu(t) \Tilde{v}_\nu \leq j_\nu^0 (\Dot{u}_\nu(t);\Tilde{v}_\nu) \ \ \text{ for all } \Tilde{v}\in V, \ \text{a.e. } t\in (0,T).
\end{align*} 
Integrating over $\Gamma_C$, and choosing $\Tilde{v}_\nu = v_\nu-\Dot{u}_\nu(t)$ gives us
\begin{align}\label{eq:ineq_jnu0}
	\int_{\Gamma_C}\sigma_\nu(t) [v_\nu - \Dot{u}_\nu(t)] da \leq \int_{\Gamma_C} j_\nu^0 (\Dot{u}_\nu(t);v_\nu-\Dot{u}_\nu(t))da
\end{align}
for all $v\in V$, a.e. $t\in(0,T)$.
Combining \eqref{eq:ineq_jnu0} with the calculations from Section \ref{sec:variationalformulation_1}, we have the following problem:
\begin{prob}\label{prob:weaksol2}
	Find $u :  \Omega \times [0,T] \rightarrow \mathbb{R}^d$ and $\alpha: \Gamma_C \times [0,T]\rightarrow \mathbb{R}$ such that
	\begin{subequations}
		\begin{align}\label{eq:alpha_as_an_eq}
			&\alpha(t) = \alpha_0 +  \int_0^t G (\alpha(s),  |\Dot{u}_\tau(s)|) ds,\\ 
			\label{eq:as_an_ineq}
			&\int_{\Gamma_C}    \mu (|\Dot{u}_\tau(t)| , \alpha(t))[|v_\tau |-|\Dot{u}_\tau(t)|] da
			+ \int_{\Gamma_C} j^\circ_\nu (\Dot{u}_\nu(t); v_\nu - \Dot{u}_\nu(t)) da \\ \notag
			&+\int_{\Omega} \rho \Ddot{u}(t) \cdot  [v-\Dot{u}(t)] dx + \scalarprod{\mathcal{A} \varepsilon (\Dot{u}(t)) }{\varepsilon (v)- \varepsilon (\Dot{u}(t))}_Q\\ 
			 & + \scalarprod{\mathcal{B} \varepsilon (u(t)) +  \int_0^t \mathcal{C}(t-s, \varepsilon(\Dot{u}(s))) ds  }{ \varepsilon (v)- \varepsilon (\Dot{u}(t)) }_Q \geq  \inner{f(t)}{v-\Dot{u}(t)} \notag
		\end{align}
		for all $v\in V$, a.e. $t\in (0,T)$ with
		\begin{equation*}
			u(0) = u_0, \ \ \Dot{u}(0) = w_0.
		\end{equation*}
	\end{subequations}

\end{prob}\noindent 

The well-posedness result for Problem \ref{prob:weaksol2} is stated in the following theorem.

\begin{thm}\label{thm:wellposed_app2}
	Assume that  \hyperref[assumptionAcal]{$\H{\mathcal{A}}$}, \hyperref[assumptionB]{$\H{\mathcal{B}}$}, \hyperref[assumptionmu]{$\H{\mu}$}, \hyperref[assumptionGapp]{$\H{G}$}, \hyperref[assumptionC]{$\H{\mathcal{C}}$}, \eqref{eq:denisty}-\eqref{eq:assumptionondata}, and \hyperref[assumptionjnu]{$\H{j_\nu}$} holds. Then, there exists a $T>0$ satisyfing \eqref{eq:times_T} so that Problem \ref{prob:weaksol2} has a unique  solution $(u,\alpha)$
	under the smallness-condition 
	\begin{align}\label{eq:smallness_app2}
		m_\mathcal{A} &> m_{j_\nu} \sqrt{\mathrm{meas}(\Gamma_C)} \norm{\gamma_\nu}_{\mathcal{L}(V,L^4(\Gamma_C))}^2\\
		&+  \sqrt{2} \:( L_{1\mu}\sqrt{\mathrm{meas}(\Gamma_C)}+ L_{2\mu}\norm{\alpha_0}_Y) \norm{\gamma_\tau}^2_{\mathcal{L}(V,L^4(\Gamma_C;\R^d))}. \notag
	\end{align}
	In addition, we have the regularity: 
	\begin{equation}\label{eq:regularity_second_application}
		u\in W^{1,2}(0,T;V),\ \ \ \Dot{u} \in \mathcal{W}^{1,2}_T \subset C([0,T];H),  \ \ \ \alpha \in C([0,T];L^2(\Gamma_C)).
	\end{equation}
	Moreover, the flow map depends continuously on the initial data. 
\end{thm}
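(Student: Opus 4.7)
The plan is to recast Problem \ref{prob:weaksol2} in the abstract form of Problem \ref{prob:fullproblem} and invoke Theorem \ref{thm:mainresult}, mirroring the reduction used for Theorem \ref{thm:finalthm}. The new feature compared to Section \ref{sec:application1} is that the normal contact law is a Clarke subdifferential inclusion, so it is absorbed into the hemivariational functional $j$ rather than into $\varphi$; on the other hand $\varphi$ simplifies because the factor $p(u_\nu)$ is no longer present.

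\textbf{Abstract reformulation.} Take $Y=L^2(\Gamma_C)$, $X=L^4(\Gamma_C)$, $U=L^4(\Gamma_C;\mathbb{R}^d)$, $Z=U$, and define $A$, $\mathcal{R}$, $\mathcal{G}$, $M=\gamma_\tau$, $N=\gamma_\nu$ exactly as in \eqref{eq:defining_ARS}--\eqref{eq:def_MN}, with $K=M$. Take $\mathcal{S}_\varphi w\equiv 0$ and $\mathcal{S}_j w\equiv 0$ (no history-dependent structure in the friction/contact functionals here), and set
\[
\varphi(t,y,z,\tilde w,\tilde v)=\int_{\Gamma_C}\mu(|\tilde w|,y)\,|\tilde v|\,da,\qquad
j(t,y,z,\tilde v)=\int_{\Gamma_C}j_\nu(\tilde v)\,da,
\]
so that $j$ is independent of its $Y$- and $X$-arguments. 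With $w=\dot u$ and $u(t)=u_0+\int_0^t w(s)\,ds$, Problem \ref{prob:weaksol2} is precisely Problem \ref{prob:fullproblem}.

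\textbf{Verification of hypotheses.} The assumptions \hyperref[assumptionA]{$\H{A}$}, \hyperref[assumptionR]{$\H{\mathcal R}$}, \hyperref[assumptionS1]{$\H{\mathcal S_\varphi}$}, \hyperref[assumptionS2]{$\H{\mathcal S_j}$}, \hyperref[assumptionG]{$\H{\mathcal G}$}, \hyperref[assumptionMNK]{$\H{MNK}$} follow from \hyperref[assumptionAcal]{$\H{\mathcal A}$}--\hyperref[assumptionC]{$\H{\mathcal C}$}, \hyperref[assumptionGapp]{$\H{G}$} and Theorem \ref{thm:trace} exactly as in the proof of Theorem \ref{thm:finalthm}. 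For \hyperref[assumptionphi]{$\H\varphi$}, items \ref{list:phi_measurable}--\ref{list:phi_convex_lsc} are immediate (convexity and continuity of $|\cdot|$ and of the Nemytskii integral), \ref{list:phi_bounded} follows from \hyperref[assumptionmu]{$\H\mu$}\ref{list:mu1_est}, and \ref{list:phi_estimate} follows from \hyperref[assumptionmu]{$\H\mu$}\ref{list:mu1_Lipschitz} by H\"older's inequality with exponents $(2,4,4)$: the splitting
\[
|\mu(|\tilde w_1|,y_1)-\mu(|\tilde w_2|,y_2)|\leq (L_{1\mu}+L_{2\mu}|y_2|)|\tilde w_1-\tilde w_2|+L_{3\mu}|\tilde w_1||y_1-y_2|
\]
yields $\beta_{1\varphi}=L_{3\mu}$, $\beta_{4\varphi}=L_{1\mu}\sqrt{\mathrm{meas}(\Gamma_C)}$, $\beta_{5\varphi}=L_{2\mu}$, with all other $\beta_{i\varphi}=0$. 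For \hyperref[assumptionj]{$\H{j}$}, items \ref{list:j_measurable}--\ref{list:j_bounded} reduce to the pointwise statements \hyperref[assumptionjnu]{$\H{j_\nu}$} through the chain rule for Clarke subdifferentials of integral functionals (see \cite[Section 6.3]{Migorski2012} or \cite[p.185--187]{han}); in particular \ref{list:j_convergence} is obtained from Fatou's lemma applied to $j_\nu^\circ$ together with the growth bound \hyperref[assumptionjnu]{$\H{j_\nu}$}\ref{list:j_nu_bounded} (providing the dominating function). The one-sided estimate \ref{list:j_estimate} follows from \hyperref[assumptionjnu]{$\H{j_\nu}$}\ref{list:j_nu_est} and the continuous embedding $L^4(\Gamma_C)\hookrightarrow L^2(\Gamma_C)$, giving $m_j=m_{j_\nu}\sqrt{\mathrm{meas}(\Gamma_C)}$ and $\bar m_j=0$. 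With these constants and $\|N\|=\|\gamma_\nu\|_{\mathcal L(V,L^4(\Gamma_C))}$, $\|K\|=\|M\|=\|\gamma_\tau\|_{\mathcal L(V,L^4(\Gamma_C;\mathbb{R}^d))}$, the abstract smallness condition \eqref{eq:assumptionbound_mA_max} becomes exactly \eqref{eq:smallness_app2}.

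\textbf{Conclusion and continuity.} Theorem \ref{thm:mainresult} then yields a unique $(w,\alpha)\in\mathcal W^{1,2}_T\times C([0,T];L^2(\Gamma_C))$, with continuous dependence on $(w_0,\alpha_0)$ in the norm of $L^2_{T/2}V\times C([0,T/2];L^2(\Gamma_C))$. Defining $u(t)=u_0+\int_0^t w(s)\,ds$ gives $\dot u=w$, $u\in W^{1,2}(0,T;V)$, and the regularity \eqref{eq:regularity_second_application}. The continuity of the flow map $(u_0,w_0,\alpha_0)\mapsto(u,\alpha)$ in $W^{1,2}(0,T/2;V)\times C([0,T/2];L^2(\Gamma_C))$ is then deduced by the same Minkowski/Cauchy--Schwarz argument as in \eqref{eq:oneestleft}--\eqref{eq:thisisit}, controlling $\|u_1-u_2\|_{L^2_{T/2}V}$ by $\|u_{01}-u_{02}\|_V$ and $\|w_1-w_2\|_{L^2_{T/2}V}$. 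The principal technical point is the verification of \hyperref[assumptionj]{$\H{j}$}\ref{list:j_convergence} for the integral functional associated with $j_\nu$; this requires interchanging $j^\circ$ with the surface integral under the growth control of $\partial j_\nu$ and the $L^4(\Gamma_C)$ convergence of $Nw^n$, but is standard for the Clarke subgradient of a Nemytskii superposition.
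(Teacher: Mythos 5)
Your reduction to the abstract framework is the same as the paper's: the same choices of spaces, the same operators $A$, $\mathcal R$, $\mathcal G$, $M=\gamma_\tau$, $N=\gamma_\nu$, $K=M$, the same $\varphi$ and $j$, and the verification of \hyperref[assumptionphi]{$\H{\varphi}$} and \hyperref[assumptionj]{$\H{j}$} with the constants $\beta_{1\varphi}=L_{3\mu}$, $\beta_{4\varphi}=L_{1\mu}\sqrt{\mathrm{meas}(\Gamma_C)}$, $\beta_{5\varphi}=L_{2\mu}$, $m_j=m_{j_\nu}\sqrt{\mathrm{meas}(\Gamma_C)}$, $\bar m_j=0$ matches Lemma \ref{lemma:assumptionon_j}, and your Fatou argument for \hyperref[assumptionj]{$\H{j}$}\ref{list:j_convergence} matches the paper's use of Corollary \ref{cor:j} and Proposition \ref{prop:chainrule_subdiff}. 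The existence part, the regularity via $u(t)=u_0+\int_0^t w(s)\,ds$, and the continuous-dependence argument are all fine and follow the route of Theorem \ref{thm:finalthm}.

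However, there is a genuine gap in the uniqueness claim. Your statement that ``Problem \ref{prob:weaksol2} is precisely Problem \ref{prob:fullproblem}'' is not correct: the abstract inequality contains $j^\circ(t,Nw;Nv-Nw)$ for the integral functional $j(t,\tilde v)=\int_{\Gamma_C}j_\nu(\tilde v)\,da$, whereas Problem \ref{prob:weaksol2} contains $\int_{\Gamma_C}j_\nu^\circ(\dot u_\nu;v_\nu-\dot u_\nu)\,da$, and under \hyperref[assumptionjnu]{$\H{j_\nu}$} one only has the one-sided relation $j^\circ(t,\tilde w;v)\leq\int_{\Gamma_C}j_\nu^\circ(\tilde w;v)\,da$ of Corollary \ref{cor:j}\ref{list:equality} (equality would require Clarke regularity of $j_\nu$, which is not assumed). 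Consequently every solution of the abstract problem solves Problem \ref{prob:weaksol2}, so existence transfers, but the solution set of Problem \ref{prob:weaksol2} is a priori larger, and uniqueness from Theorem \ref{thm:mainresult} does not carry over. This is precisely why the paper supplements the abstract result with a direct two-solution comparison for Problem \ref{prob:weaksol2}: testing the inequality for $(u_1,\alpha_1)$ with $v=\dot u_2(t)$ and vice versa, adding, and using \hyperref[assumptionAcal]{$\H{\mathcal{A}}$}\ref{list:Acal_maximalmonotone}, \hyperref[assumptionB]{$\H{\mathcal{B}}$}\ref{list:Bcal_bounded}, \hyperref[assumptionC]{$\H{\mathcal{C}}$}, \hyperref[assumptionjnu]{$\H{j_\nu}$}\ref{list:j_nu_est}, \hyperref[assumptionmu]{$\H{\mu}$}\ref{list:mu1_Lipschitz}, a Gr\"onwall argument for the state variable, the smallness-condition \eqref{eq:smallness_app2}, and a choice of $T$ small enough, to deduce $\|\dot u_1-\dot u_2\|_{L^2_TV}=0$ and then $\alpha_1=\alpha_2$. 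Your proposal omits this step entirely, so as written the uniqueness assertion of the theorem is not established.
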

\begin{remark}
	The constraint \eqref{eq:smallness_app2} can also be found in, e.g., \cite[Theorem 4.4]{han} for  $\alpha_\varphi c_\alpha^2 = \sqrt{2} \:( L_{1\mu} \sqrt{\mathrm{meas}(\Gamma_C)}+ L_{2\mu}\norm{\alpha_0}_Y) \norm{\gamma_\tau}^2_{\mathcal{L}(V,L^4(\Gamma_C;\R^d))}$ and 
 $\alpha_j c_j^2 =m_{j_\nu} \sqrt{\mathrm{meas}(\Gamma_C)} \norm{\gamma_\nu}_{\mathcal{L}(V,L^4(\Gamma_C))}^2$.
\end{remark}
\begin{remark}
	We show that there exists a solution for $\rho \equiv 1$, and then take $w(t) = w(\rho t)$. 
\end{remark}
\subsubsection{Proof of Theorem \ref{thm:wellposed_app2}}
We use the same approach as in Section \ref{sec:application1}, meaning that we will use Theorem \ref{thm:mainresult} to prove Theorem \ref{thm:wellposed_app2}. But to use this theorem, we need to first rewrite Problem \ref{prob:weaksol2} into the same form as Problem \ref{prob:fullproblem}. Then we will verify the hypothesis of Theorem \ref{thm:mainresult}. 
\\
\indent
First recall that $X=L^4(\Gamma_C)$ and  $U=L^4(\Gamma_C; \R^d)$. Then take $Y=L^2(\Gamma_C)$ and $Z=U$.
We define  $A : (0, T) \times V \rightarrow V^\ast $, $\mathcal{R} : L^2_TV \rightarrow L^2_TV^\ast$, $\mathcal{G} : (0,T) \times Y \times U \rightarrow Y$, and $f : (0,T) \rightarrow V^\ast$ as in \eqref{eq:defining_ARS}-\eqref{eq:and_G} and \eqref{eq:f_inner}, respectively. We choose $M :  V \rightarrow U$ and $N : V \rightarrow X$ to be as in \eqref{eq:def_MN} and $K\equiv M$. Moreover, we define the functional $\varphi : (0, T) \times Y \times U \times Z \rightarrow \mathbb{R}$ by
\begin{subequations}
	\begin{equation}\label{eq:defintion_phi_2}
		\varphi(t,y,\Tilde{w},\Tilde{v}) = \int_{\Gamma_C}   \mu (|\Tilde{w}|,y) |\Tilde{v}| da \  \ \text{ for  } y\in Y, \ \Tilde{w} \in U,\ \Tilde{v} \in Z, \ \text{ a.e. } t\in (0,T)
	\end{equation}
	and the functional $j : (0,T)  \times X \rightarrow \mathbb{R}$ by
	\begin{align}\label{eq:j_nu}
		j(t,\Tilde{w}) = \int_{\Gamma_C} j_\nu (\Tilde{w}) da \  \ \text{ for  } \Tilde{w} \in X, \ \text{ a.e. } t\in (0,T).
	\end{align}
\end{subequations}
The problem is then on the following form.
\begin{prob}\label{prob:done_afterthis}
	Find $(w,\alpha)\in \mathcal{W}^{1,2}_T \times C([0,T];Y)$ such that
	\begin{align*}
		&\alpha(t) = \alpha_0 + \int_0^t \mathcal{G}(s,\alpha(s),Mw(s))ds,\\
		&\inner{\rho\Dot{w}(t)}{v-w(t)}+ \inner{A(t,w(t))}{v-w(t)} + \inner{\mathcal{R}w(t)}{v-w(t)}  + j^\circ(t,Nw(t);Nv-Nw(t))\\
		& + \varphi(t, \alpha(t),Mw(t),Kv) - \varphi(t, \alpha(t),Mw(t),Kw(t)) \geq \inner{f(t)}{v-w(t)}
	\end{align*}
	for all $v\in V$, a.e. $t\in (0,T)$ with $w(0) = w_0$.
\end{prob}
To see that it suffices to prove existence of a solution to Problem \ref{prob:done_afterthis} in order for Problem \ref{prob:weaksol2} to have a solution, we introduce the following result, which is of a similar form as found in \cite[Lemma 8, p.126]{sofonea2017} (see also \cite[Theorem 3.47]{Migorski2012}). The result will also be useful to prove uniqueness. 
\begin{cor}\label{cor:j}
	Assume that \hyperref[assumptionjnu]{$\H{j_\nu}$} holds. Then, the functional $j$ defined by \eqref{eq:j_nu} has the following properties:
		\begin{enumerate}[labelindent=0pt,labelwidth=\widthof{\ref{last-item}},label=(\roman*),itemindent=1em,leftmargin=!]
		\item $j(\cdot,v)$ is measurable on $(0,T)$ for all $v\in X$.
		\label{list:finite}
		\item $j(t, \cdot)$ is locally Lipschitz on $X$ for a.e. $t\in (0,T)$.
		\label{list:locally}
		\item For all $\Tilde{w},v \in X$, we have
		$ j^\circ (t,\Tilde{w};v) \leq \int_{\Gamma_C} j_\nu^\circ (\Tilde{w};v) da$.
		\label{list:equality}
	\end{enumerate}
\end{cor}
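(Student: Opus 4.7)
The approach follows the classical Aubin--Clarke argument for integral functionals on $L^p$ spaces, as in \cite[Theorem 3.47]{Migorski2012} and \cite[Lemma 8, p.126]{sofonea2017}, with the simplification that $j$ is independent of $t$. For \ref{list:finite}, measurability in $t$ is immediate since $j(t,v) = \int_{\Gamma_C} j_\nu(v)\, da$ is a constant function of $t$; one only needs to verify that $j_\nu(\cdot, v(\cdot)) \in L^1(\Gamma_C)$ for each $v \in X = L^4(\Gamma_C)$. To this end, I would apply Lebourg's mean value theorem with the reference function $\bar{e}$ from \hyperref[assumptionjnu]{$\H{j_\nu}$} together with the growth condition \hyperref[assumptionjnu]{$\H{j_\nu}$}\ref{list:j_nu_bounded} to obtain the pointwise bound
\begin{equation*}
|j_\nu(x, v(x))| \leq |j_\nu(x, \bar{e}(x))| + \big(\bar{c}_0 + \bar{c}_1(|v(x)| + |\bar{e}(x)|)\big)|v(x) - \bar{e}(x)|,
\end{equation*}
from which integrability follows by H\"older's inequality (using $v,\bar{e} \in L^4(\Gamma_C)$) and $j_\nu(\cdot, \bar{e}(\cdot)) \in L^1(\Gamma_C)$.

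For \ref{list:locally}, the same Lebourg estimate applied to any $w_1, w_2 \in X$ produces
\begin{equation*}
|j_\nu(x, w_1(x)) - j_\nu(x, w_2(x))| \leq \big(\bar{c}_0 + \bar{c}_1(|w_1(x)| + |w_2(x)|)\big)|w_1(x) - w_2(x)|,
\end{equation*}
and integrating over $\Gamma_C$ followed by H\"older's inequality in $L^4(\Gamma_C)$ yields a Lipschitz estimate for $j$ on every bounded subset of $X$, giving local Lipschitz continuity. For \ref{list:equality}, I would start from the definition of the generalized directional derivative,
\begin{equation*}
j^\circ(t, \tilde{w}; v) = \limsup_{u \to \tilde{w},\, \lambda \downarrow 0} \int_{\Gamma_C} \frac{j_\nu(x, u(x) + \lambda v(x)) - j_\nu(x, u(x))}{\lambda}\, da,
\end{equation*}
pick a realizing sequence $(u_n, \lambda_n)$ with $u_n \to \tilde{w}$ strongly in $X$ and $\lambda_n \downarrow 0$, and pass to a subsequence so that $u_n \to \tilde{w}$ almost everywhere on $\Gamma_C$. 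The pointwise Lebourg bound from \ref{list:locally} gives the majorant
\begin{equation*}
\Big|\frac{j_\nu(x, u_n(x) + \lambda_n v(x)) - j_\nu(x, u_n(x))}{\lambda_n}\Big| \leq \big(\bar{c}_0 + \bar{c}_1(2|u_n(x)| + \lambda_n|v(x)|)\big)|v(x)|,
\end{equation*}
which is uniformly $L^1$-bounded in $n$, so the reverse Fatou lemma applies and allows me to pull the $\limsup$ inside the integral. The remaining pointwise bound $\limsup_n \frac{j_\nu(u_n + \lambda_n v) - j_\nu(u_n)}{\lambda_n} \leq j_\nu^\circ(\tilde{w}(x); v(x))$ is immediate from the definition of $j_\nu^\circ$ and the a.e.\ convergence $u_n(x) \to \tilde{w}(x)$, finishing the proof.

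The main obstacle is the reverse Fatou step in \ref{list:equality}, as it requires a uniform $L^1$ majorant for the difference quotients; however, the growth condition \hyperref[assumptionjnu]{$\H{j_\nu}$}\ref{list:j_nu_bounded} is precisely designed to supply this bound. The only subtlety is that the realizing sequence for the $\limsup$ defining $j^\circ$ need not converge a.e.\ on $\Gamma_C$, but this is bypassed by passing to a subsequence, since the Banach--space limsup is attained (up to $\varepsilon$) along some specific sequence and the reverse Fatou argument can be applied to it.
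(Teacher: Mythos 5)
Your route is the classical Aubin--Clarke argument for integral functionals, which is exactly what the paper relies on: the paper does not prove Corollary \ref{cor:j} at all, but cites it as a variant of \cite[Lemma 8, p.126]{sofonea2017} and \cite[Theorem 3.47]{Migorski2012}. Parts \ref{list:finite} and \ref{list:locally} of your proposal are correct as written (the only routine point worth adding is that $x \mapsto j_\nu(x,v(x))$ is measurable on $\Gamma_C$ because $j_\nu$ is measurable in $x$ and continuous in $r$).

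The one genuine defect is the justification of the interchange of $\limsup$ and integral in \ref{list:equality}. You assert that the majorants $g_n(x) = \big(\bar c_0 + \bar c_1(2|u_n(x)| + \lambda_n|v(x)|)\big)|v(x)|$ are ``uniformly $L^1$-bounded in $n$, so the reverse Fatou lemma applies.'' Uniform boundedness of the $L^1$-norms of a sequence of majorants is not a hypothesis under which reverse Fatou holds: one needs a single integrable function dominating all the integrands, or majorants converging a.e.\ and in $L^1$. With only a uniform norm bound the interchange can fail (moving bumps: $f_n = n\chi_{(0,1/n)}$ has $\sup_n \norm{f_n}_{L^1(0,1)} = 1$, yet $\limsup_n \int_0^1 f_n\,dx = 1 > 0 = \int_0^1 \limsup_n f_n\,dx$). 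The repair is standard and cheap: since $u_n \to \Tilde{w}$ strongly in $X = L^4(\Gamma_C)$, pass to a further subsequence along which $u_n \to \Tilde{w}$ a.e.\ \emph{and} $|u_n(x)| \le h(x)$ a.e.\ for some $h \in L^4(\Gamma_C)$ (the partial converse of the dominated convergence theorem); then $\big(\bar c_0 + \bar c_1(2h + \lambda_1|v|)\big)|v| \in L^1(\Gamma_C)$ is an $n$-independent majorant and reverse Fatou is legitimate. Alternatively, observe that $g_n \to (\bar c_0 + 2\bar c_1|\Tilde{w}|)|v|$ a.e.\ and in $L^1(\Gamma_C)$ and apply Fatou's lemma to the nonnegative functions $g_n - f_n$. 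With either fix, the remainder of your argument for \ref{list:equality} --- the pointwise estimate of the difference quotients by $j_\nu^\circ(\Tilde{w}(x);v(x))$ along the a.e.-convergent realizing subsequence --- goes through and yields the claimed inequality.
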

\begin{lem}\label{lemma:assumptionon_j}
	Under the assumptions of Theorem \ref{thm:wellposed_app2}, \hyperref[assumptionphi]{$\H{\varphi}$} holds for $\varphi$ defined by \eqref{eq:defintion_phi_2} for $c_{0\varphi}(t) = \kappa_1 (\mathrm{meas}(\Gamma_C))^{3/2}$, $c_{1\varphi}= \kappa_2  \mathrm{meas}(\Gamma_C)$, $ c_{3\varphi} = \kappa_3 (\mathrm{meas}(\Gamma_C))^{5/4}$,  $\beta_{1\varphi} =L_{3\mu}$,\\ $\beta_{4\varphi}  =L_{1\mu}\sqrt{\mathrm{meas}(\Gamma_C)}$, $\beta_{5\varphi} =L_{2\mu}$, and $c_{4\varphi}=\beta_{2\varphi} = \beta_{3\varphi}= \beta_{6\varphi} = \beta_{7\varphi}=0$. \\
 \indent Moreover, $j$ defined by \eqref{eq:j_nu} satisfies \hyperref[assumptionj]{$\H{j}$} for $c_{0j}(t) = 2^{1/4} (\mathrm{meas}(\Gamma_C))^{3/4}\Bar{c}_0$, $c_{3j} = 2^{1/4} (\mathrm{meas}(\Gamma_C))^{2/3}\Bar{c}_1$, $m_j = m_{j_\nu}\sqrt{\mathrm{meas}(\Gamma_C)}$, and $\Bar{m}_j = c_{1j} = c_{2j} = 0$.
\end{lem}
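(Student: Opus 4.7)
The plan is to check each clause of $\H{\varphi}$ and $\H{j}$ by reducing it to a pointwise statement about $\mu$ (via $\H{\mu}$) or $j_\nu$ (via $\H{j_\nu}$) and then integrating on $\Gamma_C$ using dominated convergence and H\"older's inequality. Throughout, the slots corresponding to arguments absent from \eqref{eq:defintion_phi_2} and \eqref{eq:j_nu} — namely the variable $z\in X$ in $\varphi$, and the pair $(y,z)\in Y\times X$ in $j$ — are trivially handled and produce the vanishing constants listed in the statement.

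\textbf{Verification of $\H{\varphi}$.} Measurability \ref{list:phi_measurable} is immediate since \eqref{eq:defintion_phi_2} is $t$-independent. For continuity \ref{list:phi_cont}, combine the pointwise continuity of $\mu$ from $\H{\mu}$\ref{list:mu1_Lipschitz} with Lebesgue dominated convergence, the dominant being $(\kappa_1+\kappa_2|y|+\kappa_3|\tilde{w}|)|\tilde{v}|\in L^1(\Gamma_C)$ by $\H{\mu}$\ref{list:mu1_est}. Convexity and lower semicontinuity of $\tilde{v}\mapsto\int_{\Gamma_C}\mu(|\tilde{w}|,y)|\tilde{v}|\,da$ on $Z=L^4(\Gamma_C;\R^d)$ — clause \ref{list:phi_convex_lsc} — follow at once from $\mu\geq 0$ and $|\cdot|$ being a seminorm. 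For the growth bound \ref{list:phi_bounded}, convexity identifies the Clarke and convex subdifferentials, so every $\zeta\in\partial\varphi$ obeys $|\zeta(x)|\leq\mu(|\tilde{w}(x)|,y(x))$ a.e.; taking $\|\cdot\|_{Z^\ast}=\|\cdot\|_{L^{4/3}(\Gamma_C;\R^d)}$ and applying $\H{\mu}$\ref{list:mu1_est} together with the H\"older embeddings $\|y\|_{L^{4/3}}\leq(\mathrm{meas}(\Gamma_C))^{1/4}\|y\|_{L^2}$ and $\|\tilde{w}\|_{L^{4/3}}\leq(\mathrm{meas}(\Gamma_C))^{1/2}\|\tilde{w}\|_{L^4}$ yields the stated form. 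For the key four-term estimate \ref{list:phi_estimate}, rewrite
\[
\varphi(t,y_1,\tilde{w}_1,\tilde{v}_2)-\varphi(t,y_1,\tilde{w}_1,\tilde{v}_1)+\varphi(t,y_2,\tilde{w}_2,\tilde{v}_1)-\varphi(t,y_2,\tilde{w}_2,\tilde{v}_2)=\int_{\Gamma_C}\bigl[\mu(|\tilde{w}_1|,y_1)-\mu(|\tilde{w}_2|,y_2)\bigr]\bigl(|\tilde{v}_2|-|\tilde{v}_1|\bigr)\,da,
\]
apply $\H{\mu}$\ref{list:mu1_Lipschitz} pointwise, and split into three integrals dominated by $|\tilde{w}_1-\tilde{w}_2||\tilde{v}_1-\tilde{v}_2|$, $|y_2||\tilde{w}_1-\tilde{w}_2||\tilde{v}_1-\tilde{v}_2|$, and $|\tilde{w}_1||y_1-y_2||\tilde{v}_1-\tilde{v}_2|$. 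H\"older with exponents $(2,4,4)$ on $L^2(\Gamma_C)\times L^4\times L^4$ then produces $\beta_{1\varphi}=L_{3\mu}$, $\beta_{4\varphi}=L_{1\mu}\sqrt{\mathrm{meas}(\Gamma_C)}$, and $\beta_{5\varphi}=L_{2\mu}$.

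\textbf{Verification of $\H{j}$.} Parts \ref{list:j_measurable}--\ref{list:j_locallyLipschitz} are Corollary \ref{cor:j}\ref{list:finite}--\ref{list:locally}. For the upper semicontinuity \ref{list:j_convergence}, Corollary \ref{cor:j}\ref{list:equality} gives $j^\circ(t,\tilde{v}_n;\tilde{u}_n)\leq\int_{\Gamma_C}j_\nu^\circ(\tilde{v}_n;\tilde{u}_n)\,da$; extract a subsequence converging a.e.\ on $\Gamma_C$ and dominate $j_\nu^\circ(\tilde{v}_n;\tilde{u}_n)$ in $L^1(\Gamma_C)$ by $(\bar{c}_0+\bar{c}_1|\tilde{v}_n|)|\tilde{u}_n|$ (an $L^1$-convergent sequence by strong $L^4$-convergence of $\tilde{v}_n,\tilde{u}_n$), then invoke reverse Fatou together with the pointwise upper semicontinuity of $j_\nu^\circ$ from Proposition \ref{prop:chainrule_subdiff}; the standard ``every subsequence has a further subsequence'' argument promotes this to the full sequence. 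The growth bound \ref{list:j_bounded} follows by the same subdifferential identification, $\H{j_\nu}$\ref{list:j_nu_bounded}, and H\"older on $L^{4/3}(\Gamma_C)$, giving $c_{1j}=c_{2j}=0$ and $c_{0j},c_{3j}$ of the stated form. Finally, Corollary \ref{cor:j}\ref{list:equality} combined with $\H{j_\nu}$\ref{list:j_nu_est} gives
\[
j^\circ(t,\tilde{v}_1;\tilde{v}_2-\tilde{v}_1)+j^\circ(t,\tilde{v}_2;\tilde{v}_1-\tilde{v}_2)\leq m_{j_\nu}\int_{\Gamma_C}|\tilde{v}_1-\tilde{v}_2|^2\,da\leq m_{j_\nu}\sqrt{\mathrm{meas}(\Gamma_C)}\,\|\tilde{v}_1-\tilde{v}_2\|_{L^4}^2,
\]
so $m_j=m_{j_\nu}\sqrt{\mathrm{meas}(\Gamma_C)}$ and $\bar{m}_j=0$.

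The main obstacle is the limsup condition \ref{list:j_convergence}: one must invoke reverse Fatou through a subsequence extraction and produce a uniform $L^1$-dominant, leveraging pointwise upper semicontinuity of $j_\nu^\circ$. The remaining items are bookkeeping with $\H{\mu}$, $\H{j_\nu}$, and the H\"older embeddings among $L^p(\Gamma_C)$ on the bounded surface $\Gamma_C$.
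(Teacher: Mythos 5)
Your proposal is correct and follows essentially the same route as the paper's appendix proof: pointwise use of \H{\mu} and \H{j_\nu} combined with H\"older's inequality on $\Gamma_C$ for the $\varphi$-clauses, and Corollary \ref{cor:j} together with upper semicontinuity of $j_\nu^\circ$ and a Fatou argument for \H{j}\ref{list:j_convergence}, yielding the same $\beta_{1\varphi}$, $\beta_{4\varphi}$, $\beta_{5\varphi}$, and $m_j$. The only cosmetic deviations are that you obtain continuity of $\varphi(t,\cdot,\cdot,\Tilde{v})$ by dominated convergence rather than the paper's direct Lipschitz-type estimate, and your pointwise subgradient bound yields slightly different (smaller) admissible constants $c_{0\varphi},c_{1\varphi},c_{3\varphi}$ than those listed, which is harmless since only the existence of finite constants enters the subsequent analysis.
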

The proof of Lemma \ref{lemma:assumptionon_j} is placed in Appendix \ref{appendix:assumption_phiandj}. We may now prove Theorem \ref{thm:wellposed_app2}.
\begin{proof}[Proof of Theorem \ref{thm:wellposed_app2}]
	We wish to utilize Theorem \ref{thm:mainresult}.	The hypothesis of Theorem \ref{thm:mainresult} holds by \eqref{eq:smallness_app2}, Lemma \ref{lemma:assumptionphi2} and \ref{lemma:assumptionon_j}, and Corollary \ref{cor:j}\ref{list:finite}-\ref{list:locally}. With the help of Corollary \ref{cor:j}\ref{list:equality}, we may conclude that there exists a solution to Problem \ref{prob:weaksol2}. Moreover,  the fact that the flow map depends continuously on the initial data  follows by the same approach as in the proof of Theorem \ref{thm:finalthm}.
	To obtain uniqueness, we let $(u_1,\alpha_1),(u_2,\alpha_2) \in W^{1,2}(0,T;V) \times C([0,T];Y)$ be two pairs of solutions to Problem \ref{prob:weaksol2}. Choosing the test functions $v=\Dot{u}_2(t)$ and  $v=\Dot{u}_1(t)$ for a.e. $t\in (0,T)$, respectively, in \eqref{eq:as_an_ineq} yields
	\begin{align}\label{eq:inequality}
		&\scalarprod{ \mathcal{A} \varepsilon (\Dot{u}_1(t)) - \mathcal{A} \varepsilon (\Dot{u}_2(t)) }{ \varepsilon (\Dot{u}_1(t))- \varepsilon (\Dot{u}_2(t)) }_Q  \\ \notag
		&+ \int_{\Omega} \rho \big[\Ddot{u}_1(t)- \Ddot{u}_2(t)\big] \cdot  \big[\Dot{u}_1(t)-\Dot{u}_2(t)\big] dx\\ \notag
		&\leq
			\int_{\Gamma_C}    \big[\mu (|\Dot{u}_{1\tau}(t)| , \alpha_1(t)) - \mu (|\Dot{u}_{2\tau}(t)| , \alpha_2(t))\big]\big[|\Dot{u}_{2\tau}(t) |-|\Dot{u}_{1\tau}(t)|\big] da\\ \notag
			&+ \int_{\Gamma_C} [j^\circ_\nu (\Dot{u}_{1\nu}(t); \Dot{u}_{2\nu}(t) - \Dot{u}_{1\nu}(t)) +  j^\circ_\nu (\Dot{u}_{2\nu}(t); \Dot{u}_{1\nu}(t) - \Dot{u}_{2\nu}(t))] da\\ \notag
			&+ \scalarprod{ \mathcal{B} \varepsilon (u_1(t))- \mathcal{B} \varepsilon (u_2(t))}{ \varepsilon (\Dot{u}_2(t))- \varepsilon (\Dot{u}_1(t)) }_Q \\
			&+  \scalarprod{\int_0^t[ \mathcal{C}(t-s, \varepsilon(\Dot{u}_1(s)))  - \mathcal{C}(t-s, \varepsilon(\Dot{u}_2(s)))] ds  }{ \varepsilon (\Dot{u}_2(t))- \varepsilon (\Dot{u}_1(t)) }_Q \notag
	\end{align}
	for a.e. $t\in (0,T)$.  Using \hyperref[assumptionGapp]{$\H{G}$}\ref{list:G_app_Lipschitz}, a standard Gr\"{o}nwall argument, the Cauchy-Schwarz inequality, and Minkowski's inequality in \eqref{eq:alpha_as_an_eq} implies
	\begin{align}\label{eq:alphaineq}
		\int_{\Gamma_C}|\alpha_1(t)-\alpha_2(t)|^2 dx 
		&\leq c\int_0^t \int_{\Omega} |\Dot{u}_{1}(s) - \Dot{u}_{2}(s)|^2  dx ds
	\end{align}
	for a.e. $t\in (0,T$). We next utilize \hyperref[assumptionAcal]{$\H{\mathcal{A}}$}\ref{list:Acal_maximalmonotone}, \hyperref[assumptionB]{$\H{\mathcal{B}}$}\ref{list:Bcal_bounded}, \hyperref[assumptionC]{$\H{\mathcal{C}}$}, \hyperref[assumptionjnu]{$\H{j_\nu}$}\ref{list:j_nu_est}, \hyperref[assumptionmu]{$\H{\mu}$}\ref{list:mu1_Lipschitz}, \eqref{eq:denisty}, the Cauchy-Schwarz inequality, and Young's inequality to \eqref{eq:inequality}:
    \begin{align*}
        & m_A \norm{\varepsilon (\Dot{u}_1(t)) -  \varepsilon (\Dot{u}_2(t))}_Q^2   + \int_{\Omega} \rho \big[\Ddot{u}_1(t)- \Ddot{u}_2(t)\big] \cdot  \big[\Dot{u}_1(t)-\Dot{u}_2(t)\big] dx\\ \notag
		&\leq L_{1\mu} \sqrt{\mathrm{meas}(\Gamma_C)} \norm{\Dot{u}_{1\tau}(t) - \Dot{u}_{2\tau}(t)}_{L^4(\Gamma_C;\R^d)}^2
        \\
  &+ L_{2\mu} \norm{\alpha_1(t)}_{L^2(\Gamma_C)} \norm{\Dot{u}_{1\tau}(t) - \Dot{u}_{2\tau}(t)}_{L^4(\Gamma_C;\R^d)}^2\\
  &+L_{3\mu} \norm{\Dot{u}_{2\tau}(t)}_{L^4(\Gamma_C)} \norm{\alpha_1(t)-\alpha_2(t)}_{L^2(\Gamma_C)} \norm{\Dot{u}_{1\tau}(t) - \Dot{u}_{2\tau}(t)}_{L^4(\Gamma_C;\R^d)} \\
  & + m_{j\nu} \sqrt{\mathrm{meas}(\Gamma_C)}\norm{\Dot{u}_{1\nu}(t) - \Dot{u}_{2\nu}(t)}_{L^4(\Gamma_C)}^2 \\
  &+ L_\mathcal{B} \norm{\varepsilon (u_1(t)) -  \varepsilon (u_2(t))}_Q\norm{\varepsilon (\Dot{u}_1(t)) -  \varepsilon (\Dot{u}_2(t))}_Q\\
			&+ \norm{\mathcal{C}}_{L^\infty_TL^\infty(\Omega;\mathbb{S}^d)}    \norm{\int_0^t [ \varepsilon(\Dot{u}_1(s))  - \varepsilon(\Dot{u}_2(s))  ]ds}_Q \norm{ \varepsilon (\Dot{u}_2(t))- \varepsilon (\Dot{u}_1(t)) }_Q \notag
    \end{align*}
    for a.e. $t\in(0,T)$.    Next, we use the fact that if $\Dot{u}\in \mathcal{W}^{1,2}_T$, then $\frac{d}{dt}\norm{\Dot{u}(t)}_H^2  = 2 \inner{\Ddot{u}(t)}{\Dot{u}(t)}$ \cite[Theorem 3 in Section 5.9.2]{evans}, \eqref{eq:definition_of_u}, and integrating over the time interval $(0,t') \subset (0,T)$. We observe by \eqref{eq:smallness_app2}   that  $m_A -m_{j_\nu} \sqrt{\mathrm{meas}(\Gamma_C)} \norm{\gamma_\nu}_{\mathcal{L}(V,X)}^2>0$. In addition, we use \eqref{eq:alphaineq} and then apply H\"{o}lder's inequality and Minkowski's inequality to obtain
    \begin{align*}
         \norm{\Dot{u}_1 -  \Dot{u}_2}_{L^2_{t'}V}^2   &\leq \frac{(L_{1\mu} \sqrt{\mathrm{meas}(\Gamma_C)}    + L_{2\mu}\norm{\alpha_1}_{L^\infty_TY})\norm{\gamma_\tau}_{\mathcal{L}(V,U)}^2 }{m_A -m_{j_\nu} \sqrt{\mathrm{meas}(\Gamma_C)} \norm{\gamma_\nu}_{\mathcal{L}(V,X)}^2 }  \norm{\Dot{u}_1 -  \Dot{u}_2}_{L^2_{t'}V}^2 \\
  &+c \norm{\Dot{u}_{2}}_{L^2_TV} \norm{\alpha_1-\alpha_2}_{L^\infty_{t'}Y} \norm{\Dot{u}_1 -  \Dot{u}_2}_{L^2_{t'}V}\\
  &+ c
  \Big[ \int_0^{t'} \int_0^t \norm{\Dot{u}_1(s) - \Dot{u}_2(s)}_V^2 ds dt \Big]^{1/2}\norm{ \Dot{u}_1- \Dot{u}_2 }_{L^2_{t'} V}
   \\
   &=: I + II + III
    \end{align*}
    for a.e. $t'\in (0,T)$. Using  Minkowski's inequality, \hyperref[assumptionGapp]{$\H{G}$}\ref{list:G_app_Lipschitz}, and  the Cauchy-Schwarz inequality implies
    \begin{align*}
        \norm{\alpha_1(t)}_{L^2(\Gamma_C)} &\leq \norm{\alpha_0}_{L^2(\Gamma_C)} + L_G\int_0^t [\norm{\alpha_1(s)}_{L^2(\Gamma_C)} + \sqrt{\mathrm{meas}(\Gamma_C)} \norm{\Dot{u}_{1\tau}(s)}_{L^4(\Gamma_C)}]ds\\
       &\leq \norm{\alpha_0}_{L^2(\Gamma_C)} + L_G\int_0^t \norm{\alpha_1(s)}_{L^2(\Gamma_C)}ds + cT^k \Big(\int_0^t  \norm{\Dot{u}_1(s)}^2_Vds\Big)^{1/2}
    \end{align*}
    for a.e. $t\in (0,T)$ and $k\geq 1/2$. From Gr\"{o}nwall's inequality, we deduce
    \begin{align*}
        \norm{\alpha_1}_{L^\infty_TL^2(\Gamma_C)}
       &\leq (\norm{\alpha_0}_{L^2(\Gamma_C)} + cT^k\norm{\Dot{u}_1}_{L^2_TV})(1+cT\mathrm{e}^{cT})
    \end{align*}
    for some $k\geq 1/2$. Utilizing Young's inequality to $I$ and $II+III$ and then the arithmetic-quadratic mean inequality to the latter term while keeping in mind \eqref{eq:smallness_app2}-\eqref{eq:regularity_second_application} and \eqref{eq:alphaineq}, we obtain 
    \begin{align*}
         \norm{\Dot{u}_1 -  \Dot{u}_2}_{L^2_{t'}V}^2   
		&\leq\frac{ 2(L_{1\mu} \sqrt{\mathrm{meas}(\Gamma_C)}   + L_{2\mu}(\norm{\alpha_0}_Y + T^kc) )^2\norm{\gamma_\tau}_{\mathcal{L}(V,U)}^4}{(m_A -m_{j_\nu} \sqrt{\mathrm{meas}(\Gamma_C)} \norm{\gamma_\nu}_{\mathcal{L}(V,X)}^2 )^2 } \norm{\Dot{u}_1 -  \Dot{u}_2}_{L^2_{t'}V}^2 \\
        &+ c   \int_0^{t'} \int_0^t \norm{\Dot{u}_1(s) - \Dot{u}_1(s)}_V^2 ds dt 
    \end{align*}
    for all $t'\in (0,T)$. 
    A consequence of \eqref{eq:smallness_app2}  and choosing $T>0$ such that
    \begin{align*}
        T^k \sim \frac{m_A -m_{j_\nu} \sqrt{\mathrm{meas}(\Gamma_C)} \norm{\gamma_\nu}_{\mathcal{L}(V,X)}^2}{cL_{2\mu}\norm{\gamma_\tau}_{\mathcal{L}(V,U)}^2}
    \end{align*}
    small enough for some $k\geq 1/2$
    implies
    \begin{equation*}
      \frac{\sqrt{2}(L_{1\mu} \sqrt{\mathrm{meas}(\Gamma_C)}    + L_{2\mu}(\norm{\alpha_0}_Y + T^kc))\norm{\gamma_\tau}_{\mathcal{L}(V,U)}^2  }{m_A -m_{j_\nu} \sqrt{\mathrm{meas}(\Gamma_C)} \norm{\gamma_\nu}_{\mathcal{L}(V,X)}^2 } < 1,
    \end{equation*}
    we obtain
    \begin{align*}
         \norm{\Dot{u}_1 -  \Dot{u}_2}_{L^2_{t'}V}^2   \leq  c   \int_0^{t'} \norm{\Dot{u}_1 - \Dot{u}_2}_{L^2_tV}^2 dt 
    \end{align*}
    for all $t'\in (0,T)$.     Applying a standard Gr\"{o}nwall argument reads
	\begin{align}\label{eq:ineq_to_go}
		\norm{\Dot{u}_1 - \Dot{u}_2}_{L^2_{T}V}^2 \leq 0.
	\end{align}
	By the definition of $u$, i.e., \eqref{eq:definition_of_u},  the smallness-condition \eqref{eq:smallness_app2} and \eqref{eq:alphaineq}-\eqref{eq:ineq_to_go}, we conclude that $(u,\alpha)$ is the unique solution to Problem \ref{prob:weaksol2}.
\end{proof}

\subsection{Application to rate-and-state friction} \label{sec:rateandstate}
The coupling \eqref{eq:mu_alpha} are standard in geophysical applications in earth sciences, where the experimentally derived Dieterich-Ruina laws are commonly used. We refer to \cite{Marone1998} for an overview and comparison of some commonly used laws. There have been physical issues with the standard rate law, e.g., $|\Dot{u}_\tau(t)| \rightarrow 0$ resulting in a negative friction coefficient. This was repaired by using the regularized or truncated law (see \cite[Section 1.1-1.3]{Pipping2015_phd} and references therein), which are, respectively, given by
\begin{subequations}\label{eq:both1}
\begin{align}\label{eq:regularized}
	\mu(|\Dot{u}_\tau(t)|,\alpha(t)) &= a \: \mathrm{arcsinh} \bigg(\frac{|\Dot{u}_\tau(t)|}{2 v_\alpha(t)} \bigg), 
	\\
	\mu(|\Dot{u}_\tau(t)|,\alpha(t)) &=a\log^+ \bigg(\frac{|\Dot{u}_\tau(t)|}{v_\alpha(t)}\bigg) \ \  \text{ with } \log^+v= 
	\begin{dcases}
	\log v, \ \ \text{ if } v\geq 1,\\
	0,  \ \ \text{ otherwise},
	\end{dcases}
	\label{eq:truncated}
\end{align}	
\end{subequations}
where $v_\alpha(t) = v_0e^{-\frac{1}{a}(\mu_0 + b\alpha(t))}$. The coefficients $a$, $b$, $v_0$, and $\mu_0$ are system parameters (see, e.g., \cite{Marone1998,Helmstetter},\cite[Section 1.2]{Pipping2015_phd}). Another regularization used in literature can be found in \cite{Roubicek2014}. The most popular rate-and-state friction laws are the aging and slip laws, respectively, described by
\begin{subequations}\label{eq:both2}
    \begin{align}\label{eq:aginglaw}
	\Dot{\alpha} (t) &= \frac{v_0\mathrm{e}^{-\alpha(t)}- |\Dot{u}_\tau(t)|}{L},\\
    \Dot{\alpha}(t) &= -\frac{|\Dot{u}_\tau(t)|}{L}
    \Big[\log\Big(\frac{|\Dot{u}_\tau(t)|}{v_0}\Big) + \alpha(t)\Big],
     \label{eq:sliplawrate}
\end{align}
\end{subequations}
with $L$ being a system parameter (see, e.g., \cite{Helmstetter}). 
In the framework presented in this paper, we are not able to include \eqref{eq:both1}-\eqref{eq:both2}. The main issue is that  $|\Dot{u}_\tau(t)| \not\in L^\infty(\Gamma_C)$. We therefore consider a first-order Taylor approximation of $\mathrm{e}^{ \Hat{c} \alpha(t)}$ around $\alpha_0$ with $\Hat{c} = -1, \frac{b}{a}$, which is used in \eqref{eq:regularized} and \eqref{eq:aginglaw}. This leads to a first-order approximation of \eqref{eq:regularized} and \eqref{eq:aginglaw}.\\
\indent
The first-order Taylor approximation of $\mathrm{e}^{\Hat{c}\alpha(t)}$ around $\alpha_0$ reads
\begin{align}\label{eq:expo}
    \mathrm{e}^{\Hat{c}\alpha(t)} =  \mathrm{e}^{\Hat{c}\alpha_0}  (1 + \Hat{c}  (\alpha(t)-\alpha_0) )+ \mathcal{O} ((\alpha(t)-\alpha_0)^2).
\end{align}
Using the above approximation in \eqref{eq:regularized} and \eqref{eq:aginglaw} gives us
\begin{subequations}
    \begin{align}\label{eq:alpha_approx}
        G(\alpha(t),|\Dot{u}_\tau(t)|) &= \frac{v_0 \mathrm{e}^{-\alpha_0}(1- \alpha(t)  +\alpha_0 ) 
    - |\Dot{u}_\tau(t)|}{L}, \\ 
        \mu(|\Dot{u}_\tau(t)|,\alpha(t)) &=a\: \mathrm{arcsinh} \bigg(\frac{  \mathrm{e}^{\frac{\mu_0 + b \alpha_0 }{a}}|\Dot{u}_\tau(t)|(1+\frac{b}{a}(\alpha(t)-\alpha_0))}{2v_0} \bigg).
        \label{eq:mu_approx}
    \end{align}
\end{subequations}
We will make a formal argument to justify that these are first-order approximations. 
\begin{proof}[Formal augmentations of \eqref{eq:alpha_approx}-\eqref{eq:mu_approx}]
    For simplicity in notation, we let $y= \alpha(t)$ and $r=\Dot{u}_\tau(t)$. We wish to have the same order of error for the approximation of \eqref{eq:regularized} and \eqref{eq:aginglaw} as in \eqref{eq:expo}. 
    For \eqref{eq:alpha_approx}, we are considering $\Hat{c} = -1$. 
    We directly obtain
    \begin{align*}
       \frac{v_0}{L}(\mathrm{e}^{-y}-\mathrm{e}^{-\alpha_0}(1-(y-\alpha_0))) =    \mathcal{O} ((y-\alpha_0)^2).
    \end{align*}
    Next, considering \eqref{eq:mu_approx}, we are interested in the approximation \eqref{eq:expo} for $\Hat{c} = \frac{b}{a}$. We let $g=  \frac{1  }{2v_0}\mathrm{e}^{\frac{\mu_0+ by}{a}}$ and $f =\frac{1}{2v_0}\mathrm{e}^{\frac{\mu_0 + b\alpha_0}{a}}(1+ \frac{b}{a}(y-\alpha_0))$.  Then, by the mean value theorem
    \begin{align*}
        \mathrm{arcsinh} (|r|g) -  \mathrm{arcsinh} (|r|f) 
        &= \frac{|r|}{\sqrt{1 + z^2}} (g-f)
    \end{align*}
    with $z\in (|r|f,|r|g)$ where $f\leq g$. So, we have
    \begin{align*}
         \mathrm{arcsinh} (|r|g) =   \mathrm{arcsinh} (|r|f)  + \frac{|r|}{\sqrt{1 + z^2}} \mathcal{O}((y-\alpha_0)^2).
    \end{align*}
   We also note that 
   \begin{align*}
         0 \leq \frac{|r|}{\sqrt{1 + z^2}} \leq \frac{|r|}{\sqrt{1 + (|r|f)^2}} \leq \frac{1}{|f|} = \frac{1}{|\Tilde{c}||1+\frac{b}{a}(y-\alpha_0)|}   ,
   \end{align*}
   where $f \sim \Tilde{c}$ if $y$ is close to $\alpha_0$.
   Consequently, we have the same order of error as in \eqref{eq:expo} as desired. 
\end{proof}

\begin{remark}\label{remark:ratestate1}
Above, we gave formal arguments that our model is a first-order expansion around the initial value $\alpha_0$. We will now investigate if our approximated model has the same qualitative behavior as the original model. Following \cite{rice2001rate}, the key restriction on $G$ is that when the slip rate $|\dot{u}_\tau|$ is constant, the equation $\dot{\alpha} = G(\alpha, |\dot{u}_\tau|)$  has a stable solution that evolves monotonically towards a definite value of $\alpha$, denoted $\alpha^\ast=\alpha^\ast(|\dot{u}_\tau|)$, at which $G(\alpha^\ast,|\dot{u}_\tau|) = 0$. This holds true if $\dfrac{\partial G}{\partial \alpha} < 0$, which is easily verified for the approximation of $G$ given by \eqref{eq:alpha_approx}. 
\\
\indent
For the friction term, again following \cite{rice2001rate}, we seek $\dfrac{\partial \mu}{\partial |\dot{u}_\tau|} > 0$ and $\dfrac{\partial \mu}{\partial \alpha} >0$. The first condition is consistent with the experimental observations and holds if $\alpha$ is close to $\alpha_0$. The latter condition agrees with the established convention for the state variable; larger values mean greater strength. This is also consistent with the usual interpretation of $\alpha$ as a measure of contact maturity and the fact that more mature contact is stronger. Consequently, this shows that qualitatively our approximate model has the same behavior as the original model problem. One can also see in Figure \ref{fig:sub1}-\ref{fig:sub2} that there is a neighborhood where the first-order approximations \eqref{eq:alpha_approx}-\eqref{eq:mu_approx} are close to the original equations for the values used in Table \ref{tab:table1}. 
\end{remark}

\begin{table}[h]
  \begin{center}
      \caption{Parameters used for the experiments are found in \cite{parameters}.}    
    \label{tab:table1}
    \begin{tabular}{c|c} 
      \textbf{Symbol} & \textbf{Value}\\
      \hline
      $a$ & $0.011$  \\
      $b$ & $0.014$ \\
      $L$ & $5\cdot 10^{-5}$ m \\
      $v_0$ & $10^{-9}$ m/s \\
      $\mu_0$ & $0.7$ \\
      $|\Dot{u}_\tau(t=0)|$ & $10^{-9}$ m/s\\
      $\alpha_0$ & $\ln\big(\frac{v_0}{L}\big)$ \\
    \end{tabular}
  \end{center}
\end{table}

\begin{figure}[H]
  \centering
  \includegraphics[scale=0.53]{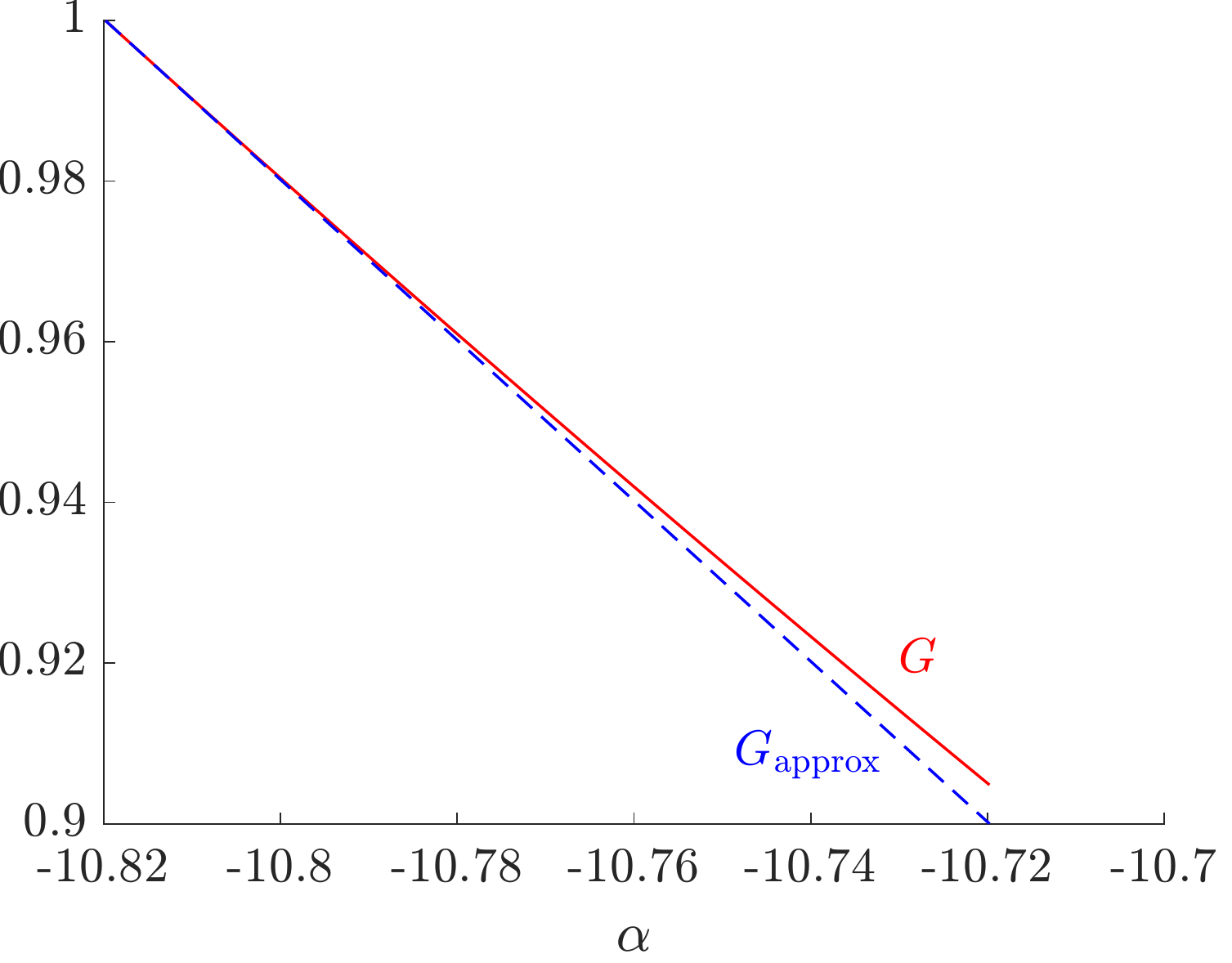}
  \caption{With the parameters in Table \ref{tab:table1}, the red line denotes $G = G(\alpha(t),|\Dot{u}_\tau(0)|)$ given by \eqref{eq:aginglaw}, and the blue dashed line denotes $G_{\mathrm{approx}}= G_{\mathrm{approx}}(\alpha(t),|\Dot{u}_\tau(0)|)$, the approximation of $G$, given by \eqref{eq:alpha_approx}.}
  \label{fig:sub1}
\end{figure}
\vspace{0.3cm}
\begin{figure}[H]
  \centering
  \includegraphics[scale=0.53]{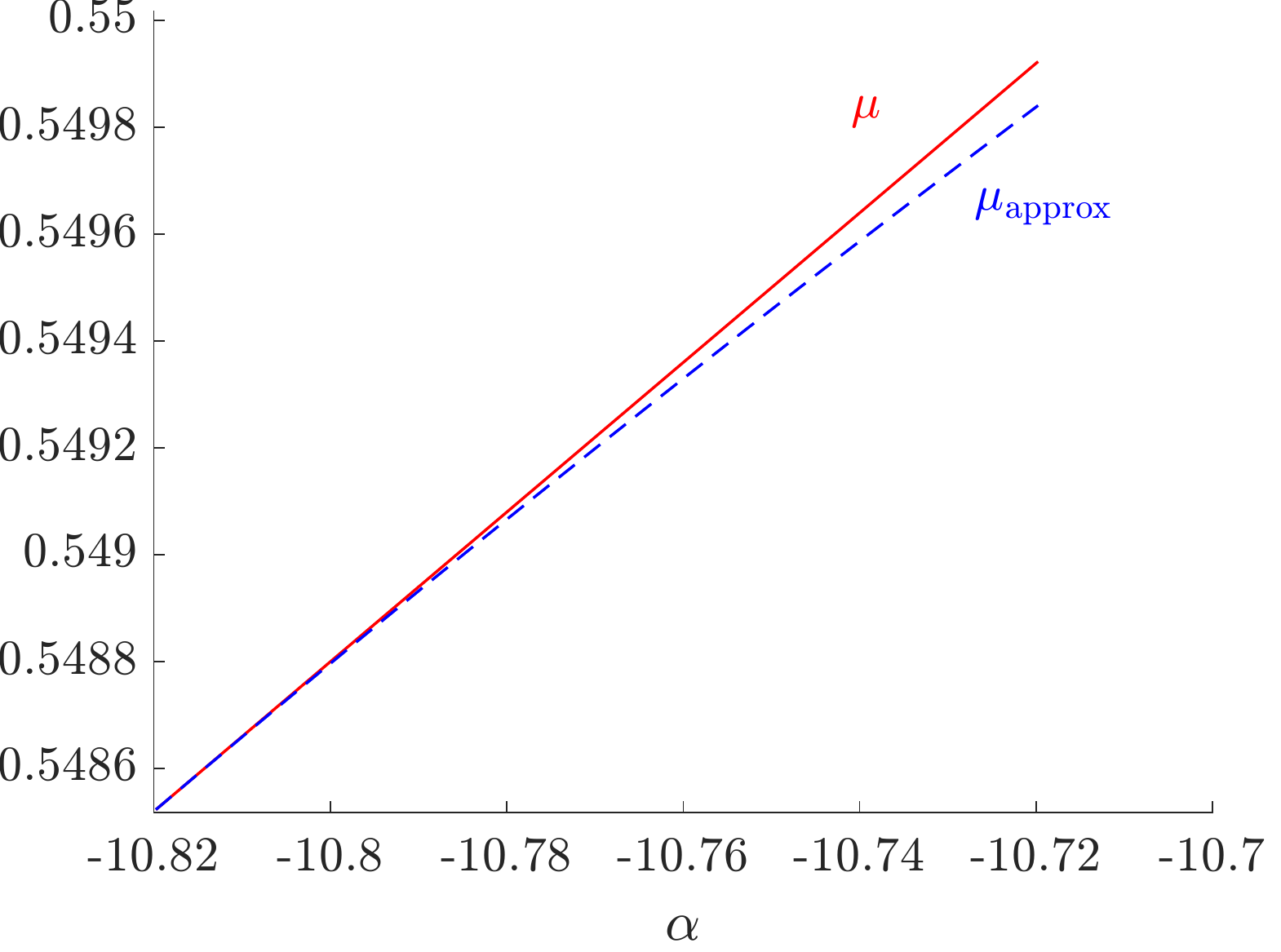}
  \caption{The friction coefficient $\mu = \mu(|\Dot{u}_\tau(0)|,\alpha(t))$ given by \eqref{eq:regularized} (red line) and $\mu_{\mathrm{approx}} = \mu_{\mathrm{approx}}(|\Dot{u}_\tau(0)|,\alpha(t))$, the approximated $\mu$, given by \eqref{eq:mu_approx} (blue dashed line) for the parameters in Table \ref{tab:table1}.}
  \label{fig:sub2}
\end{figure}

We require the following assumption on the coefficients in \eqref{eq:alpha_approx}-\eqref{eq:mu_approx}:
\begin{subequations}\label{eq:abLmu0}
    \begin{align}
        \alpha_0 &\in L^\infty (\Gamma_C), \\
       v_0^{-1}, L^{-1} &\in L^\infty(\Gamma_C),\\
         v_0,\mu_0,a,b&\in L^\infty(\Gamma_C),  \\
         L,a,&v_0  \: \text{ are nonzero}.
         \end{align}
\end{subequations}
Then, we have the following consequences of Theorem \ref{thm:finalthm} and \ref{thm:wellposed_app2}, respectively.

\begin{cor}\label{cor:cor}
   Assume that
   hypotheses \hyperref[assumptionAcal]{$\H{\mathcal{A}}$}, \hyperref[assumptionB]{$\H{\mathcal{B}}$}, \hyperref[assumptionp]{$\H{p}$}, \hyperref[assumptionC]{$\H{\mathcal{C}}$}, \eqref{eq:denisty}-\eqref{eq:assumptionondata1}, and \eqref{eq:abLmu0} holds. Then, there exists a $T>0$ satisfying \eqref{eq:times_T} such that Problem \ref{prob:weaksol} with \eqref{eq:alpha_approx}-\eqref{eq:mu_approx}  has a unique solution $(u,\alpha)$ under the smallness-conditions 
    \begin{align}\label{eq:cor1_smallness_ass1}
		m_\mathcal{A} &> \sqrt{2} \: p^\ast(\sqrt{\mathrm{meas}(\Gamma_C)} \norm{a-b\alpha_0}_{L^\infty(\Gamma_C)} + \norm{b}_{L^\infty(\Gamma_C)}\norm{\alpha_0}_{L^2(\Gamma_C)} )
		\\
		&\times \norm{\frac{1}{2v_0}\mathrm{e}^{\frac{\mu_0 + b \alpha_0 }{a}}}_{L^\infty(\Gamma_C)}   \norm{\gamma_\tau}_{ \mathcal{L}(V,L^4(\Gamma_C;\R^d))}
		\norm{(\gamma_\tau,\gamma_\nu)}_{\mathcal{L}(V,L^4(\Gamma_C;\R^d)) \times \mathcal{L}(V,L^4(\Gamma_C)) }.\notag
	\end{align}
	We obtain the following regularity
	\begin{equation*}
		u\in W^{1,2}(0,T;V),\ \ \Dot{u} \in \mathcal{W}^{1,2}_T \subset C([0,T];H), \ \ \alpha \in C([0,T];L^2(\Gamma_C)).
	\end{equation*}
	Moreover,  there exists a neighborhood around $(u_0,w_0,\alpha_0)$ so that the flow map $\Tilde{F} : V \times V \times L^\infty(\Gamma_C) \rightarrow W^{1,2}(0,T/2;V) \times C([0,T/2]; L^2(\Gamma_C))$, $(u_0,w_0,\alpha_0) \mapsto (u,\alpha)$ is continuous.
\end{cor}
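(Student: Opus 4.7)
The plan is to derive Corollary \ref{cor:cor} as a direct application of Theorem \ref{thm:finalthm}, so the only work is to verify that the specific functions $\mathcal{G}$ (arising from \eqref{eq:alpha_approx}) and $\mu$ (arising from \eqref{eq:mu_approx}) satisfy \hyperref[assumptionGapp]{$\H{G}$} and \hyperref[assumptionmu]{$\H{\mu}$} under the coefficient hypothesis \eqref{eq:abLmu0}, and then to read off the constants in a way that makes the smallness condition \eqref{eq:smallness1} reduce to \eqref{eq:cor1_smallness_ass1}. Note that \eqref{eq:abLmu0} gives $\alpha_0\in L^\infty(\Gamma_C)\subset L^2(\Gamma_C)$, so \eqref{eq:assumptionondata2} is satisfied automatically.

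Verifying \hyperref[assumptionGapp]{$\H{G}$} is easy because $G(\alpha,r)=\frac{v_0\mathrm{e}^{-\alpha_0}(1-\alpha+\alpha_0)-r}{L}$ is affine in $(\alpha,r)$ with coefficients in $L^\infty(\Gamma_C)$; measurability is immediate, the Lipschitz constant can be taken as $L_G=\max\bigl(\|v_0\mathrm{e}^{-\alpha_0}/L\|_{L^\infty},\|1/L\|_{L^\infty}\bigr)$, and $G(\cdot,0,0)=\frac{v_0\mathrm{e}^{-\alpha_0}(1+\alpha_0)}{L}\in L^\infty(\Gamma_C)\subset L^2(\Gamma_C)$. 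For \hyperref[assumptionmu]{$\H{\mu}$}(ii), I would write the argument of $\mathrm{arcsinh}$ in \eqref{eq:mu_approx} as $C|r|\phi(y)$, where $C=\frac{1}{2v_0}\mathrm{e}^{(\mu_0+b\alpha_0)/a}\in L^\infty(\Gamma_C)$ and $\phi(y)=1+\frac{b}{a}(y-\alpha_0)=\frac{a-b\alpha_0+by}{a}$. Since $|\mathrm{arcsinh}'|\le 1$, the function is globally $1$-Lipschitz, hence
\begin{equation*}
|\mu(r_1,y_1)-\mu(r_2,y_2)|\le aC\bigl(|\phi(y_2)|\,|r_1-r_2|+|r_1|\tfrac{|b|}{|a|}|y_1-y_2|\bigr),
\end{equation*}
and estimating $aC|\phi(y_2)|\le C(|a-b\alpha_0|+|b||y_2|)$ produces the required split with $L_{1\mu}=\|C(a-b\alpha_0)\|_{L^\infty}$ and $L_{2\mu}=L_{3\mu}=\|Cb\|_{L^\infty}$.

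The main technical point is \hyperref[assumptionmu]{$\H{\mu}$}(iii), since $|\mathrm{arcsinh}(x)|\le|x|$ only gives the nonlinear bound $aC|r||\phi(y)|$. My plan is to use the bilinear inequality $\mathrm{arcsinh}(uv)\le\ln((1+2u)(1+2v))\le 2u+2v$ for $u,v\ge0$, applied with $u=C|r|$ and $v=|\phi(y)|$. This yields
\begin{equation*}
|\mu(r,y)|\le 2aC|r|+2a|\phi(y)|\le 2a\|C\|_{L^\infty}|r|+2\|a-b\alpha_0\|_{L^\infty}+2|b||y|,
\end{equation*}
giving $\kappa_1,\kappa_2,\kappa_3$ in $L^\infty(\Gamma_C)$ as required, where the exponential prefactor enters via $\|C\|_{L^\infty}$. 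I expect this step—finding a linear (rather than bilinear) pointwise bound on $|\mu|$ compatible with the framework—to be the only real obstacle; the bound $\mathrm{arcsinh}(uv)\le 2u+2v$ is the key trick.

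Finally, with $L_{1\mu}\le\|C\|_{L^\infty}\|a-b\alpha_0\|_{L^\infty}$ and $L_{2\mu}\le\|C\|_{L^\infty}\|b\|_{L^\infty}$, submultiplicativity of $\|\cdot\|_{L^\infty}$ turns the smallness condition \eqref{eq:smallness1} of Theorem \ref{thm:finalthm} into \eqref{eq:cor1_smallness_ass1}. All remaining conclusions—existence and uniqueness of $(u,\alpha)$, the regularity $u\in W^{1,2}(0,T;V)$, $\dot u\in\mathcal W^{1,2}_T\subset C([0,T];H)$, $\alpha\in C([0,T];L^2(\Gamma_C))$, and continuity of the flow map—are inherited directly from Theorem \ref{thm:finalthm}, with the flow map continuity stated on $V\times V\times L^\infty(\Gamma_C)$ (with values in $W^{1,2}(0,T/2;V)\times C([0,T/2];L^2(\Gamma_C))$) simply because the input space has been strengthened to $L^\infty$ while the target uses $L^2$, which only makes the continuity statement easier.
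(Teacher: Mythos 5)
Your proposal is correct and follows essentially the same route as the paper: reduce to Theorem \ref{thm:finalthm} by verifying \hyperref[assumptionGapp]{$\H{G}$} and \hyperref[assumptionmu]{$\H{\mu}$} for \eqref{eq:alpha_approx}--\eqref{eq:mu_approx}, the only nontrivial point being a bound for $\mathrm{arcsinh}$ of a product that is linear in each factor. Your inequality $\mathrm{arcsinh}(uv)\le \ln((1+2u)(1+2v))\le 2u+2v$ is just a variant of the paper's estimate $\mathrm{arcsinh}(\beta\xi)\le \mathrm{arcsinh}(\beta)+\mathrm{arcsinh}(\xi)\le \beta+\xi$ (which the paper applies after splitting the prefactor via its square root), and since your Lipschitz constants $L_{1\mu}, L_{2\mu}$ coincide with the paper's, the smallness condition \eqref{eq:cor1_smallness_ass1} indeed implies \eqref{eq:smallness1} exactly as you argue.
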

\begin{cor}\label{cor:cor2}
    Assume that  \hyperref[assumptionAcal]{$\H{\mathcal{A}}$}, \hyperref[assumptionB]{$\H{\mathcal{B}}$}, \hyperref[assumptionmu]{$\H{\mu}$}, \hyperref[assumptionGapp]{$\H{G}$}, \hyperref[assumptionC]{$\H{\mathcal{C}}$}, \eqref{eq:denisty}-\eqref{eq:assumptionondata1}, \hyperref[assumptionjnu]{$\H{j_\nu}$}, and \eqref{eq:abLmu0} holds. Then, there exists a $T>0$ satisfying \eqref{eq:times_T} so that Problem \ref{prob:weaksol2} with \eqref{eq:alpha_approx}-\eqref{eq:mu_approx} has a unique  solution $(u,\alpha)$	under the smallness-condition \begin{align}\label{eq:cor2_smallness_ass1}
	    m_\mathcal{A} &>m_{j_\nu} \sqrt{\mathrm{meas}(\Gamma_C)} \norm{\gamma_\nu}_{\mathcal{L}(V,L^4(\Gamma_C))}^2+\sqrt{2}  \norm{\frac{1}{2v_0}\mathrm{e}^{\frac{\mu_0 + b \alpha_0 }{a}}}_{L^\infty(\Gamma_C)}  \\
	    &\times (\sqrt{\mathrm{meas}(\Gamma_C)} \norm{a-b\alpha_0}_{L^\infty(\Gamma_C)} + \norm{b}_{L^\infty(\Gamma_C)}\norm{\alpha_0}_{L^2(\Gamma_C)} )
		 \norm{\gamma_\tau}_{ \mathcal{L}(V,L^4(\Gamma_C;\R^d))}^2.
		\notag
	\end{align}
	In addition, we have the regularity: 
	\begin{equation*}
		u\in W^{1,2}(0,T;V),\ \ \ \Dot{u} \in \mathcal{W}^{1,2}_T \subset C([0,T];H), \ \ \ \alpha \in C([0,T];L^2(\Gamma_C)).
	\end{equation*}
	Moreover, the flow map depends continuously on the initial data. 
\end{cor}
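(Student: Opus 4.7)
The proof follows by specializing Theorem \ref{thm:wellposed_app2} to the explicit $G$ and $\mu$ given by \eqref{eq:alpha_approx}--\eqref{eq:mu_approx}. The strategy is to verify that these concrete functions satisfy the abstract hypotheses \hyperref[assumptionGapp]{$\H{G}$} and \hyperref[assumptionmu]{$\H{\mu}$} under the regularity assumption \eqref{eq:abLmu0}, to identify the resulting constants explicitly in terms of $a, b, \alpha_0, \mu_0, v_0, L$, and to check that these constants reduce the abstract smallness condition \eqref{eq:smallness_app2} to the concrete form \eqref{eq:cor2_smallness_ass1}. All other hypotheses of Theorem \ref{thm:wellposed_app2} are assumed directly in the corollary.

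Verification of \hyperref[assumptionGapp]{$\H{G}$} is immediate. Since $G(x,\alpha,r) = (v_0(x)e^{-\alpha_0(x)}(1+\alpha_0(x)-\alpha) - r)/L(x)$ is affine in $(\alpha, r)$ with coefficients in $L^\infty(\Gamma_C)$ by \eqref{eq:abLmu0}, measurability \hyperref[assumptionGapp]{$\H{G}$}\ref{list:G_app_measurable} and global Lipschitz continuity \ref{list:G_app_Lipschitz} hold with $L_G = \max(\norm{v_0 e^{-\alpha_0}/L}_{L^\infty(\Gamma_C)}, \norm{1/L}_{L^\infty(\Gamma_C)})$. Furthermore, $G(\cdot, 0, 0) = v_0 e^{-\alpha_0}(1+\alpha_0)/L \in L^\infty(\Gamma_C) \subset L^2(\Gamma_C)$ since $\Gamma_C$ has finite measure, so \ref{list:G_app_0} holds.

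For $\mu$, set $C(x) = \tfrac{1}{2v_0(x)}\exp((\mu_0(x) + b(x)\alpha_0(x))/a(x))$, which lies in $L^\infty(\Gamma_C)$ by \eqref{eq:abLmu0}. Measurability is evident. For \hyperref[assumptionmu]{$\H{\mu}$}\ref{list:mu1_Lipschitz}, using $|\mathrm{arcsinh}'|\leq 1$ together with the additive split
\[
|r_1|\bigl(1+\tfrac{b}{a}(y_1-\alpha_0)\bigr) - |r_2|\bigl(1+\tfrac{b}{a}(y_2-\alpha_0)\bigr) = |r_1|\tfrac{b}{a}(y_1-y_2) + (|r_1|-|r_2|)\bigl(1+\tfrac{b}{a}(y_2-\alpha_0)\bigr),
\]
one obtains the constants $L_{1\mu} \leq \norm{C}_{L^\infty(\Gamma_C)}\norm{a-b\alpha_0}_{L^\infty(\Gamma_C)}$ and $L_{2\mu} = L_{3\mu} \leq \norm{C}_{L^\infty(\Gamma_C)}\norm{b}_{L^\infty(\Gamma_C)}$; these are exactly the factors appearing on the right-hand side of \eqref{eq:cor2_smallness_ass1}. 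The growth bound \ref{list:mu1_est} is the delicate point, since the naive estimate $|\mathrm{arcsinh}(z)|\leq|z|$ produces a multiplicative $|r||y|$ term that does not fit the additive form $\kappa_1+\kappa_2|y|+\kappa_3|r|$. I would instead invoke the sharper $|\mathrm{arcsinh}(z)| \leq 1 + \log(1+|z|)$ together with $\log(1+\alpha\beta) \leq \log(1+\alpha)+\log(1+\beta) \leq \alpha+\beta$ for $\alpha,\beta \geq 0$ to separate the product, yielding constants $\kappa_i$ of the required form.

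With \hyperref[assumptionmu]{$\H{\mu}$} and \hyperref[assumptionGapp]{$\H{G}$} verified and the Lipschitz constants identified, the abstract smallness condition \eqref{eq:smallness_app2} of Theorem \ref{thm:wellposed_app2} becomes, by submultiplicativity of the $L^\infty$ norm, exactly \eqref{eq:cor2_smallness_ass1}. Theorem \ref{thm:wellposed_app2} then applies and yields existence, uniqueness, the regularity \eqref{eq:regularity_second_application}, and continuous dependence on the initial data (with the improvement $\alpha_0 \in L^\infty(\Gamma_C)$ giving continuous dependence through the continuous embedding $L^\infty(\Gamma_C) \hookrightarrow L^2(\Gamma_C)$). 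The main technical obstacle is the growth bound on $\mu$, where the multiplicative coupling in the $\mathrm{arcsinh}$ argument must be carefully decomposed; the Lipschitz estimate and the matching of the constants in the smallness condition are then purely algebraic.
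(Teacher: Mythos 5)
Your proposal is correct and follows the paper's route: specialize Theorem \ref{thm:wellposed_app2} by verifying \hyperref[assumptionGapp]{$\H{G}$} and \hyperref[assumptionmu]{$\H{\mu}$} for \eqref{eq:alpha_approx}--\eqref{eq:mu_approx} under \eqref{eq:abLmu0}, read off $L_{1\mu},L_{2\mu},L_{3\mu}$, and observe that \eqref{eq:smallness_app2} then reduces to \eqref{eq:cor2_smallness_ass1}; the paper does exactly this, simply borrowing the verification from the proof of Corollary \ref{cor:cor}. The only point where you diverge is the growth bound \hyperref[assumptionmu]{$\H{\mu}$}\ref{list:mu1_est}: the paper splits the multiplicative argument inside $\mathrm{arcsinh}$ via its Claim \eqref{eq:arc2}, namely $|\mathrm{arcsinh}(\beta\xi)|\leq|\beta|+|\xi|$ (proved through $\mathrm{arcsinh}(\beta\xi)\leq\mathrm{arcsinh}(\beta)+\mathrm{arcsinh}(\xi)$ and the mean value theorem), whereas you use $|\mathrm{arcsinh}(z)|\leq 1+\log(1+|z|)$ together with $\log(1+\alpha\beta)\leq\alpha+\beta$. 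Both exploit the logarithmic growth of $\mathrm{arcsinh}$ to avoid the problematic $|r|\,|y|$ product and both produce admissible additive constants $\kappa_1,\kappa_2,\kappa_3$; since these constants do not enter the smallness condition (only $L_{1\mu}$, $L_{2\mu}$ and $m_{j_\nu}$ do), the difference is immaterial, the paper's inequality being marginally sharper (no additive $1$). Your Lipschitz estimate, the resulting constants, and the matching of \eqref{eq:smallness_app2} with \eqref{eq:cor2_smallness_ass1} coincide with the paper's argument.
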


\begin{remark}
    A \lq\lq practical fix\rq\rq \: to get a global well-posedness result would be through a truncation operator $R^{\ast}: \mathbb{R}_+ \rightarrow \mathbb{R}_+$ defined by
    \begin{equation*}
        R^{\ast}(r) = 
        \begin{cases}
            r \: \quad \text{if }  r \leq R
            \\ 
            R \quad \text{if } r > R.
        \end{cases}
    \end{equation*}
    Then if we replace $\mu$ with $R^{\ast}(\mu)$ we would have that $\mu \in L^{\infty}(\Gamma_C)$ and obtain the global well-posedness by Theorem \ref{thm:mainresult}\ref{list:global}. See also Remark \ref{remark:mubounded} on this point. 
\end{remark}

\begin{remark}
    We observe from \eqref{eq:cor1_smallness_ass1} and \eqref{eq:cor2_smallness_ass1} that either
    \begin{align*}
       \norm{\frac{1}{2v_0}\mathrm{e}^{\frac{\mu_0 + b \alpha_0 }{a}}}_{L^\infty(\Gamma_C)}  (\sqrt{\mathrm{meas}(\Gamma_C)} \norm{a-b\alpha_0}_{L^\infty(\Gamma_C)} + \norm{b}_{L^\infty(\Gamma_C)}\norm{\alpha_0}_{L^2(\Gamma_C)} )
    \end{align*}
    is small enough, or we must compensate by adding more viscosity. 
\end{remark}

\begin{remark}
    In \cite{Pipping2019,Pipping2015}, they study \eqref{eq:both1}-\eqref{eq:both2} in a time-discrete setting with $\mathcal{S}_\varphi w \equiv \text{constant}$ (the normal stresses are constant - referred to as Tresca friction), $\varphi$ being independent of $Mw$ in its third argument, relaxing the structure of $\varphi$ (see Remark \ref{remark:mubounded}), and putting $j^\circ \equiv 0$  in Problem \ref{prob:fullproblem}. 
\end{remark}

  \begin{proof}[Proof of Corollary \ref{cor:cor}]
    We only need to verify \hyperref[assumptionmu]{$\H{\mu}$} and \hyperref[assumptionGapp]{$\H{G}$} for \eqref{eq:alpha_approx}-\eqref{eq:mu_approx}, respectively. The rest follows by Theorem \ref{thm:finalthm} since $L^\infty(\Gamma_C) \subset L^2(\Gamma_C)$. We start by verifying \hyperref[assumptionGapp]{$\H{G}$} for \eqref{eq:alpha_approx}. From \eqref{eq:abLmu0}, we directly have that \hyperref[assumptionGapp]{$\H{G}$}\ref{list:G_app_Lipschitz}-\ref{list:G_app_0} holds with $L_G = c(\norm{v_0}_{L^\infty(\Gamma_C)},\norm{L^{-1}}_{L^\infty(\Gamma_C)},\norm{\alpha_0}_{L^\infty(\Gamma_C)})$ Next, we verify \hyperref[assumptionmu]{$\H{\mu}$} for \eqref{eq:mu_approx}. 
    \begin{claim}\label{claim:2}
        We have the following inequalities:
        \begin{subequations}
            \begin{align} \label{eq:arc2}
               \Big|\mathrm{arcsinh}(\beta \xi)| &\leq |\beta|+|\xi|,\\ \label{eq:arc_lip}
                 |\mathrm{arcsinh}(\beta_1\xi_1)-\mathrm{arcsinh}(\beta_2\xi_2)| &\leq |\beta_1| |\xi_1-\xi_2| + |\xi_2| |\beta_1-\beta_2|
            \end{align}
        \end{subequations}
        for all $\beta, \ \xi\in \R$ and $\beta_i , \ \xi_i \in \R$ for $i=1,2$.
    \end{claim}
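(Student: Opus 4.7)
The second inequality \eqref{eq:arc_lip} is the straightforward one, so I would handle it first. Since $\mathrm{arcsinh}'(x)=1/\sqrt{1+x^2}\leq 1$, the mean value theorem gives
\[
|\mathrm{arcsinh}(\beta_1\xi_1)-\mathrm{arcsinh}(\beta_2\xi_2)|\leq |\beta_1\xi_1-\beta_2\xi_2|,
\]
and then the telescoping identity $\beta_1\xi_1-\beta_2\xi_2=\beta_1(\xi_1-\xi_2)+(\beta_1-\beta_2)\xi_2$ combined with the triangle inequality yields \eqref{eq:arc_lip} at once.

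For \eqref{eq:arc2}, my plan is to reduce to nonnegative arguments and then combine a subadditive-style inequality with the obvious linear bound. Since $\mathrm{arcsinh}$ is odd, we have $|\mathrm{arcsinh}(\beta\xi)|=\mathrm{arcsinh}(|\beta||\xi|)$, so we may assume $\beta,\xi\geq 0$. The key step, and the only step that needs a little care, is the submultiplicative-to-subadditive bound
\[
\mathrm{arcsinh}(ab)\leq \mathrm{arcsinh}(a)+\mathrm{arcsinh}(b)\qquad\text{for all }a,b\geq 0,
\]
which I would prove by writing $\mathrm{arcsinh}(x)=\ln(x+\sqrt{1+x^2})$, exponentiating both sides, and verifying
\[
ab+\sqrt{1+a^2b^2}\ \leq\ (a+\sqrt{1+a^2})(b+\sqrt{1+b^2}).
\]
Expanding the right-hand side leaves $\sqrt{(1+a^2)(1+b^2)}=\sqrt{1+a^2+b^2+a^2b^2}\geq \sqrt{1+a^2b^2}$ as one of its summands, and the remaining terms $a\sqrt{1+b^2}+b\sqrt{1+a^2}$ are nonnegative, so the inequality follows.

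Finally, since $\mathrm{arcsinh}'\leq 1$ and $\mathrm{arcsinh}(0)=0$ we have $\mathrm{arcsinh}(t)\leq t$ for every $t\geq 0$. Applying this to $a=|\beta|$ and $b=|\xi|$ in the subadditivity estimate above gives
\[
|\mathrm{arcsinh}(\beta\xi)|\leq \mathrm{arcsinh}(|\beta|)+\mathrm{arcsinh}(|\xi|)\leq |\beta|+|\xi|,
\]
which is \eqref{eq:arc2}. The only place any real work is required is the algebraic verification of subadditivity; everything else is routine. I expect no serious obstacles.
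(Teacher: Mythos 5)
Your proposal is correct and follows essentially the same route as the paper: the bound \eqref{eq:arc2} is obtained from the identity $\mathrm{arcsinh}(x)=\log(x+\sqrt{1+x^2})$, the product inequality $(a+\sqrt{1+a^2})(b+\sqrt{1+b^2})\geq ab+\sqrt{1+a^2b^2}$, and the linear bound $\mathrm{arcsinh}(t)\leq t$ from the mean value theorem, exactly as in the paper. For \eqref{eq:arc_lip} you apply the global $1$-Lipschitz property once and then telescope the product difference, whereas the paper telescopes through the intermediate value $\mathrm{arcsinh}(\beta_1\xi_2)$ and applies the mean value bound to each piece — a cosmetic reordering of the same argument.
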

   \indent
    By \eqref{eq:arc2}, we have
    \begin{align*}
         &|\mu(x,|r|,y)| \\
         &\leq  \norm{\frac{1}{\sqrt{2v_0}}\mathrm{e}^{\frac{\mu_0 + b \alpha_0 }{2a}}}_{L^\infty(\Gamma_C)}\Big[
         (\norm{a}_{L^\infty(\Gamma_C)} + \norm{b\alpha_0}_{L^\infty(\Gamma_C)} ) + \norm{b}_{L^\infty(\Gamma_C)} |y| +  \norm{a}_{L^\infty(\Gamma_C)} |r|\Big]
    \end{align*}
    which is \hyperref[assumptionmu]{$\H{\mu}$}\ref{list:mu1_est}   for
    $\kappa_1 =  \norm{\frac{1}{\sqrt{2v_0}}\mathrm{e}^{\frac{\mu_0 + b \alpha_0 }{2a}}}_{L^\infty(\Gamma_C)} (\norm{a}_{L^\infty(\Gamma_C)} + \norm{b\alpha_0}_{L^\infty(\Gamma_C)})$, \\$\kappa_2 =  \norm{\frac{1}{\sqrt{2v_0}}\mathrm{e}^{\frac{\mu_0 + b \alpha_0 }{2a}}}_{L^\infty(\Gamma_C)}\norm{b}_{L^\infty(\Gamma_C)}$, and $\kappa_3 =  \norm{\frac{1}{\sqrt{2v_0}}\mathrm{e}^{\frac{\mu_0 + b \alpha_0 }{2a}}}_{L^\infty(\Gamma_C)}\norm{a}_{L^\infty(\Gamma_C)}$.   Lastly, to verify \hyperref[assumptionmu]{$\H{\mu}$}\ref{list:mu1_Lipschitz}, we use \eqref{eq:arc_lip} and the triangle inequality 
    \begin{align*}
        &|\mu(x,|r_1|,y_1) - \mu(x,|r_2|,y_2)| \\
        &= |a| \: | \mathrm{arcsinh} \bigg(\frac{  \mathrm{e}^{\frac{\mu_0 + b \alpha_0 }{a}}|r_1|(1+\frac{b}{a}(y_1-\alpha_0))}{2v_0} \bigg)  -  \mathrm{arcsinh} \bigg(\frac{  \mathrm{e}^{\frac{\mu_0 + b \alpha_0 }{a}}|r_2|(1+\frac{b}{a}(y_2-\alpha_0))}{2v_0} \bigg) | \\
        &\leq \norm{\frac{1}{2v_0}  \mathrm{e}^{\frac{\mu_0 + b \alpha_0 }{a}}}_{L^\infty(\Gamma_C)} |a| \Big[|r_1|\frac{|b|}{|a|}|y_1 -y_2| + |(1+\frac{b}{a}(y_2-\alpha_0))| \: |r_1-r_2|  \Big]
        \\
        &\leq \norm{\frac{1}{2v_0}  \mathrm{e}^{\frac{\mu_0 + b \alpha_0 }{a}}}_{L^\infty(\Gamma_C)} \Big[|b||r_1| |y_1-y_2| + (|a-b\alpha_0|+|b||y_2|) \: |r_1-r_2|  \Big].
    \end{align*}
    Thus,  \hyperref[assumptionmu]{$\H{\mu}$}\ref{list:mu1_Lipschitz} holds with $L_{1\mu} =\norm{\frac{1}{2v_0}\mathrm{e}^{\frac{\mu_0 + b \alpha_0 }{a}}}_{L^\infty(\Gamma_C)} \norm{a-b\alpha_0}_{L^\infty(\Gamma_C)}$ and $L_{2\mu} = L_{3\mu}  = \norm{\frac{b}{2v_0}\mathrm{e}^{\frac{\mu_0 + b \alpha_0 }{a}}}_{L^\infty(\Gamma_C)}
    \norm{b}_{L^\infty(\Gamma_C)}$. We lastly observe that the smallness-condition \eqref{eq:smallness1} holds by \eqref{eq:cor1_smallness_ass1}.
     \begin{proof}[Proof of Claim \ref{claim:2}]
    Since $|\mathrm{arcsinh(\beta\xi)}|$ is symmetric for $\beta,\xi\in\R$, we may prove \eqref{eq:arc2} for $\beta,\xi>0$. By the mean-value theorem, it follows that
        \begin{align} \label{eq:arc456}
          \mathrm{arcsinh}(\beta)  \leq \beta.
     \end{align}
    By definition $\mathrm{arcsinh}(\beta) = \log(\beta + \sqrt{1+\beta^2})$, then by the fact that $(\beta+\sqrt{1+\beta^2})(\xi+\sqrt{1+\xi^2}) \geq \beta\xi+\sqrt{1+(\beta \xi)^2}$ and the increasing property of the logarithm, we have
     \begin{align*}
          \mathrm{arcsinh}(\beta \xi) \leq  \mathrm{arcsinh}(\beta) + \mathrm{arcsinh}(\xi).
     \end{align*}
    Then, from \eqref{eq:arc456}, we conclude that \eqref{eq:arc2} holds.
    Similarly, without loss of generality, we may assume that $\beta>\xi$, then by the mean-value theorem
     \begin{align}\label{eq:arc123}
          \mathrm{arcsinh}(\beta) -  \mathrm{arcsinh}(\xi)   \leq \beta-\xi.
     \end{align}
    Combining
     \begin{align*}
         &| \mathrm{arcsinh} (\beta_1\xi_1 )  -  \mathrm{arcsinh}(\beta_2\xi_2)| \\
        &\leq | \mathrm{arcsinh} (\beta_1\xi_1 )  -  \mathrm{arcsinh}(\beta_1\xi_2)| 
        + | \mathrm{arcsinh} (\beta_1\xi_2 )  -  \mathrm{arcsinh}(\beta_2\xi_2)|
     \end{align*}
     with \eqref{eq:arc123} implies \eqref{eq:arc_lip}.
 \end{proof}
\end{proof}

\begin{proof}[Proof of Corollary \ref{cor:cor2}]
      The proof follows by Theorem \ref{thm:wellposed_app2} and the verification of  \hyperref[assumptionmu]{$\H{\mu}$} and \hyperref[assumptionGapp]{$\H{G}$} for \eqref{eq:alpha_approx}-\eqref{eq:mu_approx}, which is done in the proof of Corollary \ref{cor:cor}. In addition, the smallness-condition \eqref{eq:smallness_app2} holds by \eqref{eq:cor2_smallness_ass1}.
\end{proof}

\SkipTocEntry\section*{Acknowledgements}
\noindent
This research was supported by the VISTA program, The Norwegian Academy of Science and Letters, and Equinor. N.S.T. would also like to thank Meir Shillor for many important comments that improved a previous version of this manuscript.

\appendix

\addcontentsline{toc}{section}{Appendix}

\SkipTocEntry\section{Comments on assumptions}\label{appendix:comments_app}
\noindent
We include a small discussion on applications fitting our assumptions:
\begin{itemize}
	\item We first consider the equation $G = G(\alpha, |\Dot{u}_\tau|)$ under the assumptions \hyperref[assumptionGapp]{$\H{G}$}. In Section \ref{sec:rateandstate}, we introduced two applications to rate-and-state friction that are included in the framework introduced in this paper.	Another application is, e.g., the slip law
	\begin{align*}
		G(\alpha, |\Dot{u}_\tau|) = |\Dot{u}_\tau|,
	\end{align*}
	which also fit the frameworks in \cite{Patrulescu2017,Migorski2022}.
    \item Neglecting $\alpha$, the friction coefficient reduces to $\mu = \mu(|\Dot{u}_\tau|)$. In this case, \hyperref[assumptionmu]{$\H{\mu}$} is the same as in, e.g., \cite[Section 6.3 and 8.1]{Migorski2012}.
	\item The assumption \hyperref[assumptionp]{$\H{p}$} is also used in, e.g., \cite{Patrulescu2017} and \cite[Section 10.3]{sofonea2017}. They hold for, e.g., a constant function and
	\begin{equation*}
		p(r) = 
		\begin{dcases}
			c_p(r^+)^m, \ \ \ \text{ if } r \leq r^\ast\\
			c_p(r^\ast)^m, \ \ \ \text{ if } r > r^\ast,
		\end{dcases}
	\end{equation*}
	where $r^\ast$ is a positive cut-off limit related to the wear and hardness of the material, $r^+= \max\{0,r\}$, $m\in \N$, and $c_p>0$ is a surface stiffness coefficient. We refer the reader to, e.g., \cite{Migorski2012,shillor2004} for more applications. We observe that $p$ is Lipschitz as we may write 
	\begin{align*}
	    r_1^m - r_2^m = (r_1-r_2)\sum_{k=0}^{m-1}r_{1}^{k}r_{2}^{m-1-k}
	\end{align*}
	and $p$ is bounded (see, e.g., \cite[Section 6.3]{Migorski2012}). 
     \item For applications of the function $j_{\nu}$ fitting the assumptions  \hyperref[assumptionjnu]{$\H{j_\nu}$}, we refer the reader to, e.g., \cite[Section 6.3]{Migorski2012} and \cite[p.185-187]{han}.
	\item One example in linear viscoelasticity where the operators $\mathcal{A}$ and $\mathcal{B}$ satisfy \hyperref[assumptionAcal]{$\H{\mathcal{A}}$} and \hyperref[assumptionB]{$\H{\mathcal{B}}$}, respectively, is the Kelvin-Voigt constitutive law
	\begin{equation*}
		\sigma_{ij} = a_{ijkl}\varepsilon_{kl}(\Dot{u}) +  b_{ijkl}\varepsilon_{kl}(u),
	\end{equation*}
	where $\sigma_{ij}$, $a_{ijkl}$, and $b_{ijkl}$ are the components of $\sigma$, $\mathcal{A}$, and $\mathcal{B}$, respectively, under the assumption $a_{ijkl} \in L^\infty(\Omega)$ and
	\begin{equation}\label{eq:sym}
		a_{ijkl} = a_{jikl} = a_{klij}.
	\end{equation}
	In addition, there exists $m_\mathcal{A}>0$ such that
	\begin{align*}
		a_{ijkl} \varepsilon_{ij}\varepsilon_{kl} \geq m_\mathcal{A} |\varepsilon|^2 \ \ \text{ for all } \varepsilon \in \mathbb{S}^d,
	\end{align*} 
	i.e., the usual ellipticity condition. This implies $a_0 = 0$ in      \hyperref[assumptionAcal]{$\H{\mathcal{A}}$}\ref{list:Acal_bounded}, and $a_1 = L_\mathcal{A}$.
	We also assume that $b_{ijkl} \in L^\infty(\Omega)$ has the same type of symmetry property as \eqref{eq:sym} (see, e.g., \cite[Remark 3.1]{han2015}).
\end{itemize}
For further applications, we refer the reader to, e.g., \cite[Section 6]{han2002},  \cite[Section 6]{Migorski2012}, and  \cite[Section 4]{Sofonea2012}.

\SkipTocEntry\section{Proof of Proposition \ref{prop:estimatewfirst}} \label{appendix:proof_part2}

\begin{proof}[Proof of Proposition \ref{prop:estimatewfirst}]
	For the simplicity of notation, let $w = w_{\alpha\xi\eta g\chi}$. We start off finding estimates on $A$, $\varphi$, and $j^\circ$. According to \hyperref[assumptionA]{$\H{A}$}\ref{list:A_maximalmonotone},
	\begin{align*}
		&\inner{A(t,w(t))}{w(t)} - \inner{A(t,0)}{w(t)} = \inner{A(t,w(t))- A(t,0)}{w(t)} \geq m_A \norm{w(t)}_V^2
	\end{align*}
	for a.e. $t\in (0,T)$. 
	Invoking \hyperref[assumptionA]{$\H{A}$}\ref{list:A_bounded}, i.e.,		$\norm{A(t,0)}_{V^\ast} \leq a_0 (t)$, and the Cauchy-Schwarz inequality gives 
	\begin{align}\label{eq:estimateonA}
		\inner{A(t,w(t))}{w(t)} \geq   m_A \norm{w(t)}_V^2 + \inner{A(t,0)}{w(t)} 
		\geq m_A \norm{w(t)}_V^2 - a_0(t)\norm{w(t)}_V
	\end{align}
	for a.e. $t\in (0,T)$. 
	Next, we take a closer look at $j^\circ$. Keeping in mind Definition \ref{def:subdiffernetial} and applying \hyperref[assumptionj]{$\H{j}$}\ref{list:j_bounded}-\ref{list:j_estimate} and the Cauchy-Schwarz inequality reads
	\begin{align}\label{eq:forsurethis}
		&j^\circ(t,\alpha(t),\chi(t),Nw(t);Nv-Nw(t)) \\ \notag
		&\hspace{1.1cm}\leq m_j \norm{N(w(t)-v)}_X^2 
		+  \Bar{m}_j(\norm{\alpha(t)}_Y +\norm{\chi(t)}_X) \norm{Nw(t)-Nv}_X  \notag\\
		&\hspace{1.1cm}- j^\circ(t,0,0,Nv;Nw(t-Nv)\notag\\
		&\hspace{1.1cm}\leq  m_j\norm{N}^2 \norm{w(t)-v}_V^2 +   \Bar{m}_j(\norm{\alpha(t)}_Y +\norm{\chi(t)}_X)\norm{N} \norm{w(t) - v}_V \notag \\
		&\hspace{1.1cm}+( c_{0j}(t) + c_{3j}\norm{N} \norm{v}_V )\norm{N} \norm{w(t) - v}_V  \notag
	\end{align}
	for all $v\in V$, a.e. $t\in (0,T)$. Similarly, we observe by Definition \ref{def:convex_subdiffernetial}, \hyperref[assumptionphi]{$\H{\varphi}$}\ref{list:phi_bounded}-\ref{list:phi_estimate}, and the Cauchy-Schwarz inequality that
	\begin{align}\label{eq:this2}
		&\varphi(t,\alpha(t),\eta(t),Mg(t),Kv) -  \varphi(t,\alpha(t),\eta(t),Mg(t),Kw(t)) \\ \notag
		&\leq 
        \beta_{2\varphi}\norm{K} \norm{\alpha(t)}_Y\norm{w(t)-v}_V 
        +\beta_{3\varphi} \norm{K}\norm{\eta(t)}_X\norm{w(t)-v}_V \\ \notag
        &+  \beta_{4\varphi}\norm{K}\norm{M}\norm{g(t)}_V\norm{w(t)-v}_V + \beta_{5\varphi}  \norm{K}\norm{M}\norm{\alpha(t)}_Y\norm{g(t)}_V\norm{w(t)-v}_V  \notag
  \\  \notag
         &+  \big[c_{0\varphi}(t) + c_{4\varphi}\norm{K}\norm{v}_V\big]\norm{K}\norm{w(t)-v}_V 
         \\
         &=: K_1 + K_2 + K_3 + K_4 + K_5 + K_6
         \notag
	\end{align}
	for all $v\in V$, a.e. $t\in (0,T)$. 
	We are now in a position to find the desired estimate.
	Choosing $v=0$ in Problem \ref{prob:first_step}, while keeping in mind \hyperref[assumptionMNK]{$\H{MNK}$}, reads
	\begin{align*}
		&\inner{\Dot{w}(t)}{w(t)} + \inner{A(t,w(t))}{w(t)} \\
		&\leq \inner{f(t)}{w(t) } - \inner{\xi(t) }{w(t) } + \varphi(t, \alpha(t),\eta(t), Mg(t), 0) -  \varphi(t,\alpha(t), \eta(t), Mg(t), Kw(t)) \\
		&+j^\circ(t, \alpha(t),\chi(t), Nw(t); -Nw(t) ) 
	\end{align*}
	for a.e. $t\in (0,T)$.
	Take $v=0$ in \eqref{eq:forsurethis}-\eqref{eq:this2} and 
	combine with \eqref{eq:estimateonA}.  Next, we integrate over the time interval $(0,T)$ and apply the Cauchy-Schwarz inequality to \eqref{eq:forsurethis}, $K_1$, $K_2$, $K_3$, and $K_5$. We apply H\"{o}lder's inequality with $\frac{1}{\infty} + \frac{1}{2} + \frac{1}{2} = 1$ to $K_4$. Applying the integration by parts formula in Proposition \ref{prop:integrationbypartsformula} (with $v_1=v_2=w(t)$ for a.e. $t\in(0,T)$) yields
	\begin{align*}
		&\norm{w(t)}_{H}^2 + (m_A- m_j\norm{N}^2)\norm{w}_{L^2_TV}^2  \\
		&\leq \norm{w(0)}_H^2 + \norm{f}_{L^2_TV^\ast}\norm{w}_{L^2_TV} + \norm{\xi}_{L^2_TV^\ast}\norm{w}_{L^2_TV}   + \norm{a_0}_{L^2(0,T)}\norm{w}_{L^2_TV}  \\
		&+ \Bar{m}_j\norm{N} \norm{\chi}_{L^2_TX}\norm{w}_{L^2_TV}   + \norm{N}  \norm{c_{0j}}_{L^2(0,T)}\norm{w}_{L^2_TV}  + \norm{K} \norm{c_{0\varphi}}_{L^2(0,T)}\norm{w}_{L^2_TV} 
		\\
    & + T^{1/2}(  \Bar{m}_j\norm{N} + \beta_{2\varphi}\norm{K}) \norm{\alpha}_{L^\infty_TY}\norm{w}_{L^2_TV} 
    +\beta_{3\varphi}\norm{K} \norm{\eta}_{L^2_TX}\norm{w}_{L^2_TV} 
    \\
    &+ \beta_{4\varphi}\norm{K}\norm{M} \norm{g}_{L^2_TV}\norm{w}_{L^2_TV}   + \beta_{5\varphi}\norm{K}\norm{M} \norm{\alpha}_{L^\infty_TY} \norm{g}_{L^2_TV}\norm{w}_{L^2_TV} 
	\end{align*}
	for a.e. $t\in (0,T)$.	
	Next, using the fact that $w(0) = w_0$, that $V\subset H$ is a continuous embedding, and applying Young's inequality implies that there exists $\epsilon >0$ such that
	\begin{align*}
		&\norm{w(t)}_{H}^2 + (m_A- m_j\norm{N}^2 - 4\epsilon )\norm{w}_{L^2_TV}^2  \\
		&\leq c(\norm{w_0}_V^2 + \frac{1}{2\epsilon}\norm{f}_{L^2_TV^\ast}^2 + \frac{1}{2\epsilon}\norm{\xi}_{L^2_TV^\ast}^2    +\frac{1}{2\epsilon} \norm{a_0}_{L^2(0,T)}^2  + \frac{1}{2\epsilon} \norm{\chi}_{L^2_TX}^2   \\
		&+  \frac{1}{2\epsilon} \norm{c_{0j}}_{L^2(0,T)}^2 + \frac{1}{2\epsilon} \norm{c_{0\varphi}}_{L^2(0,T)}^2 
		 + \frac{1}{2\epsilon} \norm{\alpha}_{L^\infty_TY}^2 
        + \frac{1}{2\epsilon} \norm{\eta}_{L^2_TX}^2  ) \\
        &+ 
        \frac{\beta_{4\varphi} \norm{K}\norm{M}+\beta_{5\varphi}\norm{K}\norm{M}\norm{\alpha}_{L^\infty_TY} }{2}
         \norm{g}_{L^2_TV}^2 \\
         &+  \frac{\beta_{4\varphi} \norm{K}\norm{M}+\beta_{5\varphi}\norm{K}\norm{M}\norm{\alpha}_{L^\infty_TY} }{2}\norm{w}_{L^2_TV}^2
	\end{align*}
	for a.e. $t\in (0,T)$. Now, we choose $4\epsilon  = \frac{1}{2}(m_A- m_j\norm{N}^2)>0$ which implies that $m_A - m_j \norm{N}^2 -4\epsilon  >0$ from \eqref{eq:assumptionbound_mA_max}.
	\\
	\indent
	It remains to find \eqref{eq:estwdot}. Let us first rearrange Problem \ref{prob:first_step}. Then the estimates \eqref{eq:forsurethis}-\eqref{eq:this2} and the Cauchy-Schwarz inequality implies
	\begin{align*}
    		&\inner{\Dot{w}(t)}{w(t)-v} + \inner{A(t,w(t))}{w(t)-v} \\
    		&\leq  \norm{f(t)}_{V^\ast} \norm{v-w(t)}_V + \norm{\xi(t)}_{V^\ast} \norm{v-w(t)}_V +  c_{0j}(t)\norm{N}\norm{v-w(t)}_V
    		\\
    		&+\Bar{m}_j\norm{N} \norm{\chi(t)}_X \norm{v-w(t)}_V 
    	 +(\bar{m}_j\norm{N}+\beta_{2\varphi}\norm{K} )\norm{\alpha(t)}_Y\norm{v-w(t)}_V  \notag
      \\
              &+\beta_{3\varphi} \norm{K}\norm{\eta(t)}_X\norm{v-w(t)}_V +  \beta_{4\varphi}\norm{K}\norm{M}\norm{g(t)}_V\norm{v-w(t)}_V  \notag
\\
&+ \beta_{5\varphi}  \norm{K}\norm{M}\norm{\alpha(t)}_Y\norm{g(t)}_V\norm{v-w(t)}_V  \notag
  +  \big[c_{0\varphi}(t) + c_{4\varphi}\norm{K}\norm{v}_V\big]\norm{K}\norm{v-w(t)}_V \notag
	\end{align*}
	for all $v\in V$, a.e. $t\in (0,T)$. Next, choosing $V \ni \Tilde{v} = v- w(t)$ with $v \in V$ arbitrary. By duality, we deduce
	\begin{align*}
		&\norm{\Dot{w}(t)}_{V^\ast} + \norm{A(t,w(t))}_{V^\ast} = \sup_{ \Tilde{v} \in V,\ \norm{\Tilde{v} }_V = 1} \Big(\inner{\Dot{w}(t)}{\Tilde{v} }_{V^\ast \times V} + \inner{A(t,w(t))}{\Tilde{v}}_{V^\ast \times V} \Big)\\
		&\leq
		\norm{f(t)}_{V^\ast} + \norm{\xi(t)}_{V^\ast}  +  \norm{N} c_{0j}(t)  + \Bar{m}_j\norm{N}\norm{\chi(t)}_X  + m_j\norm{N}^2 
		\\
	&+ (\bar{m}_j\norm{N} + \beta_{2\varphi}\norm{K}) \norm{\alpha(t)}_Y 
        +\beta_{3\varphi} \norm{K}\norm{\eta(t)}_X +  \beta_{4\varphi}\norm{K}\norm{M}\norm{g(t)}_V \\
        &+ \beta_{5\varphi} \norm{K}\norm{M}\norm{\alpha(t)}_Y\norm{g(t)}_V 
  +  c_{0\varphi}(t) + c_{4\varphi}\norm{K}+ c_{4\varphi}\norm{K}\norm{w}_V \notag
\\
&=:  \sum_{i=1}^{12} K_i
	\end{align*}
	for a.e. $t\in (0,T)$.  We square both sides and apply the arithmetic-quadratic mean inequality first to $K_9$ and then to $\overset{12}{\underset{\substack{i=1\\  i \neq 9}}{\sum}}K_i$, and then to the latter term implies
 \begin{align*}
     \norm{\Dot{w}(t)}_{V^\ast}^2 \leq  2 (K_9)^2 +c \overset{12}{\underset{\substack{i=1\\  i \neq 9}}{\sum}} (K_i)^2.
 \end{align*}
 We obtain the desired inequality integrating over the time interval $(0, T)$ and utilizing
H\"{o}lder’s inequality.
 \end{proof}

\SkipTocEntry\section{Proof of Lemma \ref{lemma:lambda_fixedpoint}} \label{appendix:lambda_fixedpoint}

\begin{proof}[Proof of Lemma \ref{lemma:lambda_fixedpoint}]
	The proof relies on the Banach fixed-point theorem. We, therefore, need to verify that the map is indeed well-defined and that it is a contractive mapping on $X_T(a)$. For the sake of presentation, we split the proof into two steps.
	\\
	\indent \textbf{Step i}  \textit{(The operator $\Lambda$ is well-defined on $X_T(a)$)}. 	Indeed, $\Lambda \alpha^1 \in X_T(a)$. We first prove that for given $\alpha^1 \in X_T(a)$, then $\norm{\Lambda \alpha^1}_{L^\infty_TY} \leq a$. We apply Minkowski's inequality to \eqref{eq:Lambda}, then we utilize the estimate \eqref{eq:est_on_G} with $\alpha^n= \alpha^1$ and $w^n= w^1$ together with \hyperref[assumptionMNK]{$\H{MNK}$} and \hyperref[assumptionG]{$\H{\mathcal{G}}$}\ref{list:G_00}. This yields
	\begin{align*}
		\norm{\Lambda \alpha^1(t)}_Y 
		&\leq \norm{\alpha_0}_Y + \int_0^t \norm{ \mathcal{G}(s,\alpha^1(s),Mw^1(s))}_Y ds \\
		&\leq \norm{\alpha_0}_Y + \int_0^t \big[L_\mathcal{G}  \norm{\alpha^1(s)}_Y + L_\mathcal{G} \norm{M} \norm{w^1(s)}_V \big]ds  + T\norm{\mathcal{G}(\cdot,0,0)}_{L^\infty_TY} 
	\end{align*}
	for a.e. $t\in (0,T)$.	From H\"{o}lder's inequality and the Cauchy-Schwarz inequality, we obtain
	\begin{align*}
		\norm{\Lambda \alpha^1(t)}_Y 
		&\leq \norm{\alpha_0}_Y +  L_\mathcal{G} T  \norm{\alpha^1}_{L^\infty_TY}   + L_\mathcal{G}   \norm{M} T^{1/2} \norm{w^1}_{L^2_TV}  + T \norm{\mathcal{G}(\cdot,0,0)}_{L^\infty_TY}.
	\end{align*}
	  We then see from Young's inequality and the estimate \eqref{eq:estiamte_w_1} that
	\begin{align*}
		&\norm{\Lambda \alpha^1}_{L^\infty_TY}^2  \leq c(1 + \norm{\alpha^1}_{L^\infty_TY}^2 ).
	\end{align*}
	Choosing $a$ such that it provides the desired upper bound concludes this part.
	\\
	\indent
	It remains to show that $\Lambda \alpha^1(t)$ is continuous in $Y$ for a.e. $t\in (0,T)$ for given $\alpha^1 \in  X_T(a)$ and $w^1 \in \mathcal{W}^{1,2}_T$. Let $t, t'\in [0,T]$, then with no loss of generality, we assume that $t < t'$. We then use Minkowski's inequality, the estimate \eqref{eq:est_on_G} (with $\alpha^n= \alpha^1$ and $w^n= w^1$), H\"{o}lder's inequality, the Cauchy-Schwarz inequality, \hyperref[assumptionMNK]{$\H{MNK}$}, and \hyperref[assumptionG]{$\H{\mathcal{G}}$}\ref{list:G_00} to deduce
	\begin{align*}
		&\norm{\Lambda\alpha^1(t') - \Lambda\alpha^1(t)}_Y \\
		&\leq \int_t^{t'} \norm{ \mathcal{G}(s,\alpha^1(s),Mw^1(s))}_Y ds\\
		&\leq \int_t^{t'} \big[  L_\mathcal{G} \norm{\alpha^1(s)}_Y  + L_\mathcal{G}\norm{M}\norm{w^1(s)}_V + \norm{\mathcal{G}(s,0,0)}_{Y} \big] ds\\
		&\leq  L_\mathcal{G} |t'-t| \norm{\alpha^1}_{L^\infty(t,t';Y)}    + L_\mathcal{G}|t'-t|^{1/2}\norm{M} \norm{w^1}_{L^2(t,t';V)} + |t'-t| \norm{\mathcal{G}(\cdot,0,0)}_{L^\infty(t,t';Y)} .
	\end{align*}
	The estimate \eqref{eq:estiamte_w_1} implies
	\begin{align*}
		\norm{\Lambda\alpha^1(t') - \Lambda\alpha^1(t)}^2_Y 
            &\leq c |t'-t|^{1/2}   + c|t'-t| \norm{\alpha^1}_{L^\infty_TY}  + |t'-t| \norm{\mathcal{G}(\cdot,0,0)}_{L^\infty_TY}
	\end{align*}
        for $k\in \mathbb{Z}^+$.
	Passing the limit $|t'-t| \rightarrow 0$, we have that indeed $\Lambda \alpha^1\in X_T(a)$.
	\\
	\indent\textbf{Step ii} \textit{(The application $\Lambda : X_T(a) \rightarrow X_T(a)$ is a contractive mapping)}. 	Let $\alpha^1_i \in X_T(a)$, $i=1,2$, and let $ w^1 \in \mathcal{W}^{1,2}_T$ be a unique solution to \eqref{eq:algorithm11}-\eqref{eq:algorithm12}. We introduce a new norm in $C([0,T];Y)$
	\begin{equation*}
		\norm{z}_\gamma = \max_{s\in[0,T]} \mathrm{e}^{-\gamma s} \norm{z(s)}_Y
	\end{equation*}
	where $\gamma>0$ is chosen later. We notice that $C([0,T];Y)$ with $\norm{\cdot}_\gamma$ is complete and $\norm{\cdot}_\gamma$  is  equivalent to the norm on $C([0,T];Y)$. 
 Then, from \eqref{eq:Lambda}, we have
	\begin{align*}
		\mathrm{e}^{-\gamma t} \norm{\Lambda\alpha^1_1(t) - \Lambda\alpha^1_2(t)}_Y &\leq  L_\mathcal{G} \mathrm{e}^{-\gamma t} \int_0^t \mathrm{e}^{\gamma s} \mathrm{e}^{-\gamma s}  \norm{\alpha^1_1(s) - \alpha^1_2(s)}_Y ds\\ 
		&\leq   L_\mathcal{G} \mathrm{e}^{-\gamma t} \norm{\alpha^1_1 - \alpha^1_2}_\gamma \int_0^t \mathrm{e}^{\gamma s}  ds \\
		&\leq \frac{L_\mathcal{G}}{\gamma}  \norm{\alpha^1_1 - \alpha^1_2}_\gamma . 
	\end{align*}
	Choosing $\gamma > L_\mathcal{G}$ implies that $\Lambda$ is a contraction on $X_T(a)$, and thus we may conclude by the Banach fixed-point theorem that $\alpha^1 \in X_T(a)$ is a unique fixed point to \eqref{eq:Lambda}. 

\end{proof}

\SkipTocEntry\section{Proof of Corollary \ref{cor:cauchysequences}} \label{appendix:cor_cauchy}

\begin{proof}[Proof of Corollary \ref{cor:cauchysequences}]
    We follow the proof of Proposition \ref{prop:cauchysequences} until \eqref{eq:I_2123}. Since $\beta_{1\varphi} = \beta_{4\varphi} =\beta_{5\varphi} =\beta_{6\varphi}=\beta_{7\varphi} = 0$, while keeping in mind \eqref{eq:assumptionbound_mA_max}, \eqref{eq:I_2123} becomes
    \begin{align*}
        \bigg[ \int_0^{t'} \norm{e_w^n(t)}^2_V dt \bigg]^{1/2} &\leq T^k\frac{c_\mathcal{R} + \beta_{3\varphi}
	\norm{K}  c_{\mathcal{S}_\varphi } + \Bar{m}_j \norm{N} c_{\mathcal{S}_j} }{ m_A -m_j \norm{N}^2 } \bigg[ 	\int_0^{t'}\int_0^t  \norm{e_w^{n-1}(s)}_V^2 ds dt \bigg]^{1/2}   \\
	&+\frac{\bar{m}_{j}\norm{N}+\beta_{2\varphi}\norm{K}}{ m_A -m_j \norm{N}^2 }\bigg[ \int_0^{t'}\norm{e_\alpha^{n-1}(t)}_Y^2dt \bigg]^{1/2}
    \end{align*}
    for all $t' \in (0,T)$ and some $k\geq 1/2$.  From a standard Gr\"{o}nwall argument (see, e.g., \cite[Lemma 3.2]{Sofonea2012}) combined with Young's inequality, and the Cauchy-Schwarz inequality applied to \eqref{eq:est_alpha123}, we obtain
    \begin{align}\label{eq:etimate_alpha}
    	\norm{e_\alpha^{n-1}(t)}_Y^2 
    	\leq c T^k \int_0^t\norm{e_w^{n-1}(s)}_V^2ds + cT^k\int_0^t \int_0^s e^{c(t-s)}  \norm{e_w^{n-1}(r)}_V^2 dr ds 
    \end{align}
for a.e. $t \in (0,T)$ and some $k\geq 1/2$. Applying Young's inequality and \eqref{eq:etimate_alpha}, we have
    \begin{align*}
        \int_0^{t} \norm{e_w^n(t_{n})}^2_V dt_{n}  &\leq c 	\int_0^t\int_0^{t_n}  \norm{e_w^{n-1}(t_{n-1})}_V^2 dt_{n-1} dt_{n} \\
	&+ c \int_0^{t} \int_0^{t_{n}} e^{c(t-t_{n})}  \norm{e_w^{n-1}(t_{n-1})}_V^2 dt_{n-1} dt_{n}.
    \end{align*}
    Iterating over $n \in \mathbb{Z}_+$ implies
    \begin{align*}
	\int_0^{t}& \norm{e_w^n(t_{n})}^2_V dt_{n} \\
	&\leq c (1+ e^{ct})  \int_0^{t}
	  \int_0^{t_{n}}  \int_0^{t_{n-1}}  \dots  \int_0^{t_3}  \int_0^{t_2} \norm{e_w^{1}(t_1)}_V^2 dt_1 dt_2 \dots dt_{n-2} 
        dt_{n-1}dt_{n} 
	\\
	&\leq c  \Big( \norm{w^{1}}_{L^2_TV}² + T\norm{w_0}_V^2\Big)(1+ e^{cT}) 
	 \int_0^t  \int_0^{t_{n}}  \int_0^{t_{n-1}} \dots  \int_0^{t_3}   dt_2 \dots dt_{n-2} dt_{n-1}  dt_n.
\end{align*}
We observe that
\begin{align*}
    \int_0^t  \int_0^{t_{n}}  \int_0^{t_{n-1}} \dots  \int_0^{t_3}   dt_2 \dots dt_{n-2}  dt_{n-1}  dt_n = \frac{t^{n-1}}{(n-1)!} \leq \frac{T^{n-1}}{(n-1)!}.
\end{align*}
Now, since $T<\infty$ and 
\begin{equation*}
n! \sim \sqrt{2\pi n}(n/e)^n,
\end{equation*}
we have 
\begin{align*}
\lim_{n\rightarrow \infty} c\frac{T^{n-1}}{(n-1)!} =0.
\end{align*}
We may therefore conclude the proof in the same way as in the proof of Proposition \ref{prop:cauchysequences}.
\end{proof}

\SkipTocEntry\section{Proof of Lemma \ref{lemma:est_W_n}} \label{appendix:proof_W_n}
 \begin{proof}[Proof of Lemma \ref{lemma:est_W_n}]
For \eqref{eq:ineq_W_n}, we utilize the conditions \hyperref[assumptionA]{$\H{A}$}\ref{list:A_maximalmonotone}, \hyperref[assumptionphi]{$\H{\varphi}$}\ref{list:phi_estimate}, \hyperref[assumptionj]{$\H{j}$}\ref{list:j_estimate}, and the Cauchy-Schwarz inequality to obtain
\begin{align*}
    &\inner{\Dot{W}^n(t)}{W^n(t)} + (m_A-m_j\norm{N}^2 ) \norm{W^n(t)}_V^2 \\
	&\leq \norm{\mathcal{R}w^{n-1}_1(t) - \mathcal{R}w^{n-1}_2(t) }_{V^\ast} \norm{W^n(t)}_V 
        \\
        &+\beta_{1\varphi}\norm{K} \norm{M}\norm{w_1^{n-1}(t)}_V\norm{\Sigma^{n-1}(t)}_Y \norm{W^n(t)}_V 
        \\
        &
        + 
        (\beta_{2\varphi}\norm{K} +\Bar{m}_j\norm{N})  \norm{\Sigma^{n-1}(t)}_Y \norm{W^n(t)}_V \\
        &+ \beta_{3\varphi}\norm{K}  \norm{\mathcal{S}_\varphi w_1^{n-1}(t)-\mathcal{S}_\varphi w_2^{n-1}(t)}_X\norm{W^n(t)}_V  \\
        &+ \beta_{4\varphi}\norm{K}\norm{M} \norm{W^{n-1}(t)}_V\norm{W^n(t)}_V
        + \beta_{5\varphi} \norm{K} \norm{M}\norm{\alpha_2^{n-1}(t)}_Y \norm{W^{n-1}(t)}_V \norm{W^n(t)}_V 
        \\
        &+
         \beta_{6\varphi} \norm{K} \norm{M}\norm{w_1^{n-1}(t)}_V\norm{\mathcal{S}_\varphi w_1^{n-1}(t)-\mathcal{S}_\varphi w_2^{n-1}(t)}_X\norm{W^n(t)}_V \\
         &+ \beta_{7\varphi}  \norm{K} \norm{\alpha^{n-1}_1(t)}_Y\norm{\mathcal{S}_\varphi w_1^{n-1}(t)-\mathcal{S}_\varphi w_2^{n-1}(t)}_X\norm{W^n(t)}_V \\
         &+ \Bar{m}_j\norm{N}\norm{\mathcal{S}_j w_1^{n-1}(t)-\mathcal{S}_j w_2^{n-1}(t)}_X\norm{W^n(t)}_V 
\end{align*}
for a.e. $t\in (0,T/2)$. First, integrating over the time interval $(0,t')\subset (0,T/2)$, applying the integration by parts formula in Proposition \ref{prop:integrationbypartsformula} (with $v_1=v_2=W(t)$ for a.e. $t\in (0,T/2)$), Young's inequality, H\"{o}lder's inequality, $W(0) = W_0$, and the fact that $V\subset H$ is a continuous embedding. Secondly, we apply the Cauchy-Schwarz inequality to \hyperref[assumptionR]{$\H{\mathcal{R}}$}, \hyperref[assumptionS1]{$\H{\mathcal{S}_\varphi}$}, and \hyperref[assumptionS2]{$\H{\mathcal{S}_j}$}, respectively, gives us the following estimates
\begin{align*}
	\norm{\mathcal{R}w_1^{n-1}(t) - \mathcal{R}w_2^{n-1}(t)}_{V^\ast}^2 &\leq c_\mathcal{R}^2 \frac{T}{2} \int_0^t  \norm{W^{n-1}(s)}_V^2 ds ,
	\\
		\norm{\mathcal{S}_\varphi w_1^{n-1}(t) - \mathcal{S}_\varphi  w^{n-1}_2(t)}_X^2 &\leq c_{\mathcal{S}_\varphi }^2 \frac{T}{2}	\int_0^t  \norm{W^{n-1}(s)}_V^2 ds,
	\\
    \norm{\mathcal{S}_jw_1^{n-1}(t) - \mathcal{S}_j w_2^{n-1}(t)}_X^2 &\leq c_{\mathcal{S}_j}^2 \frac{T}{2} 	\int_0^t  \norm{W^{n-1}(s)}_V^2 ds 
\end{align*}
for a.e. $t \in (0,T/2)$. This yields
  \begin{align*}
      & (m_A - m_j\norm{N}^2 - 4\epsilon)\norm{W^n}_{L^2_{T/2}V}^2 \\
	&\leq c(\norm{W_0}_V^2   
    + \frac{1}{2\epsilon} \norm{w_1^{n-1}}_{L^2_{T/2}V}^2
    \norm{\Sigma^{n-1}}_{L^\infty_{T/2}Y}^2
	+ \frac{1}{2\epsilon} \norm{\Sigma^{n-1}}_{L^\infty_{T/2}Y}^2
         \\
         & +  \frac{T^k}{2\epsilon}\norm{w_1^{n-1}}_{L^2_{T/2}V}^2 \norm{W^{n-1}}_{L^2_{T/2}V}^2
             +   \frac{T^k}{2\epsilon}  \norm{\alpha_1^{n-1}}_{L^\infty_{T/2}Y}^2 \norm{W^{n-1}}_{L^2_{T/2}V}^2 )
             \\
             &+    \frac{\beta_{4\varphi} \norm{K} \norm{M}+ \beta_{5\varphi} \norm{K} \norm{M}\norm{\alpha_2^{n-1}}_{L^\infty_{T/2}Y}}{2} 
            \norm{W^{n-1}}_{L^2_{T/2}V}^2  \\
            &+ \frac{\beta_{4\varphi} \norm{K} \norm{M}+ \beta_{5\varphi} \norm{K} \norm{M}\norm{\alpha_2^{n-1}}_{L^\infty_{T/2}Y}}{2}  \norm{W^n}_{L^2_{T/2}V}^2
  \end{align*}
  for $k\geq 1/2$. Keeping in mind \eqref{eq:assumptionbound_mA_max}, we may
    choose $4\epsilon = \frac{1}{2}(m_A - m_j\norm{N}^2) >0$ to obtain the desired result.
\end{proof}

\SkipTocEntry\section{Proof of Lemma \ref{lemma:assumptionphi2}} \label{appendix:proofs_app1}

\begin{proof}[Proof of Lemma \ref{lemma:assumptionphi2}]
    The assumptions on $A$, \hyperref[assumptionA]{$\H{A}$}, hold directly by the hypothesis \hyperref[assumptionAcal]{$\H{\mathcal{A}}$} with $m_A = m_\mathcal{A}$, $a_0 =0$, and $a_1 = L_\mathcal{A}$ (see, e.g., \cite[p.273]{sofonea2017}). Secondly, \hyperref[assumptionG]{$\H{\mathcal{G}}$} holds by \hyperref[assumptionGapp]{$\H{G}$} with $L_\mathcal{G} = L_G$ and $\mathcal{R}$ is history-dependent 
	with $c_\mathcal{R} = L_\mathcal{B} +  \norm{\mathcal{C}}_{L^\infty_TL^\infty(\Omega;\mathbb{S}^d)}$ by \hyperref[assumptionC]{$\H{\mathcal{C}}$} and \hyperref[assumptionB]{$\H{\mathcal{B}}$}\ref{list:Bcal_bounded}, see, e.g.,
	 \cite[p.275]{sofonea2017}. We observe that \hyperref[assumptionR]{$\H{\mathcal{R}}$}\ref{list:R0} holds by a duality argument, the Cauchy-Schwarz inequality, \hyperref[assumptionB]{$\H{\mathcal{B}}$}\ref{list:Bcal_bounded}, and \eqref{eq:assumptionondata1}.
    Moreover, it follows directly by Minkowski's inequality and the properties of the trace operator that \hyperref[assumptionS1]{$\H{\mathcal{S}_\varphi}$}\ref{list:S_hist11}-\ref{list:S011} holds with $c_{\mathcal{S}_\varphi } = \norm{\gamma_\nu}_{\mathcal{L}(V,Y)}$.   Additionally, \eqref{eq:initaldata}-\eqref{eq:assumptionbound_mA_max} is a consequence of \eqref{eq:assumptiononsourceterms}-\eqref{eq:assumptionondata} and a duality argument.
 \\
	\indent
	The verification of \hyperref[assumptionphi]{$\H{\varphi}$} requires some work.	Firstly, \hyperref[assumptionphi]{$\H{\varphi}$}\ref{list:phi_measurable} holds as the variables in  \eqref{eq:defintion_phi} do not explicitly depend on $t$.
	Next, we show that $\varphi(t,\cdot,\cdot,\cdot,\Tilde{v})$ is continuous on $Y \times X \times U$ for all $\Tilde{v}=(v^{(1)},v^{(2)})\in Z$, a.e. $t\in (0,T)$. That is, ensuring that \hyperref[assumptionphi]{$\H{\varphi}$}\ref{list:phi_cont} holds. 	Let $(y,z,\Tilde{w}),(y_0,z_0,\Tilde{w}_0) \in Y \times X \times U$ such that $\norm{(y,z,\Tilde{w})- (y_0,z_0,\Tilde{w}_0)}_{ Y \times X \times U} \rightarrow 0$. Then
	\begin{align*}
		&|\varphi(t, y,z,\Tilde{w},\Tilde{v}) - \varphi(t,  y_0,z_0,\Tilde{w}_0,\Tilde{v})| \\ 
		&\leq \int_{\Gamma_C}|p(z)-p(z_0)||v^{(2)}|da + \Big|\int_{\Gamma_C}[\mu(|\Tilde{w}|,y) p(z) - \mu(|\Tilde{w}_0|,y_0) p(z_0)] |v^{(1)}| da \Big| .
	\end{align*}
	For simplicity in notation, we let
	\begin{equation*}
		\mu = \mu (|\Tilde{w}| , y), \ \ \ \mu_0 = \mu (|\Tilde{w}_{0}| , y_0),
	\end{equation*}
	then
	\begin{align*}
	    &|\varphi(t, y,z,\Tilde{w},\Tilde{v}) - \varphi(t,  y_0,z_0,\Tilde{w}_0,\Tilde{v})| \\ 
		&\leq \int_{\Gamma_C}|p(z)-p(z_0)||v^{(2)}|da + \int_{\Gamma_C}|p(z)||\mu  - \mu_0| |v^{(1)}| da +  \int_{\Gamma_C}|\mu_0||p(z)  - p(z_0)| |v^{(1)}| da\\
		&=: I + II + III.
	\end{align*}
    We estimate $I$ directly by H\"{o}lder's inequality and \hyperref[assumptionp]{$\H{p}$}\ref{list:p_Lipschitz}, while for $II$ and $III$, we use H\"{o}lder's inequality, \hyperref[assumptionp]{$\H{p}$}\ref{list:p_bounded}, and \hyperref[assumptionmu]{$\H{\mu}$}\ref{list:mu1_Lipschitz}.     This reads
	\begin{align*}
	    I &\leq L_p\sqrt{\mathrm{meas}(\Gamma_C)}\norm{z-z_0}_X \norm{v}_Z, \\
	    II &\leq p^\ast  \big(L_{3\mu}\norm{\Tilde{w}}_U\norm{y - y_0}_Y\norm{v}_Z\\
	    &+ \big(L_{1\mu} \sqrt{\mathrm{meas}(\Gamma_C)}+  L_{2\mu}  \norm{y_0}_Y\big) \norm{\Tilde{w}-\Tilde{w}_0}_U  \norm{v}_Z, \\
		III &\leq L_p\big(\kappa_1 \sqrt{\mathrm{meas}(\Gamma_C)} + \kappa_2  \norm{y_0}_Y  +\kappa_3(\mathrm{meas}(\Gamma_C))^{1/4} \norm{\Tilde{w}_0}_U \big) \norm{z  - z_0}_X \norm{v}_Z.
	\end{align*}
	\indent
	To verify \hyperref[assumptionphi]{$\H{\varphi}$}\ref{list:phi_convex_lsc}, we first note that a continuous function is lower semicontinuous. So, it suffices to show that $\varphi(t,y,z,\Tilde{w},\cdot)$ is Lipschitz continuous on $Z$ for all $y\in Y$, $z\in X$, $\Tilde{w} \in U$, a.e. $t\in (0,T)$. Let $\Tilde{v}_i =(v^{(1)}_i, v^{(2)}_i)\in Z $, $i=1,2$, then \hyperref[assumptionp]{$\H{p}$}\ref{list:p_Lipschitz}-\ref{list:p_bounded}, \hyperref[assumptionmu]{$\H{\mu}$}\ref{list:mu1_est}, and H\"{o}lder's inequality yields
	\begin{align}\label{eq:star}
		&|\varphi(t, y,z,\Tilde{w},\Tilde{v}_1) - \varphi(t,  y,z,\Tilde{w},\Tilde{v}_2)| \\ \notag
		&\leq \int_{\Gamma_C}|p(z)||v^{(2)}_{1} - v^{(2)}_{2}|da +p ^\ast  \int_{\Gamma_C}|\mu(|\Tilde{w}|,y)| [|v^{(1)}_{1}| - |v^{(1)}_{2}|] da \\
		&\leq L_p \sqrt{\mathrm{meas}(\Gamma_C)}  \norm{z}_X \norm{\Tilde{v}_1 - \Tilde{v}_2}_Z \notag\\
            &+  ( \kappa_1(\mathrm{meas}(\Gamma_C))^{3/4} + \kappa_2(\mathrm{meas}(\Gamma_C))^{1/4} \norm{y_0}_Y +\kappa_3\sqrt{\mathrm{meas}(\Gamma_C)} \norm{\Tilde{w}_0}_U ) \norm{\Tilde{v}_1 - \Tilde{v}_2}_Z \notag
	\end{align}
	for all $y \in Y$, $z\in X$, $\Tilde{w}\in U$, a.e. $t\in (0,T)$. The triangle inequality and the linearity of the integral guarantee the convexity in the last argument of $\varphi$. Thus, we have that \hyperref[assumptionphi]{$\H{\varphi}$}\ref{list:phi_convex_lsc} holds.
	\\
	\indent
 From
  \eqref{eq:star} and $Z^\ast = U^\ast \times X^\ast = (L^4(\Gamma_C;\R^d))^\ast \times (L^4(\Gamma_C))^\ast = L^{4/3}(\Gamma_C;\R^d) \times L^{4/3}(\Gamma_C)$, we observe
  that  \hyperref[assumptionphi]{$\H{\varphi}$}\ref{list:phi_bounded} holds     for 
    $c_{0\varphi}(t) = \kappa_1(\mathrm{meas}(\Gamma_C))^{3/2}$, $c_{1\varphi}= \kappa_2\mathrm{meas}(\Gamma_C)$, $c_{2\varphi} = L_p(\mathrm{meas}(\Gamma_C))^{5/4}  $, $c_{3\varphi} = \kappa_3(\mathrm{meas}(\Gamma_C))^{5/4}$, and $c_{4\varphi} = 0$. 
    We lastly verify \hyperref[assumptionphi]{$\H{\varphi}$}\ref{list:phi_estimate}. Let $y_i\in Y$, $z_i \in X$, $\Tilde{w}_i\in U$, $\Tilde{v}_i =(v^{(1)}_i,v^{(2)}_i) \in Z$, for $i=1,2$. Then
	\begin{align*}
		&\varphi(t,y_1,z_1,\Tilde{w}_1,\Tilde{v}_2) -  \varphi(t,y_1,z_1,\Tilde{w}_1,\Tilde{v}_1) +  \varphi(t,y_2,z_2,\Tilde{w}_2,\Tilde{v}_1)
		- \varphi(t,y_2,z_2,\Tilde{w}_2,\Tilde{v}_2) \\
		&=  \int_{\Gamma_C} \big[p(z_1)- p(z_2)\big] \big[v^{(2)}_{2}-v^{(2)}_{1}\big] da + \int_{\Gamma_C}   \mu (|\Tilde{w}_{1}| , y_1) p(z_1) \big[|v^{(1)}_{2}|-|v^{(1)}_{1}|\big] da\\
		&+  \int_{\Gamma_C}   \mu (|\Tilde{w}_{2}| , y_2) p(z_2) \big[|v^{(1)}_{1}|- |v^{(1)}_2|\big] da.
	\end{align*}
	For simplicity, we define
	\begin{align*}
		\mu_1 &=  \mu (|\Tilde{w}_{1}| , y_1),  & \mu_2 =  \mu (|\Tilde{w}_{2}| , y_2),
	\end{align*}
	then by \hyperref[assumptionp]{$\H{p}$}\ref{list:p_bounded}, \hyperref[assumptionmu]{$\H{\mu}$}\ref{list:mu1_Lipschitz}, and the triangle inequality
	\begin{align*}
	    &\varphi(t,y_1,z_1,\Tilde{w}_1,\Tilde{v}_2) -  \varphi(t,y_1,z_1,\Tilde{w}_1,\Tilde{v}_1) +  \varphi(t,y_2,z_2,\Tilde{w}_2,\Tilde{v}_1)
		- \varphi(t,y_2,z_2,\Tilde{w}_2,\Tilde{v}_2) \\
		&=  \int_{\Gamma_C} \big[p(z_1)- p(z_2)\big] \big[v^{(2)}_{2}-v^{(2)}_{1}\big] da + \int_{\Gamma_C}   p(z_2)\big[\mu_1 -\mu_2\big] \big[|v^{(1)}_{2}|-|v^{(1)}_{1}|\big] da\\
		&+  \int_{\Gamma_C}   \mu_1\big[p(z_1)- p(z_2)\big] \big[|v^{(1)}_{2}|- |v^{(1)}_1|\big] da\\
		&\leq L_p\int_{\Gamma_C} |z_1- z_2| |v^{(2)}_{2}-v^{(2)}_{1}| da
         + p^\ast\int_{\Gamma_C}  L_{1\mu} |\Tilde{w}_1 - \Tilde{w}_2| |v^{(1)}_{2} - v^{(1)}_{1}| da
        \\
        &+ p^\ast  \int_{\Gamma_C} L_{2\mu}|y_2||\Tilde{w}_1 - \Tilde{w}_2| |v^{(1)}_{2} - v^{(1)}_{1}| da + p^\ast \int_{\Gamma_C}  L_{3\mu}|\Tilde{w}_1| |y_1 - y_2||v^{(1)}_{2} - v^{(1)}_{1}| da \\
        &+  L_p\int_{\Gamma_C} (\kappa_1 + \kappa_2|y_1|  + \kappa_3|\Tilde{w}_1|) |z_1- z_2| |v^{(1)}_{2}- v^{(1)}_1| da\\
		&=: I + II + III + IV + V.
	\end{align*}
	Next, we apply H\"{o}lder's inequality and \hyperref[assumptionmu]{$\H{\mu}$}\ref{list:mu1_est} to obtain 
	\begin{align*}
	    I &\leq L_p \sqrt{\mathrm{meas}(\Gamma_C)}  \norm{z_1- z_2}_X \norm{v^{(2)}_1-v^{(2)}_2}_X,\\
	    II &\leq p^\ast L_{1\mu} \sqrt{\mathrm{meas}(\Gamma_C)} \norm{\Tilde{w}_1 - \Tilde{w}_2}_U \norm{v^{(1)}_{1} - v^{(1)}_{2}}_U,\\
             III &\leq p^\ast  L_{2\mu}  \norm{y_2}_Y \norm{\Tilde{w}_1 - \Tilde{w}_2}_U \norm{v^{(1)}_1 -v^{(1)}_2}_U,\\
	    IV &\leq p^\ast  L_{3\mu} \norm{\Tilde{w}_1}_U \norm{y_1 - y_2}_Y \norm{v^{(1)}_1 -v^{(1)}_2}_U,\\
	    V &\leq \kappa_1  \sqrt{\mathrm{meas}(\Gamma_C)} L_p  \norm{z_1 - z_2}_X \norm{v^{(1)}_1 -v^{(1)}_2}_U
        +  \kappa_2L_p \norm{y_1}_Y     \norm{z_1 - z_2}_X  \norm{v^{(1)}_1 -v^{(1)}_2}_U\\
        &+  \kappa_3 (\mathrm{meas}(\Gamma_C))^{1/4} L_p \norm{\Tilde{w}_1}_U      \norm{z_1 - z_2}_X  \norm{v^{(1)}_1 -v^{(1)}_2}_U.
     \end{align*}
    Hence, \hyperref[assumptionphi]{$\H{\varphi}$}\ref{list:phi_estimate} holds with $\beta_{1\varphi}= p^\ast L_{3\mu}$, $\beta_{3\varphi} = L_p(1+ \kappa_1) \sqrt{\mathrm{meas}(\Gamma_C)} $,\\ $\beta_{4\varphi} = p^\ast  L_{1\mu} \sqrt{\mathrm{meas}(\Gamma_C)}$, $\beta_{5\varphi} = p^\ast L_{2\mu}$, $\beta_{6\varphi} = \kappa_3 L_p(\mathrm{meas}(\Gamma_C))^{1/4}$, $\beta_{7\varphi} =  \kappa_2 L_p$,   and $\beta_{2\varphi} = 0$.
\end{proof}

\SkipTocEntry\section{Proof of Lemma \ref{lemma:assumptionon_j}} \label{appendix:assumption_phiandj}

\begin{proof}[Proof of Lemma \ref{lemma:assumptionon_j}]
    We will first prove that $\varphi$ defined by \eqref{eq:defintion_phi_2} satisfies \hyperref[assumptionphi]{$\H{\varphi}$}. Firstly, \hyperref[assumptionphi]{$\H{\varphi}$}\ref{list:phi_measurable} holds as the variables in \eqref{eq:defintion_phi_2} do not explicitly depend on $t$. Next, \hyperref[assumptionphi]{$\H{\varphi}$}\ref{list:phi_cont} holds, i.e., $\varphi(t,\cdot,\cdot,\Tilde{v})$ is continuous on $Y \times U$ for all $\Tilde{v} \in Z$, a.e. $t\in (0,T)$. Indeed, let $(y,\Tilde{w}),(y_0,\Tilde{w}_0) \in Y \times U$ such that $\norm{(y,\Tilde{w}) - (y_0,\Tilde{w}_0)}_{Y \times U} \rightarrow 0$,	it then follows by \hyperref[assumptionmu]{$\H{\mu}$}\ref{list:mu1_Lipschitz}, and H\"{o}lder's inequality that
	\begin{align*}
	       |\varphi(t,y,\Tilde{w},\Tilde{v}) -       
                \varphi(t,y_0,\Tilde{w}_0,\Tilde{v})| &= | \int_{\Gamma_C}  \big[\mu (|\Tilde{w}|, y) - \mu (|\Tilde{w}_0|, y_0)\big] |\Tilde{v}| da| \\
		&\leq L_{1\mu} \norm{w}_U\norm{y- y_0}_Y\norm{\Tilde{v}}_Z\\
		&+
            (L_{2\mu}\sqrt{\mathrm{meas}(\Gamma_C)}+  L_{3\mu} \norm{y_0}_Y) \norm{ \Tilde{w}- \Tilde{w}_0}_U  \norm{\Tilde{v}}_Z
	\end{align*}
	for all $\Tilde{v} \in Z$.
	Moreover, convexity in the last argument of $\varphi$ follows by the triangle inequality and the linearity of the integral. Additionally, a continuous function is lower semicontinuous. Therefore, we will show that $\varphi$ is continuous in its last argument.  Let $\Tilde{v},\Tilde{v}_0\in Z$ such that $\norm{\Tilde{v} - \Tilde{v}_0}_Z \rightarrow 0$. Then, the Cauchy-Schwarz inequality, and \hyperref[assumptionmu]{$\H{\mu}$}\ref{list:mu1_est} gives
    \begin{align}\label{eq:varphi_est_app}
		&|\varphi(t,y,\Tilde{w},\Tilde{v}_1) -  \varphi(t,y,\Tilde{w},\Tilde{v}_2)|\\ \notag
		&\leq (\kappa_1 (\mathrm{meas}(\Gamma_C))^{3/4}  + \kappa_2 (\mathrm{meas}(\Gamma_C))^{1/4} \norm{y}_Y + \kappa_3  \sqrt{\mathrm{meas}(\Gamma_C)} \norm{\Tilde{w}}_U) \norm{\Tilde{v}_{1} - \Tilde{v}_{2}}_Z \notag
	\end{align}
	for $y\in Y$, $\Tilde{w} \in U$, $\Tilde{v}_{i} \in Z$, $i=1,2$, a.e. $t\in (0,T)$.
    This proves  \hyperref[assumptionphi]{$\H{\varphi}$}\ref{list:phi_convex_lsc}.	        Similarly as in the proof of Lemma \ref{lemma:assumptionphi2}, \eqref{eq:varphi_est_app} implies that \hyperref[assumptionphi]{$\H{\varphi}$}\ref{list:phi_bounded} holds for $c_{0\varphi}(t) = \kappa_1 (\mathrm{meas}(\Gamma_C))^{3/2}$, $c_{1\varphi}= \kappa_2  \mathrm{meas}(\Gamma_C)$, $ c_{3\varphi} = \kappa_3 (\mathrm{meas}(\Gamma_C))^{5/4}$, and  $ c_{4\varphi} = 0$. Lastly, we investigate \hyperref[assumptionphi]{$\H{\varphi}$}\ref{list:phi_estimate}. From hypothesis \hyperref[assumptionmu]{$\H{\mu}$}\ref{list:mu1_Lipschitz} and H\"{o}lder's inequality, we obtain
	\begin{align*}
		&\varphi(t,y_1,\Tilde{w}_1,\Tilde{v}_2) -  \varphi(t,y_1,\Tilde{w}_1,\Tilde{v}_1) +  \varphi(t,y_2,\Tilde{w}_2,\Tilde{v}_1)- \varphi(t,y_2,\Tilde{w}_2,\Tilde{v}_2) \\
		&\leq  (L_{1\mu} \sqrt{\mathrm{meas}(\Gamma_C)} + L_{2\mu}\norm{y_2}_Y)  \norm{\Tilde{w}_1 - \Tilde{w}_2}_U \norm{\Tilde{v}_1 - \Tilde{v}_2}_Z +  L_{3\mu} \norm{\Tilde{w}_1}_U \norm{y_1 - y_2}_Y \norm{\Tilde{v}_1 - \Tilde{v}_2}_Z 
	\end{align*}
for all $y_i \in Y$, $\Tilde{w}_i \in U$, $\Tilde{v}_{i} \in Z$ for $i=1,2$, a.e. $t\in (0,T)$.	We set $\beta_{1\varphi} =L_{3\mu}$, $\beta_{4\varphi}  = L_{1\mu}\sqrt{\mathrm{meas}(\Gamma_C)}$, $\beta_{5\varphi} =L_{2\mu}$, and $\beta_{2\varphi} = \beta_{3\varphi}= \beta_{6\varphi} = \beta_{7\varphi}=0$.
Lastly,  we prove that $j$ defined by \eqref{eq:j_nu} satisfies \hyperref[assumptionj]{$\H{j}$}. Corollary \ref{cor:j}\ref{list:finite}-\ref{list:locally} guarantees that \hyperref[assumptionj]{$\H{j}$}\ref{list:j_measurable}-\ref{list:j_locallyLipschitz}. It remain to show that \hyperref[assumptionj]{$\H{j}$}\ref{list:j_convergence}-\ref{list:j_estimate}. From Corollary \ref{cor:j}\ref{list:equality}, Proposition \ref{prop:chainrule_subdiff}, and Fatou's lemma, we obtain  \hyperref[assumptionj]{$\H{j}$}\ref{list:j_convergence}. We find that \hyperref[assumptionj]{$\H{j}$}\ref{list:j_bounded} holds by \hyperref[assumptionjnu]{$\H{j_\nu}$}\ref{list:j_nu_bounded}, Young's inequality, and H\"{o}lder's inequality. Indeed, 
\begin{align*}
    |\xi|^{4/3} &\leq (\bar{c}_0 + \bar{c}_1 |v|)^{4/3} \leq 2^{1/3}(\bar{c}_0^{4/3} + \bar{c}_1^{4/3} |v|^{4/3})
\end{align*}
implies
\begin{align*}
    \norm{\xi}_{X^\ast} \leq 2^{1/4} (\mathrm{meas}(\Gamma_C))^{3/4} \Bar{c}_0 + 2^{1/4}  (\mathrm{meas}(\Gamma_C))^{2/3} \Bar{c}_1 \norm{v}_X
\end{align*}
with $c_{0j}(t) = 2^{1/4} (\mathrm{meas}(\Gamma_C))^{3/4}\Bar{c}_0$, $c_{3j} = 2^{1/4} (\mathrm{meas}(\Gamma_C))^{2/3}\Bar{c}_1$, and $c_{1j} = c_{2j} = 0$.
For \hyperref[assumptionj]{$\H{j}$}\ref{list:j_estimate}, we use Corollary \ref{cor:j}\ref{list:equality} (see, e.g., \cite{Migorski2022}), \hyperref[assumptionjnu]{$\H{j_\nu}$}\ref{list:j_nu_est}, and the Cauchy-Schwarz inequality to obtain
\begin{align*}
    j^\circ (t,v_1 ; v_2 - v_1) +   j^\circ (t,v_2 ; v_1 - v_2) 
    &\leq  m_{j_\nu} \int_{\Gamma_C} |v_1 - v_2|^2 da \leq m_{j_\nu} \sqrt{\mathrm{meas}(\Gamma_C)} \norm{v_1 - v_2}_{X}^2,
\end{align*}
where $m_j =  m_{j_\nu} \sqrt{\mathrm{meas}(\Gamma_C)}$, concluding this proof.

\end{proof}

\end{document}